\newcommand{\rr}{\mathbb R}
\newcommand{\cc}{\mathbb C}
\newcommand{\zz}{\mathbb Z}
\newcommand{\nn}{\mathbb N}
\newcommand{\A}{\mathcal A}
\newcommand{\N}{\mathbf N}
\newcommand{\K}{\mathbf K}
\newcommand{\eq}{equation}
\newcommand{\al}{\alpha}
\newcommand{\bt}{\beta}
\newcommand{\gm}{\gamma}
\newcommand{\ep}{\varepsilon}
\newcommand{\dl}{\delta} 
\newcommand{\tht}{\theta}
\newcommand{\om}{\omega}
\newcommand{\lmd}{\lambda}
\newcommand{\zt}{\zeta}
\newcommand{\sg}{\sigma}
\newcommand{\Gm}{\Gamma}
\newcommand{\Lmd}{\Lambda}
\newcommand{\dlt}{\Delta} 
\newcommand{\ey}{\frac{1}{2}}
\newcommand{\se}{\frac{2}{3}}
\newcommand{\sy}{\frac{1}{3}}
\newcommand{\rdt}{\dot{r}}
\newcommand{\ql}{q^{\lambda}}
\newcommand{\gmd}{\dot{\gm}}
\theoremstyle{plain}
\newtheorem{thm}{Theorem}[section]
\newtheorem{prop}{Proposition}[section]
\newtheorem{lem}{Lemma}[section]
\theoremstyle{definition}
\newtheorem{dfn}{Definition}[section]
\theoremstyle{remark}
\newtheorem{rem}{Remark}[section]
\begin{document}

\title[Hyperbolic and bi-hyperbolic solutions]{Hyperbolic and Bi-hyperbolic solutions in the planar restricted $(N+1)$-body problem}
 

\author{Guowei Yu}
\address{Chern Institute of Mathematics and LPMC, Nankai University, Tianjin, China}
\email{yugw@nankai.edu.cn}

\thanks{This work is supported by the National Key R\&D Program of China (2020YFA0713303), NSFC (No. 12171253), Nankai Zhide Fundation and the Fundamental Research Funds for the Central Universities.}

\begin{abstract}

Consider the planar restricted $(N+1)$-body problem with trajectories of the $N(\ge 2)$ primaries forming a collision-free periodic solution of the $N$-body problem, for any positive energy $h$ and directions $\tht_{\pm} \in [0, 2\pi)$, we prove that starting from any initial position $x$ at any initial time $t_x$, there are hyperbolic solutions $\gm^{\pm}|_{[t_x, \pm \infty)}$ satisfying $\gm^{\pm}(t_x) =x$ and
$$ \lim_{t \to \pm \infty} \gm^{\pm}(t) / |\gm^{\pm}(t)| = e^{i \tht_{\pm} (\text{mod } 2\pi)}, \;\;\lim_{ t \to \pm \infty} \dot{\gm}^{\pm}(t) = \pm \sqrt{2h} e^{i \tht_{\pm} (\text{mod } 2\pi)}.$$
Moreover we also prove the existence of a bi-hyperbolic solution $\gm|_{\rr}$ satisfying  
$$ \lim_{t \to \pm \infty} \gm(t) / |\gm(t)| = e^{i \tht_{\pm} (\text{mod } 2\pi)}, \;\;\lim_{ t \to \pm \infty} \dot{\gm}(t) = \pm \sqrt{2h} e^{i \tht_{\pm} (\text{mod } 2\pi)}.$$
\end{abstract}

\maketitle

\section{Introduction }\label{sec:intro}

The $N$-body problem studies motion of $N$ point masses, $m_i>0$, $i \in \N:=\{1, \dots, N\}$, under Newton's universal gravitational law. The trajectories of the masses $q(t)=(q_i(t))_{i \in \N}$ satisfy
\begin{equation}
\ddot{q}_i(t) = -\sum_{ j \in \N \setminus \{i \} }\frac{m_j (q_i(t) - q_j(t))}{|q_i(t) - q_j(t)|^3}, \;\;  \forall i \in \N. \label{eq:Nbody}
\end{equation}  

One of the most important aspects of the problem is to understand final motions of the masses as time goes to infinity. When $N=3$, a complete classification of the final behavior was given by Chazy \cite{Chazy22} (also see \cite{AKN06}, a similar classification can be obtained for $N >3$). Although a solution of \eqref{eq:Nbody} may end at a finite time due to singularity, by results from \cite{Saari71}, \cite{Saari73} and \cite{Knauf19}, such solutions should be rare in the measure sense. 

By introducing an additional massless body into the system, we get the so-called \emph{restricted $(N+1)$-body problem}, where the motion of the massless body is governed by the gravitational forces of the $N$ positive masses (will be referred as primaries), while the massless body does not produce any gravitational force, so the motion of the primaries are not affected by its presence. As a result, the trajectory $z(t)$ of the massless body satisfies 
\begin{\eq}
\label{eq_RNBP1} \ddot{z}(t) = \partial_z U(z(t), t) =- \sum_{i \in \N} \frac{m_i(z(t) - q_i(t))}{|z(t) - q_i(t)|^3},
\end{\eq}
where $U(z, t)$ is a time-dependent potential function 
\begin{equation}
\label{eq:Uzt} U(z, t) = \sum_{i \in \N} \frac{m_i}{|z - q_i(t)|}.
\end{equation}

The restricted $(N+1)$-body problem can be seen as certain limit of the $(N+1)$-body problem, when one of the masses is much smaller than the others. Hence it is a good model in describing the motion of comets and satellites in gravitational systems, and has been studies by many authors including Poincar\'e and Birkhoff. 


Solutions of \eqref{eq_RNBP1} may also experience singularities at finite time. However if we assume there is no collision between the primaries, then all these singularities must be binary collisions between the massless body and one of the primaries. As is well-known such singularities can be regularized (see \cite{LC20}, \cite{Moser70} and \cite{SS71}), so solutions can be extended through these binary collisions as elastic bounce. Hence the maximum domain of any solution of \eqref{eq_RNBP1} will be the whole time interval $\rr$. Then following Chazy, a complete classification of all the solutions of \eqref{eq_RNBP1} according to the final behaviors of the massless body can be given as below. 
\begin{itemize}
\item \textbf{hyperbolic solution}: $|z(t)| \to \infty$ and $|\dot{z}(t)| \to c > 0$, as $t \to \pm \infty$;
\item \textbf{parabolic solution}: $|z(t)| \to \infty$ and $|\dot{z}(t)| \to  0$, as $t \to \pm \infty$;
\item \textbf{bounded solution}: $\limsup_{t \to \pm \infty} |z(t)| < \infty$;
\item \textbf{oscillatory solution}: $\limsup_{t \to \pm \infty} |z(t)|< \infty$ and $\liminf_{t  \to \pm \infty} |z(t)| < \infty$.  
\end{itemize}
\begin{rem}
\label{rem;bi-hyp} $z|_{\rr}$ will be called \textbf{bi-hyperbolic}, if both $z|_{[0, \infty)}$ and $z|_{(-\infty, 0]}$ are hyperbolic solutions.  
\end{rem}

In this paper, we will study (bi)-hyperbolic solutions of the planar restricted $(N+1)$-body problem, and we set $ \rr^2 =\cc$ for the rest of the paper.
\begin{thm}
\label{thm:HyperSol-1} Assume $q(t) =(q_i(t))_{i \in \N}$ is a planar collision-free periodic solution of \eqref{eq:Nbody}, i.e.,  
\begin{\eq}
\label{eq: CollFree-Per}  \begin{cases}
 & q(t) = q(t +T)  \in \cc^N, \; \forall t \in \rr, \; \text{ for some } T >0; \\
  & \rho_0 = \inf \{ |q_i(t) -q_j(t)|: t \in \rr, \; i \ne j \in \N \} >0.
 
\end{cases}  \tag{\textasteriskcentered}
\end{\eq}
Given arbitrarily an energy constant $h>0$ and asymptotic directions $\tht_{\pm} \in [0, 2\pi)$, for any  $x \in \cc$ and $t_x \in \rr$, there are hyperbolic solutions $\gm^{\pm}: [t_x, \pm \infty) \to \cc$ of \eqref{eq_RNBP1} satisfies $\gm^{\pm}(t_x) = x$ and
$$ \lim_{t \to \pm \infty} \gm^{\pm}(t) / |\gm^{\pm}(t)| = e^{i \tht_{\pm} (\text{mod } 2\pi)}, \;\;\lim_{ t \to \pm \infty} \dot{\gm}^{\pm}(t) = \pm \sqrt{2h} e^{i \tht_{\pm} (\text{mod } 2\pi)}.$$
Moreover $\gm^{\pm}(t)$ must be collision-free, whenever $t \ne t_x$. 
\end{thm}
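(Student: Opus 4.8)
\emph{Reduction to the forward case.} By the time-reversal symmetry of \eqref{eq:Nbody}, the family $(q_i(-t))_{i\in\N}$ is again a collision-free $T$-periodic solution, and if $z(t)$ solves \eqref{eq_RNBP1} then $z(-t)$ solves the restricted problem associated to the reversed primaries; hence it suffices to construct the forward solution $\gm^+$ on $[t_x,\infty)$, the backward solution $\gm^-$ following by applying the result to the reversed primaries. The plan is a direct variational method on a growing sequence of compact intervals. For $n\in\nn$ set $\xi_n = x + \sqrt{2h}\,n\,e^{i\tht_+}$ and minimize the fixed-time Lagrangian action
\[
\mathcal{A}_n(z) = \int_{t_x}^{t_x+n}\Big(\tfrac12|\dot z(t)|^2 + U(z(t),t)\Big)\,dt
\]
over $\Lmd_n = \{\, z\in H^1([t_x,t_x+n],\cc): z(t_x)=x,\ z(t_x+n)=\xi_n \,\}$. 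The target $\xi_n$ is chosen so that the free straight motion from $x$ to $\xi_n$ has constant speed $\sqrt{2h}$ and direction $e^{i\tht_+}$, matching the desired hyperbolic data.

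\emph{Existence and absence of collisions.} Existence of a minimizer $z_n\in\Lmd_n$ follows from the direct method: since $U\ge 0$, $\mathcal A_n$ is coercive and weakly lower semicontinuous on $H^1$, and the singular set $\{z=q_i(t)\}$ contributes only finite action, because along a Keplerian collision both $U$ and $|\dot z|^2$ blow up like $\tau^{-2/3}$ and are integrable; thus the infimum is finite and attained. A standard local deformation argument of Marchal type—replacing a would-be interior collision arc by a nearby collision-avoiding arc of strictly smaller action, using the two-dimensional geometry—shows $z_n$ has no collision on $(t_x,t_x+n)$ and therefore solves \eqref{eq_RNBP1} there, a collision being permitted only at the fixed endpoint $t_x$, exactly as in the statement.

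\emph{Uniform bounds and passage to the limit.} Two a priori action estimates drive everything. Cauchy--Schwarz gives the lower bound $\int_{t_x}^{t_x+n}\tfrac12|\dot z|^2\,dt \ge |\xi_n-x|^2/(2n)=h\,n$ for every $z\in\Lmd_n$, while the straight-line competitor gives $\mathcal A_n(z_n)\le h\,n + C\log n$. Subtracting, the excess kinetic action and the total potential satisfy $\int_{t_x}^{t_x+n}\big(\tfrac12|\dot z_n|^2-h\big)\,dt = O(\log n)$ and $\int_{t_x}^{t_x+n} U(z_n,t)\,dt = O(\log n)$. Combined with the energy identity $\frac{d}{dt}E_n=-\partial_t U(z_n,t)$ for $E_n=\tfrac12|\dot z_n|^2-U$ and the decay $|\partial_t U(z,t)|\lesssim |z|^{-2}$ at large $|z|$, these yield $C^1$ bounds for $z_n$ on each fixed interval $[t_x,t_x+m]$, uniformly in $n\ge m$. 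By Arzel\`a--Ascoli and a diagonal extraction, a subsequence converges in $C^1_{\mathrm{loc}}$ to a limit $\gm^+:[t_x,\infty)\to\cc$ solving \eqref{eq_RNBP1}, collision-free for $t>t_x$, with $\gm^+(t_x)=x$.

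\emph{Asymptotics: the main obstacle.} It remains to prove that $\gm^+$ is genuinely hyperbolic with the prescribed limits, and this is where the real work lies: the sublinear excess action $O(\log n)$ says the minimizers deviate little from the straight escape, and the task is to upgrade this into pointwise asymptotics for $\gm^+$ as $t\to\infty$. I expect three steps. First, linear escape $|\gm^+(t)|\ge c\,t$, obtained by transferring the action bounds to the limit and ruling out recurrence. Second, convergence of the energy: from $|\gm^+(t)|\ge c\,t$ one gets $\int_{t_x}^{\infty}|\partial_t U(\gm^+,t)|\,dt \lesssim \int t^{-2}\,dt<\infty$, so $E(t)$ converges, and the averaged identity $\tfrac1n\int \tfrac12|\dot z_n|^2 \to h$ pins the limit to $E_\infty=h$, whence $|\dot\gm^+(t)|^2=2E(t)+2U\to 2h$. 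Third—and hardest—the asymptotic direction: any sustained angular drift away from $\tht_+$ lengthens the path and raises the kinetic action above the minimal value $h\,n+O(\log n)$, a contradiction; quantifying this forces $\gm^+(t)/|\gm^+(t)|\to e^{i\tht_+}$, and the asymptotic radiality of hyperbolic motion then aligns the velocity, giving $\dot\gm^+(t)\to\sqrt{2h}\,e^{i\tht_+}$. The fixed initial condition $\gm^+(t_x)=x$ is built into $\Lmd_n$ and passes to the limit. The delicate points throughout are the simultaneous control of direction and energy from a single action bound, and the exclusion of oscillatory or parabolic alternatives for the limiting curve.
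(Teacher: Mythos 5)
Your architecture (minimize to far-away points on the ray in direction $\tht_+$, extract a local limit, then fight for the asymptotics) matches the paper's, but you replace its central device --- \emph{free-time} minimization of the $h$-augmented action $\A_h=\int L+h\,dt$ (Definition \ref{dfn:FreeTimeMinimizer}, Propositions \ref{prop:ExistFreeTimeMin}--\ref{prop:ExistFreeTimeMin-2}) --- by fixed-time minimization to the paced endpoints $\xi_n=x+\sqrt{2h}\,n\,e^{i\tht_+}$, and your two key steps fail exactly where that substitution matters. First, compactness: the bounds $hn\le \mathcal{A}_n(z_n)\le hn+C\log n$ do \emph{not} give $C^1$ bounds on a fixed window $[t_x,t_x+m]$ uniformly in $n$. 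Splitting the Cauchy--Schwarz lower bound at $t_x+m$ and writing $A=|z_n(t_x+m)-x|$, the kinetic action is at least $\frac{A^2}{2m}+\frac{(|\xi_n-x|-A)^2}{2(n-m)} = hn+\frac{n}{2m(n-m)}\bigl(A-\sqrt{2h}\,m\bigr)^2+O(1)$, so the $O(\log n)$ excess only yields $|z_n(t_x+m)|\le \sqrt{2h}\,m+O(\sqrt{m\log n})$, which diverges with $n$; the Arzel\`a--Ascoli/diagonal extraction is therefore unjustified as stated. The paper obtains uniform local bounds by a mechanism unavailable to you: restrictions of free-time minimizers are again free-time minimizers, the value functions are locally Lipschitz (Proposition \ref{prop:psih-Lip}), and the perturbed-Kepler radial monotonicity (Proposition \ref{prop_MonIncreasing}, used in Lemma \ref{lem:gmy-H1-Upperbound}) converts boundary control into the pointwise bounds of Lemma \ref{lem:BoundHlocNorm}.

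Second, and more seriously, the asymptotic speed. Since energy is not conserved in this time-periodic system, your endpoint pacing controls only the average $\frac1n\int_{t_x}^{t_x+n}\tfrac12|\dot z_n|^2\,dt\to h$, an average over the whole interval, most of which lies where $z_n$ is not close to the limit $\gm^+$ (convergence is only $C^1_{\text{loc}}$); so ``the averaged identity pins $E_\infty=h$'' is a non sequitur. The limit curve inherits from your scheme only fixed-time minimality between its own points, and fixed-time, fixed-endpoint minimizers exist with arbitrary asymptotic speed, so no energy information transfers to $\gm^+$. This is precisely why the paper minimizes with free time: the limit is then itself a free-time minimizer (Proposition \ref{prop:Lim-gm-yn}(c)), and Proposition \ref{prop: LimEnergy} forces $\lim_{t\to\infty}|\dot\gm(t)|=\sqrt{2h}$ by comparing $\gm|_{[s_1,s_2]}$ with a straight segment traversed in the \emph{optimally chosen} duration $\approx |\gm(s_2)-\gm(s_1)|/\sqrt{2h}$, exploiting that $v\mapsto v/2+h/v$ has its unique minimum at $v=\sqrt{2h}$. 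Your direction step is likewise only heuristic, whereas the paper's Proposition \ref{prop: LimitAngle} first establishes a uniform radial lower bound $\dot r_n\ge v_0$ and uniformly bounded angular momenta for the whole approximating family before identifying the limit angle with $\tht_+$. (Your collision discussion is fine in spirit: the paper's Appendix A supplies exactly the local deformation you invoke.) Without free-time minimization or an equivalent calibration mechanism, both your compactness and your energy conclusions fail as written.
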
 

\begin{rem}
For any $t \in \rr$, we say $\gm(t) \in \cc$ is \textbf{collision-free}, if $ \gm(t) \ne q_{i}(t)$,  for all $i \in \N$. 
\end{rem}

Next we will compare the above result with some previous results of the $N$-body problem. 
\begin{dfn}
Let $q|_{[t_0, \pm \infty)}$ be a solution of \eqref{eq:Nbody}. We say it is \textbf{a hyperbolic solution}, if 
$$ |q_i(t)-q_j(t)| \simeq O(|t|), \text{ as } t \to \pm \infty, \; \forall \{i \ne j \} \subset \N,$$
it is  \textbf{a parabolic solution}, if 
$$ |q_i(t)-q_j(t)| \simeq O(|t|^{\frac{2}{3}}), \text{ as } t \to \pm \infty, \; \forall \{i \ne j \} \subset \N,$$
and it is \textbf{a hyperbolic-elliptic solution}, if there is a $\K \subset \N$ with $2 \le |\K| < N$, such that 
$$ |q_i(t)-q_j(t)| \le C_1< \infty, \; \forall |t| \ge t_0 \text{ and } \forall \{ i \ne j \} \subset \K;$$
$$ |q_i(t)-q_j(t)| \simeq O(|t|), \text{ as } t \to \pm \infty, \; \forall (i, j)  \in \N \times (\N \setminus \K) \text{ with } i \ne j. $$
\end{dfn}
\begin{rem} \label{rem;hyp-ell}
Recall that the energy of a solution of the $N$-body is a constant. In particular the energy of a hyperbolic solution must be positive, and of a parabolic solution must be zero. In contrary, the energy of a hyperbolic-elliptic solution can be any real number. 
\end{rem}

In \cite{MV20} Maderna and Venturelli showed that for any $h>0$, starting from any initial configuration, there is a $h$-energy hyperbolic solution with any prescribed non-collision limiting shape at infinity (a different proof of this result was given in \cite{LYZ21} recently). A similar result for parabolic solutions was obtained by Maderna and Venturelli in \cite{MV09}, when the energy is zero and the limiting shapes at infinity are minimal central configurations. While hyperbolic and parabolic solutions only affect dynamics on positive and zero energy surfaces, hyperbolic-elliptic solutions will affect dynamics at any energy surface as there is no restriction on energies of these solutions. However so far it is unclear whether similar results hold for hyperbolic-elliptic solutions.  

Since hyperbolic solutions of the restricted $(N+1)$-body problem could be seen as certain limit of hyperbolic-elliptic solutions of the $(N+1)$-body problem with a single escaping mass, Theorem \ref{thm:HyperSol-1} may also help us understand hyperbolic-elliptic solutions of the unrestricted problem. 


Under different coordinates, solutions mentioned above form stable manifolds (when time goes to positive infinity) or unstable manifolds (when time goes to negative infinity) of different invariant subsets at infinity parameterized by the limiting energy and shape or direction (see \cite{MG73}, \cite{Rob84} and \cite{DMMY20}). The geometric and topological properties of these manifolds have great influences on the global dynamics. A basic question is: \emph{with given limiting energies and shapes or directions at infinity corresponding to negative and positive infinite time, does the corresponding stable and unstable manifolds intersect?} The intersection of these stable and unstable manifolds is equivalent to the existence of solutions with the given limiting energies and shapes as time goes to negative and positive infinity.  

For the $N$-body problem, this is a difficult problem with very few results. Up to our knowledge there is only some partial result in \cite{DMMY20} for hyperbolic solutions. Here we obtain the following result for the restricted $(N+1)$-body problem.
\begin{thm}
\label{thm:HyperScatter} Under the assumption \eqref{eq: CollFree-Per}, given arbitrarily an energy constant $h>0$ and a pair of asymptotic directions $\tht_{\pm} \in [0, 2\pi)$. There is a bi-hyperbolic solution $\gm: \rr \to \cc$ of \eqref{eq_RNBP1} satisfying
$$  \lim_{t \to \pm \infty} \gm(t) / |\gm(t)| = e^{i\tht_{\pm} (\text{mod } 2\pi)}, \;\; \lim_{ t \to \pm \infty} \dot{\gm}(t) = \pm \sqrt{2h} e^{i \tht_{\pm} (\text{mod } 2\pi)}. $$
Moreover $\gm|_{\rr}$ has at most one collision. 
\end{thm}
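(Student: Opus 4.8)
The plan is to build $\gm$ by concatenating a backward and a forward half-orbit supplied by Theorem~\ref{thm:HyperSol-1}. Fix the crossing time $t_x = 0$. For a crossing point $x \in \cc$, Theorem~\ref{thm:HyperSol-1} (the $+$ branch) gives a forward half $\gm^+_x \colon [0, +\infty) \to \cc$ with $\gm^+_x(0) = x$, asymptotic direction $\tht_+$ and limiting speed $\sqrt{2h}$, while the $-$ branch gives a backward half $\gm^-_x \colon (-\infty, 0] \to \cc$ with $\gm^-_x(0) = x$ and asymptotic direction $\tht_-$. The union $\gm^-_x \cup \gm^+_x$ is continuous, but it is a genuine solution of \eqref{eq_RNBP1} on all of $\rr$ only when the velocities agree at the crossing, i.e. $\dot{\gm}^+_x(0) = \dot{\gm}^-_x(0)$: granting this, $U$ is smooth away from the single point $(0,x)$, so uniqueness for \eqref{eq_RNBP1} forces $\gm^-_x$ and $\gm^+_x$ to be two pieces of one orbit. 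Because the primaries move, \eqref{eq_RNBP1} is non-autonomous and the instantaneous energy is not conserved along the halves; hence $\dot{\gm}^+_x(0) = \dot{\gm}^-_x(0)$ is a genuine two-dimensional condition and cannot be reduced to matching speeds. Finally, each half is collision-free for $t \neq 0$ by Theorem~\ref{thm:HyperSol-1}, so the only collision the glued orbit can have is at $x$ at time $0$; this is precisely the ``at most one collision'' asserted, and if it occurs it is a regularizable binary collision (Section~\ref{sec:intro}).

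It thus remains to find a crossing point at which the velocities match. Set $v^{\pm}(x) := \dot{\gm}^{\pm}_x(0)$ and $V(x) := v^+(x) - v^-(x) \in \cc$; I seek a zero of $V$. The approach is a topological shooting/degree argument in $x$ over a large disk $\bar{D}_R = \{|x| \le R\}$. The decisive input is the behaviour of $V$ on $\{|x| = R\}$ for $R$ large, where the relevant half-orbits run far from the primaries and are nearly Kepler hyperbolae of energy $h$. Selecting for each far $x$ the half-orbits that genuinely swing past the primaries — the forward orbit meeting $x$ on its incoming branch, the backward orbit meeting $x$ on its outgoing branch — gives $v^+(x) \approx -\sqrt{2h}\, x/|x|$ and $v^-(x) \approx +\sqrt{2h}\, x/|x|$, so $V(x) \approx -2\sqrt{2h}\, x/|x|$ has nonzero winding number as $x$ traverses $\{|x|=R\}$. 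Degree theory then yields a zero $x_* \in D_R$ of $V$, and $\gm := \gm^-_{x_*} \cup \gm^+_{x_*}$ is the desired bi-hyperbolic solution.

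\emph{The main obstacle} is to turn this heuristic into a rigorous degree computation. Theorem~\ref{thm:HyperSol-1} asserts existence but not uniqueness of the half-orbits, so a priori $x \mapsto v^{\pm}(x)$ are correspondences, not continuous maps; one must either produce a continuous selection over $\bar{D}_R$ realizing the prescribed boundary branches, or — more robustly — replace $V$ by the algebraic intersection number of the time-$0$ stable and unstable sets $W^{\pm} = \{(x, v^{\pm}(x))\}$ in phase space and evaluate it from their linking at infinity. Care is needed near the directions $\tht_{\pm}$, where the relevant Kepler branch switches and the boundary map degenerates, and at the points of $\bar{D}_R$ where a half-orbit has its collision at $t=0$, since there $v^{\pm}$ is singular; the latter should be handled through the regularized flow, for which $v^{\pm}$ extends continuously, so that the zero of $V$ persists and produces at most one collision. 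A variational alternative that sidesteps the selection problem is to connect the far points $r_n e^{i\tht_-}$ and $r_n e^{i\tht_+}$ by action-minimizing solutions and extract a limit; this instead demands a priori compactness of the central arcs, control of the limiting energy and directions, and a Marchal-type lemma for \eqref{eq_RNBP1} to keep the limit's collisions to at most one.
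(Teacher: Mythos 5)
Your shooting scheme has a genuine gap at its decisive step, and you have correctly located it yourself: the degree computation on $\{|x|=R\}$ is not supported by anything in the paper. Theorem \ref{thm:HyperSol-1} asserts, for each $x$, the existence of \emph{some} forward solution escaping along $\tht_+$; it gives no control whatsoever on the initial velocity $\dot{\gm}^+_x(0)$, no uniqueness, and in particular no mechanism for selecting the ``incoming branch'' orbit that first swings past the primaries. The half-orbits actually constructed in Section \ref{sec_HypSol} are free-time minimizers, which for $|x|$ large tend to leave essentially radially toward $\tht_+$, so the natural selection would give $v^+(x)\approx \sqrt{2h}\,e^{i\tht_+}$ (independent of $x$) rather than $-\sqrt{2h}\,x/|x|$; the boundary winding you need is thus not merely unproven but likely false for any selection available from the theorem. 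Since $v^{\pm}$ are multivalued with no continuity, the map $V$ is not defined, and the entire degree argument remains a heuristic. Gluing by matching velocities is also strictly stronger than what the statement requires: the theorem allows one interior collision, which a velocity-matching construction cannot produce, whereas the paper's solution may genuinely have one (regularizable) collision.

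The ``variational alternative'' you mention at the end is in essence the paper's actual proof, but the key idea you are missing is the topological constraint that makes it work: the minimization in Section \ref{sec_HypScat} is carried out not over all paths joining $r_ne^{i\tht_-}$ to $r_ne^{i\tht_+}$, but over the class of paths \emph{tied} with two chosen primaries $m_{i_0}, m_{i_1}$ (Definition \ref{dfn:sx-sy-tied}, a braid-type homotopy condition relative to their trajectories). This single device does all the work you list as missing: it forces the minimizer to enter the ball $B_{R_0}$ and hence rules out the trivial path staying near infinity (Lemma \ref{lem:|x-y|ge2} gives the lower bound $\int|\dot\gm|\,dt\ge 2$ used for compactness); and, combined with the two local deformations of opposite winding near an isolated collision (Proposition \ref{prop:LocalDeform}, whose deformed paths satisfy \eqref{eq;Arg-diff}, so one of them always remains in the tie class, cf.\ Lemma \ref{lem;Coll-imply-Tie}), it plays the role of your ``Marchal-type lemma'': a constrained minimizer can have at most one collision, necessarily with $m_{i_0}$ or $m_{i_1}$, and that collision is regularizable (Proposition \ref{prop:Minimizer-Gamma_ij}). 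The asymptotic directions and the limiting velocity $\pm\sqrt{2h}\,e^{i\tht_\pm}$ are then obtained exactly as in Section \ref{sec_HypSol} via the perturbed-Kepler estimates of Appendix B, after letting the endpoints go to infinity along $\Lmd_{\tht_-}^{\tht_+}$. To repair your write-up you would need to abandon the degree argument and supply this tie constraint (or an equivalent topological selection) in the variational route.
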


For the $N$-center problem, which is a simplified model of the restricted $(N+1)$-problem with the primaries fixed, there are much more results. In \cite{Knauf92} and \cite{Knauf02}, (bi)-hyperbolic solutions of the planar and spatial $N$-center problem were studied correspondingly. More recently using variational methods, \cite{BDT17} and \cite{BBD18} studied bi-parabolic solutions and bi-hyperbolic solutions for the spatial $N$-center problem correspondingly, and \cite{BDP18} studied bi-parabolic solutions in the planar $N$-center problem.

Our proofs of Theorem \ref{thm:HyperSol-1} and Theorem \ref{thm:HyperScatter} are variational in nature. Since the work of Chenciner and Montgomery \cite{CM00}, variational methods have had a lot of success in the $N$-body problem, first in proving periodic solutions (see \cite{FT04}, \cite{Ch08}, \cite{Yu21} and the references within), and recently in proving parabolic and hyperbolic solutions (see \cite{MV09} and \cite{MV20} given above). The success is similar in the $N$-center problem. Besides references mentioned above, periodic solutions were studied in \cite{ST12}, \cite{Y15a}, \cite{Castelli17} and \cite{ChenYu16}, while homoclinic and heteroclinic solutions were studied in \cite{ChenYu20}.    

Compare to the $N$-body problem and the $N$-center problem, very few results were obtained using variational methods in the restricted $(N+1)$-body problem. Up to our knowledge, so far there are only a few results for periodic solutions, see \cite{Terr00}, \cite{KS22} and \cite{ChenHsu20}, and no result seems to be available for parabolic or hyperbolic solutions. 

The main difficulties in proving (bi)-hyperbolic solutions of the restricted $(N+1)$-body problems using variational methods are as below. First, energy is not conserved in this case as the system is time periodic, and the energy conservation plays an important role in controlling the asymptotic behaviors at infinity. Second, to obtain a bi-hyperbolic solution as a minimizer, we need to find some proper topological constraint. Without it the minimizer is likely to be a trivial solution staying at infinity all the time. On the other hand, if the topological constraint was not chosen properly then the corresponding minimizer may be a collision-ejection solution that is not regularizable, as indicated by Gordon's result \cite{Gordon77} for the Kepler problem. 

Our paper is organized as following: Section \ref{sec: minimizers} contains results that are essential for applying action minimization methods to the restricted $(N+1)$-body problem; Section \ref{sec_HypSol} and Section \ref{sec_HypScat} are devoted to the proofs of Theorem \ref{thm:HyperSol-1} and Theorem \ref{thm:HyperScatter} correspondingly; Appendix A shows how to lower the action of a collision-ejection solution near an isolated collision by local deformation; Appendix B contains proofs of two propositions essential to control the asymptotic behavior of the massless body at infinity.

\textbf{Notations.} The following notations will be used throughout the paper.
\begin{enumerate}
	\item for any $t \in \rr$, $\{t\} \in [0, 1)$ represents the fractional part of $t$. 
\item $C_i$, $i =1, 2, \cdots$, are finite positive constants that vary in different properties and proofs.
\item For any $x \in \cc$ and $\dl >0$, $B_{\dl}(x) = \{y \in \cc: |y -x| \le \dl\}$ and $B_{\dl}= \{y \in \cc: |y| \le \dl \}.$
\item $H^1([t_1, t_2], \cc)$ denotes the set of all Sobolev paths defined on $[t_1, t_2]$.
\item For any $x, y \in \cc$ and $t_1 < t_2 \in \rr$, 
$$ H^1_{t_1, t_2}(x, y) = \{ \gm \in H^1([t_1, t_2], \cc): \; \gm(t_1) = x, \gm(t_2) = y \}. $$ 
\item For any $\gm \in H^1_{t_1, t_2}(x, y)$, $\Delta(\gm)=\{t \in [t_1, t_2]: \gm(t) = q_i(t), \text{ for some } i \in \N \}.$
\item For any $\gm \in H^1([t_1, t_2], \cc)$ and $\xi \in H^1([\tau_1, \tau_2], \cc)$ with $\gm(t_2) =\xi(\tau_1)$ and $\{t_2\}=\{\tau_1\}$, $(\gm*\xi)|_{[t_1, t_2 +\tau_2 -\tau_1]} =\gm|_{[t_1, t_2]}*\xi|_{[\tau_1, \tau_2]}$ represents the concatenation of $\gm$ and $\xi$ as 
$$ (\gm * \xi)(t) = \begin{cases}
\gm(t), \; & \text{ when } t \in [t_1, t_2];\\
\xi(t-t_2 +\tau_1), \; & \text{ when } t \in [t_2, t_2 + \tau_2 -\tau_1].
\end{cases} 
$$
\end{enumerate}

\section{Properties of action minimizers}  \label{sec: minimizers}

In this section we prove some basic results that will be needed when applying action minimization methods to the restricted $(N+1)$-body problem. Some ideas are similarly to those used in the $N$-body problem.

For any constant $h$, a collision-free critical point of the action functional 
\begin{equation}
\label{eq:LagAction}  \A_h(\gm; t_1, t_2) = \int_{t_1}^{t_2} L(\gm(t), \dot{\gm}(t), t) +h \,dt; \;\; L(\gm, \dot{\gm}, t) =\ey |\gmd(t)|^2 + U(\gm(t), t),  
\end{equation} 
is a solution of the restricted problem \eqref{eq_RNBP1}.
\begin{dfn} We say $\gm \in H^1_{t_1, t_2}(x, y)$ is \emph{a fixed-end local minimizer} of $\A_h$, if  $\exists \ep>0$, such that
$$ \A_h(\gm; t_1, t_2) \le \A_h(\xi; t_1, t_2), \;\; \forall \xi \in H^1_{t_1, t_2}(x, y) \text{ satisfying } \| \xi - \gm \|_{H^1} \le \ep. $$
We say  $\gm \in H^1_{t_1, t_2}(x,y)$ \emph{a fixed-end minimizer} of $\A_h$, if  
$$ \A_h(\gm; t_1, t_2)= \psi_{h}(x, y; t_1, t_2) := \inf \{ \A_{h}(\gm): \; \gm \in H^1_{t_1, t_2}(x, y)\}.$$
\end{dfn}

For simplicity, a fixed-end (local) minimizer will be called a (local) minimizer in the following.  

\begin{prop}
\label{prop:IsolatedColl} If $\gm \in H^1([t_1, t_2], \cc)$ is a local minimizer of $\A_h$ with finite action value, then its set of collision moments $\Delta(\gm)$ is an isolated subset in $[t_1, t_2]$. 
\end{prop}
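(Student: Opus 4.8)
The plan is to argue by contradiction: assuming the closed set $\Delta(\gm)$ has an accumulation point $t^{*}\in[t_1,t_2]$, I would produce a competitor that is $H^1$-close to $\gm$ but has strictly smaller action, contradicting local minimality. First I would record the elementary reductions. Since the action is finite and $U\ge 0$ with $h$ constant, $\int_{t_1}^{t_2}U(\gm(t),t)\,dt<\infty$ and $\dot{\gm}\in L^2$; hence $\gm(t)\ne q_i(t)$ for a.e.\ $t$, so $\Delta(\gm)$ has Lebesgue measure zero, and by continuity of $\gm$ and the $q_i$ it is closed. On the open dense collision-free set $[t_1,t_2]\setminus\Delta(\gm)$, $\gm$ is a classical solution of \eqref{eq_RNBP1}. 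If now $t^{*}$ is an accumulation point of $\Delta(\gm)$, then because the primaries stay $\rho_0$-separated and $\gm,q_i$ are continuous, there is a single index $i$ and a $\dl>0$ such that every collision in $[t^{*}-\dl,t^{*}+\dl]$ is with $q_i$ and $|\gm(t)-q_i(t)|$ is as small as we wish there; writing $w=\gm-q_i$, the relative coordinate solves the perturbed Kepler equation $\ddot w=-m_i w/|w|^3+g(t)$ with $g(t)=-\sum_{j\ne i}m_j(\gm-q_j)/|\gm-q_j|^3-\ddot q_i$ bounded on $[t^{*}-\dl,t^{*}+\dl]$.

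Next I would extract the local structure at a collision. With $I=\ey|w|^2$ one computes $\ddot I=|\dot w|^2-m_i/|w|+\langle w,g\rangle$, while the relative energy $\mathcal E=\ey|\dot w|^2-m_i/|w|$ satisfies $\dot{\mathcal E}=\langle\dot w,g\rangle$. Since $|\dot w|\simeq|w|^{-1/2}$ is integrable in time near a collision, $\mathcal E$ stays bounded, which yields the Sundman--Sperling estimate $|w(t)|\sim(9m_i/2)^{1/3}|t-c|^{2/3}$ and a limiting direction as $t$ tends to any collision time $c$ along a collision-free arc; in particular $\ddot I>0$ once $|w|$ is small, so $|w|$ is strictly monotone near $c$ on any collision-free arc having $c$ as an endpoint. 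Thus any collision reached by a collision-free arc is automatically isolated from that side, which isolates the ordinary collisions and reduces the whole problem to excluding a two-sided pile-up at $t^{*}$.

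The crux, and the main obstacle, is to rule out this accumulation. Using that $\Delta(\gm)$ has measure zero I would choose collision-free times $\alpha<\beta$ with $t^{*}-\dl<\alpha<\beta<t^{*}+\dl$ enclosing at least two collisions, taken so close to $t^{*}$ that $\gm|_{[\alpha,\beta]}$ lies in an arbitrarily small ball about $q_i$. Let $\xi^{*}$ be a fixed-end minimizer of $\A_h$ on $[\alpha,\beta]$ with $\xi^{*}(\alpha)=\gm(\alpha)$, $\xi^{*}(\beta)=\gm(\beta)$ (it exists by the direct method). An a priori bound confines $\xi^{*}$ to a comparable small ball, so the kinetic and potential actions of both paths are small and $\|\xi^{*}-\gm\|_{H^1[\alpha,\beta]}\to 0$ as $\beta-\alpha\to 0$. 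Extending $\xi^{*}$ by $\gm$ outside $[\alpha,\beta]$ produces a global competitor inside the $\ep$-ball of local minimality, so $\A_h(\gm;\alpha,\beta)\le\A_h(\xi^{*};\alpha,\beta)$; as $\xi^{*}$ is the minimizer, $\gm|_{[\alpha,\beta]}$ is itself a fixed-end minimizer carrying (at least) two distinct collisions with $q_i$.

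It remains to show that this is impossible, and this is the delicate step. Taking the two collision times $c_1<c_2$ in $(\alpha,\beta)$, I would apply a Marchal-type averaging/local deformation at one of them — which the other centres and the bounded perturbation $g$ affect only at lower order — to decrease the action strictly, contradicting minimality. The subtlety is exactly the strict decrease: a single Kepler collision is action-degenerate (Gordon's phenomenon), so the deformation must genuinely exploit the presence of the \emph{second} collision to gain, and it is precisely this degeneracy that forces the conclusion to be isolation of $\Delta(\gm)$ rather than its absence. Once the accumulation is excluded, $\Delta(\gm)$ is a closed subset of the compact interval $[t_1,t_2]$ with no accumulation point, hence finite, and in particular an isolated subset.
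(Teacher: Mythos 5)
There is a genuine gap at precisely the point you flag as ``the delicate step.'' Your exclusion of the accumulation scenario rests entirely on a Marchal-type averaging/deformation producing a \emph{strict} action decrease for a fixed-end minimizer carrying two collisions, and this is asserted, not proved; acknowledging Gordon's degeneracy and saying the deformation ``must genuinely exploit the second collision'' does not substitute for an argument. Worse, the deformation technology available here (the paper's Proposition \ref{prop:LocalDeform}, or any variant of Marchal's lemma adapted to this setting) is built on the Sperling asymptotics of Proposition \ref{prop:Asymptotic}, which are established for an \emph{isolated} collision of a collision-ejection solution; at a putative accumulation point of $\Delta(\gm)$ the collisions you would deform need not be isolated, so the asymptotic expansions you would need are exactly what is in question. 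There is also a minor circularity earlier: you justify boundedness of the relative energy $\mathcal{E}$ by ``$|\dot w|\simeq |w|^{-1/2}$ is integrable,'' but that Sundman estimate itself presupposes bounded energy. The paper's Lemma \ref{lem:E_i0} avoids this: finite action gives $\dot w\in L^2$ on the interval, the equation gives $|\dot{\mathcal E}|\le C|\dot w|$ away from collisions, so $\dot{\mathcal E}\in L^1$ by Cauchy--Schwarz and $\mathcal E\in W^{1,1}\subset C^0$ extends continuously through the collision set --- no asymptotics required.

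The frustrating part is that you already hold the tool that finishes the proof, and it is exactly how the paper concludes. If $t^*$ is an accumulation point of $\Delta(\gm)$, then since $\Delta(\gm)$ is closed of measure zero, arbitrarily close to $t^*$ there are two collision times $c_1<c_2$ bounding a collision-free arc (endpoints of a complementary interval). On such a short arc, $I=|w|^2$ vanishes at both endpoints, is positive inside, and --- by your own computation, with $\mathcal E$ bounded and $|w|$ small --- satisfies $\ddot I\ge 2m_{i}I^{-1/2}-C>0$. A function that is strictly convex on $(c_1,c_2)$, vanishes at $c_1$ and $c_2$, and is positive inside is impossible. This single observation rules out one-sided and two-sided accumulation simultaneously, with no extraction of a fixed-end minimizer on $[\alpha,\beta]$ and no deformation lemma. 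Your intermediate reduction (``isolated from the side of any collision-free arc, so only a two-sided pile-up remains'') is therefore both an underuse of the convexity --- applied to arcs with collisions at \emph{both} endpoints it closes the argument outright --- and the reason you were driven to the unproved deformation step.
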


\begin{proof} 
As for any $t \in \Delta(\gm)$, $U(\gm(t), t) = \infty$, the Lebesgues measure of $\Delta(\gm)$ must be zero. Therefore $[t_1, t_2] \setminus \Delta(\gm)$ is the union of some (at most countable) open intervals. 

Choose a $t_0 \in \Delta(\gm) \cap (t_1, t_2)$ (if $t_1$ or $t_2 \in \Delta(\gm)$, the proof is similar and will be omitted). Then $\gm(t_0) = q_{i_0}(t_0)$, for some  $i_0 \in \N$. By \eqref{eq: CollFree-Per}, we can find a $\dl>0$ small enough, such that
\begin{equation}
\label{eq:gm-qi}  \forall t \in [t_0 -\dl, t_0 +\dl], \; \begin{cases}  |\gm(t) - q_i(t)| \le \rho_0/2, \; & \text{if } i = i_0; \\
 |\gm(t) - q_i(t)| \ge \rho_0/2, \; & \text{if } i \in \N  \setminus \{i_0\}. 
\end{cases} 
\end{equation}

Notice that for any $[\tau_1, \tau_2] \subset [t_0 -\dl, t_0+\dl] \setminus \Delta(\gm)$, $\gm|_{[\tau_1, \tau_2]}$ is a smooth solution of \eqref{eq_RNBP1}, as it is a collision-free local minimizer of $\A_h$. Then $E_{i_0} \in C^{1}([t_0-\dl, t_0+\dl] \setminus \Delta(\gm), \rr)$, where 
\begin{equation}
\label{eq:E_i0} E_{i_0}(t) = \ey |\dot{\gm}(t)-\dot{q}_{i_0}(t)|^2 - \frac{m_{i_0}}{|\gm(t) - q_{i_0}(t)|}. 
\end{equation}   
Moreover we have the following lemma, whose proof will be postponed. 
\begin{lem}  \label{lem:E_i0}
$E_{i_0}(t)$ can be extended continuously to $[t_0 -\dl, t_0+\dl]$. 
\end{lem} 
By a contradiction argument, let's assume $t_0$ is not isolated in $\Delta(\gm)$. Set $I(t) = |\gm(t) - q_{i_0}(t)|^2$, for any $t$, the we can find a sequence of times  $\{ \tau_n (\ne t_0)\}_{n \in \nn}$ satisfying  
\begin{equation*}
\label{eq:zt_tau_n}  \lim_{n \to \infty} \tau_n = t_0 \; \text{ and } \; I(\tau_n)=0, \; \forall n \in \nn. 
\end{equation*}
We may further assume $\{\tau_n\}_{n \in \nn}$ is monotone and $(\tau_{2k-1}, \tau_{2k}) \cap \Delta(\gm) = \emptyset$, $\forall k \in \nn$. Then for each $k \in \nn$, there is a $s_k \in (\tau_{2k-1}, \tau_{2k})$, such that
\begin{equation}
\label{eq:dot-zt=0}  \dot{I}(s_k) = 0, \; \forall k \in \nn. 
\end{equation}
By \eqref{eq:gm-qi} and Lemma \ref{lem:E_i0}, for any $t \in [t_0-\dl, t_0+\dl] \setminus \Delta(\gm)$, there is a $C_1$, such that  
\begin{equation}
\label{eq:Lag-Jacobi} \begin{aligned}
\ddot{I}(t) & = 2 \frac{m_{i_0}}{|\gm(t) - q_{i_0}(t)|} + 4E_{i_0}(t) - 2 \langle \gm(t) - q_{i_0}(t), \sum_{i \ne i_0} \frac{m_i(\gm(t) - q_i(t))}{|\gm(t) - q_i(t)|^3} +\ddot{q}_{i_0}(t) \rangle \\
& \ge  2 m_{i_0}I^{-\ey}(t) - C_1.
\end{aligned}
\end{equation}
Since $I(t_0)=0$, we can find a $\dl_0 \in (0, \dl)$, such that 
\begin{equation} \label{eq;SecDeri>0}
 \ddot{I}(t) > 0, \; \forall t  \in [t_0-\dl_0, t_0+\dl_0] \setminus \Delta(\gm) 
 \end{equation} 
As $\lim_{k \to \infty} s_k =t_0$, we have $\dot{I}(s_k) >0$, for $k$ large enough,  which is a contradiction to \eqref{eq:dot-zt=0}.   
\end{proof}

\begin{proof}[Proof of Lemma \ref{lem:E_i0}]
Since $\max\{|\ddot{q}_{i_0}(t)|, |\dot{q}_{i_0}(t)|: \; t \in [t_1, t_2] \} < C_2$, for some constant $C_2$, 
\begin{equation*} \label{eq;Int-dot-gm-qi0}
\begin{aligned}
\int_{t_0-\dl}^{t_0+\dl} |\dot{\gm} - \dot{q}_{i_0}|^2 \,dt & \le \int_{t_0-\dl}^{t_0 +\dl} |\dot{\gm}|^2 \,dt + 2 C_2\int_{t_0 -\dl}^{t_0 +\dl} |\dot{\gm}| \,dt + 2 \dl C_2^2 \\
& \le \int_{t_0-\dl}^{t_0 +\dl} |\dot{\gm}|^2 \,dt + 2 C_2 \left( 2\dl \int_{t_0 -\dl}^{t_0 +\dl} |\dot{\gm}|^2 \,dt\right)^{\ey} + 2 \dl C_2^2 \\
& \le 2\A(\gm; t_1, t_2) + 4C_2 \sqrt{\dl \A(\gm; t_1, t_2)} + 2 \dl C_2^2   < \infty. 
\end{aligned} 
 \end{equation*} 
By the definition of $E_{i_0}(t)$, 
\begin{equation*}
 \label{eq;int-Ei0} \int_{t_0 -\dl}^{t_0 +\dl} |E_{i_0}(t)| \,dt \le \ey \int_{t_0 -\dl}^{t_0+\dl} |\dot{\gm}- \dot{q}_{i_0}|^2 \,dt + \A(\gm; t_1, t_2) < \infty.
 \end{equation*} 

Meanwhile for any $t \in [t_1, t_2] \setminus \Delta(\gm)$, 
$$  
\dot{E}_{i_0} = \langle \ddot{\gm} - \ddot{q}_{i_0}, \dot{\gm}- \dot{q}_{i_0} \rangle + \langle \frac{m_{i_0}(\gm -q_{i_0})}{|\gm - q_{i_0}|^3}, \dot{\gm} - \dot{q}_{i_0} \rangle = - \langle \ddot{q}_{i_0} + \sum_{i \ne i_0} \frac{m_i(\gm - q_i)}{|\gm-q_i|^3}, \dot{\gm} - \dot{q}_{i_0}\rangle. 
$$
By \eqref{eq:gm-qi}, $|\dot{E}_{i_0}| \le C_3|\dot{\gm}- \dot{q}_{i_0}|$, for some finite $C_3$. Then the following implies $\dot{E}_{i_0} \in L^1([t_0-\dl, t_0 +\dl], \rr)$
$$ \int_{t_0-\dl}^{t_0 +\dl} |\dot{\gm} -\dot{q}_{i_0}| \,dt \le \left( 2 \dl \int_{t_0-\dl}^{t_0+\dl} |\dot{\gm} -\dot{q}_{i_0}|^2 \,dt \right)^{\ey} $$
As a result, $E_{i_0} \in W^{1,1}([t_0-\dl, t_0 +\dl], \rr) \subset C([t_0 -\dl, t_0 +\dl], \rr).$
\end{proof}

By the above result, let's assume $\gm \in H^1([t_0 -\dl, t_0 +\dl], \rr)$ is \textbf{a collision-ejection solution} of \eqref{eq_RNBP1}, i.e., $\gm(t)$ is collision-free and satisfies \eqref{eq_RNBP1}, $\forall t \ne t_0$, and $\gm(t_0) = q_{i_0}(t_0)$, for some $ i_0 \in \N$. Then we have the following two propositions, whose proofs can be found in Appendix A.

\begin{prop}
\label{prop:Asymptotic} There exist $\sigma_{\pm} \in \cc$ and $f^{\pm} \in C^0([t_0 -\dl, t_0 +\dl], \cc) \cap C^1([t_0 -\dl, t_0+\dl]\setminus \{t_0\}, \cc)$ with $f^{\pm}(t) \simeq o(|t-t_0|^{2/3})$ and $\dot{f}^{\pm}(t) \simeq o(|t-t_0|^{-1/3})$, when $t \to t_0$, such that the following hold. 
 
\begin{enumerate}
\item[(a).] $\sigma_{\pm} = \lim_{t \to t_0^{\pm}} \frac{\gm(t) -q_{i_0}(t)}{|\gm(t) -q_{i_0}(t)|}.$ 
\item[(b).] When $t \in (t_0, t_0 + \dl]$, 
$$ \begin{aligned}
\gm(t) - q_{i_0}(t) & =  (\frac{9}{2}m_{i_0})^{\frac{1}{3}}(t-t_0)^{\frac{2}{3}} \sigma_{+} + f^+(t); \\
\dot{\gm}(t) - \dot{q}_{i_0}(t) & = \se (\frac{9}{2}m_{i_0})^{\frac{1}{3}}(t-t_0)^{-\sy}\sigma_+ + \dot{f}^+(t).
\end{aligned}$$
\item[(c).] When $t \in [t_0 -\dl, t_0)$, 
$$ \begin{aligned}
\gm(t) - q_{i_0}(t) & =  (\frac{9}{2}m_{i_0})^{\frac{1}{3}}(t-t_0)^{\frac{2}{3}} \sigma_{-} + f^-(t);\\
 \dot{\gm}(t) - \dot{q}_{i_0}(t)  & = \se (\frac{9}{2}m_{i_0})^{\frac{1}{3}}(t-t_0)^{-\sy}\sigma_- + \dot{f}^-(t).
\end{aligned}$$
\end{enumerate}
\end{prop}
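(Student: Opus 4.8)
The plan is to pass to the relative coordinate $w(t) = \gm(t) - q_{i_0}(t)$ and treat its equation of motion as a Kepler problem with a bounded, continuous perturbation. Subtracting $\ddot{q}_{i_0}$ from \eqref{eq_RNBP1} and isolating the $i_0$-term gives
\[
\ddot{w}(t) = -\frac{m_{i_0}\, w(t)}{|w(t)|^3} + g(t), \qquad g(t) = -\sum_{i\ne i_0}\frac{m_i(\gm(t)-q_i(t))}{|\gm(t)-q_i(t)|^3} - \ddot{q}_{i_0}(t).
\]
By \eqref{eq:gm-qi} every summand in $g$ has denominator bounded below by $(\rho_0/2)^3$, and $\ddot{q}_{i_0}$ is bounded by periodicity, so $|g| \le C$ on $[t_0-\dl,t_0+\dl]$. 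I would reuse the objects already produced in the proof of Proposition \ref{prop:IsolatedColl}: the energy $E_{i_0}$, continuous on $[t_0-\dl,t_0+\dl]$ by Lemma \ref{lem:E_i0}; the moment $I=|w|^2$; and the convexity $\ddot{I}>0$ near $t_0$ from that proof. Convexity together with $I(t_0)=0$ forces $I$ to be strictly monotone on each side of $t_0$, so $r:=|w|$ is monotone on $(t_0,t_0+\dl_0]$ and on $[t_0-\dl_0,t_0)$; this monotonicity is what makes the one-sided asymptotic analysis well posed.

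First I would pin down the radial rate. Eliminating $|\dot w|^2$ from $\ddot I = 2|\dot w|^2 + 2\langle w,\ddot w\rangle$ via $E_{i_0}=\ey|\dot w|^2 - m_{i_0}/r$ yields $\ddot I = 4E_{i_0} + 2m_{i_0}I^{-1/2} + 2\langle w,g\rangle$, where the middle term blows up as $t\to t_0$ while the other two stay bounded, so $\ddot I = 2m_{i_0}I^{-1/2}(1+o(1))$. The energy relation also gives $|\dot w| = \sqrt{2m_{i_0}/r}\,(1+o(1))$, whence $r|\dot w| = O(r^{1/2})\to 0$ and $\dot I = 2\langle w,\dot w\rangle\to 0$ (no sharp rate needed yet). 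Multiplying $\ddot I\sim 2m_{i_0}I^{-1/2}$ by $\dot I$ and integrating from $t_0$, using $\dot I\to 0$ and $I\to 0$, gives $\ey\dot I^2 = 4m_{i_0}I^{1/2}(1+o(1))$; a second integration of the resulting separable relation yields $I(t) = (\tfrac{9}{2}m_{i_0})^{2/3}|t-t_0|^{4/3}(1+o(1))$, i.e. $r(t) = (\tfrac{9}{2}m_{i_0})^{1/3}|t-t_0|^{2/3}(1+o(1))$.

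Next I would show the direction converges, defining $\sigma_{\pm}$ and proving (a). In the plane the scalar angular momentum $C=w\times\dot w$ satisfies $\dot C = w\times\ddot w = w\times g$, so $|\dot C|\le r|g| = O(|t-t_0|^{2/3})$, which is integrable; hence $C$ has a limit at $t_0$, necessarily $0$ since $|C|\le r|\dot w| = O(|t-t_0|^{1/3})$. Integrating $|\dot C|$ from $t_0$ then gives $|C(t)| = O(|t-t_0|^{5/3})$. Writing $u=w/r$, one has $|\dot u| = |C|/r^2 = O(|t-t_0|^{1/3})$, again integrable, so $u(t)$ converges to a unit vector $\sigma_{\pm}$ as $t\to t_0^{\pm}$, which is statement (a).

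Finally I would assemble the refined expansions. For the position I decompose
\[
w - (\tfrac{9}{2}m_{i_0})^{1/3}(t-t_0)^{2/3}\sigma_{\pm} = \big(r - (\tfrac{9}{2}m_{i_0})^{1/3}(t-t_0)^{2/3}\big)u + (\tfrac{9}{2}m_{i_0})^{1/3}(t-t_0)^{2/3}(u-\sigma_{\pm});
\]
the first bracket is $o(|t-t_0|^{2/3})$ by the radial rate and the second by $u\to\sigma_{\pm}$, so setting $f^{\pm}$ equal to the left-hand side gives (b)/(c) for the position with $f^{\pm}=o(|t-t_0|^{2/3})$. For the velocity I use $|\dot w|^2 = \dot r^2 + C^2/r^2$ with $C^2/r^2 = O(|t-t_0|^{2})$ and the energy relation to get $\dot r^2 = 2m_{i_0}/r\,(1+o(1))$; matching the sign of $\dot r$ to that of $(t-t_0)$, legitimate by the monotonicity of $r$, yields $\dot r = \se(\tfrac{9}{2}m_{i_0})^{1/3}(t-t_0)^{-1/3}(1+o(1))$. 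Combined with $\dot w = \dot r\,u + r\,\dot u$ and $r|\dot u| = O(|t-t_0|) = o(|t-t_0|^{-1/3})$, this gives the velocity expansion with $\dot f^{\pm}=o(|t-t_0|^{-1/3})$, exactly the derivative of the $f^{\pm}$ defined above. The \emph{main obstacle} is the sharp radial asymptotic in the second paragraph: converting the one-sided convexity of Proposition \ref{prop:IsolatedColl} and the relation $\ddot I = 2m_{i_0}I^{-1/2}(1+o(1))$ into the two-sided estimate $I\sim(\tfrac{9}{2}m_{i_0})^{2/3}|t-t_0|^{4/3}$ with a genuine $o(1)$ error, propagating that error rigorously through the two integrations (most cleanly via the monotone change of variable $t\mapsto I$ and a comparison argument). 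Everything else — convergence of the direction and the velocity expansion — is a routine bootstrap once this rate and the monotonicity of $r$ are in hand.
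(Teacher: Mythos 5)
Your proposal is correct, but it takes a genuinely different route from the paper. The reduction in your first paragraph --- passing to $w=\gm-q_{i_0}$ and bounding the non-$i_0$ attractions and $\ddot q_{i_0}$ by a constant via \eqref{eq:gm-qi} and periodicity --- is, in fact, the paper's entire proof: having exhibited $w$ as a collision-ejection solution of a Kepler problem with bounded perturbation, the paper simply cites Sperling \cite{Sp69} for the asymptotics. Everything after your first paragraph is a self-contained reconstruction of those Sundman--Sperling estimates, and I find it sound: the identity $\ddot I = 4E_{i_0}+2m_{i_0}I^{-1/2}+2\langle w,g\rangle$ together with boundedness of $E_{i_0}$ and $\dot I\to0$ makes the double integration legitimate (the improper integral of $\ddot I\,\dot I$ converges because $\ddot I=O(I^{-1/2})$ is integrable while $\dot I$ is bounded, and the error propagates through both integrations precisely because $\dot I>0$ on each side, so the comparison is monotone); the constant checks out, since $\bigl(\tfrac34\sqrt{8m_{i_0}}\bigr)^{4/3}=(\tfrac92 m_{i_0})^{2/3}$; $|\dot C|\le r|g|$ for the angular momentum $C=w\times\dot w$ gives $|C|=O(|t-t_0|^{5/3})$, hence $|\dot u|=|C|/r^2=O(|t-t_0|^{1/3})$ is integrable and the direction converges; and the velocity expansion follows from $\dot w=\dot r\,u+r\dot u$ with the sign of $\dot r$ fixed by the one-sided monotonicity of $r$. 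Two small points you should make explicit. First, Lemma \ref{lem:E_i0} is stated for local minimizers of $\A_h$ with finite action, whereas in Proposition \ref{prop:Asymptotic} the path $\gm$ is only an $H^1$ collision-ejection solution; the lemma's proof uses nothing beyond $\int|\dot\gm|^2<\infty$ and the separation \eqref{eq:gm-qi}, so the reuse is legitimate, but it needs a sentence (alternatively, $\dot E_{i_0}\in L^1$ directly yields finite one-sided limits of $E_{i_0}$, which is all your argument requires). Second, the convexity $\ddot I>0$ borrowed from the proof of Proposition \ref{prop:IsolatedColl} likewise transfers to this setting once $E_{i_0}$ is bounded, which is what justifies the monotonicity of $r$ on each side of $t_0$. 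As for trade-offs: your route buys a self-contained proof that makes visible where the exponent $2/3$ and the coefficient $(\tfrac92 m_{i_0})^{1/3}$ come from, at the cost of length and of carefully tracking $o(1)$ errors through two integrations; the paper buys a one-paragraph proof by outsourcing exactly the content of the proposition to the classical literature.
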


\begin{prop}
\label{prop:LocalDeform} When $\sigma_- \ne \sigma_+$, for any $\ep >0$ small enough,
there are two collision-free paths $\eta^{\pm} \in H^1_{t_0 -\dl, t_0+\dl}(\gm(t_0-\dl), \gm(t_0 +\dl))$ satisfying $\A(\eta^{\pm}; t_0 -\dl, t_0 +\dl) < \A(\gm; t_0 -\dl, t_0 +\dl)$ and 
\begin{equation} \label{eq;ep}
|\eta^{\pm}(t) -\gm(t)| \le \ep, \;\; \forall t \in [t_0 -\dl, t_0+\dl]. 
\end{equation}
Moreover for $\dl$ small enough, we have 
\begin{equation}
\label{eq;Arg-diff} \begin{aligned}
\text{Arg}(\eta^+(t_0+\dl)-q_{i_0}(t_0 +\dl)) - \text{Arg}(\eta^+(t_0-\dl)-q_{i_0}(t_0 -\dl)) & \in (0, 2\pi); \\
\text{Arg}(\eta^-(t_0+\dl)-q_{i_0}(t_0 +\dl)) - \text{Arg}(\eta^-(t_0-\dl)-q_{i_0}(t_0 -\dl)) & \in (-2\pi, 0).
\end{aligned}
\end{equation}
 
\end{prop}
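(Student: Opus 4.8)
The plan is to localize the deformation in a small symmetric subinterval $[t_0-\tau, t_0+\tau] \subset (t_0-\dl, t_0+\dl)$ and leave $\eta^{\pm} = \gm$ elsewhere, so that the fixed-end condition at $t_0\pm\dl$ holds automatically and only the action on $[t_0-\tau,t_0+\tau]$ must be lowered. Working in the relative coordinate $w(t) = \gm(t) - q_{i_0}(t)$, equation \eqref{eq_RNBP1} reads $\ddot w = -m_{i_0} w/|w|^3 + g(t)$, where $g(t) = -\ddot q_{i_0}(t) - \sum_{i\neq i_0} m_i(w + q_{i_0} - q_i)/|w + q_{i_0} - q_i|^3$ is uniformly bounded on $[t_0-\dl,t_0+\dl]$ by \eqref{eq:gm-qi}. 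Correspondingly the action \eqref{eq:LagAction} splits into the Kepler part $\int [\ey|\dot w|^2 + m_{i_0}/|w|]\,dt$ plus terms involving $\dot q_{i_0}$, $h$, and the bounded potential of the remaining primaries. First I would record, via Proposition \ref{prop:Asymptotic}, that near $t_0$ the arc $w$ is a perturbation of the two rectilinear Kepler collision rays $(\tfrac{9}{2}m_{i_0})^{\sy}(t-t_0)^{\se}\sigma_{\pm}$, and that on $[t_0-\tau,t_0+\tau]$ its Kepler action is of size $\tau^{\sy}$ while the cross term with $\dot q_{i_0}$ is $O(\tau^{\se})$ and the bounded contributions are $O(\tau)$; hence for $\tau$ small the Kepler part dominates the action budget and it suffices to beat the collision arc for the Kepler problem.

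The heart of the argument is a scale-invariant Kepler comparison. Setting $w(t) = \tau^{\se} W(s)$ with $t = t_0 + \tau s$, $s \in [-1,1]$, the Kepler action becomes $\tau^{\sy}\big[\ey\int_{-1}^{1}|W'|^2\,ds + \int_{-1}^{1} m_{i_0}/|W|\,ds\big]$, an order-one functional of $W$ on a fixed interval, with the collision arc corresponding to $W_0(s) = (\tfrac{9}{2} m_{i_0})^{\sy}|s|^{\se}\sigma_{\mathrm{sgn}(s)}$. Because a rectilinear Kepler collision bounces back along the same line, $W_0$ fails to be a genuine Kepler solution exactly when $\sigma_- \neq \sigma_+$, so it is not an action minimizer; I would make this strict and quantitative by exhibiting explicit competitors. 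Concretely, I would replace $W_0$ near $s=0$ by a path that descends radially along $\sigma_-$ to a small radius $\rho$, sweeps an arc of the circle $|W| = \rho$ from direction $\sigma_-$ to direction $\sigma_+$, and ascends radially along $\sigma_+$; sweeping one way gives total angle $\alpha \in (0,2\pi)$ and the other way $\alpha - 2\pi \in (-2\pi,0)$. Both competitors remove the velocity blow-up and cap the potential at $m_{i_0}/\rho$, so each lowers the Kepler action by a definite amount, while the cost of the circular detour is $O(\rho)$ and is made negligible by first choosing $\rho$ small; crucially this yields strictly smaller action for \emph{both} orientations, which is what the two paths $\eta^{\pm}$ require.

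Undoing the scaling and adding back the lower-order terms, the two competitors produce collision-free arcs on $[t_0-\tau,t_0+\tau]$ whose total action is strictly below that of $\gm$ once $\tau$ (and then $\rho$) are small, giving $\A(\eta^{\pm};t_0-\dl,t_0+\dl) < \A(\gm;t_0-\dl,t_0+\dl)$. Since the competitors stay within $O(\tau^{\se})$ of $q_{i_0}$, the $C^0$ bound \eqref{eq;ep} follows by taking $\tau$ small relative to $\ep$. For the winding condition \eqref{eq;Arg-diff}, I would note that outside $[t_0-\tau,t_0+\tau]$ the path coincides with $\gm$, whose relative position is nearly radial in directions $\sigma_{\mp}$, so its continuous argument changes by $o(1)$ there as $\tau,\dl\to 0$; the entire net change of $\mathrm{Arg}(\eta^{\pm}(\cdot)-q_{i_0}(\cdot))$ across $[t_0-\dl,t_0+\dl]$ is therefore dictated by the circular detour, which is $\alpha\in(0,2\pi)$ for the counterclockwise competitor $\eta^+$ and $\alpha-2\pi\in(-2\pi,0)$ for the clockwise competitor $\eta^-$.

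I expect the main obstacle to be the strict, two-sided Kepler comparison in the second paragraph: one must verify that rounding the corner lowers the action not only along the geometrically ``short'' side but along \emph{both} sides of $q_{i_0}$, and that the gain survives the reintroduction of the $\dot q_{i_0}$ cross term and the potentials of the other primaries. Controlling these competing $\tau^{\sy}$, $\tau^{\se}$, $\tau$, and $O(\rho)$ contributions simultaneously, so that the net sign is negative for all small parameters while the prescribed winding is preserved, is the delicate bookkeeping at the core of the proof.
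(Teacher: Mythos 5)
Your overall architecture --- pass to the relative coordinate $\zt = \gm - q_{i_0}$, rescale so the problem becomes a fixed-interval Kepler comparison against the homothetic arc, transfer the gain back while bookkeeping the $\dot q_{i_0}$ cross term and the bounded potentials of the other primaries, and read off the winding from the detour --- is the same blow-up scheme the paper uses (Definition \ref{def;blow-up}, Lemma \ref{lem:ScaleofAction}, Lemma \ref{lem:ztl-to-ztb}, and the gluing \eqref{eq:XiLmdn}). The genuine gap is exactly where you suspected: the two-sided Kepler comparison, and your corner-rounding competitor does not establish it because the order bookkeeping in your second paragraph is wrong. If you modify the collision arc only inside radius $\rho$, the saving is the action of the zero-energy spike below radius $\rho$, which equals $4\sqrt{2m_{i_0}\rho}$ --- of order $\sqrt{\rho}$, not a ``definite amount'' independent of $\rho$; and a circular detour of swept angle $\beta$ at radius $\rho$ costs at least $\beta\sqrt{2m_{i_0}\rho}$ even when its traversal time is optimized (and about $\bigl(\tfrac{3\beta^2}{4\sqrt2}+\tfrac{2\sqrt2}{3}\bigr)\sqrt{m_{i_0}\rho}$ if it must fit the time the spike spends inside radius $\rho$), again of order $\sqrt{\rho}$, not $O(\rho)$. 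So gain and cost are the \emph{same} order and the outcome is a fight between constants: the circle competitor wins only for $\beta<4$ radians, whereas the indirect class requires $\beta = 2\pi - \alpha$, which exceeds $4$ whenever the angle $\alpha$ between $\sigma_-$ and $\sigma_+$ is less than $2\pi-4\approx 2.28$, i.e.\ for most configurations. This degeneration is not an artifact of the competitor: by Gordon \cite{Gordon77}, in the limit $\sigma_-=\sigma_+$ the collision--ejection solution's action equals the infimum over the winding class, so the indirect margin tends to zero and cannot be captured by any crude cut-and-cap estimate.

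The two-sided strict inequality for \emph{both} homotopy classes is precisely Proposition \ref{prop:LocalDeform-Kepler}, which the paper does not reprove but quotes from \cite{TV07}, \cite{FGN11}, \cite{Y15a} and \cite{Chen17}; the known proofs use genuinely finer tools (comparison with true Kepler arcs, Jacobi-metric or Lambert-type estimates) rather than a circle-arc detour. If you replace your second paragraph by an appeal to that result, the rest of your outline is essentially the paper's proof modulo standard repairs: your competitor must match the actual rescaled solution at the window endpoints rather than the homothetic limit $W_0$ (the two agree only in the limit, by Proposition \ref{prop:Asymptotic}), which forces interpolation bands and a weak lower semicontinuity bound as in \eqref{eq:Abar-Ztlmd-Ztbar}--\eqref{eq:XiLmdn}; and the cross term with $\dot q_{i_0}$ must be controlled via the blow-up rate $|\dot{\zt}(t)| \le C_4|t-t_0|^{-\sy} + C_5$, exactly as the paper does. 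Those are fixable technicalities; the Kepler comparison is not, and as written your argument proves the proposition only when the indirect swept angle is small enough, which is not the case claimed.
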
 

\begin{rem}
A similar result as the above proposition was obtained in \cite{BDP20} for the perturbed Kepler problem. However it does not imply our result, as the trajectories of the primaries were given a priori, so can not be deformed. 
\end{rem}

\begin{prop}
\label{prop:MinimizerCollFree} If $\gm \in H^1([t_1, t_2], \cc)$ is a local minimizer of $\A_h$, then $\gm|_{(t_1, t_2)}$ is a collision-free solution of \eqref{eq_RNBP1}.
\end{prop}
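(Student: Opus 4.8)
The plan is to argue by contradiction, reducing everything to the local tools already in place. Since the sentence preceding the definition of minimizer says that a collision-free critical point of $\A_h$ solves \eqref{eq_RNBP1}, and since a collision-free $H^1$ local minimizer is automatically a classical critical point on every compact subinterval where it stays away from all the $q_i$, it suffices to prove that $\gm$ has no collision in the open interval $(t_1,t_2)$. So suppose for contradiction that $t_0 \in (t_1,t_2)\cap\Delta(\gm)$, say $\gm(t_0)=q_{i_0}(t_0)$. By Proposition \ref{prop:IsolatedColl} this collision is isolated, so on a small interval $[t_0-\dl,t_0+\dl]\subset(t_1,t_2)$ the restriction $\gm|_{[t_0-\dl,t_0+\dl]}$ is a collision-ejection solution in the sense preceding Proposition \ref{prop:Asymptotic}, and Proposition \ref{prop:Asymptotic} supplies the limiting directions $\sg_{\pm}$ and the $(t-t_0)^{2/3}$ blow-up profile.

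First I would record a closeness remark reconciling the deformation estimates (which are $C^0$) with the $H^1$ definition of a local minimizer. Every competitor below will agree with $\gm$ outside $[t_0-\dl,t_0+\dl]$ and share its two endpoints there, so $\|\xi-\gm\|_{H^1}$ is governed entirely by the localized integral $\int_{t_0-\dl}^{t_0+\dl}(|\dot\gm|^2+|\dot\xi|^2)\,dt$. Because the profile of Proposition \ref{prop:Asymptotic} gives $U(\gm(t),t)\simeq O(|t-t_0|^{-2/3})$ and $|\dot\gm(t)|^2\simeq O(|t-t_0|^{-2/3})$, both integrable near $t_0$, the localized action $\A(\gm;t_0-\dl,t_0+\dl)\to 0$ as $\dl\to0$, and the same bound forces $\int_{t_0-\dl}^{t_0+\dl}|\dot\xi|^2\,dt\to0$ for any competitor $\xi$ whose localized action does not exceed that of $\gm$. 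Hence every such competitor lies in the $H^1$-ball around $\gm$ once $\dl$ is small, so a strict decrease of the action on $[t_0-\dl,t_0+\dl]$ does contradict local minimality.

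The core is then the dichotomy $\sg_-\ne\sg_+$ versus $\sg_-=\sg_+$. When $\sg_-\ne\sg_+$ I apply Proposition \ref{prop:LocalDeform} directly: for $\dl$ and $\ep$ small it produces collision-free $\eta^{\pm}$ with the same endpoints and $\A(\eta^{\pm};t_0-\dl,t_0+\dl)<\A(\gm;t_0-\dl,t_0+\dl)$; by the previous paragraph such an $\eta^{+}$ is $H^1$-close to $\gm$, contradicting minimality. This disposes of the generic case immediately.

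The remaining case $\sg_-=\sg_+=:\sg$ is the one I expect to be the main obstacle, precisely because it is the regularizable elastic bounce which, for a fixed center, can genuinely be a fixed-endpoint minimizer (Gordon); the remark after Proposition \ref{prop:LocalDeform} signals that here one must use that the primaries move. My plan is to break the symmetry and reduce to the previous case: keep $\gm$ on $[t_0,t_0+\dl]$ and replace it on $[t_0-\dl,t_0]$ by the path obtained by rotating the relative position $\gm-q_{i_0}$ through a small angle $\al$ about $q_{i_0}$ near $t_0$, blended back to $\gm$ at $t_0-\dl$. This still reaches $q_{i_0}(t_0)$ at $t_0$ but with incoming direction $e^{i\al}\sg\ne\sg$, so the modified path is a collision-ejection with $\sg_-\ne\sg_+$ and Proposition \ref{prop:LocalDeform} applies to it, giving a strict gain and a collision-free competitor. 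The delicate point—and the crux of the whole proposition—is the action bookkeeping of the rotation: rotating about $q_{i_0}$ preserves the singular $i_0$-term of $U$ and the relative kinetic energy $|\dot\gm-\dot q_{i_0}|^2$, so the net cost is the rotational work term $\int\langle\dot q_{i_0},(R_\al-I)(\dot\gm-\dot q_{i_0})\rangle\,dt=O(\al)$ together with the smooth variation of the far potential terms. I expect that, choosing the rotation sense dictated by $\dot q_{i_0}(t_0)$, this cost can be made nonpositive, so that the strict decrease furnished by Proposition \ref{prop:LocalDeform} survives and yields $\A(\text{competitor})<\A(\gm)$ on $[t_0-\dl,t_0+\dl]$; controlling this sign is exactly where the moving-primary hypothesis enters, and once it is secured the contradiction is complete and $\gm|_{(t_1,t_2)}$ must be a collision-free solution of \eqref{eq_RNBP1}.
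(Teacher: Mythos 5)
Your first half is correct and coincides with the paper's own (one-line) proof, which derives the proposition directly from Propositions \ref{prop:IsolatedColl} and \ref{prop:LocalDeform}: collisions of a finite-action local minimizer are isolated, Proposition \ref{prop:Asymptotic} supplies the directions $\sigma_{\pm}$, and when $\sigma_-\ne\sigma_+$ the deformation of Proposition \ref{prop:LocalDeform} contradicts local minimality. Your preliminary remark reconciling the $C^0$ bound \eqref{eq;ep} with the $H^1$ notion of local minimizer (competitors agree with $\gm$ outside $[t_0-\dl,t_0+\dl]$, and $\A_h(\gm;t_0-\dl,t_0+\dl)\to 0$ as $\dl\to 0$ forces any competitor with smaller localized action into the $H^1$-ball of radius $\ep$) is a legitimate point that the paper leaves implicit, and your argument for it is sound.

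The genuine gap is your treatment of $\sigma_-=\sigma_+$, on three counts. First, the premise is wrong: for a fixed center the same-direction bounce is \emph{not} a fixed-endpoint local minimizer; Gordon's obstruction concerns winding-constrained classes, which is exactly why in Proposition \ref{prop:Minimizer-Gamma_ij} a $\sigma_-=\sigma_+$ collision can survive for the \emph{tied} minimizer but not for an unconstrained one. Second, your reduction cannot invoke Proposition \ref{prop:LocalDeform}: that proposition (via Proposition \ref{prop:Asymptotic} and the blow-up to the Kepler problem) is proved for a path that is a collision-ejection \emph{solution} of \eqref{eq_RNBP1}, and your rotated-and-blended path is not a solution, so neither the asymptotic profile nor the deformation estimate applies to it. Third, even granting the reduction, the decisive inequality --- that the rotational work term has the right sign so the rotated path costs no more than $\gm$ --- is stated as an expectation, not proved; this is the crux of your argument and it is missing. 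In fact no moving-primary mechanism is needed: if $\sigma_-=\sigma_+$, set $\zt=\gm-q_{i_0}$, let $t_{\pm}$ be the first and last times with $|\zt(t_{\pm})|=\ep$, and flatten the dip by replacing $\zt|_{[t_-,t_+]}$ with (essentially) the constant $\ep\,\sigma_+$, plus an $o(\ep)$ interpolation reconciling the endpoints, which by Proposition \ref{prop:Asymptotic} differ from $\ep\,\sigma_{\pm}$ by $o(\ep)$. The parabolic asymptotics give $\int_{t_-}^{t_+}\bigl(\ey|\dot\zt|^2+m_{i_0}/|\zt|\bigr)\,dt=(4\sqrt2+o(1))\sqrt{m_{i_0}}\,\ep^{1/2}$ for the discarded piece, while the replacement costs $\bigl(\tfrac{2\sqrt2}{3}+o(1)\bigr)\sqrt{m_{i_0}}\,\ep^{1/2}$, since $t_+-t_-\simeq 2(\ep^3\cdot 2/(9m_{i_0}))^{1/2}$; the cross terms with $\dot q_{i_0}$ are $O(\ep)$ and the potentials of the other primaries contribute $O(\ep^{3/2})$. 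This is a strict decrease achieved by a collision-free, $H^1$-small modification, giving the contradiction directly and elementarily in the case you flagged as the main obstacle.
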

\begin{proof}
This follows immediately from Proposition \ref{prop:IsolatedColl} and \ref{prop:LocalDeform}.
\end{proof}

\begin{prop}
\label{prop:ExistMinimizer}
For any $x, y \in \cc$ and $t_1 <t_2$, there is a $\gm \in H^1_{t_1, t_2}(x,y)$ satisfying
$$ \A_{h}(\gm; t_1, t_2) = \psi_{h}(x, y; t_1, t_2). $$
\end{prop}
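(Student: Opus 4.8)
The plan is to apply the direct method of the calculus of variations, for which I need three ingredients: that the infimum $\psi_{h}(x,y;t_1,t_2)$ is finite, that any minimizing sequence is bounded in $H^1$, and that $\A_h$ is weakly lower semicontinuous. First I would show $\psi_{h}(x,y;t_1,t_2) < \infty$ by exhibiting a single competitor of finite action. Because the primaries satisfy \eqref{eq: CollFree-Per}, the finitely many trajectories $\{q_i(t)\}_{i \in \N}$ sweep out a thin set in the cylinder $[t_1,t_2] \times \cc$, so one can choose a Lipschitz path $\xi \in H^1_{t_1,t_2}(x,y)$ (for instance a reparametrized segment, perturbed if necessary) that stays a positive distance from every $q_i(t)$; along such $\xi$ the potential $U(\xi(t),t)$ is bounded and the kinetic term is finite, giving $\A_h(\xi;t_1,t_2) < \infty$. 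Since $U > 0$ and the kinetic term is nonnegative, the integrand is bounded below by $h$, so $\A_h(\gm;t_1,t_2) \ge h(t_2-t_1) > -\infty$; hence $\psi_{h}(x,y;t_1,t_2) \in \rr$.

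Next, let $\{\gm_n\} \subset H^1_{t_1,t_2}(x,y)$ be a minimizing sequence, so $\A_h(\gm_n;t_1,t_2) \to \psi_{h}(x,y;t_1,t_2)$. Using $U \ge 0$ I would bound the Dirichlet energy, $\ey \int_{t_1}^{t_2}|\dot{\gm}_n|^2\,dt \le \A_h(\gm_n;t_1,t_2) - h(t_2-t_1)$, which stays bounded; together with the fixed endpoint $\gm_n(t_1)=x$ and the representation $\gm_n(t) = x + \int_{t_1}^{t}\dot{\gm}_n\,ds$ this controls $\|\gm_n\|_{L^\infty}$ and hence $\|\gm_n\|_{H^1}$. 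By reflexivity of $H^1$ I pass to a subsequence with $\gm_n \rightharpoonup \gm_*$ weakly in $H^1$, and by the compact embedding $H^1([t_1,t_2],\cc) \hookrightarrow C^0([t_1,t_2],\cc)$ the convergence is uniform along that subsequence. Uniform convergence preserves boundary values, so $\gm_*(t_1)=x$, $\gm_*(t_2)=y$, and $\gm_* \in H^1_{t_1,t_2}(x,y)$.

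Finally I would prove $\A_h(\gm_*;t_1,t_2) \le \liminf_n \A_h(\gm_n;t_1,t_2) = \psi_{h}(x,y;t_1,t_2)$, which forces $\gm_*$ to be a minimizer. The kinetic part is weakly lower semicontinuous because $v \mapsto \int |v|^2$ is convex and continuous on $L^2$. For the potential part, the function $z \mapsto U(z,t)$ is lower semicontinuous into $[0,+\infty]$ for each fixed $t$ (continuous off the collision set and tending to $+\infty$ there), so the uniform convergence $\gm_n(t)\to\gm_*(t)$ yields $\liminf_n U(\gm_n(t),t) \ge U(\gm_*(t),t)$ pointwise; Fatou's lemma then gives $\int_{t_1}^{t_2} U(\gm_*,t)\,dt \le \liminf_n \int_{t_1}^{t_2} U(\gm_n,t)\,dt$. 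Adding the two estimates completes the argument.

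The step I expect to be most delicate is the first: verifying finiteness of the infimum by producing a finite-action collision-avoiding competitor, which relies essentially on the uniform separation $\rho_0 > 0$ and the periodicity in \eqref{eq: CollFree-Per} to keep the primaries away from a fixed path. Note that the limit $\gm_*$ need not itself be collision-free — that it avoids collisions on $(t_1,t_2)$ is exactly the content of Proposition \ref{prop:MinimizerCollFree}, proved separately — so the lower semicontinuity argument must tolerate possible collisions of $\gm_*$, which is handled cleanly by Fatou precisely because $U \ge 0$.
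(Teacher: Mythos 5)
Your argument is correct and follows essentially the same route as the paper: both run the direct method, and your direct $H^1$-bound on minimizing sequences (finite action controls $\int|\dot{\gm}_n|^2$, then the fixed endpoint plus Cauchy-Schwarz controls $\|\gm_n\|_{L^\infty}$) is exactly the contrapositive of the coercivity statement the paper proves by contradiction. You go further than the paper in two places: you write out the weak lower semicontinuity that the paper merely asserts (convexity for the kinetic term; Fatou together with lower semicontinuity of $z \mapsto U(z,t)$ into $[0,+\infty]$ for the potential term, which correctly tolerates a possibly colliding limit, consistent with the fact that collision-freeness is only established later in Proposition \ref{prop:MinimizerCollFree}), and you add a finiteness step that the paper's proof omits entirely.

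That added step is the one spot where your write-up is not quite right as stated. Since $x, y \in \cc$ are arbitrary, the proposition covers the case $x = q_{i}(t_1)$ or $y = q_{j}(t_2)$; then every path in $H^1_{t_1,t_2}(x,y)$ touches a primary at an endpoint, so no ``perturbed segment staying a positive distance from every $q_i(t)$'' exists, and moreover a generic escape is too slow: a path leaving the collision linearly, $\xi(t)-q_i(t) \sim c(t-t_1)$, gives $U(\xi(t),t)\sim m_i/(c(t-t_1))$, which is not integrable. The repair is easy: eject Kepler-style, $\xi(t) = q_i(t) + c\,(t-t_1)^{2/3}\sigma$ near $t_1$ for a unit vector $\sigma$, for which both $|\dot{\xi}|^2$ and $U(\xi(t),t)$ are $O\bigl((t-t_1)^{-2/3}\bigr)$ and hence integrable (the other potential terms stay bounded by \eqref{eq: CollFree-Per}), then continue at positive distance from all primaries. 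Alternatively, observe that finiteness of $\psi_h$ is not actually needed for the bare existence claim: if $\psi_{h}(x,y;t_1,t_2) = +\infty$, every path attains it, which is presumably why the paper does not address finiteness in this proposition at all.
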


\begin{proof}
As $\A_{h}$ is weakly lower semi-continuous in $H^1_{t_1, t_2}(x, y)$, by a standard argument from the direct method of calculus of variation, the existence of such a $\gm$ follows once we can show $\A_{h}$ is coercive in $H^1_{t_1, t_2}(x, y)$, i.e., if a sequence $\{ \gm_n \in H^1_{t_1, t_2}(x, y) \}$ satisfies $\|\gm_n\|_{H^1} \to \infty$, as $n \to \infty$, then $\A_{h}(\gm_n) \to \infty$ as well.  

By a contradiction argument, let's assume $\|\gm_n\|_{H^1} \to \infty$, as $n \to \infty$ and there is a finite $C_1$, such that $\A_{h}(\gm_n) \le C_1$, $\forall n$. Then the Cauchy-Schwartz inequality implies 
$$ \int_{t_1}^{t_2} |\dot{\gm}_n| \,dt \le (t_2 -t_1)^{\ey} \left(\int_{t_1}^{t_2} |\dot{\gm}_n(t)|^2 \,dt\right)^\ey \le \sqrt{2(t_2 -t_1)C_1}. $$
As a result, 
$$ |\gm_n(t)| \le |x| + \int_{t_1}^{t} |\dot{\gm}_n(\tau)| \,d \tau \le |x| + \sqrt{2(t_2-t_1)C_1}, \; \forall t \in [t_1, t_2].$$
Then the following inequality holds for all $n$, which is a contradiction. 
$$ \| \gm_n\|^2_{H^1} = \int_{t_1}^{t_2} |\gm(t)|^2 + |\dot{\gm}(t)|^2 \,dt \le (|x| + \sqrt{2(t_2-t_1)C_1})^2(t_2 -t_1) + 2C_1.$$ 
\end{proof}

By \eqref{eq: CollFree-Per} or the periodicity of $q(t)$, we can find an $R_0 >0$, such that 
\begin{equation}
\label{eq:R0} \sup \{ |q_i(t)|: t \in \rr, \; i \in \N \} = R_0 -1. 
\end{equation} 

\begin{lem}
\label{lem:ActionValueLocalUpperBound} Given any $(x, y) \in \cc \times (\cc \setminus B_{R_0}^o)$ and $t_1< t_2$, there is a $C_1$ with 
$$ \sup \{ \psi_h(x, z; t_1, t_2):  z  \in B_{1/2}(y) \} \le C_1. $$
\end{lem}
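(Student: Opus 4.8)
The plan is to bound $\psi_h(x,z;t_1,t_2)$ from above by exhibiting, for each $z \in B_{1/2}(y)$, a single admissible competitor whose action is controlled uniformly in $z$; since $\psi_h$ is an infimum, any such test path suffices. I would split $[t_1,t_2]$ into a short initial piece $[t_1,t_1+\tau]$ and the remainder $[t_1+\tau,t_2]$, fix a ``safe'' point $p_0$ with $|p_0|=R_0$, and define the competitor $\gm_z$ to run from $x$ to $p_0$ on the first subinterval along a fixed path $\al$ (independent of $z$), and from $p_0$ to $z$ on the second subinterval along a path $\bt_z$ that stays outside the closed disk $B_{R_0-1}$ containing all primaries. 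Then $\A_h(\gm_z;t_1,t_2)=A_0+\A_h(\bt_z;t_1+\tau,t_2)$, where $A_0:=\A_h(\al;t_1,t_1+\tau)$ is a $z$-independent constant.

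The observation that makes the $z$-dependent piece uniform is the following: by \eqref{eq:R0} we have $|q_i(t)|\le R_0-1$ for all $t$ and $i\in\N$, while every $z\in B_{1/2}(y)$ satisfies $|z|\ge |y|-1/2\ge R_0-1/2$ (using $y\notin B_{R_0}^o$). Hence both $p_0$ and $z$ lie in the compact annulus $D=\{w:\,R_0-1/2\le |w|\le |y|+1\}$, which is path-connected with intrinsic diameter bounded by some $L=L(R_0,|y|)$. I would take $\bt_z$ to be a path in $D$ from $p_0$ to $z$ of length $\le L$ (radial and circular arcs), traversed at constant speed. On $D$ one has $|w-q_i(t)|\ge (R_0-1/2)-(R_0-1)=1/2$, so $U(\bt_z(t),t)\le 2\sum_{i\in\N}m_i$, and therefore
$$\A_h(\bt_z;t_1+\tau,t_2)\le \frac{L^2}{2(t_2-t_1-\tau)}+\Bigl(2\sum_{i\in\N}m_i+h\Bigr)(t_2-t_1-\tau),$$
a constant independent of $z$.

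For the fixed piece $\al$ I only need $A_0<\infty$. Here there is a dichotomy. If $x\ne q_i(t_1)$ for every $i\in\N$, I would take $\al$ to be the straight segment $\al_0$ from $x$ to $p_0$ perturbed transversally, $\al_v(t)=\al_0(t)+v\sin\bigl(\pi(t-t_1)/\tau\bigr)$: the set of $v$ for which $\al_v(t)=q_i(t)$ at some interior $t$ is contained in the image of the curve $t\mapsto \bigl(q_i(t)-\al_0(t)\bigr)/\sin\bigl(\pi(t-t_1)/\tau\bigr)$, hence has planar measure zero, so a generic small $v$ gives a path keeping positive distance from all $q_i(t)$ on $[t_1,t_1+\tau]$, with bounded $U$ and finite action. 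If instead $x=q_{i_0}(t_1)$ for some $i_0$, a straight exit would produce a non-integrable $1/|t-t_1|$ singularity, so I would instead prepend a short collision-ejection arc $\al(t)=q_{i_0}(t)+(\tfrac{9}{2}m_{i_0})^{1/3}(t-t_1)^{2/3}\sigma$ in the asymptotic profile of Proposition \ref{prop:Asymptotic}; since $|\al-q_{i_0}|\sim (t-t_1)^{2/3}$, both $\int|\dot\al|^2\,dt$ and $\int U\,dt$ converge near $t_1$, the arc ends at a collision-free point, and one continues to $p_0$ by the perturbed-segment construction.

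Combining the two pieces yields $\psi_h(x,z;t_1,t_2)\le A_0+\A_h(\bt_z;t_1+\tau,t_2)\le C_1$ uniformly over $z\in B_{1/2}(y)$. I expect the only genuinely delicate point to be the potential singularity: everything reduces to producing a test path with integrable potential, and all difficulty concentrates in leaving the fixed start $x$ with finite action when $x$ sits on a primary, which is resolved by the $t^{2/3}$ ejection profile rather than a linear exit. The uniformity in $z$ is then essentially free, since the exterior region $D$ keeps the competitor a fixed distance from every primary.
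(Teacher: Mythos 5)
Your proposal is correct, but it takes a genuinely different route from the paper. The paper does not build a competitor from scratch: it invokes Proposition \ref{prop:ExistMinimizer} to get an actual minimizer $\gm$ from $x$ to $y$, lets $\tau_0$ be the first time $\gm$ enters $B_{1/2}(y)$, and for each $z \in B_{1/2}(y)$ keeps $\gm|_{[t_1,\tau_0]}$ and replaces the tail by the straight chord from $\gm(\tau_0)$ to $z$ traversed over $[\tau_0,t_2]$; since the chord stays in $B_{1/2}(y)$, where $U \le C_2$ by \eqref{eq:R0}, and has length at most $1$, the extra cost is at most $\tfrac{1}{2(t_2-\tau_0)}+C_2(t_2-\tau_0)$, giving $C_1=\psi_h(x,y;t_1,t_2)+C_2(t_2-\tau_0)+\tfrac{1}{2(t_2-\tau_0)}$. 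This makes the whole issue of leaving $x$ invisible: all the $x$-dependence is absorbed into the single finite number $\psi_h(x,y;t_1,t_2)$, so no case distinction on whether $x=q_{i_0}(t_1)$ is ever needed, at the price of presupposing that $\psi_h(x,y;t_1,t_2)<\infty$. Your construction --- fixed path $\al$ from $x$ to a safe point $p_0\in\partial B_{R_0}$, then a uniformly short path to $z$ in the annulus where $|w-q_i(t)|\ge 1/2$ --- is fully explicit and self-contained: in particular your $t^{2/3}$ ejection profile and the transversal-perturbation genericity argument actually \emph{prove} the finiteness that the paper's proof quietly inherits from Proposition \ref{prop:ExistMinimizer}, including in the degenerate case where $x$ sits on a primary at $t_1$ (where, as you correctly note, a linear exit would make $U$ non-integrable). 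The trade-off is length: your argument must verify measure-zero avoidance and integrability of the ejection arc by hand, whereas the paper's two-line concatenation leverages machinery already established. Both yield a constant uniform over $z\in B_{1/2}(y)$ for the same underlying reason, namely that $U$ is uniformly bounded on the region where the $z$-dependent piece of the competitor lives.
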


\begin{proof}
By Proposition \ref{prop:ExistMinimizer}, there is a $\gm \in H^1_{t_1, t_2}(x, y)$ with $\A_h(\gm; t_1, t_2) = \psi_h(x, y; t_1, t_2)$. Then
$$ \tau_0 = \min \{ t \in [t_1, t_2]: \gm(t) \in B_{1/2}(y) \} < t_2.$$

Notice that for any $z \in B_{1/2}(y)$, $|z| \ge |y| -|y-z| \ge R_0 -1/2$, then there is a $C_2>0$, such that 
\begin{equation}
\label{eq;Uz-le} U(z, t) \le C_2, \; \forall z \in B_{1/2}(y) \text{ and } t \in \rr. 
\end{equation}
Now for any $z \in B_{1/2}(y)$, we define a new path 
$$ \xi(t) =  \begin{cases}
\gm(t), & \text{ if } t \in [t_1, \tau_0], \\
\gm(\tau_0) + \frac{t-\tau_0}{t_2 - \tau_0} ( z - \gm(\tau_0)), & \text{ if } t \in [\tau_0, t_2].
\end{cases} $$
Since $\xi(t) \in B_{1/2}(y)$, $\forall t \in [\tau_0, t_0]$, by \eqref{eq;Uz-le}, 
$$ \begin{aligned}
\psi_h(x, z ; t_1, t_2)  - \psi_h(x, y; t_1, t_2) & \le \int_{t_1}^{t_2} L(\xi, \dot{\xi}, t)- L(\gm, \gmd, t) \,dt \le \int_{\tau_0}^{t_2} L(\xi, \dot{\xi}, t) \,dt \\ 
 & = \int_{\tau_0}^{t_2} \ey |\xi(t)|^2 + U(\xi(t), t) \, dt  \le  \frac{|z - \gm(\tau_0)|^2}{2(t_2 -\tau_0)} + C_2  (t_2 -\tau_0) \\
 & \le \frac{1}{2(t_2 -\tau_0)} + C_2 (t_2 -\tau_0). 
\end{aligned}
$$ 
As a result, $C_1 = \psi_h(x, y;t_1, t_2) + C_2(t_2 -\tau_0) + 2^{-1}(t_2 -\tau_0)^{-1}$.
\end{proof}

\begin{prop}
\label{prop:psih-Lip}
Fix arbitrarily an $x \in \cc$ and a $t_1 \in \rr$, $(y, t_2) \mapsto \psi_h(x, y; t_1, t_2)$ is locally Lipschitz continuous in $(\cc \setminus B_{R_0}^o) \times \{ t_2 \in \rr: t_2 > t_1 \}$.
\end{prop}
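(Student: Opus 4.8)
The plan is to prove local Lipschitz continuity separately in the two variables — first in $y$ with $t_2$ fixed, then in $t_2$ with $y$ fixed — and to combine the two estimates by the triangle inequality, so that for $(y,t_2),(y',t_2')$ in a fixed neighborhood one obtains $|\psi_h(x,y';t_1,t_2')-\psi_h(x,y;t_1,t_2)|\le C(|y-y'|+|t_2-t_2'|)$. The geometric fact that makes everything run is that, since all primaries stay inside $B_{R_0-1}$ by \eqref{eq:R0}, whenever $|y|\ge R_0$ the ball $B_{1/2}(y)$ is collision-free and on it $U$, together with $\partial_z U$ and $\partial_t U$, is bounded by constants depending only on $\rho_0$, the masses and $\sup_t|\dot{q}_i(t)|$; I will call $B_{1/2}(y)$ the \emph{good region} and confine every path modification to it, where $U$ is both spatially and temporally Lipschitz.

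First I would record two a priori facts, uniform over a small neighborhood $W$ of a fixed $(y_*,t_{2,*})$ with $|y_*|\ge R_0$ and $t_{2,*}>t_1$. (i) There is a finite $M$ with $\psi_h(x,y;t_1,t_2)\le M$ for all $(y,t_2)\in W$; this follows as in Lemma \ref{lem:ActionValueLocalUpperBound} by using the competitor that follows a fixed minimizer until it first enters $B_{1/2}(y_*)$ and then moves inside the good region to $y$ during the remaining (uniformly positive) time. (ii) For a minimizer $\gm$ of the $(y,t_2)$-problem (which exists by Proposition \ref{prop:ExistMinimizer}), let $\tau_0$ be the last time in $[t_1,t_2]$ at which $|\gm-y|=1/2$, setting $\tau_0=t_1$ if there is none; then $\gm|_{(\tau_0,t_2]}$ lies in the good region and $t_2-\tau_0\ge c$ for a uniform $c>0$. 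Indeed, when the crossing exists, $|\gm(t_2)-\gm(\tau_0)|=1/2$ forces $\int_{\tau_0}^{t_2}\ey|\dot{\gm}|^2\,dt\ge 1/(8(t_2-\tau_0))$ by Cauchy--Schwarz, and since $U\ge 0$ this is controlled by the total action $\le M$ up to a term linear in $t_2-\tau_0$; when there is no crossing, $t_2-\tau_0=t_2-t_1$ is bounded below on $W$. This sojourn-time lower bound is the crux: it is precisely what prevents the rerouting costs below from blowing up.

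For Lipschitz continuity in $y$ at fixed $t_2$, I would take the minimizer $\gm$ for $(x,y;t_1,t_2)$ and define a competitor $\xi$ for $(x,y';t_1,t_2)$ equal to $\gm$ on $[t_1,\tau_0]$ and to $\gm(t)+\tfrac{t-\tau_0}{t_2-\tau_0}(y'-y)$ on $[\tau_0,t_2]$. For $|y-y'|$ small this stays in the good region and satisfies $\xi(t_2)=y'$, and the action difference reduces to a cross term $\tfrac{1}{t_2-\tau_0}\langle\gm(t_2)-\gm(\tau_0),\,y'-y\rangle$, a quadratic term in $|y-y'|$, and a potential term; using $|\gm(t_2)-\gm(\tau_0)|=1/2$, the bound $t_2-\tau_0\ge c$, and the spatial Lipschitz bound for $U$, each of these is $O(|y-y'|)$. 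Exchanging the roles of $y$ and $y'$ gives the two-sided estimate, hence Lipschitz continuity in $y$.

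For Lipschitz continuity in $t_2$ at fixed $y$, I would treat the two directions differently. If $t_2<t_2'$, appending the constant path $\xi\equiv y$ on $[t_2,t_2']$ to the $t_2$-minimizer yields $\psi_h(x,y;t_1,t_2')\le\psi_h(x,y;t_1,t_2)+(C+h)(t_2'-t_2)$, since $U(y,t)$ is bounded. For the reverse inequality I would take the minimizer $\eta$ for the longer time $t_2'$ and affinely reparametrize its good tail $\eta|_{[\tau_0',t_2']}$ onto the shorter interval $[\tau_0',t_2]$, leaving $\eta$ unchanged on $[t_1,\tau_0']$; since $t_2-\tau_0'\ge c/2$, the compression multiplies the kinetic part by $1+O(t_2'-t_2)$, while the time shift alters the potential part by $O(t_2'-t_2)$ through the temporal Lipschitz bound for $U$, giving $\psi_h(x,y;t_1,t_2)\le\psi_h(x,y;t_1,t_2')+C(t_2'-t_2)$. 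Combining the three estimates yields the joint local Lipschitz continuity; the only real difficulty throughout is keeping the constants uniform on $W$ and every modified path inside the good region, which the uniform bound $M$ and the induced sojourn-time lower bound $t_2-\tau_0\ge c$ supply.
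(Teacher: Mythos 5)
Your proof is correct, and although it follows the paper's overall skeleton --- separate Lipschitz estimates in $y$ and in $t_2$, a uniform local action bound as in Lemma \ref{lem:ActionValueLocalUpperBound}, and the sojourn-time lower bound $t_2-\tau_0\ge c$ (the paper's \eqref{eq;sup-tau-y}) --- your competitors are genuinely different at the decisive step. The paper's central device is the uniform velocity bound \eqref{eq:gmdot-UpperBound} on the tail of each minimizer, obtained from a mean-value argument plus the acceleration bound supplied by the Euler--Lagrange equation (so it invokes that minimizers solve \eqref{eq_RNBP1} on the collision-free tail, via Proposition \ref{prop:MinimizerCollFree}); this $C^1$ bound is then used to patch endpoints with short straight segments on $[t_2-\Delta y, t_2]$ for the $y$-estimate, and to control $|y-\gm_y(t_2-\Delta t)|$ together with subadditivity and the already-proved $y$-Lipschitz continuity for both directions of the $t_2$-estimate. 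You bypass the velocity bound entirely: for $y$ you shift the whole tail affinely and exploit the exact cross-term identity $\int_{\tau_0}^{t_2}\bigl\langle\dot{\gm},\tfrac{y'-y}{t_2-\tau_0}\bigr\rangle\,dt=\tfrac{1}{t_2-\tau_0}\langle\gm(t_2)-\gm(\tau_0),\,y'-y\rangle$, which the sojourn bound makes $O(|y'-y|)$; for increasing $t_2$ you append a constant path, which is simpler than the paper's time-shifted tail; and for decreasing $t_2$ you compress the tail, paying $O(t_2'-t_2)$ in kinetic energy via the uniform action bound and in potential via boundedness of $\partial_t U$ on the good region. What the paper's route buys is the reusable pointwise bound $|\dot{\gm}_y|\le C_3$ near the endpoint; what yours buys is a purely variational argument needing no interior regularity of minimizers, at the modest extra cost of spatial and temporal Lipschitz bounds for $U$ on the good region, which indeed hold since the primaries stay in $B_{R_0-1}$ with bounded velocities by \eqref{eq:R0} and periodicity. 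Two bookkeeping points you should make explicit when writing this up: in the no-crossing case $\tau_0=t_1$ the cross-term numerator is $|y-x|$ rather than $1/2$ (harmless, since $t_2-\tau_0$ is still uniformly positive on $W$), and the shifted tail lies in the slightly enlarged ball $B_{1/2+|y'-y|}(y)$, which remains collision-free for $|y'-y|$ small.
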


\begin{proof}
Since $h$ is a constant, it is enough to prove the result for $h=0$. 

Let's fix an arbitrary $t_2 >t_1$, and prove the local Lipschitz continuity of $\phi_0$ with respect to $y$ first. Choose a $y_0 \in \cc \setminus B^o_{R_0}$ and a $\dl \in (0, 1/4]$ small enough. For any $y \in B_{\dl}(y_0)$, by Proposition \ref{prop:ExistMinimizer}, there is a $\gm_y \in H^1_{t_1, t_2}(x, y)$ satisfying $\A_0(\gm_y; t_1, t_2) = \psi_0(x, y; t_1, t_2)$. Let 
$$ \tau_y = \min \{ t_0 \in [t_1, t_2]: \gm_y(t) \in B_{1/2}(y_0), \; \forall t \in [t_0, t_2] \}. $$
We claim 
\begin{equation}
\label{eq;sup-tau-y} \tau = \sup \{\tau_y: y \in B_{\dl}(y_0) \text{ with } \tau_y > t_1 \} < t_2. 
\end{equation}
Notice that we only need to consider the cases, when $\tau_y >t_1$, and in these case, we must have $|\gm_y(\tau_y) - y_0| = 1/2$. Then by Lemma \ref{lem:ActionValueLocalUpperBound}, there is a $C_1>0$, such that
\begin{equation}
\label{eq:t2-tauy-UpperB} \begin{aligned}
C_1 \ge \A_0(\gm_y; t_1, t_2) & \ge \ey \int_{\tau_y}^{t_2} |\dot{\gm}_{y_n}|^2 \,dt  \ge \frac{1}{2(t_2 -\tau_y)}\left( \int_{\tau_y}^{t_2} |\gmd_y| \,dt \right)^2 \\
& \ge \frac{|y - \gm_y(\tau_y)|^2}{2(t_2 -\tau_y)} \ge \frac{(|\gm_y(\tau_y) -y_0| - |y_0 -y|)^2}{2(t_2 -\tau_y)} \ge \frac{(\ey -\dl)^2}{2 (t_2 -\tau_y)}. 
\end{aligned}
\end{equation}
As a result $ t_2 -\tau_y \ge (1 -2\dl)^2/(8C_1) >0$, and this implies \eqref{eq;sup-tau-y}. 

Next we show that there must be an $s_y \in [\tau, t_2]$ with
$$ |\dot{\gm}_y(s_y)| \le \sqrt{2C_1(t_2 -t_1)}/(t_2 -\tau). $$
Otherwise $\int_{\tau}^{t_2} |\dot{\gm}_y| \,dt > \sqrt{2C_1(t_2-t_1)}$, which is a contradiction to  
$$  
\int_{\tau}^{t_2} |\dot{\gm}_y| \,dt \le \int_{t_1}^{t_2} |\gmd_y| \,dt \le \sqrt{2(t_2 -t_1)\A_0(\gm_y; t_1, t_2)}  \le \sqrt{2C_1(t_2 -t_1)}. $$

Meanwhile since $\forall t \in [\tau, t_2)$, $\gm_y(t)$ satisfies \eqref{eq_RNBP1}, and $\gm_y(t) \in B_{1/2}(y_0) \subset (\cc \setminus B^o_{R_0 -1/2})$, there is a $C_2>0$, such that 
$$ |\ddot{\gm}_y(t)| \le |\nabla U(\gm_y(t), t)| \le C_2. $$
Therefore for any $t \in [\tau, t_2]$ and $y \in B_{\dl}(y_0)$, 
\begin{equation}
\label{eq:gmdot-UpperBound} |\dot{\gm}_y(t)| \le |\dot{\gm}_y(s_y)| + \int_{\tau}^{t_2} |\ddot{\gm}_y(s)| \,ds \le \frac{\sqrt{2C_1(t_2 -t_1)}}{t_2 -\tau} + C_2(t_2 -\tau) = C_3.
\end{equation} 

Let $\dl_0 = \min \{ \dl, t_2 -\tau\}$. For any $y \in B_{\dl_0}(y_0)$, define a new path $\xi \in H^1_{t_2 -\Delta y, t_2}(\gm_{y_0}(t_2 -\Delta y), y_0)$ as below, where $\Delta y = |y-y_0|$,  
$$ \xi_y(t) = \gm_{y_0}(t_2 - \Delta y) + \frac{t + \Delta y -t_2}{\Delta y} \big( y- \gm_{y_0}(t_2 - \Delta y) \big), \; t \in [t_2 -\Delta y, t_2].$$
By the triangle inequality and \eqref{eq:gmdot-UpperBound},
$$ \begin{aligned}
\int_{t_2 -\Delta y}^{t_2} |\dot{\xi}_y|^2 \,dt & \le \frac{1}{\Delta y} \left( \int_{t_2 -\Delta y}^{t_2} |\dot{\xi}_y| \,dt \right)^2 = \frac{|y - \gm_{y_0}(t_2 - \Delta y)|^2 }{\Delta y} \\
& \le \frac{ (|y -y_0| + |y_0 - \gm_{y_0}(t_2 - \Delta y) |)^2 }{\Delta y} \le \frac{\left( \Delta y + \int_{t_2 - \Delta y}^{t_2} |\dot{\gm}_{y_0}| \,dt\right)^2}{\Delta y} \le (C_3 +1)^2 \Delta y. 
\end{aligned}
$$
As there is a $C_4>0$, such that $U(z, t) \le C_4$, for any $z \in \cc \setminus B^0_{R_0 -1/2}$ and $t \in \rr$, 
$$  \psi_0(x, y; t_1, t_2) - \psi_0(x, y_0; t_1, t_2)    \le \int_{t_2 -\Delta y}^{t_2} \ey|\dot{\xi}_y|^2 + U(\xi_y(t), t) \,dt  \le \ey (C_3 + 1)^2 \Delta y + C_4 \Delta y.
$$

Meanwhile we can also define a path $\eta_y \in H^1_{t_2 -\Delta y, t_2}(\gm_y(t_2 -\Delta y), y)$ as 
$$ \eta_y(t) = \gm_{y}(t_2 - \Delta y) + \frac{t + \Delta y-t_2}{\Delta y} \big( y- \gm_{y}(t_2 -\Delta y) \big), \; t \in [t_2 -\Delta y, t_2]. $$
Then a similar argument as above shows 
$$ \psi_0(x, y_0; t_1, t_2) - \psi_0(x, y; t_1, t_2) \le \int_{t_2 -\Delta y}^{t_2} \ey|\dot{\eta}_y|^2 + U(\eta_y(t), t) \,dt  \le \ey (C_3 + 1)^2 \Delta y + C_4\Delta y. $$
These results imply that for any $y \in B_{\dl}(y_0)$,  
$$ | \psi_0(x, y; t_1, t_2) - \psi_0(x, y_0; t_1, t_2)| \le \frac{(C_3+1)^2 + 2C_4 }{2}|y -y_0|= C_5|y-y_0|.$$
This finishes our proof of the local Lipschitz continuity of $\phi_0$ with respect to $y$.

To show the local Lipschitz continuity of $\phi_0$ with respect to $t_2$. Let $\gm_y \in H^1_{t_1, t_2}(x, y)$ be a minimizer of $\A_0$. Now choose a $\dlt t>0$ small enough, then similar arguments as above show 
\begin{equation}
\label{eq;gm-y-Delta t} |\dot{\gm}_y(t)| \le C_3  \text{ and }  U(\gm_y(t), \tau) \le C_4, \; \forall t \in [t_2- \dlt t, t_2] \text{ and } \tau \in \rr. 
\end{equation}
This implies
$$ |y - \gm_y(t_2 -\dlt t)| \le \int_{t_2 -\dlt t}^{t_2} |\dot{\gm}_y(t)| \,dt \le C_3 \dlt t; $$
$$ \A_0(\gm_y; t_2- \Delta t, t_2) = \int_{t_2 - \Delta t}^{t_2} \ey |\dot{\gm}_y(t)|^2 + U(\gm_y(t), t) \, dt \le ( \ey C_3^2 +C_4) \dlt t. $$
Combining these inequalities with the Lipschitz continuity with respect to $y$, we get
$$ \begin{aligned}
 |\psi_0(x, y; t_1, t_2) &  - \psi_0(x, y; t_1, t_2-\dlt t)| \\
 & \le \A_0(\gm_y; t_2 -\dlt t, t_2) + |\psi_0(x, \gm_y(t_2 -\dlt t); t_1, t_2 -\dlt t) - \psi_0(x, y; t_1, t_2 -\dlt t) | \\
 & \le (\ey C_3^2 +C_4) \dlt t + C_5 |y - \gm_y(t_2 - \dlt t)| \le (\ey C_3^2 +C_4 + C_3 C_5) \dlt t.
\end{aligned}
$$
On the other hand, 
$$ \begin{aligned}
|\psi_0(x, y; & t_1, t_2 +\Delta t)  - \psi_0(x, y; t_1, t_2)| \\
& \le |\psi_0(\gm_y(t_2 -\Delta t), y; t_2, t_2 +\Delta t)| + |\psi_0(x, \gm_y(t_2 -\Delta t); t_1, t_2) - \psi_0(x, y; t_1, t_2)| \\
& \le |\psi_0(\gm_y(t_2 -\Delta t), y; t_2, t_2 +\Delta t)| + C_6 |y- \gm_y(t_2 -\Delta t)| \\
& \le |\psi_0(\gm_y(t_2 -\Delta t), y; t_2, t_2 +\Delta t)| + C_3 C_6 \Delta t.
\end{aligned}
$$
Now define a new path $\tilde{\gm}_y \in H^1_{t_2, t_2+\Delta t}(\gm_y(t_2 -\Delta t), y)$ as $\tilde{\gm}_y(s)= \gm_y(s - \Delta t)$, $\forall s \in [t_2, t_2 +\Delta t]$.  
With \eqref{eq;gm-y-Delta t}, we get 
$$  |\psi_0(\gm_y(t_2 -\Delta t), y;  t_2, t_2 +\Delta t)| \le \A_0(\tilde{\gm}_y; t_2, t_2 +\Delta t)  \le ( \ey C_3^2 +C_4) \dlt t. $$
As a result, 
$$ |\psi_0(x, y;  t_1, t_2 +\Delta t)  - \psi_0(x, y; t_1, t_2)| \le (\ey C_3^2 +C_4 + C_3 C_6) \Delta t.$$
This proves the local Lipschitz continuity of $\phi_0$ with respect to $t_2$. 
\end{proof}

To control the asyptotic energy of the massless body as it goes to infinity, instead of minimizer with the same starting and ending time, we need to consider minimizers with varying starting and ending time. However as the system is $T$-periodic the time difference must belong to $T\zz$. For simplicity, we will only state and prove these results for $T=1$ (it is easy to generalize the following results to arbitrary $T>0$).  Notice that 

In the rest of the section, we assume $q(t)$ is $1$-periodic, i.e., $T=1$ in \eqref{eq: CollFree-Per}.
\begin{dfn}
\label{dfn:FreeTimeMinimizer} For any $x, y \in \cc$ and $s_1, s_2 \in [0, 1)$, we say $\gm \in H^1_{t_1, t_2}(x, y)$ is \emph{a free-time minimizer of $\A_h$}, if $\{t_i\} = s_i$, $i =1,2$, and $\A_h(\gm; t_1, t_2) = \phi_h(x, y; s_1, s_2)$, where
$$ \phi_{h}(x, y; s_1, s_2) := \inf \{ \A_{h}(\gm): \gm \in H^1_{\tau_1, \tau_2}(x, y) \text{ with } \tau_1 < \tau_2,  \;\{\tau_i \} = s_i, \; i = 1,2 \};$$
\end{dfn} 

\begin{rem}
Since the Lagrangian $L$ is $1$-periodic, if $\gm|_{[t_1, t_2]}$ is a free-time minimizer of $\A_h$, then so is $\gm|_{[\tau_1, \tau_2]}$, for any  $t_1 \le \tau_1 < \tau_2 \le t_2$.
\end{rem}

\begin{prop}
\label{prop:ExistFreeTimeMin} When $h>0$, for any $x \ne y \in \cc$ and $s_1, s_2 \in [0,1)$, there exist $t_1< t_2$ with $\{t_i\} = s_i$, $i=1,2$ and a $\gm \in H^1_{t_1, t_2}(x,y)$ satisfying  $\A_{h}(\gm; t_1, t_2) = \phi_{h}(x, y; s_1, s_2)$. Moreover $\gm|_{(t_1, t_2)}$ is a collision-free solution of \eqref{eq_RNBP1}.
\end{prop}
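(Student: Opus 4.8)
The plan is to use the $1$-periodicity of the Lagrangian to convert the free-time problem into a minimization over a \emph{discrete} set of interval lengths, and then to exploit the constant $+h$ term in the action (the only place where $h>0$ is needed) to show that just finitely many lengths are relevant, thereby reducing everything to the fixed-time existence result in Proposition~\ref{prop:ExistMinimizer}.

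First I would record the reduction. Since $q(t)$ is $1$-periodic, $U(\cdot,t)$ and hence $L(\cdot,\cdot,t)$ are $1$-periodic in $t$, so for any $k \in \zz$ the shift $\gm(\cdot)\mapsto\gm(\cdot-k)$ gives $\A_h(\gm;t_1,t_2)=\A_h(\gm(\cdot-k);t_1+k,t_2+k)$, whence $\psi_h(x,y;t_1,t_2)=\psi_h(x,y;t_1+k,t_2+k)$. Thus $\psi_h(x,y;t_1,t_2)$ depends on $(t_1,t_2)$ only through the phases $\{t_i\}$ and the length $\ell:=t_2-t_1$. Taking the representative $t_1=s_1$, $t_2=s_1+\ell$ (note $\{t_2\}=\{s_2+k\}=s_2$), for each admissible length $\ell\in\mathcal L:=\{(s_2-s_1)+k:\,k\in\zz\}\cap(0,\infty)$ we obtain a well-defined value $\Psi(\ell):=\psi_h(x,y;s_1,s_1+\ell)$, and by definition $\phi_h(x,y;s_1,s_2)=\inf_{\ell\in\mathcal L}\Psi(\ell)$. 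The set $\mathcal L$ is spaced by $1$ and has a smallest element $\ell_{\min}>0$.

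Next I would establish the two bounds that confine the minimizing length. Since $U\ge 0$ and $h>0$, every $\gm\in H^1_{s_1,s_1+\ell}(x,y)$ satisfies $\A_h(\gm;s_1,s_1+\ell)\ge\int_{s_1}^{s_1+\ell}h\,dt=h\ell$, so $\Psi(\ell)\ge h\ell$. For a finite upper bound one produces an explicit competitor at the shortest length $\ell_{\min}$ — for instance a path from $x$ to $y$ that leaves the ball $B_{R_0}$ containing all the primaries, travels through the region where $U$ is bounded, and returns, avoiding the (finitely many, continuous) primary trajectories — which shows $M:=\Psi(\ell_{\min})<\infty$, and in particular $\phi_h<\infty$. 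Combining the bounds, any $\ell>M/h$ has $\Psi(\ell)\ge h\ell>M\ge\phi_h$, so those lengths are irrelevant and
$$ \phi_h(x,y;s_1,s_2)=\min\{\Psi(\ell):\,\ell\in\mathcal L,\ \ell\le M/h\}. $$
As $\mathcal L\cap(0,M/h]$ is a \emph{finite} set and each $\Psi(\ell)$ is finite and attained by Proposition~\ref{prop:ExistMinimizer}, this minimum is attained at some $\ell^*\in\mathcal L$. Putting $t_1=s_1$, $t_2=s_1+\ell^*$ and letting $\gm$ be the fixed-time minimizer from Proposition~\ref{prop:ExistMinimizer}, we get $\A_h(\gm;t_1,t_2)=\psi_h(x,y;t_1,t_2)=\Psi(\ell^*)=\phi_h(x,y;s_1,s_2)$ with $\{t_i\}=s_i$, i.e. a free-time minimizer.

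Finally, since this $\gm$ is in particular a fixed-end local minimizer of $\A_h$, Proposition~\ref{prop:MinimizerCollFree} shows that $\gm|_{(t_1,t_2)}$ is a collision-free solution of \eqref{eq_RNBP1}, completing the proof. The only genuine obstacle is ruling out that the infimum over lengths is approached as $\ell\to\infty$; this is precisely where $h>0$ enters, through the coercivity in the \emph{time} variable supplied by the constant term $h$ in the Lagrangian. I note that the hypothesis $x\ne y$ plays no role in this existence argument; it is there to guarantee a nonconstant minimizer in the subsequent applications.
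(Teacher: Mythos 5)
Your proof is correct and takes essentially the same route as the paper: both reduce the free-time problem to the discrete family of fixed-time values $\psi_h(x,y;s_1,s_2+n)$ (your $\Psi(\ell)$ over $\ell\in\mathcal L$), use $h>0$ to get the lower bound $h\ell$ forcing the infimum to be attained at a finite length, and then conclude via Propositions \ref{prop:ExistMinimizer} and \ref{prop:MinimizerCollFree}. Your explicit competitor showing $\phi_h<\infty$ is a detail the paper leaves implicit, not a different argument.
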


\begin{proof}
First let's assume $s_1 < s_2$, then
$$ \phi_h(x, y; s_1, s_2) = \inf_{n \in \zz^+ \cup \{0\}} \psi_{h}(x, y; s_1, s_2 +n).$$
Since $h >0$, $\psi_{h}(x, y; s_1, s_2 +n) \ge h(s_2 -s_1+n) \to \infty$, when $n \to \infty$. As a result,   
$$ \phi_h(x, y; s_1, s_2) = \psi_{h}(x, y; s_1, s_2 +n_0), \; \text{ for some } n_0 \in \zz^+ \cup \{0\}. $$
By Proposition \ref{prop:ExistMinimizer}, there is a $\gm \in H^1_{s_1, s_2 +n_0}(x, y)$ satisfying 
$$ \A_{h}(\gm_n; s_1, s_2 +n) = \psi_{h}(x, y; s_1, s_2+n_0) = \phi_h(x, y; s_1, s_2), $$ 
and $\gm|_{(s_1, s_2 +n_0)}$ is a collision-free solution of \eqref{eq_RNBP1}. 

When $s_1 \ge s_2$, $\phi_h(x, y; s_1, s_2) = \inf_{n \in \zz^+} \psi_{h}(x, y; s_1, s_2 +n)$, and the rest of the proof is the exactly the same as above.
\end{proof}

\begin{prop} \label{prop:ExistFreeTimeMin-2}
When $h>0$, for any $x \ne y$ and $s \in [0, 1)$, there is a $s^* \in [0, 1)$, such that 
$$ \phi_h(x, y; s, s^*) = \inf \{ \phi_h(x, y; s, t): t \in [0, 1) \}. $$
\end{prop}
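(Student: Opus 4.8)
The plan is to reduce the minimization over the ending phase $t\in[0,1)$ to a one-parameter minimization over the elapsed time $\ell>0$, and then to run the direct method jointly in the path and in $\ell$. First I would exploit the $1$-periodicity of $L$: whenever $\{t_1\}=s$ one has $\psi_h(x,y;t_1,t_2)=\psi_h(x,y;s,s+(t_2-t_1))$, so setting $F(\ell):=\psi_h(x,y;s,s+\ell)$ for $\ell>0$ and recalling Definition \ref{dfn:FreeTimeMinimizer} gives $\phi_h(x,y;s,t)=\inf\{F(\ell):\ell>0,\ \{s+\ell\}=t\}$. Since every $\ell>0$ satisfies $\{s+\ell\}\in[0,1)$, taking the infimum over $t$ collapses the constraint and yields $\inf_{t\in[0,1)}\phi_h(x,y;s,t)=\inf_{\ell>0}F(\ell)=:m$. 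It therefore suffices to produce an $\ell^*>0$ with $F(\ell^*)=m$: then $s^*:=\{s+\ell^*\}$ works, since $m\le\phi_h(x,y;s,s^*)\le F(\ell^*)=m$.

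Next I would establish coercivity of $F$ at both ends of $(0,\infty)$. Using $U\ge0$ and discarding the $h$-term, Cauchy--Schwarz applied to a minimizer gives $F(\ell)\ge |x-y|^2/(2\ell)$; discarding instead the kinetic and potential terms gives $F(\ell)\ge h\ell$. Because $x\ne y$ and $h>0$, the first bound forces $F(\ell)\to\infty$ as $\ell\to0^+$ and the second forces $F(\ell)\to\infty$ as $\ell\to\infty$. Since $m<\infty$ (each $\phi_h(x,y;s,t)$ is finite by Proposition \ref{prop:ExistFreeTimeMin}), any minimizing sequence $\ell_n$ with $F(\ell_n)\to m$ must remain in a compact subinterval $[a,b]\subset(0,\infty)$, so after passing to a subsequence $\ell_n\to\ell^*\in[a,b]$.

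Finally I would carry out the direct method jointly in the path and the time. For each $n$ let $\gm_n\in H^1_{s,s+\ell_n}(x,y)$ be a minimizer with $\A_h(\gm_n;s,s+\ell_n)=F(\ell_n)$ (Proposition \ref{prop:ExistMinimizer}), and reparametrize affinely to a fixed interval, $\hat\gm_n(u):=\gm_n(s+\ell_n u)$ for $u\in[0,1]$, so that $\hat\gm_n(0)=x$, $\hat\gm_n(1)=y$ and
$$ F(\ell_n)=\int_0^1\Big(\tfrac{1}{2\ell_n}|\hat\gm_n'(u)|^2+\ell_n\,U(\hat\gm_n(u),s+\ell_n u)+h\ell_n\Big)\,du. $$
The bound $\ell_n\le b$ makes $\int_0^1|\hat\gm_n'|^2\,du\le 2b\,F(\ell_n)$ uniformly bounded, so $\{\hat\gm_n\}$ is bounded in $H^1([0,1],\cc)$; after a further subsequence, $\hat\gm_n\rightharpoonup\hat\gm^*$ weakly in $H^1$ and uniformly on $[0,1]$ by the compact embedding, with $\hat\gm^*(0)=x$, $\hat\gm^*(1)=y$. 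The kinetic term is weakly lower semicontinuous and $\ell_n\to\ell^*>0$, whence $\liminf_n(2\ell_n)^{-1}\int|\hat\gm_n'|^2\ge(2\ell^*)^{-1}\int|(\hat\gm^*)'|^2$; for the potential term, uniform convergence $\hat\gm_n\to\hat\gm^*$ and $\ell_n\to\ell^*$, together with $U\ge0$ and the lower semicontinuity of $U$ (continuous where finite, $+\infty$ at collisions), give via Fatou $\liminf_n\ell_n\int_0^1 U(\hat\gm_n,s+\ell_n u)\,du\ge\ell^*\int_0^1 U(\hat\gm^*,s+\ell^* u)\,du$, while the constant term $h\ell_n\to h\ell^*$. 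Undoing the reparametrization, $\gm^*(t):=\hat\gm^*((t-s)/\ell^*)$ lies in $H^1_{s,s+\ell^*}(x,y)$ and satisfies $\A_h(\gm^*;s,s+\ell^*)\le\liminf_n F(\ell_n)=m$; hence $F(\ell^*)\le\A_h(\gm^*;s,s+\ell^*)\le m$, and since $F\ge m$ everywhere we conclude $F(\ell^*)=m$, completing the argument with $s^*=\{s+\ell^*\}$.

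The main obstacle is precisely this joint lower semicontinuity: because $\ell$ is itself varying, both the kinetic coefficient $1/(2\ell_n)$ and the time-argument $s+\ell_n u$ of the singular potential move with $n$, so one must combine weak $H^1$ convergence with the uniform convergence from the compact embedding in order to pass to the limit through the collision singularities of $U$. This is exactly why the coercivity step excluding $\ell^*=0$ and $\ell^*=\infty$ is indispensable, as the limiting procedure would degenerate if the elapsed time collapsed or diverged; note also that this route avoids any appeal to Proposition \ref{prop:psih-Lip}, which would not apply here since $y$ is arbitrary rather than outside $B_{R_0}^o$.
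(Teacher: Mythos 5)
Your proof is correct, and it follows the same overall strategy as the paper -- a direct method with a compactness step on the elapsed time followed by weak lower semicontinuity -- but the execution differs in two substantive ways. The paper takes a minimizing sequence of phases $s_n$, invokes Proposition \ref{prop:ExistFreeTimeMin} to get minimizers $\gm_n$ on the varying intervals $[s, s_n+k_n]$, bounds $k_n$ from above via $hk_n \le \A_h(\gm_n) \le C_1$, extracts $s_n+k_n \to t^*$, and then appeals directly to the weak lower semicontinuity of $\A_h$ for the sequence $\gm_n$ -- tacitly passing over the fact that these paths live on slightly different time intervals. You instead collapse the problem to the one-dimensional minimization of $F(\ell)=\psi_h(x,y;s,s+\ell)$, and your affine reparametrization to the fixed interval $[0,1]$ makes the varying-domain limit rigorous at the cost of having to prove joint lower semicontinuity by hand (weak lsc for the kinetic term with the moving coefficient $1/(2\ell_n)$, Fatou plus lower semicontinuity of $U$ for the potential term); this is more self-contained but requires exactly the care you flag. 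Your approach also makes explicit a point the paper leaves implicit: the bound $F(\ell)\ge |x-y|^2/(2\ell)$ together with $x\ne y$ rules out collapse of the elapsed time, i.e.\ $\ell^*>0$, whereas the paper only bounds the times from above (via the $h$-term) and never verifies that its limit $t^*$ satisfies $t^*>s$ -- this is precisely where the hypothesis $x \ne y$ enters, so your version actually records a detail the paper's proof glosses over. Both arguments conclude identically, sandwiching $\phi_h(x,y;s,s^*)$ between the infimum and the action of the limit path.
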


\begin{proof}
As the infimum in the above equation is finite, there is a sequence $\{ s_n \in [0, 1) \}_{n \in \zz^+}$ with
\begin{equation}
\label{eq:InfPhi_h<C1} \lim_{n \to \infty} \phi_h(x, y; s, s_n) = \inf \{ \phi_h(x, y; s, t): t \in [0, 1) \} \le C_1.
\end{equation}
By Proposition \ref{prop:ExistFreeTimeMin}, for each $n \in \zz^+$, there is a  $  k_n \in  \zz^+ \cup \{0\} $ and a $ \gm_n \in H^1_{s, s_n +k_n}(x, y)$ satisfying 
$$ \A_h(\gm_n; s, s_n +k_n) = \phi_h(x, y; s, s_n). $$
We claim $\{k_n\}_{n \in \zz^+}$ must have a finite upper bound. Otherwise 
$$ \phi_h(x, y; s, s_n) = \A_h(\gm_n; s, s_n +k_n) \ge h k_n \to \infty, \text{ as } n \to \infty,$$
which is a contradiction to \eqref{eq:InfPhi_h<C1}. 

This implies the existence of a finite $t^* \in \rr$ with (after passing to a subsequence) $s_n +k_n \to t^*$, as $n \to \infty$. Since $\gm_n(s) = x$, $\forall n$ and 
$$ \int_{s}^{s_n +k_n} |\gmd_n|^2 \,dt \le 2 \A_h(\gm_n; s, s_n + k_n) = 2\phi_h(x, y;s, s_n) \le 2C_1. $$
By a similar argument as in the proof of Proposition \ref{prop:ExistMinimizer}, we can show $\{ \gm_n|_{[s, s_n +k_n]} \}_{n \in \zz^+}$is a bounded sequence under the $H^1$-norm. Then there is a $\gm \in H^1_{s, t^*}(x, y)$, such that (after passing to a proper subsequence) $\gm_n$ will converge weakly to $\gm$ with respect to the $H^1$-norm, as $n \to \infty$. The weakly lower semi-continuity of $\A_h$ then implies 
$$ \phi_h(x, y; s, s^*) \le \A_h(\gm; s, t^*) \le \liminf_{n \to \infty} \A_h(\gm_n; s, s_n + k_n) =\inf \{ \phi_h(x, y; s, t): t \in [0, 1) \},$$
where $s^* = \{t^*\}$. The desired result then follows from the fact that
$$ \inf \{ \phi_h(x, y; s, t): t \in [0, 1) \}  \le \phi_h(x, y; s, s^*) . $$
\end{proof}

\section{Existence of hyperbolic solutions} \label{sec_HypSol}
In this section, we assume $q(t)$ is a $1$-periodic solution of \eqref{eq:Nbody}, and give a detailed proof of Theorem \ref{thm:HyperSol-1}, until the last proof, where the cases with $T \ne 1$ will be treated by a blow-up argument. Moreover only the proof of $\gm^+|_{[t_x, \infty)}$ will be given, as it is the same for $\gm^-|_{(-\infty, t_x]}$. 

In the following it is more convenient to rewrite the potential function $U$ given in \eqref{eq:Uzt} as 
\begin{equation}
 \label{eq;U-rewrite}  U(z, t) = \frac{m}{|z|} + W(z, t), \text{ where } \;  m= \sum_{i=1}^N m_i.
 \end{equation} 
A direct computation shows 
\begin{\eq}
\label{eq_Wpotential} W(z, t) \simeq O(m|z|^{-2}), \;\text{ as } |z| \to \infty.
\end{\eq}
Therefore we can find a constant $R_1 \ge R_0$, and positive constants $\al_1$, $\al_2$, such that 
\begin{equation}
\label{eq:R_1} |W(z, t)| \le \frac{\al_1}{|z|^2} \le \frac{m}{|z|}, \;\; |\partial_z W(z, t)| \le \frac{\al_2}{|z|^3} \le \frac{m}{|z|^2}, \;\; \forall |z| \ge R_1. 
\end{equation}

Let $\gm(t) =r(t)e^{i \tht(t)} \in C^2([t_1, t_2], \cc)$ be a collision-free solution of \eqref{eq_RNBP1}, we have the following results, which are useful in controlling the asymptotic behavior of the massless body near infinity. 
\begin{prop}
\label{prop_MonIncreasing} 
If $r(t_1) \ge R_1$ and $\rdt(t_1) \ge \sqrt{6m/r(t_1)}$, then $\rdt(t) > \ey \rdt(t_1)$, $\forall t \in [t_1, t_2]$.  
\end{prop}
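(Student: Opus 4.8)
\subsection*{Proof proposal}

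The plan is to work in polar coordinates $\gm(t) = r(t)e^{i\tht(t)}$ and reduce \eqref{eq_RNBP1} to a scalar one-sided differential inequality for the radial variable $r$, then close the estimate by a continuity (bootstrap) argument combined with an energy integral. Writing $\ddot{\gm} = \bigl[(\ddot r - r\dot{\tht}^2) + i(2\rdt\dot{\tht} + r\ddot{\tht})\bigr]e^{i\tht}$ and decomposing the right-hand side $\partial_z U = -mz/|z|^3 + \partial_z W$ (recall \eqref{eq;U-rewrite}) into its radial and angular parts, the radial component of \eqref{eq_RNBP1} reads $\ddot r - r\dot{\tht}^2 = -m/r^2 + W_r$, where $W_r = \langle \partial_z W, \gm/|\gm|\rangle$. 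Since $r\dot{\tht}^2 \ge 0$ and, by \eqref{eq:R_1}, $|W_r| \le |\partial_z W| \le m/r^2$ whenever $r \ge R_1$, I obtain the crucial bound $\ddot r \ge -2m/r^2$ valid on any time interval on which $r \ge R_1$.

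Next I would run the bootstrap. Set $t^* = \sup\{\, t \in [t_1, t_2] : \rdt(s) > \ey\rdt(t_1) \text{ for all } s \in [t_1, t]\,\}$; since $\rdt(t_1) > 0$ and $\rdt$ is continuous, $t^* > t_1$. On $[t_1, t^*]$ one has $\rdt \ge \ey\rdt(t_1) > 0$, so $r$ is increasing and hence $r(t) \ge r(t_1) \ge R_1$ there, which legitimizes the bound $\ddot r \ge -2m/r^2$. Multiplying it by $\rdt > 0$ and integrating from $t_1$ to $t$ gives the energy-type estimate
$$ \ey\rdt(t)^2 - \ey\rdt(t_1)^2 \ge -2m\int_{t_1}^t \frac{\rdt}{r^2}\,ds = 2m\Bigl(\frac{1}{r(t)} - \frac{1}{r(t_1)}\Bigr) \ge -\frac{2m}{r(t_1)}, $$
the last inequality using only $r(t) > 0$. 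The hypothesis $\rdt(t_1)^2 \ge 6m/r(t_1)$ yields $2m/r(t_1) \le \tfrac13\rdt(t_1)^2$, so $\rdt(t)^2 \ge \tfrac13\rdt(t_1)^2$, i.e.\ $\rdt(t) \ge \rdt(t_1)/\sqrt{3} > \ey\rdt(t_1)$ for every $t \in [t_1, t^*]$.

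Finally, this strict inequality at $t = t^*$ forces $t^* = t_2$: were $t^* < t_2$, continuity of $\rdt$ would give $\rdt(s) > \ey\rdt(t_1)$ for $s$ slightly beyond $t^*$, contradicting the definition of the supremum. Hence $\rdt(t) \ge \rdt(t_1)/\sqrt{3} > \ey\rdt(t_1)$ on all of $[t_1, t_2]$, which is the claim. The one genuinely delicate point, and what I expect to be the main obstacle, is the apparent circularity: the bound $\ddot r \ge -2m/r^2$ requires $r \ge R_1$, while maintaining $r \ge R_1$ requires $\rdt$ to stay positive, which is exactly what is to be proved. The continuity argument is what breaks this circle, and it works precisely because the quantitative lower bound $\rdt(t_1)/\sqrt{3}$ strictly exceeds the threshold $\ey\rdt(t_1)$; the numerical constant $6$ in the hypothesis is tuned so that $\tfrac13\rdt(t_1)^2$ beats $(\ey\rdt(t_1))^2 = \tfrac14\rdt(t_1)^2$ with room to spare.
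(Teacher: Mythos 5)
Your proof is correct and is essentially the paper's own argument: the paper (Proposition \ref{prop_MonIncreasingApp} in Appendix B, stated for a general perturbation $|F|\le C/|x|^2$ and specialized via \eqref{eq:R_1} with $C=m$, whence the constant $\sqrt{6m/r(t_1)}$) uses the same radial equation with the centrifugal term dropped, the same energy quantity (its $g(t)=\ey\rdt^2-\frac{m+C}{r}$ is exactly your integrated inequality), and the same continuity mechanism, merely packaged as a contradiction at the first time $\rdt$ hits $\ey\rdt(t_1)$ rather than as a bootstrap to the supremum. Your quantitative form $\rdt(t)\ge \rdt(t_1)/\sqrt{3}>\ey\rdt(t_1)$ is the same arithmetic the paper runs in reverse.
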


\begin{prop} \label{prop:PurterbKeplerLimitAngle}
If  $r(t_1) \ge R_1$ and $\rdt(t) \ge v_0$, $\forall t \ge t_1$, for some constant $v_0>0$, then 
\begin{enumerate}
 \item[(a).] $\sup\{ |\om(t)|: \; t \in [t_1, t_2] \} \le  |\om(t_1)| + \frac{\al_2}{v_0r(t_1)},$, where $\om(t) = \gm(t) \wedge \gmd(t)$;
 \item[(b).] when $t_2=\infty$,  $\lim_{t \to \infty} \tht(t) = \tht_0$, for some $\tht_0 \in \rr$.
 \end{enumerate} 
\end{prop}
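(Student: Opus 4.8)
The central object is the scalar angular momentum $\om(t) = \gm(t) \wedge \gmd(t)$, which in polar coordinates equals $r(t)^2 \dot{\tht}(t)$. The plan is as follows. First I would observe that the hypotheses already pin down the radial behaviour: since $\rdt(t) \ge v_0 > 0$ throughout, $r$ is strictly increasing, so $r(t) \ge r(t_1) \ge R_1$ for every $t \ge t_1$, and moreover $r(t) \ge r(t_1) + v_0(t-t_1) \to \infty$ as $t \to \infty$. This guarantees that the decay estimates \eqref{eq:R_1} for $W$ are available along the entire trajectory, which is what makes the whole argument run.

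The key computation is the evolution of $\om$. Differentiating gives $\dot{\om} = \gm \wedge \ddot{\gm}$ (the term $\gmd \wedge \gmd$ vanishes), and substituting \eqref{eq_RNBP1} rewritten via \eqref{eq;U-rewrite}, namely $\ddot{\gm} = -m\gm/|\gm|^3 + \partial_z W(\gm, t)$, the purely radial Kepler term wedges to zero, leaving $\dot{\om} = \gm \wedge \partial_z W(\gm, t)$. Thus only the perturbation $W$ can change the angular momentum, and by \eqref{eq:R_1}, $|\dot{\om}(t)| \le r(t)\,|\partial_z W(\gm(t), t)| \le \al_2/r(t)^2$. Recognizing this cancellation of the central force is the crux of the proof; everything afterwards is routine estimation.

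To prove (a), I would integrate this bound. Since $\rdt \ge v_0$, the substitution $u = r(t)$ gives $dt \le du/v_0$, so for any $t \in [t_1, t_2]$,
$$ |\om(t) - \om(t_1)| \le \int_{t_1}^{t_2} |\dot{\om}(s)|\,ds \le \frac{\al_2}{v_0}\int_{r(t_1)}^{r(t_2)} \frac{du}{u^2} \le \frac{\al_2}{v_0\, r(t_1)}, $$
which yields the claimed bound on $\sup |\om(t)|$ after the triangle inequality.

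For (b) with $t_2 = \infty$, part (a) provides a finite bound $M = |\om(t_1)| + \al_2/(v_0 r(t_1))$ on $|\om|$. Writing $\dot{\tht} = \om/r^2$ and applying the same change of variables,
$$ \int_{t_1}^{\infty} |\dot{\tht}(t)|\,dt \le M \int_{t_1}^{\infty} \frac{dt}{r(t)^2} \le \frac{M}{v_0}\int_{r(t_1)}^{\infty} \frac{du}{u^2} = \frac{M}{v_0\, r(t_1)} < \infty. $$
Hence $\dot{\tht} \in L^1([t_1, \infty))$, so $\tht(t) = \tht(t_1) + \int_{t_1}^{t} \dot{\tht}(s)\,ds$ converges to a finite limit $\tht_0$ as $t \to \infty$. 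The only points needing care are the strict monotonicity of $r$ (to justify the substitution and to keep $r(t) \ge R_1$) and the divergence $r(t) \to \infty$ (to close the improper integral in (b)); both follow immediately from $\rdt \ge v_0 > 0$.
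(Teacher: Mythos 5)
Your proof is correct and follows essentially the same route as the paper's (given in Appendix B for the general perturbed Kepler problem): the central force wedges to zero, giving $|\dot{\om}| \le \al_2/r^2$, which is integrated using $\rdt \ge v_0$ to get (a), and then $\dot{\tht} = \om/r^2$ is shown to have a convergent integral for (b). The only cosmetic differences are that you integrate via the substitution $u = r(t)$ where the paper uses the pointwise bound $r(t) \ge v_0(t-t_1) + r(t_1)$, and you phrase (b) as $\dot{\tht} \in L^1$ where the paper invokes the Cauchy criterion with a tail estimate — the same argument in different words.
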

 
Proofs of the above two propositions will be given in Appendix B.  
\begin{lem}
\label{prop:phihUpperBound} For any $x, y \in \cc$ satisfying $\sqrt{2}R_1 \le |x|, |y| \le R$, the following results hold. 
\begin{enumerate}
\item[(a).] $\psi_0(x, y; t_1, t_2) \le  16 \frac{R^2}{t_2 -t_1} + 12m \frac{t_2 -t_1}{R}$, $\forall t_1 < t_2$.
\item[(b).] For any $h>0$, there exist positive constants $\beta_1, \beta_2$ independent of $R$, such that
$$ \sup \{ \phi_{h}(x, y; s_1, s_2): s_1, s_2 \in [0,1)\} \le \beta_1 R + \beta_2. $$
\end{enumerate} 
\end{lem}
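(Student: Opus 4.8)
The plan is to bound the free-time minimizer action $\phi_h$ by exhibiting a single explicit competitor path connecting $x$ to $y$, then estimating its action directly. Part (a) at $h=0$ is the workhorse; part (b) will follow by relating $\phi_h$ to $\psi_0$ plus the energy contribution $h(t_2-t_1)$ and then optimizing over the elapsed time.

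For part (a), since $|x|,|y| \ge \sqrt{2}R_1 > R_1$, both endpoints lie in the region where the perturbation estimate \eqref{eq:R_1} controls $W$, so along any path staying outside $B_{R_1}$ we have $U(z,t) = m/|z| + W(z,t) \le m/|z| + m/|z| = 2m/|z|$. The natural competitor is the straight-line segment $\xi(t) = x + \frac{t-t_1}{t_2-t_1}(y-x)$, $t \in [t_1,t_2]$. The kinetic term integrates to $\frac{|y-x|^2}{2(t_2-t_1)} \le \frac{(|x|+|y|)^2}{2(t_2-t_1)} \le \frac{2R^2}{t_2-t_1}$, already consistent with the claimed $16R^2/(t_2-t_1)$ coefficient. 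The potential term is where I must be careful: the straight segment between two points of norm $\le R$ can dip close to the origin, where $m/|z|$ blows up and the bound \eqref{eq:R_1} fails. The hard part will be handling this: I expect the fix is to route the path so it never enters $B_{R_1}$ — for instance first move radially/along an arc to keep $|\xi(t)|$ bounded below by a fixed fraction of $R$ (using $|x|,|y| \ge \sqrt{2}R_1$ to guarantee room), at the cost of lengthening the path by a bounded factor, which is exactly why the kinetic coefficient is inflated to $16$ rather than $2$. With $|\xi(t)| \ge c R$ along such a detour, $\int_{t_1}^{t_2} U(\xi,t)\,dt \le 2m(t_2-t_1)/(cR)$, giving the $12m(t_2-t_1)/R$ term for an appropriate geometric constant.

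For part (b), I would invoke the definition of the free-time minimizer: $\phi_h(x,y;s_1,s_2) = \inf \{\A_h(\gm) : \{\tau_i\}=s_i\} \le \inf_{\tau} \{\A_0(\gm;\tau_1,\tau_2) + h(\tau_2-\tau_1)\}$ over admissible elapsed times. Choosing any fixed $t_1 < t_2$ with the prescribed fractional parts $\{t_i\}=s_i$ and elapsed time $\ell := t_2 - t_1$, part (a) gives
$$ \phi_h(x,y;s_1,s_2) \le \frac{16R^2}{\ell} + \frac{12m\,\ell}{R} + h\,\ell. $$
Now I optimize the right-hand side over $\ell$ (subject to $\ell \in s_2 - s_1 + \zz^+$, but since I only need an upper bound I may pick a convenient admissible $\ell$). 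Minimizing $16R^2/\ell + (12m/R + h)\ell$ in $\ell$ gives an optimal $\ell^* \simeq 4R/\sqrt{12m/R + h}$ and a minimal value $\simeq 8R\sqrt{12m/R+h}$. For large $R$ the dominant term is $8R\sqrt{h}$, which is linear in $R$ and independent of $R$ in its coefficient, yielding the claimed bound $\beta_1 R + \beta_2$ with $\beta_1,\beta_2$ depending only on $h,m$. The only technical nuisance is that $\ell$ is constrained to a discrete arithmetic progression rather than a continuum, so I would choose the integer nearest the continuous optimum $\ell^*$; the resulting discretization error changes the estimate by a bounded amount absorbed into $\beta_2$, and crucially keeps the coefficient of $R$ free of any $R$-dependence.

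The main obstacle I anticipate is the geometric routing in part (a): ensuring the competitor path stays outside $B_{R_1}$ (so the clean potential bound applies) while keeping both its length and its minimum distance to the origin controlled by universal multiples of $R$. The hypothesis $|x|,|y| \ge \sqrt{2}R_1$ is precisely the margin that makes this possible, and getting the explicit constants to land on $16$ and $12$ will require choosing the detour — likely a two-segment or arc-plus-segment path — with some care, though no genuinely deep idea beyond elementary planar geometry and the Cauchy–Schwarz bound on the kinetic energy.
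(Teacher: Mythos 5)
Your part (b) is fine modulo part (a) (and is essentially the paper's argument: the paper simply takes elapsed time $\ell = t_2-t_1 \in [R, R+1]$ instead of optimizing, getting $\beta_1 = 16+h$; your optimization over the arithmetic progression of admissible $\ell$ works equally well). But your plan for part (a) has a genuine gap, and it is exactly at the point you flagged as "the hard part". Your proposed fix is to route the competitor so that $|\xi(t)| \ge cR$ for a universal $c>0$, and then bound $\int U \le 2m(t_2-t_1)/(cR)$. This is impossible: the hypothesis only gives $|x|, |y| \ge \sqrt{2}R_1$, and $R_1$ is a \emph{fixed} constant while $R$ is arbitrary, so the endpoints themselves may sit at distance $\simeq R_1 \ll R$ from the origin. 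No path with these endpoints satisfies a pointwise lower bound proportional to $R$. Worse, the obstruction is not just at the endpoints: with any \emph{constant-speed} parametrization, a path that travels from norm $\simeq R_1$ out to norm $\simeq R$ picks up $\int 2m/|\xi|\,dt \simeq \frac{2m(t_2-t_1)}{R}\log(R/R_1)$, i.e.\ an unavoidable logarithmic factor, which falls short of the stated bound $12m(t_2-t_1)/R$. (That weaker log-bearing bound would still suffice for part (b), since $\log R \le R$, but it does not prove statement (a) as written.)

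The missing idea in the paper is a \emph{non-uniform time reparametrization}, not a spatial detour. The paper routes through an intermediate point $z$ with $|z|=R$ on the bisector of the rays $\overrightarrow{ox}$, $\overrightarrow{oy}$ (so each leg subtends an angle $\le \pi/2$, whence $|\xi(t)| \ge \frac{1}{\sqrt{2}}\min(|x|,|z|) \ge R_1$ and \eqref{eq:R_1} applies, and also $|\xi(t)| \ge |\mu - \lambda(t)|(|x|+|z|)$ by the reverse triangle inequality), and then traverses each straight segment with the Kepler-type profile $\lambda(t) \sim (\text{time})^{2/3}$, fast near the closest approach and slow far away, with the time budget split in the ratio $\mu^{3/2} : (1-\mu)^{3/2}$ where $\mu = |x|/(|x|+|z|)$. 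The $2/3$-power makes $\int |\mu - \lambda(t)|^{-1}\,dt$ finite and of size $O(t_2-t_1)/\mu$ (resp.\ $O(t_2-t_1)/(1-\mu)$), so the potential integral scales like $m(t_2-t_1)/|z| = m(t_2-t_1)/R$ with no log, at the cost of inflating the kinetic coefficient through the factors $f_1(\mu)f_2(\mu)$ — this, together with the midpoint detour, is where $16$ and $12$ come from. Without this reparametrization (or an equivalent device trading kinetic for potential along the radial direction), your competitor cannot achieve the claimed coefficient, so the proof as proposed does not close.
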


\begin{proof}
(a). Choose a point $z \in \cc$ with $|z|=R$. Let $\overrightarrow{ox}, \overrightarrow{oy}$ and $\overrightarrow{oz}$ denote the three rays starting from the origin and passing the points $x, y$ and $z$ correspondingly. Moreover we assume $\overrightarrow{oz}$ divide the angle between $\overrightarrow{ox}$ and $\overrightarrow{oy}$ into two equal angles with each angle less than or equal to $\pi/2$. 

Define a straight line segment $\xi(t)= (1 - \lmd(t))x + \lmd(t)z$, $t \in [t_1, \frac{t_1+t_2}{2}]$ with
\begin{equation*}
\lmd(t) = \begin{cases} \mu\left[ 1 - \left( \frac{\tau-t}{\tau - t_1} \right)^{\frac{2}{3}} \right], & \text{ if } t \in [t_1, \tau]; \\
\mu + (1- \mu) \left( \frac{t -\tau}{\frac{t_1 + t_2}{2} -\tau} \right)^{\frac{2}{3}}, & \text{ if } t \in [\tau, \frac{t_1 + t_2}{2}],
\end{cases}
\end{equation*}
satisfying $\lmd(t_1) =0$ and $\lmd(\frac{t_1 +t_2}{2}) =1$. For this, we need to let 
$$ \mu = \frac{|x|}{|x| + |z|} \in (0, 1), \;\; \tau = t_1 + \frac{\mu^{3/2}}{\mu^{3/2} + (1-\mu)^{3/2}} \frac{t_2 -t_1}{2} \in (t_1, \ey(t_1 +t_2)). $$
By the triangle inequality, for any $t \in [t_1, \frac{t_1 +t_2}{2}]$, 
\begin{equation*}
\label{eq_gammaModule>} |\xi(t)| = |(1 -\lmd(t))x + \lmd(t) z| \ge \big|(1- \lmd(t))|x| - \lmd(t) |z| \big| = | \mu -\lmd(t)| \big( |x| + |z| \big). 
\end{equation*}

Since $|x|, |z| \ge \sqrt{2}R_1$, $|\xi(t)| \ge R_1$,  $ \forall t\in [t_1, \frac{t_1 +t_2}{2}]$. Then \eqref{eq:R_1} and the above inequality imply
$$ U(\xi(t), t) \le \frac{2m}{|\xi(t)|} \le \frac{2m}{(|x| + |z|)|\mu -\lmd(t)|} \le \frac{2m}{|z| \cdot |\mu - \lmd(t)|}.$$
Using this, we get
$$ \begin{aligned}
\int_{t_1}^{\tau} L(\xi, \dot{\xi}, t) \,dt & = \int_{t_1}^{\tau} \ey |\dot{\xi}|^2 + U(\xi, t) \,dt \le  \frac{|z -x|^2}{2} \int_{t_1}^{\tau} |\dot{\lmd}(t)|^2 \,dt + \frac{2m}{|z|} \int_{t_1}^{\tau} \frac{1}{|\mu -\lmd(t)|} \,dt \\
& = \frac{2|z-x|^2}{9}\frac{\mu^2}{(\tau-t_1)^2} \int_{t_1}^{\tau} \left(\frac{\tau-t}{\tau- t_1}\right)^{-\frac{2}{3}} \,dt + \frac{2m}{|z|} \frac{1}{\mu}\int_{t_1}^{\tau} \left(\frac{\tau-t}{\tau- t_1}\right)^{-\frac{2}{3}} \,dt \\
& = \frac{2}{3}|z-x|^2 \frac{\mu^2}{\tau-t_1} + \frac{6m}{|z|} \frac{\tau -t_1}{\mu} \\
& = \frac{4}{3} \mu^{\ey} \left(\mu^{\frac{3}{2}}+ (1-\mu)^{\frac{3}{2}}\right) \frac{|z-x|^2}{t_2 -t_1} + 3m \frac{\mu^{\ey}}{\mu^{\frac{3}{2}}+ (1-\mu)^{\frac{3}{2}}} \frac{t_2 -t_1}{|z|};
\end{aligned}
$$ 
$$ 
\begin{aligned} \int_{\tau}^{\frac{t_1+t_2}{2}}  L(\xi, \dot{\xi},  t)\,dt & = \int_{\tau}^{\frac{t_1+t_2}{2}} \ey |\dot{\xi}(t)|^2 + U(\xi(t), t) \,dt \\
& \le  \frac{|z -x|^2}{2} \int_{\tau}^{\frac{t_1+t_2}{2}} |\dot{\lmd}(t)|^2 \,dt + \frac{2m}{|z|} \int_{\tau}^{\frac{t_1+t_2}{2}} \frac{1}{|\mu -\lmd(t)|} \,dt   \\
& = \left( \frac{2|z-x|^2}{9} \frac{(1-\mu)^2}{(\frac{t_1+t_2}{2}-\tau)^2} + \frac{2m}{|z|} \frac{1}{1-\mu} \right)  \int_{\tau}^{\frac{t_1+t_2}{2}} \left( \frac{t-\tau}{\frac{t_1+t_2}{2} - \tau} \right)^{-\frac{2}{3}} \,dt \\
& = \frac{2}{3} |z-x|^2 \frac{(1- \mu)^2}{\frac{t_1 + t_2}{2}- \tau} + \frac{6m}{|z|} \frac{ \frac{t_1+t_2}{2} -\tau}{1-\mu} \\
& = \frac{4}{3}(1 -\mu)^{\ey} \left(\mu^{\frac{3}{2}}+ (1-\mu)^{\frac{3}{2}} \right) \frac{|z-x|^2}{t_2 -t_1} + 3m \frac{(1 -\mu)^{\ey}}{\mu^{\frac{3}{2}}+ (1-\mu)^{\frac{3}{2}}} \frac{t_2 -t_1}{|z|}. 
\end{aligned}
$$
These two estimates imply
\begin{equation} \label{eq:xtoz}
\int_{t_1}^{\frac{t_1+t_2}{2}} L(\xi, \dot{\xi}, t) \,dt \le \frac{4}{3} f_1(\mu) f_2(\mu) \frac{|z-x|^2}{t_2 -t_1} + 3m \frac{f_1(\mu)}{f_2(\mu)} \frac{t_2 -t_1}{|z|},
\end{equation}
where  
$$ f_1(\mu) = \mu^{\ey} + (1-\mu)^{\ey}, \; f_2(\mu) = \mu^{\frac{3}{2}} + (1- \mu)^{\frac{3}{2}}. $$
Since $\mu \in (0, 1)$,
$$ f_1(\mu) = 2 \left(\ey \mu^{\ey} + \ey (1-\mu)^{\ey} \right) \le 2 \left( \ey \mu +\ey(1 -\mu) \right)^{\ey} = \sqrt{2}; $$   
$$ 1 \ge f_2(\mu) = 2 \left(\ey \mu^{\frac{3}{2}} + \ey (1-\mu)^{\frac{3}{2}} \right) \ge 2 \left(\ey\mu + \ey(1 -\mu) \right)^{\frac{3}{2}} \ge \frac{\sqrt{2}}{2}. 
$$ 
Plug these into \eqref{eq:xtoz}, we have
$$ \int_{t_1}^{\frac{t_1+t_2}{2}} L(\xi, \dot{\xi}) \,dt \le \frac{4\sqrt{2}}{3} \frac{|z-x|^2}{t_2 -t_1} + 6m \frac{t_2 -t_1}{|z|} \le \frac{16 \sqrt{2}}{3} \frac{R^2}{t_2 -t_1} + 6m \frac{t_2 -t_1}{R}. $$

A similarly straight line segment $\zeta|_{[\frac{t_1 + t_2}{2}, t_2]}$ going from $z$ to $y$ can be found, which satisfies   
$$ \int_{\frac{t_1+t_2}{2}}^{t_2} L(\zeta, \dot{\zeta}, t) \,dt \le \frac{16 \sqrt{2}}{3} \frac{R^2}{t_2 -t_1} + 6m \frac{t_2 -t_1}{R}.$$
As a result, 
$$ \psi_0(x, y; t_1, t_2) \le \int_{t_1}^{\frac{t_1+t_2}{2}} L(\xi, \dot{\xi}, t) \,dt  + \int_{\frac{t_1+t_2}{2}}^{t_2} L(\zeta, \dot{\zeta}, t) \,dt  \le 16\frac{R^2}{t_2 -t_1} + 12m \frac{t_2 -t_1}{R}. $$ 
 
(b). Let $t_1 =s_1$, we can always find a $t_2 \in [s_1+R, s_1+ R+1]$ satisfying $\{t_2 \} =s_2.$ Then $R \le t_2 -t_1 =t_2 -s_1 \le R+1$. Hence
\begin{equation*}
\begin{aligned}
\phi_{h}(x, y; s_1, s_2) & \le \psi_h(x, y; t_1, t_2) \le 16 \frac{R^2}{t_2 -t_1} + 6m \frac{t_2 -t_1}{R} + h (t_2 -t_1) \\
& \le 16 R + 6m (1 + R^{-1}) + hR + h \\
& \le (16 + h)R + 6m + \frac{3\sqrt{2}m}{R_1} +h. 
\end{aligned}
\end{equation*}
\end{proof}

For the rest of the section, we fix arbitrarily a positive constant $h$, a position $x \in \cc$, a moment $t_x$ (without loss of generality we further assume $t_x \in [0, 1)$) and an angle $\tht_+ \in [0, 2\pi)$. Set 
\begin{equation}
  \label{eq;Lmd-R}  \Lmd_{\tht_+}(R) := \{r e^{i\tht} \in \cc: r \ge R \text{ and } \tht = \tht_+ (\text{mod } 2 \pi) \}, \; \text{ for } R>0.
  \end{equation}  
 By Proposition \ref{prop:ExistFreeTimeMin} and \ref{prop:ExistFreeTimeMin-2}, for any $y \in \Lmd_{\tht_+}(|x|+1)$, there is a $\gm_y \in H^1_{t_x, t_y}(x, y)$ satisfying
\begin{equation}
\label{eq:gm_y}  \A_h(\gm_y; t_x, t_y) = \phi_{h}(x, y; t_x, \{t_y\})= \inf_{s \in [0, 1)} \phi_{h}(x, y; t_x, s).
\end{equation}

\begin{lem}
\label{lem:gmy-H1-Upperbound}
For any $R_2 \ge \max \{2\sqrt{2}R_1, |x| +1 \}$ and $y \in \Lmd_{\tht_+}(R_2)$, let 
\begin{\eq}
\label{eq:ty} \tau_y := \min \{ t \in [t_x, t_y]: |\gm_y(t)| =R_2\}.
\end{\eq} 
The following results hold. 
\begin{enumerate}
\item[(a).] $ t_x < \inf\{\tau_y: \; y \in \Lmd_{\tht_+}(R_2) \} \le \sup \{ \tau_y: \;  y \in \Lmd_{\tht_+}(R_2) \}< \infty$. 
\item[(b).]  When $R_2$ is large enough, $  \dot{r}_y(t) \ge \sqrt{3m/2} R_2^{-\ey}$, $\forall t \in [\tau_y, t_y]$, where $r_y(t) = |\gm_y(t)|$.
\end{enumerate}
\end{lem}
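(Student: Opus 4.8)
The author wants to prove two things about the free-time minimizers $\gm_y$ connecting $x$ to a point $y$ on the ray $\Lmd_{\tht_+}(R_2)$: first, that the entry time $\tau_y$ into the sphere of radius $R_2$ is bounded away from both $t_x$ and $+\infty$, uniformly in $y$; and second, that once the trajectory has reached radius $R_2$ it has a definite minimal radial speed. Let me sketch how I would attack each part.

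Let me think about part (a) first. For the lower bound $t_x < \inf_y \tau_y$, the point is that $\gm_y$ starts at $x$ with $|x| < R_2 - 1$ (since $R_2 \ge |x|+1$), so to reach radius $R_2$ the path must travel a distance at least $R_2 - |x| \ge 1$. A short time interval would force a huge kinetic action, contradicting the uniform upper bound on $\phi_h$ from Lemma \ref{prop:phihUpperBound}(b), which gives $\A_h(\gm_y) \le \beta_1 R_2 + \beta_2$. I would make this quantitative via Cauchy–Schwarz: if $\tau_y - t_x$ were tiny, then $\ey\int_{t_x}^{\tau_y}|\dot\gm_y|^2\,dt \ge (R_2-|x|)^2/(2(\tau_y - t_x))$ would blow up. So $\tau_y - t_x$ is bounded below by a positive constant depending on $R_2$ but not on $y$. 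For the upper bound $\sup_y \tau_y < \infty$: on $[t_x, \tau_y]$ the whole trajectory stays in $B_{R_2}$, where the potential $U$ is bounded below (say by $0$) and the added constant $h>0$ contributes $h\cdot(\tau_y - t_x)$ to the action; since $\A_h(\gm_y) \le \beta_1 R_2 + \beta_2$, we get $h(\tau_y - t_x) \le \beta_1 R_2 + \beta_2$, hence $\tau_y - t_x \le (\beta_1 R_2 + \beta_2)/h$. This is exactly where positivity of $h$ is essential.

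For part (b) I would invoke Proposition \ref{prop_MonIncreasing}, which says that if at some time the radius is $\ge R_1$ and the radial velocity satisfies $\rdt \ge \sqrt{6m/r}$, then $\rdt$ stays above $\ey\rdt$ of its initial value thereafter. So the task reduces to producing, at the entry time $\tau_y$ (where $r_y(\tau_y) = R_2$), a lower bound $\dot r_y(\tau_y) \ge \sqrt{6m/R_2}$. The natural mechanism is energy/action balance: because $\gm_y$ is a \emph{free-time} minimizer, it must have the "right" asymptotic energy $h$, and a genuinely positive-energy trajectory escaping to infinity should cross radius $R_2$ with radial speed of order $\sqrt{2h}$, which dominates $\sqrt{6m/R_2}$ once $R_2$ is large. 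I would extract this by comparing $\A_h(\gm_y)$ against a test path and using the first-variation (transversality) condition in the free endpoint time $t_y$, which forces the energy of $\gm_y$ to equal $h$; combined with the upper action bound this pins down the radial speed at the entry sphere.

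The main obstacle I expect is part (b): translating the free-time minimization property into a genuine pointwise lower bound on $\dot r_y(\tau_y)$. Unlike a fixed-time minimizer, the free-time minimizer satisfies a transversality condition at $t_y$ equating the energy to $h$, but one still has to propagate this energy information backward to the entry time $\tau_y$ while controlling the (small but nonzero) contribution of the angular momentum and the perturbation $W$. The estimate in Proposition \ref{prop:PurterbKeplerLimitAngle}(a) bounding $|\om(t)|$ will likely be needed to argue that almost all the kinetic energy at radius $R_2$ is radial rather than angular, so that $\ey\dot r_y^2 \approx \ey|\dot\gm_y|^2 \ge h + m/R_2 > 3m/R_2$ for $R_2$ large. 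Getting the constants to line up so that $\dot r_y(\tau_y) \ge \sqrt{6m/R_2}$ (the hypothesis of Proposition \ref{prop_MonIncreasing}), and then reading off the stated bound $\sqrt{3m/2}\,R_2^{-1/2}$ for all later times, is the delicate bookkeeping step.
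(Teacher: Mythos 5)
Your part (a) follows the paper's route (a uniform action bound on the sub-arc $[t_x,\tau_y]$, the inequality $h(\tau_y-t_x)\le \A_h(\gm_y;t_x,\tau_y)$ for the upper bound, and Cauchy--Schwarz blow-up for the lower bound), but the source you cite for the action bound is wrong: Lemma \ref{prop:phihUpperBound}(b) requires \emph{both} endpoints to have modulus in $[\sqrt{2}R_1,R]$, whereas $x$ is an arbitrary point of $\cc$ (it may lie deep inside $B_{R_1}$, even at a primary), and moreover your claimed bound $\A_h(\gm_y)\le\beta_1R_2+\beta_2$ for the whole path cannot hold uniformly, since that lemma's constant is $\beta_1 R+\beta_2$ with $R\ge|y|\to\infty$. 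The paper instead bounds $\A_h(\gm_y;t_x,\tau_y)=\phi_h(x,\gm_y(\tau_y);t_x,\{\tau_y\})\le\psi_h(x,\gm_y(\tau_y);t_x,n_0+\{\tau_y\})$ and uses the local Lipschitz continuity of $\psi_h$ (Proposition \ref{prop:psih-Lip}) over the compact set $\partial B_{R_2}\times[n_0,n_0+1]$, which makes no assumption on $|x|$. With that replacement your (a) is correct.

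Part (b) is where the proposal genuinely fails. Your mechanism --- a transversality condition in the free ending time forcing the energy of $\gm_y$ to equal $h$, then backward propagation of that energy to $\tau_y$ --- is unavailable here: the system is time-periodic, so energy is \emph{not} conserved along solutions (this is the first difficulty flagged in the introduction), and the ``free time'' in Definition \ref{dfn:FreeTimeMinimizer} only ranges over times with a fixed fractional part, i.e.\ a discrete family $t_y\in s_2+\zz$, so there is no continuous first variation in $t_y$ and no transversality identity at all. Your appeal to Proposition \ref{prop:PurterbKeplerLimitAngle}(a) is also circular, since its hypothesis $\rdt\ge v_0$ on $[\tau_y,t_y]$ is exactly the conclusion of (b). Finally, demanding the bound at precisely $t=\tau_y$ asks for more than is needed. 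The paper's actual argument is purely action-theoretic: set $a_y=\max\{t\in[t_x,\tau_y]:r_y(t)=R_2/2\}$; if $\rdt_y(t)<\sqrt{6m/r_y(t)}$ on all of $[a_y,\tau_y]$, then since $r_y\ge R_2/2$ there, $R_2/2=\int_{a_y}^{\tau_y}\rdt_y\,dt<\sqrt{12m/R_2}\,(\tau_y-a_y)$, forcing $\tau_y-a_y\ge R_2^{3/2}/(4\sqrt{3m})$ and hence $\A_h(\gm_y;a_y,\tau_y)\ge h(\tau_y-a_y)>\frac{h}{4\sqrt{3m}}R_2^{3/2}$; but the free-time minimality of the sub-arc together with Lemma \ref{prop:phihUpperBound} --- now legitimately applicable, because $|\gm_y(a_y)|=R_2/2\ge\sqrt{2}R_1$ (this is why $R_2\ge2\sqrt2R_1$) and $|\gm_y(\tau_y)|=R_2$ --- bounds the same quantity by $\beta_1R_2+\beta_2$, a contradiction once $R_2$ is large. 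This yields \emph{some} $t_0\in[a_y,\tau_y]$ with $\rdt_y(t_0)\ge\sqrt{6m/r_y(t_0)}$, and Proposition \ref{prop_MonIncreasing} then propagates $\rdt_y(t)\ge\ey\sqrt{6m/r_y(t_0)}\ge\sqrt{3m/2}\,R_2^{-\ey}$ on $[t_0,t_y]\supset[\tau_y,t_y]$, with no energy conservation or angular-momentum control needed.
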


\begin{proof} 
(a). The second inequality in the statement is trivial. Let's give a proof of the third inequality first. Fix an $n_0 \in \zz^+$, by Proposition \ref{prop:psih-Lip}, 
$$ \sup \{ \psi_{h}(x, z; t_x, t): \; z \in \partial B_{R_2} \text{ and } t \in [n_0, n_0+1] \} \le C_1.$$
Since $\gm_y|_{[t_x, t_y]}$ is a free-time minimizer, so is $\gm_y|_{[t_x, \tau_y]}$.  Then
\begin{equation}
\label{eq: PhiGmyUpperBound} h(\tau_y -t_x) \le  \A_{h}(\gm_y; t_x, \tau_y) = \phi_{h}(x, \gm_y(\tau_y); t_x, \{\tau_y\}) \le \psi_{h}(x, \gm_y(\tau_y); t_x, n_0 + \{\tau_y\}) \le C_1.
\end{equation}
This implies  
$$ \sup\{ \tau_y: \; y \in \Lmd_{\tht_+}(R_2) \} \le t_x + C_1/h < \infty.$$ 

Meanwhile by the Cauchy-Schwartz inequality,
$$ (\tau_y- t_x)  \int_{t_x}^{\tau_y} |\dot{\gm}_y|^2\,dt   \ge \left( \int_{t_x}^{\tau_y} |\dot{\gm}_y|\,dt  \right) ^2 \ge |\gm(\tau_y)- x|^2 \ge  (R_2 -|x|)^2. $$
Combining this with \eqref{eq: PhiGmyUpperBound}, we get
\begin{equation*}
C_1 \ge \A_{h}(\gm_y; t_x, \tau_y ) \ge \ey \int_{t_x}^{\tau_y} |\gmd_y|^2 \,dt  \ge \frac{(R_2 -|x|)^2}{2(\tau_y -t_x)}.
\end{equation*}
Assuming the first inequality in the statement does not hold, then we can find a sequence $\{y_n  \in \Lmd_{\tht_+}(R_2) \}_{n \in \zz^+}$ with the corresponding $\{\tau_{y_n}\}_{n \in \zz^+}$ satisfying $\lim_{n \to \infty} \tau_{y_n}= t_x$. Then 
$$ C_1 \ge \A_{h}(\gm_{y_n}; t_x, \tau_{y_n}) \ge \frac{(R_2 - |x|)^2}{2 (\tau_{y_n} - t_x)} \to \infty, \; \text{ as } n \to \infty, $$
which is absurd.  

(b). For each $y \in \Lmd_{\tht_+}(R_2)$, let $a_y = \max \{ t \in [t_x, \tau_y]: r_y(t) = R_2/2 \}$. Then 
$$  r_y(t)  \ge R_2/2 \ge R_1, \;\; \forall t \in [a_y, \tau_y]. $$
Now it is enough to show $\rdt_y(t_0) \ge \sqrt{6m/r_y(t_0)}$, for some $t_0 \in [a_y, \tau_y]$, as Proposition \ref{prop_MonIncreasing} implies 
$$ \rdt_y(t) \ge \ey \dot{r}(t_0) \ge \ey \sqrt{6m/r_y(t_0)} \ge \sqrt{3m/2} R^{-\ey}_2, \;\; \forall t  \in  [t_0, t_y],$$
where the last inequality follows from $r_y(t_0) \le r_y(\tau_y) = R_2$. 

By a contradiction argument, let's assume $\rdt_y(t) < \sqrt{6m/r_y(t)}$, $\forall t \in [a_y, \tau_y]. $ Then 
$$ \frac{R_2}{2}= r_y(\tau_y) - r_y(a_y) = \int^{\tau_y}_{a_y} \rdt_y(t) \,dt < \int^{\tau_y}_{a_y}  \sqrt{6m/r_y(t)} \,dt  \le \sqrt{\frac{12m}{R_2}}(\tau_y - a_y). $$
Notice that $r_y(t) \ge R_2/2$, $\forall t \in [a_y, \tau_y]$. As a result, 
\begin{equation*} 
\A_{h}(\gm_y; a_y, \tau_y) \ge h(\tau_y - a_y) >  \frac{h}{4 \sqrt{3m}} R_2^{\frac{3}{2}}.  
\end{equation*}
However with Lemma \ref{prop:phihUpperBound}, they imply 
\begin{equation*}  
\frac{h}{4 \sqrt{3m}} R_2^{\frac{3}{2}} < \A_{h}(\gm_y; a_y, \tau_y) = \phi_{h}(\gm_y(a_y), \gm_y(\tau_y); \{a_y\}, \{\tau_y\}) \le \bt_1 R_2 +\beta_2,
\end{equation*}  
which is absurd, when $R_2$ is large enough. 
\end{proof}

In the rest of the section, we fix an  $R_2 \ge \max \{2\sqrt{2}R_1, |x|+1 \}$ large enough, such that the above lemma always holds.  

\begin{lem}
\label{lem:BoundHlocNorm} For any $\tau > \sup \{\tau_y: \; y \in \Lmd_{\tht_+}(R_2) \}$, the following results hold. 
\begin{enumerate}
\item[(a).] There is a constant $R=R(\tau, R_2)>0$ (independent of $y$), such that for any $y \in \Lmd_{\tht_+}(R_2)$ with  $t_y \ge \tau$,  we have $|\gm_y(t)| \le R$, $\forall t \in [t_x, \tau]$.
\item[(b).] $\{\gm_y|_{[t_x, \tau]}:  y \in \Lmd_{\tht_+}(R_2) \text{ with } t_y \ge \tau \}$ is a bound subset in $H^1([t_x, \tau], \cc)$. 
  \end{enumerate}  
\end{lem}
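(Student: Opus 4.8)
The plan is to prove both parts essentially at once, using the free-time minimizers $\gm_y$ restricted to the fixed time interval $[t_x, \tau]$. The key observation is that since $\tau > \sup\{\tau_y\}$, every $\gm_y$ with $t_y \ge \tau$ has already crossed the sphere $\partial B_{R_2}$ before time $\tau$, so its behavior on $[t_x,\tau]$ is controlled by the uniform bounds already established in Lemma \ref{lem:gmy-H1-Upperbound}. I would derive (a) first, then (b) will follow almost immediately, since a uniform $L^\infty$ bound on $\gm_y$ combined with a uniform $L^2$ bound on $\dot\gm_y$ is exactly a uniform $H^1$ bound.

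For part (a), first I would obtain a uniform bound on $\int_{t_x}^{t_y}|\dot\gm_y|^2\,dt$, or at least on $\int_{t_x}^\tau|\dot\gm_y|^2\,dt$. Since $\gm_y|_{[t_x,t_y]}$ is a free-time minimizer, so is its restriction $\gm_y|_{[t_x,\tau]}$ by the remark following Definition \ref{dfn:FreeTimeMinimizer}, and hence $\A_h(\gm_y; t_x, \tau) = \phi_h(x, \gm_y(\tau); \{t_x\}, \{\tau\})$. By Lemma \ref{prop:phihUpperBound}(b) this is bounded by $\beta_1|\gm_y(\tau)| + \beta_2$, but this still involves $|\gm_y(\tau)|$, which is not yet controlled. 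The cleaner route is to bound the action on $[t_x,\tau]$ by comparison: build a competitor path that goes from $x$ to $\gm_y(\tau)$, but to do this I need an a priori bound on $|\gm_y(\tau)|$ in the first place. So I would instead argue directly. Since $\tau \ge \tau_y$, Proposition \ref{prop_MonIncreasing} together with Lemma \ref{lem:gmy-H1-Upperbound}(b) gives $\dot r_y(t) \ge \sqrt{3m/2}\,R_2^{-1/2}$ for all $t \in [\tau_y, t_y]$; in particular $r_y$ is monotone increasing past $\tau_y$, so $r_y(t) \le r_y(\tau) \le |\gm_y(\tau)|$ throughout $[t_x,\tau]$ once we bound the endpoint. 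The endpoint is bounded as follows: the time spent traveling from $R_2$ out to $|\gm_y(\tau)|$ is at least $(|\gm_y(\tau)| - R_2)/\dot r_{\max}$, and more usefully, using the free-time minimality with Lemma \ref{prop:phihUpperBound}, the action $\A_h(\gm_y;t_x,\tau) \le \beta_1 R_2 + \beta_2$ cannot hold unless $|\gm_y(\tau)|$ stays bounded — because $h(\tau - \tau_y) \le \A_h(\gm_y;\tau_y,\tau)$ and the outward speed is controlled, forcing $r_y(\tau) \le R_2 + C(\tau - t_x)$ for a uniform constant.

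More concretely, I would set $R := R_2 + \big(\sup_{t\in[\tau_y,t_y]}\dot r_y(t)\big)(\tau - t_x)$ and bound the maximal outward speed uniformly. An upper bound on $\dot r_y$ comes from the energy/action bound: since $\frac12\int_{\tau_y}^\tau |\dot\gm_y|^2 \le \A_h(\gm_y;\tau_y,\tau) = \phi_h(\cdots) \le \beta_1 R_2 + \beta_2$ by Lemma \ref{prop:phihUpperBound}(b), and $r_y$ is monotone on $[\tau_y,\tau]$ with $r_y(\tau_y)=R_2$, Cauchy–Schwartz gives $|r_y(\tau) - R_2|^2 \le (\tau - \tau_y)\int_{\tau_y}^\tau |\dot r_y|^2 \le (\tau - t_x)(2\beta_1 R_2 + 2\beta_2)$, so $r_y(\tau) \le R_2 + \sqrt{(\tau-t_x)(2\beta_1 R_2 + 2\beta_2)} =: R$, a constant depending only on $\tau$ and $R_2$. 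Since $r_y$ is monotone increasing on $[\tau_y,\tau]$ and $r_y(t)\le R_2 \le R$ on $[t_x,\tau_y]$ (by definition of $\tau_y$), we conclude $|\gm_y(t)|\le R$ on all of $[t_x,\tau]$, proving (a).

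For part (b), I would combine (a) with the $L^2$ bound on the velocity. From free-time minimality, $\frac12\int_{t_x}^\tau |\dot\gm_y|^2\,dt \le \A_h(\gm_y; t_x,\tau) = \phi_h(x,\gm_y(\tau);\{t_x\},\{\tau\}) \le \beta_1 R + \beta_2$ using Lemma \ref{prop:phihUpperBound}(b) with the endpoint bound $|\gm_y(\tau)|\le R$ from (a). Then
$$\|\gm_y\|^2_{H^1([t_x,\tau])} = \int_{t_x}^\tau |\gm_y|^2 + |\dot\gm_y|^2\,dt \le R^2(\tau - t_x) + 2(\beta_1 R + \beta_2),$$
which is a uniform bound independent of $y$, giving (b). The main obstacle is part (a): the subtlety is that the endpoint $\gm_y(\tau)$ is not fixed and could a priori escape to infinity, so one cannot directly invoke the action upper bound of Lemma \ref{prop:phihUpperBound} (which presupposes the endpoint). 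The resolution is the monotonicity of $r_y$ past $\tau_y$ from Proposition \ref{prop_MonIncreasing}, which lets the Cauchy–Schwartz estimate on the outward displacement close the loop and bound $r_y(\tau)$ in terms of the already-controlled quantities.
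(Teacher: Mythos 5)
Your overall architecture matches the paper's proof of Lemma \ref{lem:BoundHlocNorm} (monotonicity of $r_y$ on $[\tau_y,\tau]$ reduces part (a) to bounding the single endpoint $|\gm_y(\tau)|$; Cauchy--Schwartz plus the free-time-minimizer identity $\A_h(\gm_y;\tau_y,\tau)=\phi_h(\gm_y(\tau_y),\gm_y(\tau);\{\tau_y\},\{\tau\})$ closes it; then (b) follows from the $L^\infty$ bound plus an $L^2$ velocity bound), but your concrete estimate in (a) misapplies Lemma \ref{prop:phihUpperBound}(b). That lemma bounds $\phi_h$ by $\beta_1 R+\beta_2$ only when \emph{both} endpoints satisfy $\sqrt{2}R_1\le |x|,|y|\le R$; with endpoints $\gm_y(\tau_y)\in\partial B_{R_2}$ and $\gm_y(\tau)$, the bound it yields is $\beta_1|\gm_y(\tau)|+\beta_2$, not $\beta_1 R_2+\beta_2$ --- and since $r_y$ is increasing past $\tau_y$, $|\gm_y(\tau)|\ge R_2$ is exactly the quantity you are trying to control. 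You flagged this circularity yourself earlier (``this still involves $|\gm_y(\tau)|$, which is not yet controlled''), but the monotonicity of $r_y$ does not resolve it, and your final computation simply drops the dependence. The correct closure is the paper's self-improving quadratic: Cauchy--Schwartz with the honest bound gives $|\gm_y(\tau)|\le R_2+\sqrt{2\tau}\,(\beta_1|\gm_y(\tau)|+\beta_2)^{1/2}$, and because the right-hand side grows only like the square root of $|\gm_y(\tau)|$, squaring produces $|\gm_y(\tau)|^2-2(R_2+\beta_1\tau)|\gm_y(\tau)|+R_2^2-2\beta_2\tau\le 0$, whence the uniform bound $R=R_2+\beta_1\tau+(\beta_1^2\tau^2+2\beta_1\tau R_2+2\beta_2\tau)^{1/2}$.

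There is a second, independent gap in your part (b): you apply Lemma \ref{prop:phihUpperBound}(b) to the pair $(x,\gm_y(\tau))$, but the hypothesis $\sqrt{2}R_1\le |x|$ fails in general --- the initial position $x\in\cc$ is arbitrary in Theorem \ref{thm:HyperSol-1} and may lie well inside $B_{R_1}$, even near a primary, so the lemma says nothing about $\phi_h(x,\gm_y(\tau);\cdot,\cdot)$. The paper avoids this by splitting $[t_x,\tau]$ at $\tau_y$: on $[\tau_y,\tau]$ both endpoint norms lie in $[\sqrt{2}R_1,R]$ and Lemma \ref{prop:phihUpperBound} applies, while on $[t_x,\tau_y]$ it writes $\A_h(\gm_y;t_x,\tau_y)=\psi_h(x,\gm_y(\tau_y);t_x,\tau_y)$ and bounds this uniformly via Proposition \ref{prop:psih-Lip}, using the facts (from Lemma \ref{lem:gmy-H1-Upperbound}(a)) that $\tau_y$ ranges in a compact interval $[a,b]$ with $a>t_x$ and that $\gm_y(\tau_y)\in\partial B_{R_2}$ is confined to a compact set, so $\sup\{\psi_h(x,z;t_x,t): z\in\partial B_{R_2},\, t\in[a,b]\}<\infty$. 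With both pieces of the kinetic integral bounded this way, your concluding $H^1$ estimate goes through verbatim.
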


\begin{proof}
(a). By \eqref{eq:ty}, $|\gm_y(t)| \le R_2$, when $t \in [t_x, \tau_y]$. Meanwhile Lemma \ref{lem:gmy-H1-Upperbound} implies $|\gm_y(t)|$ is strictly increasing, when $t \in [\tau_y, \tau]$. Hence it is enough to show $|\gm_y(\tau)| \le R$, for some finite constant $R$ independent of $y$. By the Cauchy-Schwartz inequality,    
$$  
|\gm_y(\tau)|  \le |\gm_y(\tau_y)| + |\gm_y(\tau) -\gm_y(\tau_y)| \le R_2 + \int_{\tau_y}^{\tau} |\gmd_y| \,dt \le R_2 +\left( (\tau-\tau_y) \int_{\tau_y}^{\tau} |\gmd_y|^2 \,dt \right)^{\ey}. 
$$ 
At the same time, Lemma \ref{prop:phihUpperBound} implies 
$$ \int_{\tau_y}^{\tau} |\gmd_y|^2 \,dt \le 2 \A_h(\gm_y; \tau_y, \tau) = 2\phi_h(\gm_y(\tau_y), \gm_y(\tau); \{\tau_y\}, \{\tau\}) \le 2\beta_1|\gm_y(\tau)| + 2\beta_2.  $$
These two estimates implies $|\gm_y(\tau)|  \le R_2 + \sqrt{2 \tau} (\beta_1 |\gm_y(\tau)| + \beta_2)^{\ey}$, which is equivalent to 
$$ |\gm_y(\tau)|^2 -2(R_2+\beta_1 \tau)|\gm_y(\tau)| + R_2^2 - 2\beta_2\tau \le 0. $$
Solving this inequality, we get 
$$ |\gm_y(\tau)| \le R_2 + \beta_1 \tau + (\beta_1^2 \tau^2 + 2 \beta_1 \tau R_2 + 2 \beta_2 \tau)^{\ey} :=R. $$

(b). Let $a = \inf \{\tau_y: y \in \Lmd_{\tht_+}(R_2) \text{ and } \tau_y \ge \tau \}$, $b = \sup \{\tau_y: y \in \Lmd_{\tht_+}(R_2) \text{ and } \tau_y \ge \tau \}.$  By Lemma \ref{lem:gmy-H1-Upperbound}, $t_x < a < b < \infty$. As $|\gm_y(\tau_y)| = R_2$,  $\forall y \in \Lmd_{\tht_+}(R_2)$ with $t_y \ge \tau$. By Proposition \ref{prop:psih-Lip}  
\begin{equation*} \A_h(\gm_y; t_x, \tau_y) =  \psi_{h}(x, \gm_y(\tau_y); t_x, \tau_y)  \le \sup_{t \in [a, b], z \in \partial B_{R_2}} \psi_h(x, z; t_x, t) \le  C_2.  
\end{equation*}
Meanwhile as $\sqrt{2}R_1 \le R_2 = |\gm_y(\tau_y)| \le |\gm_y(\tau)| \le R$, Lemma \ref{prop:phihUpperBound} implies
\begin{equation*}
 \A_{h}(\gm_y; \tau_y, \tau) = \phi_{h}(\gm_y(\tau_y), \gm_y(\tau); \{\tau_y\}, \{\tau\}) \le \bt_1 R + \bt_2. 
\end{equation*}
Combining these two estimates, we get
$$ \int_{t_x}^\tau |\dot{\gm}_y|^2 \,dt \le 2 \A_{h}(\gm_y; t_x, \tau) \le 2(\bt_1 R + \bt_2 + C_2). $$
On the other hand, property (a) implies
$$ \int_{t_x}^\tau |\gm_y(t)|^2 \,dt \le (\tau-t_x)R^2. $$ 
As a result, for any $y \in \Lmd_{\tht_+}(R_2)$ with $t_y \ge \tau$, 
$$ \| \gm_y|_{[t_x, \tau]} \|_{H^1} \le \int_{t_x}^{\tau} |\gm_y|^2 + |\dot{\gm}_y|^2 \,dt \le (\tau-t_x)R^2+ 2(\bt_1 R + \bt_2 + C_2).
$$ 
\end{proof}

The next result is a simple exercise from real analysis, which will be needed.
\begin{prop}
\label{prop_C1Convergence} 
Given a sequence $\{f_n\}_{n \in \nn} \subset C^2([T_1, T_2], \cc)$, whose $C^2$-norm is bounded. If there is a $f \in C^2([T_1, T_2], \cc)$ with $\| f_n -f\|_{L^{\infty}} \to 0$, as $n \to \infty$, then $\| f_n -f\|_{C^1} \to 0$, as $n \to \infty$.  
\end{prop}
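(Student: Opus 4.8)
The plan is to bypass compactness entirely and reduce the whole statement to a single quantitative interpolation estimate, controlling $\|\dot{u}\|_{L^{\infty}}$ by $\|u\|_{L^{\infty}}$ and $\|\ddot{u}\|_{L^{\infty}}$, which I then apply to $u = f_n - f$. First I would put the hypotheses in usable form: let $M$ be a finite upper bound for $\sup_{n} \|\ddot{f}_n\|_{L^{\infty}}$ together with $\|\ddot{f}\|_{L^{\infty}}$ (finite because the $C^2$-norms of the $f_n$ are uniformly bounded and $f \in C^2$), so that $\|\ddot{u}\|_{L^{\infty}} \le M$ for every $n$, while $\dl_n := \|f_n - f\|_{L^{\infty}} \to 0$ by assumption.

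The heart of the argument is the following elementary observation. Fix $t \in [T_1, T_2]$ and a step $h \in (0, T_2 - T_1]$, using the forward interval $[t, t+h]$ when $t+h \le T_2$ and the backward interval $[t-h, t]$ otherwise. Taylor's formula with Lagrange remainder gives $u(t \pm h) = u(t) \pm h \dot{u}(t) + \tfrac{1}{2} h^2 \ddot{u}(\xi)$ for some intermediate point $\xi$; solving for $\dot{u}(t)$ and using the triangle inequality yields
$$ |\dot{u}(t)| \le \frac{2 \|u\|_{L^{\infty}}}{h} + \frac{M}{2}\, h. $$
Optimizing the right-hand side in $h$ by taking $h = 2\sqrt{\|u\|_{L^{\infty}}/M}$, which is admissible once $\|u\|_{L^{\infty}}$ is small enough that this value does not exceed $T_2 - T_1$, gives the uniform bound $|\dot{u}(t)| \le 2\sqrt{M \|u\|_{L^{\infty}}}$.

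Applying this with $u = f_n - f$ produces $\|\dot{f}_n - \dot{f}\|_{L^{\infty}} \le 2\sqrt{M \dl_n}$ for all $n$ large enough, and since $\dl_n \to 0$ the right-hand side tends to $0$. Combining with $\|f_n - f\|_{L^{\infty}} = \dl_n \to 0$ gives $\|f_n - f\|_{C^1} = \|f_n - f\|_{L^{\infty}} + \|\dot{f}_n - \dot{f}\|_{L^{\infty}} \to 0$, which is the claim.

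The only point requiring care is the boundary: near the endpoints of $[T_1, T_2]$ one cannot always step forward by $h$, but stepping backward works equally well, and since the optimal $h$ shrinks to $0$ as $n \to \infty$ the interval of length $h$ eventually fits inside $[T_1, T_2]$ for every $t$ and all large $n$. I therefore expect no genuine obstacle; the statement is a fully quantitative, compactness-free fact. As an alternative I could argue by contradiction via Arzel\`a--Ascoli: the uniform $C^2$ bound makes $\{\dot{f}_n\}$ uniformly Lipschitz, hence precompact in $C^0$, any uniform subsequential limit of $\dot{f}_n$ must coincide with $\dot{f}$ because $f_n \to f$ uniformly, and a standard subsequence argument then upgrades this to convergence of the whole sequence; but the interpolation route above is shorter and has the advantage of yielding the explicit rate $2\sqrt{M\dl_n}$.
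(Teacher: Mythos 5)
Your proof is correct. Note first that the paper offers no proof of this proposition at all — it is introduced with the sentence ``a simple exercise from real analysis'' — so there is no official argument to deviate from; both routes you describe are standard, and your Landau-type interpolation bound has the advantage of being fully quantitative, yielding the explicit rate $\|\dot f_n - \dot f\|_{L^{\infty}} \le 2\sqrt{M\,\delta_n}$, while the Arzel\`a--Ascoli alternative you sketch is presumably the ``exercise'' the author had in mind. One small repair is needed in your main computation: since $u = f_n - f$ is $\cc$-valued, Taylor's formula with a Lagrange remainder $\tfrac12 h^2 \ddot u(\xi)$ at a single intermediate point $\xi$ is not available (the mean value theorem fails for vector-valued functions). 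Use instead the integral form of the remainder, $u(t+h) = u(t) + h\dot u(t) + \int_t^{t+h}(t+h-s)\,\ddot u(s)\,ds$, whose modulus is bounded by $\tfrac12 M h^2$; this gives verbatim the inequality $|\dot u(t)| \le 2\|u\|_{L^{\infty}}/h + \tfrac12 M h$ that you wrote, so nothing downstream changes. (Alternatively, apply the Lagrange form to the real and imaginary parts separately.) Two degenerate cases deserve one line each: if $M = 0$ the optimizing $h$ is undefined, but then taking $h = T_2 - T_1$ gives $|\dot u(t)| \le 2\|u\|_{L^{\infty}}/(T_2 - T_1) \to 0$ directly; and if $\delta_n = 0$ then $u \equiv 0$ and there is nothing to prove. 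Your handling of the endpoint issue (stepping backward) and of the admissibility constraint $h \le T_2 - T_1$ for large $n$ is correct.
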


Now let's choose a sequence of points $\{ y_n \in \Lmd_{\tht_+}(R_2)\}_{n \in \nn}$ with $|y_n| \to \infty$, as $n \to \infty$. For each $n$, let $\gm_{y_n} \in H^1_{t_x, t_{y_n}}(x, y_n)$ be a free-time minimizer satisfying 
$$ \A_h(\gm_{y_n}; t_x, t_{y_n}) = \phi_h(x, y_n; t_x, \{t_{y_n}\}) = \inf_{s \in [0, 1)} \phi_h(x, y_n; t_x, s). $$
\begin{prop}
\label{prop:Lim-gm-yn} Under the above notations, the following results hold.
\begin{enumerate}
\item[(a).] $t_{y_n} \to \infty$, as $n \to \infty$.  
\item[(b).] There exists a  $\gm \in H^1([t_x, \infty), \cc)$, such that (after passing to a proper subsequence) $\gm_{y_n} \to \gm$ strongly in $L^{\infty}_{\text{loc}}$-norm and weakly in $H^1_{\text{loc}}$-norm,  as $n \to \infty$,
\item[(c).]  $\forall t > t_x$, $\A_h(\gm; t_x, t) = \phi_h(x, \gm(\tau); t_x, \{t \})$, and $\gm|_{(t_x, \infty)}$ is a collision-free solution of \eqref{eq_RNBP1}. 
\item[(d).] $\gm_{y_n} \to \gm$ in $C^1_{\text{loc}}$-norm, as $n \to \infty$, i.e., for any compact subset $I \subset (t_x, \infty)$, $\|\gm_{y_n}|_I - \gm|_I\|_{C^1}$, as $n \to \infty$.  
\end{enumerate}
 \end{prop}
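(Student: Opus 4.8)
The plan is to establish the four claims in the order stated, each resting on the previous ones and on the a priori bounds already proved in this section. For \textbf{(a)}, I would exploit that the terminal point $y_n$ escapes to infinity while the total action stays controlled. Since $\int_{t_x}^{t_{y_n}}|\gmd_{y_n}|\,dt\ge |y_n|-|x|$, the Cauchy--Schwarz inequality together with the upper bound $\A_h(\gm_{y_n};t_x,t_{y_n})=\phi_h(x,y_n;t_x,\{t_{y_n}\})\le\bt_1|y_n|+\bt_2$ from Lemma \ref{prop:phihUpperBound}(b) gives $(|y_n|-|x|)^2\le 2(t_{y_n}-t_x)(\bt_1|y_n|+\bt_2)$, whence $t_{y_n}-t_x\ge (|y_n|-|x|)^2/\bigl(2(\bt_1|y_n|+\bt_2)\bigr)\to\infty$ as $|y_n|\to\infty$. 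For \textbf{(b)}, fix any $\tau>\sup\{\tau_y\}$; by (a) all but finitely many $\gm_{y_n}$ are defined past $\tau$, and Lemma \ref{lem:BoundHlocNorm}(b) furnishes a uniform $H^1([t_x,\tau],\cc)$-bound. Weak compactness of bounded sets in the Hilbert space $H^1([t_x,\tau],\cc)$ and the compact one-dimensional embedding $H^1\hookrightarrow C^0$ then extract a subsequence converging weakly in $H^1$ and strongly in $L^\infty$ on $[t_x,\tau]$; running this over an exhaustion $\tau_k\uparrow\infty$ and diagonalizing yields $\gm$ on $[t_x,\infty)$ with $\gm_{y_n}\to\gm$ strongly in $\lloc$ and weakly in $\hloc$. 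In particular $\gm(t_x)=\lim_n\gm_{y_n}(t_x)=x$.

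The heart of the matter is \textbf{(c)}. Fix $t>t_x$ and set $y=\gm(t)$. One inequality is free: $\gm|_{[t_x,t]}$ is an admissible competitor for $\phi_h$, so $\phi_h(x,y;t_x,\{t\})\le\A_h(\gm;t_x,t)$. For the reverse, weak lower semicontinuity of $\A_h$ (the kinetic term is convex hence weakly lower semicontinuous, and the potential term is lower semicontinuous by Fatou since $U\ge 0$ and $\gm_{y_n}\to\gm$ pointwise) gives $\A_h(\gm;t_x,t)\le\liminf_n\A_h(\gm_{y_n};t_x,t)$. By the Remark following Definition \ref{dfn:FreeTimeMinimizer} each restriction $\gm_{y_n}|_{[t_x,t]}$ is again a free-time minimizer, so $\A_h(\gm_{y_n};t_x,t)=\phi_h(x,\gm_{y_n}(t);t_x,\{t\})$. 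It therefore suffices to prove the upper semicontinuity $\limsup_n\phi_h(x,\gm_{y_n}(t);t_x,\{t\})\le\phi_h(x,y;t_x,\{t\})$, which I would obtain by a direct competitor construction: starting from a near-optimal path realizing $\phi_h(x,y;t_x,\{t\})$, I glue on a short final arc joining its endpoint $y$ to $\gm_{y_n}(t)$, traversed over an integer number of periods so the fractional terminal time is unchanged. Because $\gm_{y_n}(t)\to y$ and the Newtonian potential has an integrable $1/r$ singularity, the extra action tends to $0$ even when $y$ is a collision configuration; this is exactly the type of endpoint estimate already carried out in Lemma \ref{lem:ActionValueLocalUpperBound} and in the proof of Proposition \ref{prop:psih-Lip}. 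Combining the two inequalities yields $\A_h(\gm;t_x,t)=\phi_h(x,y;t_x,\{t\})$. Since $\A_h(\gm;t_x,t)\le\psi_h(x,y;t_x,t)\le\A_h(\gm;t_x,t)$, the path $\gm|_{[t_x,t]}$ is in fact a fixed-end minimizer, hence a local minimizer, so Proposition \ref{prop:MinimizerCollFree} makes $\gm|_{(t_x,t)}$ a collision-free solution of \eqref{eq_RNBP1}; as $t$ is arbitrary this holds on all of $(t_x,\infty)$.

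Finally, \textbf{(d)} follows from the elementary Proposition \ref{prop_C1Convergence}. On a compact $I\subset(t_x,\infty)$, part (c) shows $\gm|_I$ is a collision-free solution, so $\gm$ keeps a definite distance from every $q_i$ on $I$; by the uniform convergence from (b), for $n$ large the same distance bound holds for $\gm_{y_n}|_I$, which is then a genuine solution of \eqref{eq_RNBP1} on $I$. Thus $\ddot{\gm}_{y_n}=\partial_z U(\gm_{y_n},t)$ is uniformly bounded on $I$; the $\hloc$-bound from (b) together with the equation bounds $\gmd_{y_n}$ as well, and differentiating the equation once more bounds the third derivatives, so the sequence $\{\gm_{y_n}|_I\}$ has uniformly bounded $C^2$-norm. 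Since $\gm_{y_n}\to\gm$ in $L^\infty(I)$, Proposition \ref{prop_C1Convergence} upgrades the convergence to $C^1(I)$. I expect the only genuinely delicate step to be the upper semicontinuity of $\phi_h$ in (c): one must bound the endpoint-gluing cost uniformly as $\gm_{y_n}(t)\to\gm(t)$ \emph{even when the common limit is a collision configuration}, and it is precisely the weak $1/r$ nature of the singularity that keeps this cost integrable and vanishing in the limit.
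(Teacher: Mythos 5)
Your overall architecture matches the paper's for (a), (b) and (d), and your reformulation of (c) as a direct two-inequality argument (lower semicontinuity plus upper semicontinuity of $\phi_h$ at the moving endpoint) is logically equivalent to the paper's proof by contradiction. But the concrete gluing construction you propose for the upper-semicontinuity step in (c) fails: you append to a near-optimal path for $\phi_h(x,\gm(t);t_x,\{t\})$ a short arc from $\gm(t)$ to $\gm_{y_n}(t)$ \emph{traversed over an integer number of periods} in order to preserve the fractional terminal time. Since $T=1$, that arc has duration at least $1$, and the $+h$ term in $\A_h$ then contributes at least $h>0$ to its action no matter how close $\gm_{y_n}(t)$ is to $\gm(t)$. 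You would only obtain $\limsup_n\phi_h(x,\gm_{y_n}(t);t_x,\{t\})\le\phi_h(x,\gm(t);t_x,\{t\})+h$, which is useless. The correct device — and the one the paper uses — is to add \emph{no} time at all: take the free-time minimizer $\xi|_{[t_x,\tau]}$ realizing $\phi_h(x,\gm(t);t_x,\{t\})$ (Proposition \ref{prop:ExistFreeTimeMin}), and deform only its final sub-arc $\xi|_{[\tau',\tau]}$ within the fixed time window $[\tau',\tau]$ so that its endpoint moves from $\xi(\tau)=\gm(t)$ to $\gm_{y_n}(t)$; since this sub-arc is collision-free, $\A_h$ is continuous in its $H^1$-neighborhood and the deformation costs at most $\ep$ for $n$ large. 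Relatedly, your fallback claim that the gluing cost vanishes ``even when the common limit is a collision configuration'' via integrability of the $1/r$ singularity is both unsubstantiated (Lemma \ref{lem:ActionValueLocalUpperBound} and Proposition \ref{prop:psih-Lip} assume the endpoint lies outside $B^o_{R_0}$, i.e.\ away from the primaries, and prove nothing at collisions) and unnecessary: by the Remark after Definition \ref{dfn:FreeTimeMinimizer} it suffices to prove (c) for $t$ large, and for such $t$ one has $|\gm(t)|\ge R_2>R_0$, so the endpoint is automatically collision-free — this reduction is the paper's first line of the proof of (c) and should replace your collision-endpoint discussion.

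A second, smaller misstep occurs in (a): you invoke Lemma \ref{prop:phihUpperBound}(b) to bound $\A_h(\gm_{y_n};t_x,t_{y_n})=\phi_h(x,y_n;t_x,\{t_{y_n}\})\le\bt_1|y_n|+\bt_2$, but that lemma requires \emph{both} endpoints to have modulus at least $\sqrt{2}R_1$, whereas $x\in\cc$ is arbitrary (it may even coincide with a primary). The paper instead applies the bound only to the tail $\gm_n|_{[\tau_n,t_n]}$, which is again a free-time minimizer between $\gm_n(\tau_n)\in\partial B_{R_2}$ and $y_n$, and runs your same Cauchy--Schwarz computation on $[\tau_n,t_n]$; the conclusion $t_{y_n}\to\infty$ is unchanged, so this is an easy repair but, as written, your application of the lemma is outside its hypotheses. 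Parts (b) and (d) are essentially identical to the paper's arguments (in (d), note that bounding third derivatives is superfluous: Proposition \ref{prop_C1Convergence} only needs a uniform $C^2$-bound, which already follows from the uniform bounds on $\gm_{y_n}$, $\dot\gm_{y_n}$ and, via \eqref{eq_RNBP1}, on $\ddot\gm_{y_n}$ over $I$).
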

\begin{rem}
For the rest of the section, we set $\gm_n = \gm_{y_n}$, $t_n = t_{y_n}$ and $\tau_n = \tau_{y_n}$(given as in \eqref{eq:ty}). 
\end{rem}
\begin{proof}
(a). Since for $n$ large enough, $|\gm_n(\tau_n)|$ and $|\gm_n(t_n)|= |y_n| \ge \sqrt{R_1}$, by Lemma \ref{prop:phihUpperBound},
$$ \begin{aligned}
\beta_1 |y_n| + \beta_2 & \ge \phi_{h}(\gm_n(\tau_n), \gm_n(t_n); \{\tau_n\}, \{t_n\})    \ge \ey \int_{\tau_n}^{t_n} |\dot{\gm}_n|^2 \,dt   \ge \frac{1}{2(t_n -\tau_n)} \left( \int_{\tau_n}^{t_n} |\gmd_n| \,dt \right)^2 \\
&  \ge \frac{|y_n - \gm_n(\tau_n)|^2}{2(t_n -\tau_n)} \ge \frac{(|y_n| -|\gm_n(\tau_n)|)^2}{2t_n} \ge \frac{|y_n|^2 - 2R_2 |y_n| + R_2^2}{2t_n}. 
\end{aligned}
$$
This then implies 
$$ t_n \ge \frac{|y_n|^2 - 2R_2 |y_n| + R_2^2}{2(\bt_1 |y_n| + \bt_2)} \ge \frac{|y_n| - 2R_2 + R^2_2|y_n|^{-1}}{2\bt_1 + 2\bt_2|y_n|^{-1}} \to \infty, \; \text{as } |y_n| \to \infty. $$

(b). For any $\tau>t_x$ large enough, by Lemma \ref{lem:BoundHlocNorm}, $\{ \gm_n|_{[t_x, \tau]} \}_{n \in \nn}$ is bounded under the $H^1$-norm, so (after passing to a proper subsequence) it converges to a $\xi \in H^1([t_x, \tau], \cc)$ weakly in $H^1$-norm and strongly in $L^{\infty}$-norm. 

Since $t_n \to \infty$, when $n \to \infty$, by a diagonal argument, we can find a $\gm \in H^1([t_x, \infty), \cc)$, such that (after passing to a proper subsequence) $\gm_n \to \gm$  strongly in $L^{\infty}_{loc}$-norm and weak in $H^1_{loc}$-norm. Moreover the weakly lower semi-continuity of $\A_{h}$ implies
\begin{equation} \label{eq:ActionValueGmntoGm}
\A_{h}(\gm; t_x, \tau) \le \liminf_{n \to \infty} \A_{h}(\gm_n; t_x, \tau), \;\; \forall \tau > t_x. 
\end{equation}

(c). Notice that is enough to prove the result for all $t$ large enough. By a contradiction argument, let's assume there is a $t >t_x$ large, such that 
$$ \A_{h}(\gm; t_x, t) >\phi_{h}(x, \gm(\tau); t_x, \{t\}). $$
Then by Proposition \ref{prop:ExistFreeTimeMin}, there is a $\xi \in H^1_{t_x, \tau}(x, \gm(t))$ with $\{\tau\} = \{t\}$(see Figure \ref{fig:one-deform}), such that 
\begin{equation*} \label{eq: XiFreeTimeMin}
\A_{h}(\xi; t_x, \tau) = \phi_{h}(x, \gm(t); t_x, \{t\}) \le \A_{h}(\gm; t_x, t) -  3\ep,
\end{equation*}
for some $\ep>0$ small enough. Then by \eqref{eq:ActionValueGmntoGm}, when $n$ is large enough, 
\begin{equation}
\label{eq: 2ep} \A_{h}(\gm_n; t_x, t) \ge \A_{h}(\gm; t_x, t) -\ep \ge \A_{h}(\xi; t_x, \tau) +2\ep. 
\end{equation}

Since $\xi(\tau) = \gm(t)$ must be collision-free, for $\tau$ large enough, so is $\xi|_{[\tau', \tau]}$ for $\tau'$ close enough to $\tau$. Then $\A_{h}$ is continuous in a small neighborhood of $\xi|_{[\tau', \tau]}$ in $H^1([\tau', \tau], \cc)$. As $\gm_n(t) \to \gm(t)= \xi(\tau)$, when $n \to \infty$. For each $n$ large enough, we can find a $\eta_n \in H^1_{\tau', \tau}(\xi(\tau), \gm_n(t))$ with 
\begin{equation*} \label{eq: Xi-Eta}
|\A_{h}(\xi; \tau', \tau) - \A_{h}(\eta_n; \tau', \tau)| \le \ep.
\end{equation*}
Then $\zeta_n|_{[t_x, \tau]} = \xi|_{[t_x, \tau']}*\eta_n|_{[\tau', \tau]} \in H^1_{t_x, \tau}(x, \gm_n(t))$ and 
$$ \A_{h}(\xi; t_x, \tau) - \A_{h}(\zeta_n; t_x, \tau)= \A_{h}(\xi; \tau', \tau) - \A_{h}(\eta_n; \tau', \tau) \ge -\ep. $$
Combining this with \eqref{eq: 2ep}, we get the following inequality, which is absurd.  
$$ \phi_{h}(x, \gm_n(t); t_x, \{t\}) = \A_{h}(\gm_n; t_x, t) \ge \A_{h}(\xi; t_x, \tau) +2\ep \ge \A_{h}(\zeta_n; t_x, \tau) +\ep.
$$ 

\begin{figure} 
\centering
\includegraphics[scale=1]{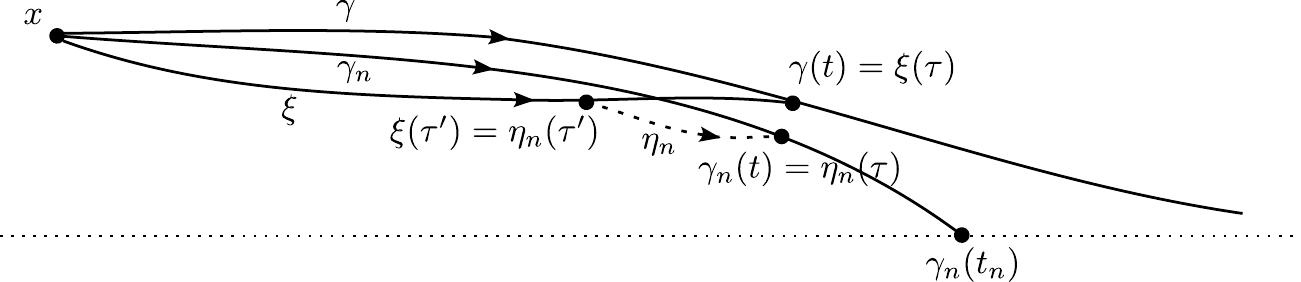}
\caption{}
\label{fig:one-deform}
\end{figure}

(d). Given an arbitrary compact subset $I \subset (t_x, \infty)$, by property (c), $\gm|_{I}$ is a collision-free solution of \eqref{eq_RNBP1}. As a result, 
$$ |\ddot{\gm}(t)| = \left|\sum_{i \in \N} \frac{m_i (\gm(t)- q_i(t))}{|\gm(t) - q_i(t)|^3} \right| \le \sum_{i \in \N} \frac{m_i}{|\gm(t) - q_i(t)|} \le C_1, \;\; \forall t \in I.
$$
Since each $\gm_n|_I$ is also a free-time minimizer, $\gm_n|_{I}$ is a collision-free solution of \eqref{eq_RNBP1} as well. By property (a), $\|\gm_n|_I - \gm|_I \|_{L^{\infty}} \to 0$, as $n \to \infty$. Then for $n$ large enough,
\begin{equation*}
|\ddot{\gm}_n(t)| = \left| \sum_{i \in \N} \frac{m_i (\gm_n(t) - q_i(t))}{|\gm_n(t) - q_i(t)|^3} \right| \le  \sum_{i \in \N} \frac{m_i}{|\gm_n(t) - q_i(t)|}  \le 2C_1, \;\; \forall t \in I. 
\end{equation*}
The desired result now follows directly from Proposition \ref{prop_C1Convergence}.
\end{proof}

For $\gm$ and $\gm_n$ from above, in polar coordinate, we let $\gm(t) = r(t)e^{i \tht(t) }$ and $\gm_n(t)= r_n(t) e^{i \tht_n(t)}$. 
\begin{prop} \label{prop: LimitAngle}
$\lim_{t \to \infty} \tht(t) = \tht_+ ( \text{mod } 2\pi).  $
\end{prop}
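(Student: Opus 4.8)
The plan is to first upgrade the convergence $\gm_n \to \gm$ of Proposition \ref{prop:Lim-gm-yn} into a statement that $\gm$ itself escapes to infinity with radial speed bounded below, so that its asymptotic direction exists, and then to pin down that direction as $\tht_+$ by comparing the angular coordinate of $\gm$ against the fixed endpoint argument of the approximating minimizers $\gm_n$.

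First I would verify the hypotheses of Proposition \ref{prop:PurterbKeplerLimitAngle}(b) for $\gm$. Since the times $\tau_n = \tau_{y_n}$ lie in the bounded interval of Lemma \ref{lem:gmy-H1-Upperbound}(a), after passing to a subsequence $\tau_n \to \tau_* < \infty$. Fix any $\bar t_0 > \sup\{\tau_y\}$. For $t \ge \bar t_0$ and $n$ large we have $t > \tau_n$, so Lemma \ref{lem:gmy-H1-Upperbound}(b) gives $\rdt_n(t) \ge v_0 := \sqrt{3m/2}\,R_2^{-1/2} > 0$ and $r_n(t) \ge R_2$. Passing to the limit using the $C^1_{\text{loc}}$ convergence of Proposition \ref{prop:Lim-gm-yn}(d) (legitimate here since $r \ge R_2 > 0$), I obtain $\rdt(t) \ge v_0$ and $r(t) \ge R_2 \ge R_1$ for all $t \ge \bar t_0$; in particular $r(t) \to \infty$. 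Then Proposition \ref{prop:PurterbKeplerLimitAngle}(b) shows $\lim_{t\to\infty}\tht(t) = \tht_0$ exists for some $\tht_0 \in \rr$, and it remains to prove $\tht_0 \equiv \tht_+ \pmod{2\pi}$.

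The engine of the argument is the angular-momentum bound of Proposition \ref{prop:PurterbKeplerLimitAngle}(a), which controls total angular displacement once $r$ is large. Applying it to $\gm$ on $[\bar t_0, \infty)$ gives a uniform bound $\sup_{t \ge \bar t_0}|\om(t)| \le M_0 < \infty$, where $\om = r^2\dot\tht$. Applying it to each $\gm_n$ on $[\bar t_0, t_n]$, together with $\om_n(\bar t_0) \to \om(\bar t_0)$ and $r_n(\bar t_0) \ge R_2$, furnishes a bound $\sup_{[\bar t_0, t_n]}|\om_n| \le M_1$ uniform in (large) $n$. Combined with $\rdt, \rdt_n \ge v_0$ and the change of variables $dt \le \rdt^{-1}\,dr$, this yields, for every $\bar t \ge \bar t_0$,
$$ \Bigl|\int_{\bar t}^{\infty}\frac{\om(t)}{r(t)^2}\,dt\Bigr| \le \frac{M_0}{v_0\, r(\bar t)}, \qquad \Bigl|\int_{\bar t}^{t_n}\frac{\om_n(t)}{r_n(t)^2}\,dt\Bigr| \le \frac{M_1}{v_0\, r_n(\bar t)}. $$
The left integral equals $\tht_0 - \tht(\bar t)$, so $|\tht_0 - \tht(\bar t)| \le M_0/(v_0 r(\bar t))$; the right integral equals $\tht_n(t_n) - \tht_n(\bar t)$.

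Finally I would run the mod-$2\pi$ comparison. Because $y_n \in \Lmd_{\tht_+}(R_2)$, the continuous lift satisfies $\tht_n(t_n) \equiv \tht_+ \pmod{2\pi}$. By Proposition \ref{prop:Lim-gm-yn} the points $\gm_n(\bar t) \to \gm(\bar t) \ne 0$, hence $\tht_n(\bar t) \to \tht(\bar t) \pmod{2\pi}$. Reducing modulo $2\pi$,
$$ \tht_+ \equiv \tht_n(t_n) = \tht_n(\bar t) + \int_{\bar t}^{t_n}\frac{\om_n}{r_n^2}\,dt, $$
so $\mathrm{dist}\bigl(\tht_+ - \tht(\bar t), 2\pi\zz\bigr)$ is bounded by the sum of $\mathrm{dist}(\tht_n(\bar t) - \tht(\bar t), 2\pi\zz) \to 0$ and $M_1/(v_0 r_n(\bar t))$; letting $n\to\infty$ (so $r_n(\bar t) \to r(\bar t)$) gives $\mathrm{dist}(\tht_+ - \tht(\bar t), 2\pi\zz) \le M_1/(v_0 r(\bar t))$. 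Together with $|\tht_0 - \tht(\bar t)| \le M_0/(v_0 r(\bar t))$ and $r(\bar t) \to \infty$ as $\bar t \to \infty$, this forces $\tht_0 \equiv \tht_+ \pmod{2\pi}$. The main obstacle is exactly this bookkeeping of the winding: a priori each $\gm_n$ may wind many times around the origin before escaping, and the whole argument hinges on the angular-momentum estimate making the angular drift over the escape phase $[\bar t, t_n]$ uniformly small, so that the fixed endpoint argument $\tht_+$ can be faithfully transferred to the limiting direction $\tht_0$.
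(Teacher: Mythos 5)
Your proof is correct and follows essentially the same route as the paper's: you deduce $\rdt(t) \ge v_0$ and $r(t)\ge R_2$ for the limit $\gm$ from the convergence of the $\gm_n$, invoke Proposition \ref{prop:PurterbKeplerLimitAngle} both for the existence of $\tht_0$ and for the uniform angular-momentum bounds on $\om$ and $\om_n$, and then transfer the endpoint condition $\tht_n(t_n)\equiv\tht_+ \pmod{2\pi}$ through the drift estimate $|\Delta\tht|\le C/(v_0 r)$. The only difference is presentational: the paper runs the final comparison as a contradiction with explicit $\ep$-bookkeeping and integer multiples of $2\pi$, while you phrase it directly as a distance-to-$2\pi\zz$ bound that tends to zero as $\bar t \to \infty$.
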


\begin{proof}
Notice that Proposition \ref{prop:Lim-gm-yn} implies  
\begin{equation}
\label{eq:gmntogm} \lim_{n \to \infty}|\gm_n(t) - \gm(t)| =0, \;\; \lim_{n \to \infty} |\dot{\gm}_n(t) - \dot{\gm}(t)|=0, \;\; \forall t  > t_x.
\end{equation}
Meanwhile by a direct computation,
$$ 
\begin{aligned}
|\rdt_n - \rdt| & = \left| \frac{\langle \gm_n, \dot{\gm}_n \rangle}{|\gm_n|}- \frac{\langle \gm, \dot{\gm} \rangle}{|\gm|} \right| = \left| \frac{\langle |\gm| \gm_n, \dot{\gm}_n \rangle - \langle |\gm_n| \gm, \dot{\gm} \rangle}{|\gm_n||\gm|} \right| \\
& = \left| \frac{\langle |\gm| \gm_n, \dot{\gm}_n \rangle -\langle |\gm| \gm_n, \dot{\gm} \rangle + \langle |\gm| \gm_n, \dot{\gm} \rangle - \langle |\gm| \gm, \dot{\gm} \rangle + \langle|\gm| \gm, \dot{\gm} \rangle - \langle |\gm_n| \gm, \dot{\gm} \rangle }{|\gm_n| |\gm|} \right| \\
& \le \frac{|\langle |\gm|\gm_n, \dot{\gm}_n - \dot{\gm} \rangle| + |\langle |\gm|(\gm_n - \gm), \dot{\gm} \rangle| + |\langle (|\gm| - |\gm_n|) \gm, \dot{\gm} \rangle | }{|\gm_n| |\gm|} \\
& \le \frac{ |\gm| \cdot |\gm_n| \cdot |\dot{\gm}_n - \dot{\gm}| + |\gm| \cdot | \gm_n -\gm| \cdot |\dot{\gm} |+ |\gm-\gm_n| \cdot |\gm| \cdot |\dot{\gm}|  }{|\gm| \cdot |\gm_n|}
\end{aligned}
$$
Then by \eqref{eq:gmntogm}, 
\begin{equation} \label{eq:rndot-to-rdot}
  \lim_{n \to \infty} |\rdt_n(t) - \rdt(t)| = 0, \; \forall t > t_x.  
  \end{equation}  
Recall $\tau_n =\tau_{y_n}=\min\{t \in [t_x, t_{y_n}]: |\gm_{y_n}(t)|=R_2\}$. By Lemma \ref{lem:gmy-H1-Upperbound}, $\tau = \sup \{\tau_n: \; n \ge 1 \} < \infty$. From now on we always assume $t_n =t_{y_n} > \tau$. Then again by Lemma \ref{lem:gmy-H1-Upperbound},   
\begin{equation}
\label{eq:rndotgevo} \rdt_n(t) \ge v_0 =  (3m/2)^{\ey}R_2^{-\ey}, \;\; \forall t \ge \tau \text{ and } \forall n.
\end{equation}
Then \eqref{eq:rndot-to-rdot} implies
\begin{equation} \label{eq:rdtgev0} 
\rdt(t) \ge v_0, \;\; \forall t \ge \tau. 
\end{equation}
As $|\gm(\tau)|= \lim_{n \to \infty} |\gm_n(\tau)| \ge R_2$, Proposition \ref{prop:PurterbKeplerLimitAngle} implies,  
$$ \lim_{t \to \infty} \tht(t)= \tht_0, \; \text{ for some } \tht_0 \in \rr. $$

What's left now is to show $\tht_0 = \tht_+ (\text{mod } 2\pi)$. For this, we need an uniform upper bound of the angular momenta $\om(t)$ and $\om_n(t)$. First by Proposition \ref{prop:PurterbKeplerLimitAngle}, for all $t \ge \tau$, 
\begin{equation} \label{eq:omUpperBound} 
 \begin{aligned}
  |\om(t)| & \le |\om(\tau)| + \frac{\al_2}{v_ 0 r(\tau)} \le |\om(\tau)| + \frac{\al_2}{v_0 R_2}; \\
  |\om_n(t)| & \le |\om_n(\tau)| + \frac{\al_2}{v_0 r_n(\tau)} \le |\om_n(\tau)| + \frac{\al_2}{v_0 R_2}.
  \end{aligned} 
\end{equation}
Meanwhile notice that 
$$ 
|\om - \om_n| = |\gm \wedge \dot{\gm} - \gm \wedge \dot{\gm}_n + \gm \wedge \dot{\gm}_n - \gm_n \wedge \dot{\gm}_n|  \le |\gm| \cdot |\dot{\gm} - \dot{\gm}_n| + |\dot{\gm}_n| \cdot |\gm - \gm_n|.
 $$
Then \eqref{eq:gmntogm} implies, $\lim_{n \to \infty} |\om(\tau) - \om_n(\tau)| =0$. Combining this with \eqref{eq:omUpperBound}, we get  
\begin{equation}
\label{eq:omUpperBound2}  |\om(t)|, |\om_n(t)| \le C_1, \;\; \forall t \ge \tau \text{ and } \forall n.
\end{equation}
For any $s_2 > s_1 \ge \tau$, using \eqref{eq:rdtgev0} and \eqref{eq:omUpperBound2}, we get 
\begin{equation}
\label{eq:thtDiff} \begin{aligned}
 |\tht(s_2) -\tht(s_1)| & \le \int_{s_1}^{s_2} |\dot{\tht}(t)| \,dt \le \int_{s_1}^{s_2} \frac{|\om(t)|}{r^2(t)}\,dt \le \frac{C_1}{v_0^2} \int_{s_1}^{s_2} \left( t- s_1 + \frac{r(s_1)}{v_0} \right)^{-2} \,dt. \\
 & \le \frac{C_1}{v_0^2} \left(\frac{v_0}{r(s_1)} - \frac{v_0}{v_0(s_2 -s_1) + r(\tau)} \right) \le  \frac{v_0^{-1}C_1}{r(s_1)},
\end{aligned}
\end{equation}
By \eqref{eq:rndotgevo} and \eqref{eq:omUpperBound2}, a similar computation as above shows that for any $n$, 
\begin{equation}
\label{eq:thtnDiff} |\tht_n(s_2) -\tht_n(s_1)| \le \frac{v_0^{-1}C_1}{r_n(s_1)}.
\end{equation}

By a contradiction argument, let's assume $\tht_0 \ne \tht_+( \text{mod } 2\pi)$, then there is a $\ep>0$, such that
\begin{equation}
\label{eq:6ep} |\tht_0 - \tht_+ - 2k\pi| \ge 6 \ep, \;\; \forall k \in \zz. 
\end{equation}
As $r(t) \to \infty$, when $t \to \infty$, there must be a $s_0 \ge \tau$ with $r(s_0) \ge C_1/(v_0 \ep)$. Then \eqref{eq:thtDiff} implies
\begin{equation} \label{eq:1}
|\tht(t) - \tht(s_0)| \le \ep, \;\; \forall t \ge s_0. 
\end{equation}
Since $\gm_n(s_0)\to \gm(s_0)$, when $n \to \infty$. For each $n$ large enough, there is a $j_n \in \zz$, such that  
\begin{equation}
\label{eq:Converges0} |\tht(s_0) - \tht_n(s_0) - 2j_n \pi| \le \ep. 
\end{equation}
Notice that $r_n(s_0) \ge \frac{r(s_0)}{2} \ge \frac{C_1}{2 v_0 \ep}$, for $n$ large enough. Then \eqref{eq:thtnDiff} implies 
\begin{equation}
\label{eq:3} |\tht_n(t) - \tht_n(s_0)| \le 2\ep, \;\; \forall t \ge s_0. 
\end{equation}

Since $y_n = \gm_n(t_n) \in \Lmd_{\tht_+}(R_2)$, for each $n$, there is a $k_n \in \zz$, such that
\begin{equation}
\label{eq:thtn(tn)}  \tht_n(t_n) -\tht_+ =2 k_n \pi.
\end{equation}
Meanwhile as $t_n \to \infty$, when $n \to \infty$, we have 
\begin{equation}
\label{eq:tht0 - thttn}  |\tht_0 - \tht(t_n) | \le \ep, \; \text{ for $n$ large enough}.
\end{equation}
Then by \eqref{eq:1},  \eqref{eq:Converges0}, \eqref{eq:3}, \eqref{eq:thtn(tn)} and \eqref{eq:tht0 - thttn}, for $n$ large enough, we have
\begin{equation*}
\label{eq:Difftht+0} \begin{aligned}
|\tht_0 - \tht_+ & - 2k_n \pi -2j_n \pi|   = |\tht_0 - \tht_n(t_n)- 2j_n \pi| \\
&  = |\tht_0 - \tht(t_n) + \tht(t_n) - \tht(s_0) + \tht(s_0) -\tht_n(s_0)- 2j_n \pi + \tht_n(s_0) -\tht_n(t_n)|   \le 5 \ep,
\end{aligned}
\end{equation*}
which is a contradiction to \eqref{eq:6ep}. 
\end{proof}

\begin{prop}
\label{prop: LimEnergy} $\lim_{t \to \infty} \dot{\gm}(t) = \sqrt{2 h} e^{i\tht_+}$.  
\end{prop}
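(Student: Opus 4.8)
The plan is to reduce everything to the single statement $\lim_{t\to\infty}\rdt(t)=\sqrt{2h}$, and then assemble the conclusion from facts already in hand. Writing $\gmd=(\rdt+ir\dot\tht)e^{i\tht}$, Proposition \ref{prop:PurterbKeplerLimitAngle} together with \eqref{eq:rdtgev0} shows that $\om(t)=r^2\dot\tht$ stays bounded while $r(t)\to\infty$, so the transverse velocity $r\dot\tht=\om/r\to0$. Since Proposition \ref{prop: LimitAngle} gives $e^{i\tht(t)}\to e^{i\tht_+}$, once we know $\rdt(t)\to\sqrt{2h}$ we immediately get $\gmd(t)=(\rdt+ir\dot\tht)e^{i\tht}\to\sqrt{2h}\,e^{i\tht_+}$. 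Thus the proposition is equivalent to controlling the asymptotic radial speed, equivalently the energy $E(t):=\ey|\gmd(t)|^2-U(\gm(t),t)$.

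First I would show that $E(t)$ converges as $t\to\infty$. Because $\gm|_{(t_x,\infty)}$ solves \eqref{eq_RNBP1}, a direct computation gives $\dot E(t)=-\partial_t U(\gm(t),t)$, and since the $q_i$ and $\dot q_i$ are bounded, $|\partial_t U(\gm(t),t)|\le C_1 r(t)^{-2}$ for $r$ large. By \eqref{eq:rdtgev0} one has $r(t)\ge R_2+v_0(t-\tau)$, so $\int_\tau^\infty|\dot E|\,dt<\infty$ and $E(t)\to E_\infty$ for some finite $E_\infty$. Using \eqref{eq;U-rewrite}--\eqref{eq:R_1} we have $U(\gm(t),t)\to0$, hence $\ey|\gmd|^2=E+U\to E_\infty$; combined with $r\dot\tht\to0$ this yields $\rdt(t)\to v_\infty:=\sqrt{2E_\infty}$, with $v_\infty\ge v_0>0$. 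It therefore remains only to identify $v_\infty=\sqrt{2h}$.

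For this I would compare action growth rates. Writing $\ey|\gmd|^2+U=E+2U$, property (c) of Proposition \ref{prop:Lim-gm-yn} gives $\A_h(\gm;t_x,t)=\int_{t_x}^t(E+2U+h)\,ds$, whose integrand tends to $E_\infty+h$; since $r(t)=v_\infty t+o(t)$ this yields the leading asymptotics $\A_h(\gm;t_x,t)=r(t)\big(\tfrac{v_\infty}{2}+\tfrac{h}{v_\infty}\big)+o(r(t))$. On the other hand, for each large $t$ I would build a comparison path from $x$ to $\gm(t)$ that first reaches a fixed radius $2R_1$ and then runs nearly straight out to $\gm(t)$ at essentially constant speed over a travel time $T\approx r(t)/\sqrt{2h}$ (adjusted by an integer so its fractional endpoints match those forced by $\phi_h(x,\gm(t);t_x,\{t\})$). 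Along the outer segment $|\cdot|\gtrsim s$, so the potential contributes only $O(\log r(t))$, and the kinetic-plus-$h$ cost is $\tfrac{r(t)^2}{2T}+hT+o(r(t))$; optimizing $T$ gives $\A_h\le \sqrt{2h}\,r(t)+o(r(t))$. Because $\gm|_{[t_x,t]}$ is a free-time minimizer, $\A_h(\gm;t_x,t)$ is bounded above by this comparison value, so $\tfrac{v_\infty}{2}+\tfrac{h}{v_\infty}\le\sqrt{2h}$; since the arithmetic--geometric mean inequality gives $\tfrac{v}{2}+\tfrac{h}{v}\ge\sqrt{2h}$ with equality iff $v=\sqrt{2h}$, we conclude $v_\infty=\sqrt{2h}$, and the proposition follows.

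The routine parts are the energy-convergence estimate and the polar bookkeeping; the delicate point is the comparison in the last paragraph. The upper bound already available in Lemma \ref{prop:phihUpperBound} carries a non-sharp constant and is useless here, so the comparison path must be engineered to produce the exact leading coefficient $\sqrt{2h}$; keeping the potential contribution and the integer-valued travel-time adjustment within the error $o(r(t))$ is where the real care is needed. Morally this is the statement that a free-time minimizer escapes at the energy-$h$ speed, and the AM--GM step is exactly the mechanism that selects that speed.
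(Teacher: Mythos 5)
Your proposal is correct, and its decisive step --- testing the free-time minimizer against a straight segment traversed in time $\approx |\Delta\gm|/\sqrt{2h}$ (adjusted to respect the fractional-time constraint), with the potential along it contributing only a logarithm, and then invoking that $f(v)=v/2+h/v$ attains its minimum $\sqrt{2h}$ uniquely at $v=\sqrt{2h}$ --- is exactly the mechanism of the paper's proof, which introduces the same function $f$ and the same time adjustment $s\in[0,1)$. You differ in the scaffolding at two places. First, for convergence of the radial speed the paper never touches the energy: it bounds $|\ddot r|\le 2m/r^2+C_1^2/r^3$ from the polar equations \eqref{eq_KeplerPolar} and the angular-momentum bound \eqref{eq:omUpperBound2} and integrates; you instead use $E(t)=\ey|\gmd|^2-U$ with $\dot E=-\partial_t U(\gm(t),t)=O(r^{-2})$. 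Both are routine, and your version has the mild advantage of yielding $|\gmd|\to\sqrt{2E_\infty}$ directly. Second, the paper localizes the comparison to distant tail segments $[s_1,s_2]$ of $\gm$ (legitimate since restrictions of free-time minimizers are free-time minimizers) and derives the quantified contradiction $\A_h(\gm;s_1,s_2)-\A_h(\xi;s_1,s_2^*)\ge \dl\,|\gm(s_2)-\gm(s_1)|-O(\log)>0$; because both endpoints of the chord lie far out in nearly the direction $\tht_+$, it must run the explicit two-triangle estimate of Figure \ref{fig:trianle} to keep $|\xi(t)|\ge R_1$. Your global version on $[t_x,t]$ trades that geometry for asymptotic bookkeeping: you need the leading term $\bigl(\tfrac{v_\infty}{2}+\tfrac{h}{v_\infty}\bigr)r(t)$ of the total action (which requires $E$ integrable near $t_x$ --- true since the action is finite --- and $\int U\,ds=O(\log t)$), plus a uniformly $O(1)$ inner connection from $x$ to the circle of radius $2R_1$; you glossed over the latter, but it follows from Lemma \ref{lem:ActionValueLocalUpperBound} and compactness of that circle, since $2R_1\ge R_0$, and your radial outer segment makes the lower bound on $|\xi|$ trivial. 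Net effect: the paper's tail argument is more self-contained at the estimate level, while yours isolates the conceptual content (a free-time minimizer must escape at the energy-$h$ speed because its action grows at rate $f(v_\infty)\ge\sqrt{2h}$ per unit radius, with equality forced by minimality); either route completes the proof.
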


\begin{proof}
First we claim  $\lim_{t \to \infty} \dot{r}(t) = v_1$, for some $v_1 >0$. 

Since $r(t) \ge R_2 \ge R_1$, $\forall t \ge \tau$, \eqref{eq:omUpperBound2} and the first equation in \eqref{eq_KeplerPolar} imply
$$ |\ddot{r}(t)| \le \frac{2m}{r^2(t)} + \frac{|\om^2(t)|}{r^3(t)} \le \frac{2m}{r^2(t)} + \frac{C_1^2}{r^3(t)}.$$ 
Meanwhile by \eqref{eq:rdtgev0},  $r(t) \ge v_0(t-\tau) + r(\tau)$, $\forall t \ge \tau$. As a result, $\int_{\tau}^{\infty} |\ddot{r}(t)| \,dt< \infty$. This implies $\lim_{t \to \infty} \dot{r}(t)$ exists and is finite. Moreover \eqref{eq:rdtgev0} shows it is also positive. This proves the claim. 

Since $|\om(t)| \le C_1$, $\forall t \ge \tau$ and $r(t) \to \infty$, as $t \to \infty$, 
$$ \lim_{t \to \infty} r(t)\dot{\tht}(t) = \lim_{t \to \infty}(\om(t)/r(t))=0.$$
Then Proposition \ref{prop: LimitAngle} and the above claim imply
$$ \lim_{t \to \infty} \dot{\gm}(t) = \lim_{t \to \infty} \big(\dot{r}(t)  + i r(t) \dot{\tht}(t) \big)e^{i \tht(t)}   = v_1 e^{i \tht_+ (\text{mod } 2\pi)}.$$

Now we only need to show $v_1 = \sqrt{2h}$. By a contradiction argument, assume $v_1 \ne \sqrt{2h}$. Notice that $\sqrt{2h}$ is the unique global minimum of the following continuous function  
$$ f: (0, \infty) \to \rr; \; v \mapsto v/2 + h/v.$$
Then $\dl = (f(v_1) - \sqrt{2h})/5 >0$, and for $\ep \in (0, \dl)$ small enough, $f(v_1 + \ep) - \sqrt{2h} \ge 4\dl$. 

As $\lim_{t \to \infty} |\dot{\gm}(t)| = v_1$, we can find an $s_1> \tau$ large enough, such that 
\begin{equation*}
  v_1 -\ep \le |\dot{\gm}(t)| \le v_ 1 + \ep,  \; \forall t \ge s_1. 
\end{equation*}
Then for any $s_2 >s_1$, $|\gm(s_2) -\gm(s_1)| \le \int_{s_1}^{s_2} |\dot{\gm}(t)| \,dt \le (v_1 +\ep) (s_2 -s_1)$, which implies 
$$ s_2 -s_1 \ge |\gm(s_2) -\gm(s_1)|/(v_1 +\ep). $$
Using the above estimates, we get
\begin{equation} \label{eq: GammaActionValue>=}
\begin{aligned}
\A_{h}(\gm; s_1, s_2) & \ge \int_{s_1}^{s_2} \ey |\dot{\gm}|^2 + h \,dt  \ge \left(\ey(v_1-\ep)^2+ h \right) \frac{|\gm(s_2) - \gm(s_1)|}{v_1 +\ep} \\
& \ge \left(\ey(v_1 +\ep) + \frac{h}{v_1+\ep} - 2 \ep \right)|\gm(s_2) - \gm(s_1)| \\
&  = (f(v_1 +\ep) -2\ep) |\gm(s_2) - \gm(s_1)| \ge (\sqrt{2h} + 4\dl-2\ep)|\gm(s_2) - \gm(s_1)| \\
& \ge (\sqrt{2h} + 2\dl) |\gm(s_2) - \gm(s_1)|.
\end{aligned}
\end{equation}
For any $s_2 >s_1$ as above, we can always find an $s \in [0, 1)$, such that 
$$ s^*_2 = s_1 +(2h)^{-\ey}|\gm(s_2)-\gm(s_1)| + s \; \text{ satisfying } \; \{s^*_2 \} = \{s_2 \}.$$
We will show $\xi \in H^1_{s_1, s^*_2}(\gm(s_1), \gm(s_2))$ given as below satisfying $\A_{h}(\xi; s_1, s^*_2) < \A_{h}(\gm; s_1, s_2)$.
$$ \xi(t) = \gm(s_1) + \frac{t -s_1}{s^*_2 -s_1} (\gm(s_2) - \gm(s_1)), \; \forall t \in [s_1, s_2^*]. $$
However this is a contradiction to $\A_h(\gm; s_1, s_2) = \phi_h(\gm(s_1), \gm(s_2); \{s_1 \}, \{s_2\})$.  

To estimate $\int |\dot{\xi}|^2 \,dt$, consider the following strictly increasing continuous function,  
$$ g: [0, \infty) \to \rr; \; v \mapsto (1+v) + (1+v)^{-1}.$$
As $|\gm(s_2) - \gm(s_1)| \to \infty$, when $s_2 \to \infty$,
$$ g \left(\frac{\sqrt{2h}s}{|\gm(s_2)-\gm(s_1)|} \right) \le 2+ \sqrt{2/h} \dl, \; \text{ for } s_2 \text{ large enough}. $$
As a result, for $s_2$ large enough, 
\begin{equation} \label{eq: XiFreeActionValue<=}
\begin{aligned}
\int_{s_1}^{s^*_2} \ey |\dot{\xi}|^2 & +h \,dt = \left(\ey \frac{|\gm(s_2)- \gm(s_1)|^2}{(s^*_2 -s_1)^2} + h \right) \left(\frac{|\gm(s_2) - \gm(s_1)|}{\sqrt{2h}} + s \right) \\
& = \left[ \left(1 + \frac{\sqrt{2h}s}{|\gm(s_2) - \gm(s_1)|} \right) + \left(1 + \frac{\sqrt{2h}s}{|\gm(s_2) - \gm(s_1)|} \right)^{-1} \right] \sqrt{\frac{h}{2}}|\gm(s_2) - \gm(s_1)| \\
&   \le (\sqrt{2h} + \dl) |\gm(s_2) - \gm(s_1)|.
\end{aligned}
\end{equation}

To estimate $\int U(\xi(t), t)\,dt$, we set $ \tht_+=0$ for simplicity. For any $s_2 >s_1$ with $s_1$ large enough,
\begin{equation}
\label{eq:Triangle1}  |\tht(s_2)|, |\tht(s_1)| \le \pi/12. 
\end{equation}
By further assuming $s_2 -s_1$ is large enough, we can get
\begin{equation}
\label{eq:Triangle2} |\gm(s_2) - \gm(s_1)| > |\gm(s_1)|. 
\end{equation}

\begin{figure} 
\centering
\includegraphics[scale=0.8]{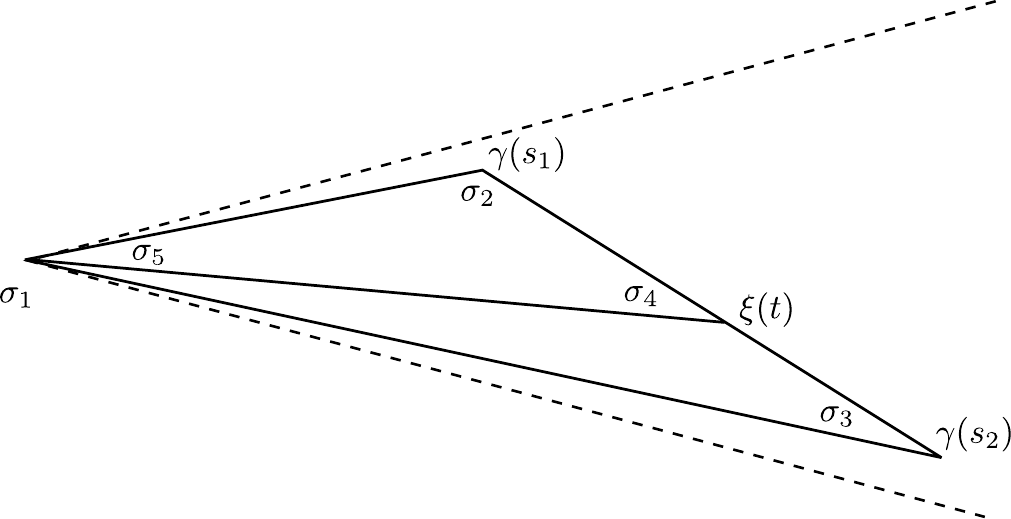}
\caption{the two triangles.}
\label{fig:trianle}
\end{figure}

Consider the triangle with $\gm(s_1), \gm(s_2)$ and the origin as the three vertices, and $\sigma_i$, $i=1,2,3$, as the three angles (see Figure \ref{fig:trianle}). By \eqref{eq:Triangle1}, $0 \le \sigma_1 \le \pi/6$. Then \eqref{eq:Triangle2} implies $0 \le \sigma_3 \le \sigma_1  \le \pi/6$. As a result, $2 \pi/3 \le \sigma_2 \le \pi$. 

Meanwhile for any $t \in [s_1, s_2^*]$, there is a triangle with $\gm(s_1), \xi(t)$ and the origin as the three vertices, and $\sigma_i$, $i=2, 4,5$ as the three angles (see Figure \ref{fig:trianle}). Then $2 \pi/3 \le \sigma_2 \le \pi$ implies $0 \le \sigma_4, \sigma_5 \le \pi/3$ and as a result, for any $t \in [s_1, s_2^*]$, 
\begin{equation*} \label{eq:|Xi(t)|geR1}
|\xi(t)| = |\xi(t) - \gm(s_1)| \cos \sigma_4 + |\gm(s_1)| \cos \sigma_5  \ge \ey \left(\frac{t -s_1}{s^*_2 -s_1}|\gm(s_2)- \gm(s_1)| +  |\gm(s_1)| \right) \ge R_1. 
\end{equation*}
Using this, we get
\begin{equation} 
\begin{aligned}
\int_{s_1}^{s^*_2} |U(\xi(t), t)| \,dt & \le \int_{s_1}^{s_2^*} \frac{2m}{|\xi(t)|} \,dt \le \int_{s_1}^{s^*_2} \frac{4m(s^*_2 -s_1)}{|\gm(s_2) - \gm(s_1)|} \left( t - s_1 + \frac{(s^*_2 -s_1)|\gm(s_1)|}{|\gm(s_2) - \gm(s_1)|} \right)^{-1} \,dt \\
& = \int_{s_1}^{s^*_2} \frac{\frac{4m}{\sqrt{2h}}\left(1 + \frac{\sqrt{2h}s}{|\gm(s_2) - \gm(s_1)|} \right)}{ t -s_1 + \frac{|\gm(s_1)|}{\sqrt{2h}}\left(1 + \frac{\sqrt{2h}s}{|\gm(s_2) - \gm(s_1)|} \right)}  \,dt \\
& = \frac{4m}{\sqrt{2h}} \left(1 + \frac{\sqrt{2h}s}{|\gm(s_2) - \gm(s_1)|} \right) \log \left( 1 + \frac{|\gm(s_2) - \gm(s_1)|}{|\gm(s_1)|} \right)
\end{aligned}
\end{equation}
Combining the above inequality with \eqref{eq: GammaActionValue>=} and \eqref{eq: XiFreeActionValue<=}, we get 
$$ \begin{aligned}
\A_{h}&(\gm; s_1, s_2) - \A_{h}(\xi; s_1, s^*_2) \\
& \ge \dl |\gm(s_2) - \gm(s_1)| - \frac{4m}{\sqrt{2h}} \left(1 + \frac{\sqrt{2h}s}{|\gm(s_2) - \gm(s_1)|} \right) \log \left( 1 + \frac{|\gm(s_2) - \gm(s_1)|}{|\gm(s_1)|} \right) >0,  
\end{aligned}$$
for $s_2$ large enough, which is absurd.
\end{proof}

Up to now, we have proved the existence of the desired hyperbolic solution $\gm^+|_{[t_x, \infty)}$, when $T=1$. By a similar argument, we can obtain the corresponding $\gm^-|_{(-\infty, t_x]}$, when $T=1$. To generalize the result to $T \ne 1$, we use a blow-up argument similar to the one that have been used quite a lot in the variational study of the $N$-body problem (see \cite{Ve02} and \cite{FT04}). 

\begin{proof}[Proof of Theorem \ref{thm:HyperSol-1}] 
Recall that $q(t) =(q_i(t))_{i \in \N}$ is a collision-free $T$-periodic solution of the $N$-body problem \eqref{eq:Nbody}. Define $q^{\lmd}(t) = (q_i^{\lmd}(t))_{ i \in \N}$ as
$$ q_i^{\lmd}(s) = \lmd^{\se} q_i(t)= \lmd^{\frac{2}{3}}q_i(s/\lmd), \; i \in \N, \; \text{ where } \lmd= T^{-1}. $$
Then $q^{\lmd}(t)$ is a collision-free $1$-periodic solution of \eqref{eq:Nbody}, as a direct computation shows 
$$ \ddot{q}^{\lmd}_i(s) = -\sum_{j \in \N \setminus \{i\}} \frac{m_j(q^{\lmd}_i(s) - q^{\lmd}_j(s))}{|q^{\lmd}_i(s) - q^{\lmd}_j(s)|^3}, \; \forall i \in \N. $$

By previous results of this section, for $h^{\lmd} = \lmd^{-\se}h$ and $\tht_+ \in [0, 2\pi)$, there is a $\gm^{\lmd}(s)$, $s \in [\lmd t_x, \infty)$, with $\gm^{\lmd}(\lmd t_x) = \lmd^{\se} x$, which is a solution of 
$$ \ddot{\gm}^{\lmd}(s) = -\sum_{i \in \N} \frac{m_i (\gm(s) - q_i^{\lmd}(s))}{|\gm(s) - q_i^{\lmd}(s)|^3},
$$
and satisfies
$$ \lim_{s \to \infty} \gm^{\lmd}(s)/|\gm^{\lmd}(s)| = e^{i \tht_+ ( \text{mod } 2\pi)}, \; \lim_{s \to \infty} \dot{\gm}^{\lmd}(s) = \sqrt{2 h^{\lmd}}e^{i \tht_+ (\text{mod } 2\pi)}.$$
Then $\gm^+(t)=\lmd^{-\se}\gm^{\lmd}(\lmd t) = \lmd^{-\se}\gm^{\lmd}(s)$, $t \in [t_x, \infty)$, is a solution of \eqref{eq_RNBP1} satisfying $\gm^+(t_x) =x$ and 
$$ \lim_{t \to \infty} \gm^+(t) / |\gm^+(t)| = \lim_{s \to \infty} \gm^{\lmd}(s)/|\gm^{\lmd}(s)| = e^{i \tht_+ (\text{mod } 2\pi)}$$
and 
$$ \lim_{ t \to \infty} \dot{\gm}^+(t) = \lim_{ t \to \infty} \lmd^{\sy} \dot{\gm}^{\lmd}(s) =  \lmd^{\sy} \sqrt{2h^{\lmd}} e^{i \tht_{\pm} (\text{mod } 2\pi)} =\sqrt{2h} e^{i \tht_{\pm} (\text{mod } 2\pi)}.  $$
The proof of $\gm^-|_{(-\infty, t_x]}$ is exactly the same and will be omitted. 
\end{proof}

\section{Existence of bi-hyperbolic solutions}  \label{sec_HypScat} 
Like in the previous section we assume $T=1$ throughout this section except the last proof. 
\begin{dfn}
\label{dfn:sx-sy-tied} 
For any $x, y \in \cc \setminus B^o_{R_0}$ and $\gm \in H^1_{t_1, t_2}(x, y)$ with $\gm([t_1, t_2]) \cap B_{R_0} \ne \emptyset$, let
\begin{equation} \label{eq;sx-sy}
\begin{aligned}
 s_y & = s_y(\gm) = \max \{ t \in [t_1, t_2]: |\gm(t)| = R_0 \};  \\
 s_x & = s_x(\gm) = \min \{ t \in [t_1, t_2]: |\gm(t)| = R_0 \}. 
\end{aligned}
\end{equation}
We say such a $\gm$ is \emph{tied with $m_{i_0}$ and $m_{i_1}$} ($\{ i_0 \ne i_1\} \subset \N$), if for any $\xi \in H^1_{s_x, s_y}(\gm(s_x), \gm(s_y))$ with $\xi([s_x, s_y]) \subset \cc \setminus B^o_{R_0}$, there is no fixed-end homotopy between $\gm|_{[s_x, s_y]}$ and $\xi|_{[s_x, s_y]}$ avoiding the trajectories of $m_{i_0}$ and $m_{i_1}$, i.e.,  there is no continuous paths $P(t, \tau)$, $(t, \tau) \in [s_x, s_y] \times [0, 1]$, in $H^1([s_x, s_y], \cc)$ satisfying $P(t, 0) = \gm(t), \; P(t, 1) = \xi(t), \; \forall t \in [s_x, s_y]$, and
$$ P(t, \tau) \in \cc \setminus \{q_{i_0}(t), q_{i_1}(t) \}, \; \forall (t,\tau) \in [s_x, s_y] \times [0, 1]. $$
\end{dfn}
\begin{rem}
The named \emph{tied} was used as in the 3-dim space $\cc \times \rr$, the trajectories $q_{i_0}|_{[s_x, s_y]}, q_{i_1}|_{[s_x, s_y]}$ and $\gm|_{[s_x, s_y]}$ can be seen as three braids. It has been used by Gordon \cite{Gordon77} and Montgomery \cite{Mont98}.
\end{rem}

For the rest of the section, we shall always assume $x, y \in \cc \setminus B^o_{R_0}$ unless otherwise stated. 
\begin{dfn}
\label{dfn:Gm-Xi} For any $t_1 < t_2 \in \rr$ and $s_1, s_2 \in [0, 1)$, we define
$$ \hat{\Gm}^{i_0,i_1}_{t_1, t_2}(x, y) = \{ \gm \in H^1_{t_1, t_2}(x, y):  \gm \text{ is tied with } m_{i_0} \text{ and } m_{i_1} \};$$
$$  \Gm^{i_0, i_1}_{t_1, t_2}(x, y) = \text{ the weak closure of } \hat{\Gm}^{i_0, i_1}_{t_1, t_2}(x, y) \text{ in } H^1_{t_1, t_2}(x, y);$$
$$ \hat{\Xi}_{s_1, s_2}^{i_0,i_1}(x, y) = \cup_{\{t_i\}=s_i, i=1, 2}\hat{\Gm}^{i_0, i_1}_{\tau_1, \tau_2}(x, y); \; \; \Xi_{s_1, s_2}^{i_0,i_1}(x, y) = \cup_{ \{t_i\}=s_i, i=1, 2} \Gm^{i_0, i_1}_{\tau_1, \tau_2}(x, y).$$
\end{dfn}
\begin{rem}
An arbitrary pair of indices $ \{ i_0 \ne i_1 \} \subset \N$ will be fixed for the rest of the section, and for simplicity, they will be omitted from the notations introduced in the above definition. 
\end{rem}

The next result is a simple corollary of the above definition, but will be useful in our proof. 

\begin{lem}
\label{lem;Coll-imply-Tie} For any  $\gm \in H^1_{t_1, t_2}(x, y)$, if there is a $t_0 \in (t_1, t_2)$, such that $\gm(t_0) = q_{i_0}(t_0)$ or $q_{i_1}(t_0)$, then $\gm \in \Gamma_{t_1, t_2}(x, y)$.
\end{lem}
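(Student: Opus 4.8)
The plan is to show directly that $\gm$ already belongs to $\hat\Gm_{t_1,t_2}(x,y)$, i.e.\ that $\gm$ is itself tied with $m_{i_0}$ and $m_{i_1}$; since $\hat\Gm_{t_1,t_2}(x,y) \subset \Gm_{t_1,t_2}(x,y)$ by definition (the latter is the weak closure of the former), this gives the claim without any limiting construction. Assume without loss of generality that the collision at $t_0$ is with $m_{i_0}$, i.e.\ $\gm(t_0) = q_{i_0}(t_0)$.

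First I would check that the quantities $s_x, s_y$ of \eqref{eq;sx-sy} are well defined and that $t_0$ lies strictly between them. Since $x, y \in \cc \setminus B^o_{R_0}$ we have $|\gm(t_1)|, |\gm(t_2)| \ge R_0$, while \eqref{eq:R0} gives $|\gm(t_0)| = |q_{i_0}(t_0)| \le R_0 - 1 < R_0$; hence $\gm([t_1,t_2]) \cap B^o_{R_0} \ne \emptyset$ and $s_x, s_y$ are defined. As $t \mapsto |\gm(t)|$ is continuous and passes from a value $\ge R_0$ at $t_1$ to a value $< R_0$ at $t_0$ and back to $\ge R_0$ at $t_2$, the intermediate value theorem produces crossings of $\partial B_{R_0}$ on both sides of $t_0$; combined with $|\gm(t_0)| < R_0 = |\gm(s_x)| = |\gm(s_y)|$ this yields $s_x < t_0 < s_y$.

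The key observation is then that no admissible homotopy can even start at $\gm$. Suppose for contradiction that for some outside competitor $\xi \in H^1_{s_x, s_y}(\gm(s_x), \gm(s_y))$ with $\xi([s_x,s_y]) \subset \cc \setminus B^o_{R_0}$ there were a continuous family $P(t, \tau)$ as in Definition \ref{dfn:sx-sy-tied}, with $P(\cdot, 0) = \gm|_{[s_x,s_y]}$ and $P(t,\tau) \ne q_{i_0}(t), q_{i_1}(t)$ for all $(t,\tau) \in [s_x,s_y]\times[0,1]$. Evaluating at $(t,\tau) = (t_0, 0)$, which is a legitimate point of the parameter domain precisely because $t_0 \in (s_x, s_y)$, one would obtain $P(t_0, 0) = \gm(t_0) = q_{i_0}(t_0)$, contradicting the constraint $P(t_0, 0) \ne q_{i_0}(t_0)$. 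Hence no such homotopy exists for any admissible $\xi$, which is exactly the statement that $\gm$ is tied with $m_{i_0}$ and $m_{i_1}$.

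Therefore $\gm \in \hat\Gm_{t_1,t_2}(x,y) \subset \Gm_{t_1,t_2}(x,y)$, as desired. The argument uses no analytic estimate; the only point requiring care — and the one I would write out explicitly — is the verification that the collision time $t_0$ is interior to $[s_x, s_y]$, since it is precisely the inclusion $t_0 \in (s_x, s_y)$ that makes the evaluation $P(t_0, 0)$ available and forces the obstruction. The case $\gm(t_0) = q_{i_1}(t_0)$ is identical, with the roles of the two indices exchanged.
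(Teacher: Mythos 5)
Your proof is correct and is exactly the argument the paper intends: the paper states this lemma without proof, calling it ``a simple corollary of the above definition,'' and the evident reason is precisely your observation that the avoidance constraint in Definition \ref{dfn:sx-sy-tied} is imposed at $\tau=0$ as well, so $P(t_0,0)=\gm(t_0)=q_{i_0}(t_0)$ rules out any admissible homotopy, making $\gm$ vacuously tied and hence an element of $\hat\Gm_{t_1,t_2}(x,y)\subset\Gm_{t_1,t_2}(x,y)$. Your explicit verification via \eqref{eq:R0} and the intermediate value theorem that $s_x<t_0<s_y$ (so the evaluation point lies in the homotopy's domain) is the one detail worth writing out, and you did.
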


We further introduce the following two functions:
\begin{equation} \label{eq:Psih}
\Psi_h(x, y; t_1, t_2) = \inf\{ \A_h(\gm; t_1, t_2): \gm \in \Gm_{t_1, t_2}(x, y) \},
\end{equation}
\begin{equation}
\label{eq:Phih} \Phi_h(x, y; s_1, s_2) = \inf\{ \A_h(\gm): \gm \in  \Xi_{s_1, s_2}(x, y) \}.
\end{equation}
In the previous section, we assume $x \ne y$ in several lemmas and propositions to give a positive lower bound of $\int_{t_1}^{t_2} |\gmd| \,dt$, $\forall \gm \in H^1_{t_1, t_2}(x, y)$. Here it is not needed, because of the following result. 
\begin{lem}
\label{lem:|x-y|ge2} For any $x, y \in \cc \setminus B^o_{R_0}$ and $\gm \in \Gm_{t_1, t_2}(x, y)$, $ \int_{t_1}^{t_2} |\gmd(t)| \,dt \ge 2$. 
\end{lem}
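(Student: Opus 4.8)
The plan is to reduce the claim to a purely geometric statement: any tied path must dive from the sphere $\{|z| = R_0\}$ down into the ball $B_{R_0-1}$ and climb back out, and each of these two radial excursions already contributes length at least $1$ by the reverse triangle inequality. Recall from \eqref{eq:R0} that every primary satisfies $|q_i(t)| \le R_0-1$, so the open region $\{|z| > R_0-1\}$ contains no trajectory of a primary.

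First I would establish the key topological fact: if $\gm \in \hat{\Gm}_{t_1,t_2}(x,y)$ is tied, then there exists $s_* \in [t_1,t_2]$ with $|\gm(s_*)| \le R_0-1$. I argue by contraposition. Suppose $|\gm(t)| > R_0-1$ for all $t \in [s_x, s_y]$, and define the homotopy $P(t,\tau)$ that keeps the argument of $\gm(t)$ fixed while moving its radius linearly in $\tau$ from $|\gm(t)|$ to $\max\{|\gm(t)|, R_0\}$. Throughout the homotopy the radius never drops below $|\gm(t)| > R_0-1 \ge |q_{i_0}(t)|, |q_{i_1}(t)|$, so $P(t,\tau)$ avoids both primaries; at $\tau=1$ the path lies in $\cc \setminus B^o_{R_0}$; and since $|\gm(s_x)| = |\gm(s_y)| = R_0$ the endpoints remain fixed. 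This is precisely a fixed-end homotopy, avoiding the two primaries, to a path outside $B^o_{R_0}$, contradicting that $\gm$ is tied. (Here I use that $s_x, s_y$ are well defined for a tied path: since $\gm(t_1)=x,\gm(t_2)=y$ lie in $\cc\setminus B^o_{R_0}$ while $\gm$ meets $B_{R_0}$, the intermediate value theorem forces $|\gm|=R_0$ somewhere.)

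Next I would transfer this penetration property to the weak closure. Given $\gm \in \Gm_{t_1,t_2}(x,y)$, choose tied paths $\gm_n \rightharpoonup \gm$ weakly in $H^1_{t_1,t_2}(x,y)$ and points $s_n \in [t_1,t_2]$ with $|\gm_n(s_n)| \le R_0-1$. By the compact embedding $H^1([t_1,t_2],\cc) \hookrightarrow C^0([t_1,t_2],\cc)$ the convergence $\gm_n \to \gm$ is uniform; passing to a subsequence with $s_n \to s_*$ yields $|\gm(s_*)| = \lim_n |\gm_n(s_n)| \le R_0-1$. It is worth noting that weak lower semicontinuity of the length functional points the wrong way here, so it is this uniform convergence, not lower semicontinuity, that does the work.

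Finally I would conclude by the reverse triangle inequality. Since $|x|, |y| \ge R_0$ and $|\gm(s_*)| \le R_0-1$, splitting the integral at $s_*$ gives
$$ \int_{t_1}^{t_2} |\gmd(t)|\,dt \ge |x - \gm(s_*)| + |\gm(s_*) - y| \ge \big(|x| - |\gm(s_*)|\big) + \big(|y| - |\gm(s_*)|\big) \ge 1 + 1 = 2. $$
The main obstacle is the first, topological step: making precise that tiedness forces penetration to radius at most $R_0-1$, and verifying that the radial homotopy genuinely avoids both moving primaries while fixing the endpoints on $\{|z| = R_0\}$. Once that is in place, the limiting argument and the length estimate are routine.
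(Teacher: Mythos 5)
Your proof is correct and follows essentially the same route as the paper: the paper's own proof is exactly your final step, asserting ``by the definition of $\Gm_{t_1,t_2}(x,y)$, there must be a $t_0 \in (t_1,t_2)$ with $|\gm(t_0)| \le R_0-1$'' and then applying the same reverse triangle inequality to get $|\gm(t_1)-\gm(t_0)| + |\gm(t_0)-\gm(t_2)| \ge 2$. Your radial homotopy (pushing $|\gm(t)|$ linearly up to $\max\{|\gm(t)|, R_0\}$, which fixes the endpoints on $\{|z|=R_0\}$ and stays strictly outside the primaries' range $|q_i(t)| \le R_0-1$) together with the compact embedding $H^1 \hookrightarrow C^0$ for the weak closure correctly supplies the justification that the paper leaves implicit.
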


\begin{proof}
By the definition $\Gamma_{t_1, t_2}(x, y)$, there must be a $t_0 \in (t_1, t_2)$ with$|\gm(t_0)| \le R_0 -1$, then 
$$ \int_{t_1}^{t_2} |\dot{\gm}(t)|\,dt \ge |\gm(t_1) -\gm(t_0)| + |\gm(t_0) -\gm(t_2)| \ge 2. $$
\end{proof}

\begin{prop}
\label{prop:Minimizer-Gamma_ij} There is a $\gm \in \Gm_{t_1, t_2}(x, y)$ satisfying $\A_h(\gm;t_1, t_2) =  \Psi_h(x, y; t_1, t_2)$. Moreover 
\begin{enumerate}
\item[(a).] for any $\tau_0 \in (t_1, t_2) \setminus \Delta(\gm)$, there is a $\dl>0$ small enough, such that $\gm|_{[\tau_0 -\dl, \tau_0+ \dl]}$ is a collision-free local minimizer and a solution of \eqref{eq_RNBP1};
\item[(b).] $\Delta(\gm)= \emptyset$ or $ \{t_0\} \subset (t_1, t_2)$ and in the latter case,  $\gm(t_0) = q_{j_0}(t_0)$ with $j_0  \in \{ i_0, i_1\}$ and the following limits exist
\begin{equation} \label{eq:TwoLimits}
\lim_{t \to t_0} \ey |\dot{\gm}(t) - \dot{q}_{j_0}(t)|^2 - \frac{m_i}{|\gm(t) - q_{j_0}(t)|}, \;\; \lim_{t \to t_0} \frac{\gm(t)-q_{j_0}(t)}{|\gm(t) - q_{j_0}(t)|}. 
\end{equation} 
\item[(c).] $\gm|_{[t_1, t_2]}$ is a solution of \eqref{eq_RNBP1} with at most one collision, and if there is a collision, it is regularizable.

\end{enumerate}
\end{prop}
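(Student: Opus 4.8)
The plan is to produce $\gm$ by the direct method on the weakly closed class $\Gm_{t_1,t_2}(x,y)$, and then to read off (a)--(c) by combining the local deformation results of Appendix~A with the single structural fact that $\gm$ is a \emph{weak limit of tied paths}. \emph{Existence and (a):} the functional $\A_h$ is weakly lower semi-continuous, and the coercivity estimate in the proof of Proposition~\ref{prop:ExistMinimizer} applies verbatim on $\Gm_{t_1,t_2}(x,y)\subset H^1_{t_1,t_2}(x,y)$, so a minimizing sequence $\{\gm_n\}\subset\Gm_{t_1,t_2}(x,y)$ is bounded in $H^1$; a subsequence converges weakly to some $\gm$, and since $\Gm_{t_1,t_2}(x,y)$ is by definition the weak closure of $\hat\Gm_{t_1,t_2}(x,y)$ we get $\gm\in\Gm_{t_1,t_2}(x,y)$ and hence $\A_h(\gm;t_1,t_2)=\Psi_h(x,y;t_1,t_2)$. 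For (a), fix $\tau_0\notin\Delta(\gm)$ and $\dl>0$ with $\gm$ collision-free on $I=[\tau_0-\dl,\tau_0+\dl]$, and let $\gm_n\in\hat\Gm$ be tied with $\gm_n\rightharpoonup\gm$ (hence $\gm_n\to\gm$ uniformly by compact embedding). Given any fixed-end competitor $\xi$ on $I$ that is $H^1$-close to $\gm|_I$, graft the bump $\xi-\gm|_I$ onto $\gm_n$; for $n$ large the grafted paths stay collision-free on $I$ and are homotopic to $\gm_n$ rel the trajectories of $q_{i_0},q_{i_1}$ through the straight-line homotopy $\gm_n+\theta(\xi-\gm)$, hence remain tied, and their weak limit is the grafted path $\tilde\gm$, so $\tilde\gm\in\Gm_{t_1,t_2}(x,y)$. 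Minimality forces $\A_h(\gm;I)\le\A_h(\xi;I)$, so $\gm|_I$ is a fixed-end local minimizer and therefore a collision-free solution of \eqref{eq_RNBP1}; running the Lagrange--Jacobi argument of Proposition~\ref{prop:IsolatedColl} on punctured neighborhoods then shows $\Delta(\gm)$ is isolated, hence finite.

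For (b), suppose $\gm(t_0)=q_{j_0}(t_0)$ with $j_0\notin\{i_0,i_1\}$. By \eqref{eq: CollFree-Per} the obstacles $q_{i_0},q_{i_1}$ stay a distance $\ge\rho_0$ from $q_{j_0}(t_0)$, so the grafting argument of (a) applies near $t_0$ with no reference to $j_0$: every small local competitor produces, after grafting into $\{\gm_n\}$, a tied sequence whose weak limit lies in $\Gm_{t_1,t_2}(x,y)$. Thus $\gm$ is an \emph{unconstrained} local minimizer on a neighborhood of $t_0$, and Proposition~\ref{prop:MinimizerCollFree} makes it collision-free there, a contradiction; hence $j_0\in\{i_0,i_1\}$. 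The first limit in \eqref{eq:TwoLimits} is exactly the continuous extension of the Kepler energy $E_{j_0}$ provided by Lemma~\ref{lem:E_i0}, and the second is $\sigma_\pm$ from Proposition~\ref{prop:Asymptotic}(a).

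Part (c) is the crux. First, suppose some collision $t_0$ has $\sigma_-\ne\sigma_+$. Then Proposition~\ref{prop:LocalDeform} supplies two collision-free local replacements $\eta^+,\eta^-$, each of strictly smaller action, winding around $q_{j_0}(t_0)$ in opposite senses (the two ranges $(0,2\pi)$ and $(-2\pi,0)$ in \eqref{eq;Arg-diff}). Since $\gm$ is a uniform limit of the tied, collision-free paths $\gm_n$, for large $n$ these $\gm_n$ pass $q_{j_0}(t_0)$ on one definite side; after passing to a subsequence that side agrees with the winding of one of $\eta^\pm$, so that this $\eta$ is homotopic to the $\gm_n$ rel $q_{i_0},q_{i_1}$, is therefore tied, lies in $\Gm_{t_1,t_2}(x,y)$, and lowers the action --- contradicting minimality. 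Hence $\sigma_-=\sigma_+$ at every collision, so by the expansion in Proposition~\ref{prop:Asymptotic} the collision is a regularizable bounce. Finally, to obtain $|\Delta(\gm)|\le1$ I would argue that the tie is realized by a single passage: if $\gm$ had two collisions, then following $\gm$ up to just after the first collision and replacing the remaining excursion through $B_{R_0}$ by a free-time minimizer of $\Phi_h$ between the same configurations --- one that realizes the residual tie with no further collision --- produces a competitor in $\Gm_{t_1,t_2}(x,y)$ that omits a redundant dive into the singularity and hence has strictly smaller action, again contradicting minimality.

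The main obstacle is the topological bookkeeping in (c): one must convert the \emph{winding} data of Proposition~\ref{prop:LocalDeform}, together with the \emph{side} on which the approximating tied paths pass the collision, into rigorous membership statements for the weak closure $\Gm_{t_1,t_2}(x,y)$, taking care that local modifications do not disturb the crossing structure defining $s_x,s_y$ in Definition~\ref{dfn:sx-sy-tied}. For the ``at most one collision'' claim the delicate point is the construction of the single-passage competitor: one must verify both that it remains tied and that it strictly lowers the action. Everything else reduces to the direct method together with the already-established Propositions~\ref{prop:IsolatedColl}, \ref{prop:Asymptotic}, \ref{prop:LocalDeform}, \ref{prop:MinimizerCollFree} and Lemma~\ref{lem:E_i0}.
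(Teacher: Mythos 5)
Your existence argument, part (a), and the identification $j_0\in\{i_0,i_1\}$ all follow the paper's route; your grafting of a small bump onto the approximating tied paths $\gm_n$, with the straight-line homotopy keeping the grafted paths tied, is a reasonable fleshing-out of the membership claim $\tilde{\gm}\in\Gm_{t_1,t_2}(x,y)$ that the paper simply asserts around \eqref{eq:gm-tld}. Your treatment of $\sigma_-=\sigma_+$ also matches the paper, which grafts both $\eta^{\pm}$ from Proposition \ref{prop:LocalDeform} and observes that one of the two resulting paths must still lie in $\Gm_{t_1,t_2}(x,y)$ because of the opposite windings in \eqref{eq;Arg-diff} (one caution: elements of $\hat{\Gm}$ need not be collision-free, so ``passes $q_{j_0}(t_0)$ on one definite side'' should be run through the $\text{Arg}$ bookkeeping rather than a pointwise notion of side).

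The genuine gap is exactly at the step you flagged: ``at most one collision.'' Your single-passage competitor is neither admissible nor known to exist. First, $\Psi_h(x,y;t_1,t_2)$ is a fixed-time problem, while a free-time minimizer of $\Phi_h$ lives on an interval of a generally different length (only fractional parts of the endpoints are constrained), so splicing it into $\gm$ does not produce an element of $H^1_{t_1,t_2}(x,y)$, hence no contradiction with the minimality of $\A_h(\gm;t_1,t_2)=\Psi_h(x,y;t_1,t_2)$. Second, a tied replacement ``with no further collision'' is precisely what this proposition cannot guarantee: by part (c) a minimizer over the tied class may itself carry one collision, so invoking such a replacement is circular. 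Third, ``omits a redundant dive into the singularity'' is not an action estimate; a collision arc need not cost more than a collision-free arc between the same endpoints, as Gordon's result \cite{Gordon77} for the Kepler problem shows. The paper's argument is simpler and hinges on an ingredient you never invoke, Lemma \ref{lem;Coll-imply-Tie}: if $\gm$ had two collision moments $\tau_0\ne\tau_1$, both necessarily with $q_{i_0}$ or $q_{i_1}$, apply Proposition \ref{prop:LocalDeform} locally at $\tau_0$ to obtain $\tilde{\gm}$ with strictly smaller action; since $\tilde{\gm}$ still collides with $q_{j_1}$ at the interior time $\tau_1$, Lemma \ref{lem;Coll-imply-Tie} puts $\tilde{\gm}$ back in $\Gm_{t_1,t_2}(x,y)$ automatically --- no homotopy or side bookkeeping needed --- contradicting minimality. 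You should replace your last construction by this deletion argument; also note that regularizability in (c) is deduced from the two limits \eqref{eq:TwoLimits} (the paper cites \cite{Zhao21}), not from the asymptotic expansion alone.
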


\begin{proof} 
By Lemma \ref{lem:|x-y|ge2}, the existence of such a $\gm \in \Gm_{t_1, t_2}(x, y)$ follows from the same argument given in the proof of property (a) in Proposition \ref{prop:ExistMinimizer}. In particular $\A_h(\gm; t_1, t_2)$ must be finite. 

(a). Since $\A_h(\gm; t_1, t_2)$ is finite, the Lebesgue measure of $\Delta(\gm)$ must be zero with $(t_1, t_2) \setminus \Delta(\gm)$ being the union of at most countably many open intervals. Then $\forall \tau_0 \in (t_1, t_2) \setminus \Delta(\gm)$, there is a $\dl >0$ small enough, such that $[\tau_0-\dl, \tau_0+\dl] \subset (t_1, t_2) \setminus \Delta(\gm)$ with $\gm|_{[\tau_0 -\dl, \tau_0 +\dl]}$ being collision-free.

As a result, we can find a $\ep>0$ small enough, such that $\forall \xi \in H^1_{\tau_0-\dl, \tau_0+\dl}(\gm(\tau_0-\dl), \gm(\tau_0+\dl))$ satisfying $\| \xi -\gm \|_{H^1} \le \ep$, the following new path still belongs to $\Gm_{t_1, t_2}(x,y)$, 
\begin{equation}
 \label{eq:gm-tld} \tilde{\gm}|_{[t_1, t_2]} =\gm|_{[t_1, \tau_0-\dl]} * \xi|_{[\tau_0 -\dl, \tau_0+ \dl]}* \gm|_{[\tau_0 +\dl, t_2]} \in \Gm_{t_1, t_2}(x, y).
 \end{equation} 
This means $\gm|_{[\tau_0-\dl, \tau_0 +\dl]}$ is a collision-free local minimizer and a solution of \eqref{eq_RNBP1}, as otherwise  
\begin{equation}
 \label{eq:Action-gm-Tilde} \A_h(\tilde{\gm}; t_1, t_2) < \A_h(\gm; t_1, t_2) = \Psi_h(x, y; t_1, t_2), 
 \end{equation}  
which is absurd.  

(b). By the above property (a), $\gm(t)$, $t \in [t_1, t_2] \setminus \Delta(\gm)$, satisfies \eqref{eq_RNBP1}. Then similar arguments as in the proof of Proposition \ref{prop:IsolatedColl} show that $\Delta(\gm)$ must be isolated in $[t_1, t_2]$.  

Choose an arbitrary $\tau_0 \in \Delta(\gm)$. Then $\gm(\tau_0) = q_{j_0}(\tau_0)$ for some $j_0 \in \N$. First we will show $j_0 \in \{i_0, i_1\}$. Otherwise we can find $\dl, \ep>0$ small enough, such that $\forall \xi \in H^1_{\tau_0 -\dl, \tau_0 +\dl}(\gm(\tau_0-\dl), \gm(\tau_0 +\dl))$ satisfying $\| \gm|_{[\tau_0 -\dl, \tau_0 +\dl]} -\xi|_{[\tau_0 -\dl, \tau_0 +\dl]} \|_{H^1} \le \ep$, the new path $\tilde{\gm}$ given as in \eqref{eq:gm-tld} is still contained in $\Gm_{t_1, t_2}(x, y)$. Meanwhile by Proposition \ref{prop:LocalDeform}, one of such a $\xi$ satisfies 
\begin{equation}
\label{eq;Action-xi-gm} \A_h(\xi; \tau_0 -\dl, \tau_0 +\dl) < \A_h(\gm; \tau_0-\dl, \tau_0+\dl).
\end{equation}
This then leads to \eqref{eq:Action-gm-Tilde}, which is absurd.

Next we will show $|\Delta(\gm)| \le 1$. By a contradiction argument, assume there are two different collision moments $\tau_0$ and $\tau_1$. Then $\gm(\tau_k) = q_{j_k}(\tau_k)$ with $j_k \in \{i_0, i_1\}$, for $k=1, 2$. By Proposition \ref{prop:LocalDeform}, there is a $\xi \in H^1_{\tau_0 -\dl, \tau_0 +\dl}(\gm(\tau_0-\dl), \gm(\tau_0 +\dl))$ satisfying $\| \gm|_{[\tau_0 -\dl, \tau_0 +\dl]} -\xi|_{[\tau_0 -\dl, \tau_0 +\dl]} \|_{H^1} \le \ep$ and \eqref{eq;Action-xi-gm}, for $\dl, \ep>0$ small enough. Let $\tilde{\gm}$ be a path given as in \eqref{eq:gm-tld}, then \eqref{eq:Action-gm-Tilde} holds. However by Lemma \ref{lem;Coll-imply-Tie}, $\tilde{\gm} \in \Gamma_{t_1, t_2}(x, y)$, which is absurd. 

What's left now is to prove \eqref{eq:TwoLimits}. Without loss of generality, let's assume $\gm(\tau_0) = q_{i_0}(\tau_0)$. Then the first limit in \eqref{eq:TwoLimits} follows directly from Lemma \ref{lem:E_i0}. For the second limit, by Proposition \ref{prop:Asymptotic}, the following two one-sided limits exist
$$ \sigma_{\pm} = \lim_{t \to \tau_0^{\pm}} \frac{\gm(t) - q_i(t)}{|\gm(t) -q_i(t)|}. $$
Then it is enough show $\sigma_- = \sigma_+$. Assume this is not true. By Proposition \ref{prop:LocalDeform}, for $\dl, \ep>0$ small enough, there are two paths $\eta^{\pm} \in H^1_{\tau_0-\dl, \tau_0+\dl}(\gm(\tau_0-\dl), \gm(\tau_0+\dl)$ satisfying \eqref{eq;ep}, \eqref{eq;Arg-diff} and 
$$ \A_h(\eta^{\pm}; \tau_0-\dl, \tau_0 +\dl) < \A_h(\gm; \tau_0 -\dl, \tau_0 +\dl). $$
Let $\gm^{\pm}|_{[t_1, t_2]} = \gm|_{[t_1, \tau_0-\dl]}*\xi^{\pm}|_{[\tau_0-\dl, \tau_0+\dl]}*\gm|_{[\tau_0+\dl, t_1]}$. Then $\A_h(\gm^{\pm}; t_1, t_2) < \A_h(\gm; t_1, t_2).$ Since one of $\gm^{\pm}$ must belong to $\Gm_{t_1, t_2}(x, y)$, we get a contradiction. 

(c). By property (c), $\gm|_{[t_1, t_2]}$ contains at most one collision moment. If $t_0$ is such a collision moment, then $t_0 \in (t_1, t_2)$ and the two limits in \eqref{eq:TwoLimits} hold. This implies $\gm|_{[t_1, t_2]}$ is regularizable, where the details can be found in \cite{Zhao21}. 
\end{proof}

\begin{lem}
\label{lem:Psih-UpperBnd} For any $t_1 < t_2$ and $x, y \in \cc \setminus B^o_{R_0}$, there is a $C_1 =C_1(x, y, t_1, t_2, h)$, such that 
$$ \sup\{ \Psi_h(x', y'; t_1, t_2):  (x', y') \in B_{1/2}(x) \times B_{1/2}(y) \text{ with } |x'|, |y'| \ge R_0 \} \le C_1. $$
\end{lem}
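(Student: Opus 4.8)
The plan is to exhibit, for every pair $(x', y')$ in the prescribed compact parameter set, a single competitor path lying in $\Gm_{t_1,t_2}(x', y')$ whose action is bounded by a constant depending only on $x, y, t_1, t_2, h$. I would force the membership in $\Gm_{t_1,t_2}(x',y')$ topologically rather than check it by hand: by Lemma \ref{lem;Coll-imply-Tie}, any path passing through $q_{i_0}(t_0)$ (or $q_{i_1}(t_0)$) at some interior time $t_0 \in (t_1,t_2)$ automatically lies in $\Gm_{t_1,t_2}(x',y')$. Thus it suffices to construct a finite-action path from $x'$ to $y'$ that collides with the primary $m_{i_0}$ exactly once, at a fixed interior instant $t^* := (t_1+t_2)/2$, and to bound its action uniformly in $(x',y')$.

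For the construction I would set $\gm(t) = q_{i_0}(t) + \psi(t)$, where $\psi$ vanishes at $t^*$ with a Keplerian rate $|\psi(t)| \simeq |t-t^*|^{2/3}$ (for instance $\psi(t) = \big((t^*-t)/(t^*-t_1)\big)^{2/3}\big(x'-q_{i_0}(t_1)\big)$ on the left half and the symmetric expression on the right). This makes $\gm(t^*) = q_{i_0}(t^*)$ a genuine collision and keeps the action finite across it: the dominant potential term $m_{i_0}/|\psi(t)| \simeq |t-t^*|^{-2/3}$ is integrable, while $\dot\psi(t) \simeq |t-t^*|^{-1/3}$ gives $|\dot\gm|^2 \in L^1$ near $t^*$; this is exactly the integrability already exploited in Lemma \ref{lem:E_i0} and Proposition \ref{prop:Asymptotic}. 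Since $\psi(t) \to 0$ and the primaries are $\rho_0$-separated by \eqref{eq: CollFree-Per}, the remaining potential terms $m_j/|\gm(t)-q_j(t)|$, $j \ne i_0$, stay bounded near $t^*$, so only the $i_0$-term is singular there and its contribution is a fixed finite number independent of $(x',y')$.

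It then remains to bound the action over the transit portions of $[t_1,t^*]$ and $[t^*,t_2]$, where $\gm$ travels between the exterior points $x', y'$ (of modulus $\ge R_0$) and the interior trajectory of $q_{i_0}$, which stays inside $B_{R_0-1}$ by \eqref{eq:R0}. Here the kinetic energy is controlled by the fixed lengths of the transit segments together with the compactness of $\overline{B_{1/2}(x)}\times\overline{B_{1/2}(y)}$ (via Cauchy--Schwarz over a fixed time length), and the $i_0$-potential is bounded once $\gm$ is confined to the moving safe tube $\{|z-q_{i_0}(t)| \le \rho_0/2\}$ on the final approach. The only genuinely delicate point, and the main obstacle, is ensuring that during transit $\gm$ stays a uniform positive distance $d_0$ from the other primaries $q_j$, $j\ne i_0$: a transversal crossing of some $q_j$ at a time $t_j$ would produce a nonintegrable $|t-t_j|^{-1}$ singularity and destroy finiteness. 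I would resolve this by routing each transit first radially outward to a large fixed radius (beyond all primaries, which are confined to $B_{R_0-1}$), then along a circular arc, and finally down into the safe tube of $q_{i_0}$; since the obstacles are finitely many continuous curves confined to $B_{R_0-1}$ and the endpoint set is compact, such a route can be chosen with a clearance $d_0>0$ uniform in $(x',y')$, whence $\sum_{j\ne i_0} m_j/|\gm-q_j| \le \sum_{j\ne i_0} m_j/d_0$ on the transit and the total action is bounded by a constant $C_1=C_1(x,y,t_1,t_2,h)$, as required.
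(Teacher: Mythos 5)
Your proposal is viable but takes a genuinely different route from the paper, and it leaves one step under-justified. The paper never constructs a path from scratch: it takes the minimizer $\gm \in \Gm_{t_1,t_2}(x,y)$ supplied by Proposition \ref{prop:Minimizer-Gamma_ij}, lets $\tau_1$ (resp.\ $\tau_2$) be the last (resp.\ first) moment such that $\gm$ stays in $B_{1/2}(x)$ on $[t_1,\tau_1]$ (resp.\ in $B_{1/2}(y)$ on $[\tau_2,t_2]$), and replaces only the two endpoint portions by straight segments from $x'$ to $\gm(\tau_1)$ and from $\gm(\tau_2)$ to $y'$. These segments stay in $B_{1/2}(x)\cup B_{1/2}(y)$, hence at distance at least $1/2$ from every primary (which sit in $B_{R_0-1}$ by \eqref{eq:R0}), so the potential is bounded there, the kinetic correction is at most $\tfrac{1}{2(\tau_1-t_1)}+\tfrac{1}{2(t_2-\tau_2)}$, and the tie is preserved because the inner tied portion of $\gm$ is untouched; this yields $\Psi_h(x',y';t_1,t_2)\le \Psi_h(x,y;t_1,t_2)+C$ with no obstacle-avoidance issue at all. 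Your alternative --- an explicit competitor colliding with $m_{i_0}$ at the Keplerian rate $|t-t^*|^{2/3}$, certified to lie in $\Gm_{t_1,t_2}(x',y')$ by Lemma \ref{lem;Coll-imply-Tie} --- is legitimate: the rate does put $|\dot{\gm}|^2$ and $m_{i_0}/|\gm-q_{i_0}|$ in $L^1$ near $t^*$, membership in $\Gm$ comes for free, and your argument has the virtue of not presupposing Proposition \ref{prop:Minimizer-Gamma_ij} or the finiteness of $\Psi_h(x,y;t_1,t_2)$. What it costs you is precisely the step you flag: the transit must avoid the \emph{moving} primaries with clearance $d_0$ uniform in $(x',y')$, and ``radial segment, then arc, then descent into the tube'' does not by itself achieve this, since some $q_j(t)$, $j\ne i_0$, may occupy the descent corridor at the relevant times. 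The claim is true --- in the plane a time-monotone path in space-time avoiding finitely many continuous obstacle curves exists, and compactness of $B_{1/2}(x)\times B_{1/2}(y)$ upgrades this to a uniform clearance --- but it requires an actual isotopy or transversality argument, not merely the observation that the obstacles are finitely many curves confined to $B_{R_0-1}$. The paper's endpoint-splicing sidesteps this issue entirely, which is why its proof is shorter; the same splicing device is then reused for the Lipschitz estimate of Proposition \ref{prop:Psih-LocalLip}, whereas your construction would have to be redone there.
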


\begin{proof}
By Proposition \ref{prop:Minimizer-Gamma_ij}, there is a $\gm \in \Gm_{t_1, t_2}(x, y)$ with $\A_h(\gm; t_1, t_2) = \Psi_h(x, y; t_1, t_2)$. Then
$$ \tau_1 = \max \{ \tau \in [t_1, t_2]: \gm(t) \in B_{1/2}(x), \; \forall t \in [t_1, \tau] \}; $$
$$ \tau_2 = \min \{ \tau \in [t_1, t_2]: \gm(t) \in B_{1/2}(y), \; \forall t \in [\tau, t_2] \}, $$
are well-defined. Moreover $t_1 < \tau_1< \tau_2< t_2$ and $|\gm(\tau_1) -x|= |\gm(\tau_2) -y|=1/2$. 

For any $x' \in B_{1/2}(x) \cap (\cc \setminus B^o_{R_0})$ and $y' \in B_{1/2}(y) \cap (\cc \setminus B^o_{R_0})$, we can define a new path
$$ \xi(t) = \begin{cases}
\gm(\tau_2) + \frac{t -\tau_2}{t_2 -\tau_2}(y' - \gm(t_2)), & \text{ if } t \in [\tau_2, t_2], \\
\gm(t), & \text{ if } t \in [\tau_1, \tau_2], \\
x' + \frac{t-t_1}{\tau_1 -t_1}(\gm(\tau_1) - x'), & \text{ if } t \in [t_1, \tau_1]. 
\end{cases}
$$
Notice that $\xi \in \Gm_{t_1, t_2}(x', y')$. Moreover $\xi(t) \in B_{1/2}(x)$, $\forall t \in [t_1, \tau_1]$ and $\xi(t)\in B_{1/2}(y)$, $\forall t \in [\tau_2, t_2]$. By \eqref{eq:R0}, there is a $C_2=C_2(x, y)$ with $U(\xi(t), t) \le C_2$, for all $t \in [t_1, \tau_1] \cup [\tau_2, t_2]$. As a result, 
$$ \begin{aligned}
 \Psi_h (x',  y' ; t_1, t_2)  &  \le \A_h(\xi; t_1, t_2) - \A_h(\gm; t_1, t_2) + \Psi_h(x, y; t_1, t_2) \\
& \le \int_{t_1}^{\tau_1}L(\xi, \dot{\xi}, t) \,dt + \int_{\tau_2}^{t_2} L(\xi, \dot{\xi}, t) \,dt + \Psi_h(x, y; t_1, t_2) \\
& \le \frac{|\gm(\tau_1) -x'|^2}{2(\tau_1 -t_1)} + \frac{|\gm(\tau_2) -y'|^2}{2(t_2 - \tau_2)} + C_2(t_2 -t_1) +\Psi_h(x, y; t_1, t_2) \\
& \le \frac{1}{2(\tau_1 -t_1)} + \frac{1}{2(t_2 -\tau_2)} + C_2 ( t_2 -t_1) +\Psi_h(x, y; t_1, t_2) :=C_1.
\end{aligned}
$$
\end{proof}

The next three propositions are similarly to Proposition \ref{prop:psih-Lip}, \ref{prop:ExistFreeTimeMin} and \ref{prop:ExistFreeTimeMin-2}, so are their proofs.
\begin{prop}
\label{prop:Psih-LocalLip} $\Psi_h$ is locally Lipschitz continuous in $\{(x, y, t_1, t_2): x, y \in \cc \setminus B^o_{R_0}, \; t_2 >t_1 \} \subset \cc^2 \times \rr^2$. 
\end{prop}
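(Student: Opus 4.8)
The plan is to run the proof of Proposition~\ref{prop:psih-Lip} essentially verbatim, with three substitutions: the unconstrained value $\psi_0$ is replaced by the tied value $\Psi_0$, the free minimizers are replaced by the tied minimizers furnished by Proposition~\ref{prop:Minimizer-Gamma_ij}, and the local action bound of Lemma~\ref{lem:ActionValueLocalUpperBound} is replaced by its tied analogue Lemma~\ref{lem:Psih-UpperBnd}. As in that proof, since $h$ contributes only the affine term $h(t_2-t_1)$ to every action value, it suffices to treat $h=0$ and to prove local Lipschitz continuity separately in each of the four variables $x,y,t_1,t_2$; joint local Lipschitz continuity on the stated open set then follows immediately.

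First I would fix $t_1<t_2$ and prove the estimate in $y$. For $y_0\in\cc\setminus B^o_{R_0}$ and a small $\dl$, to each $y\in B_\dl(y_0)$ I associate a tied minimizer $\gm_y\in\Gm_{t_1,t_2}(x,y)$ and set $\tau_y=\min\{t_0:\gm_y(t)\in B_{1/2}(y_0)\text{ for all }t\in[t_0,t_2]\}$. Feeding Lemma~\ref{lem:Psih-UpperBnd} into the chain of inequalities \eqref{eq:t2-tauy-UpperB} bounds $\tau:=\sup_y\tau_y$ strictly below $t_2$. On $[\tau,t_2]$ the path stays in $B_{1/2}(y_0)\subset\cc\setminus B^o_{R_0-1/2}$, so $\gm_y$ solves \eqref{eq_RNBP1} there with uniformly bounded acceleration, which upgrades to the uniform velocity bound \eqref{eq:gmdot-UpperBound}. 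Comparing $\gm_{y_0}$ and $\gm_y$ by the straight-line interpolations on $[t_2-\Delta y,t_2]$ used there (the paths $\xi_y$ and $\eta_y$), and using that $U$ is bounded on $\cc\setminus B^o_{R_0-1/2}$, yields the two-sided bound $|\Psi_0(x,y;t_1,t_2)-\Psi_0(x,y_0;t_1,t_2)|\le C_5|y-y_0|$. The estimate in $x$ is identical after performing the same construction near $t_1$, and the estimates in $t_1,t_2$ follow by the time-shift argument of Proposition~\ref{prop:psih-Lip}, using the bounds \eqref{eq;gm-y-Delta t}.

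The one genuinely new point, and the main obstacle, is that every comparison path constructed above must still be admissible, i.e.\ must lie in $\Gm_{t_1,t_2}(\cdot,\cdot)$, so that the action comparisons are legitimate. I would handle this by distinguishing two cases according to Proposition~\ref{prop:Minimizer-Gamma_ij}(c). If the minimizer being deformed already has a (necessarily interior) collision with $q_{i_0}$ or $q_{i_1}$, then the deformed path, differing from it only near the endpoint, retains the same collision and is tied by Lemma~\ref{lem;Coll-imply-Tie}. If the minimizer is collision-free, I must instead argue that the deformation preserves the tied class, and this is exactly where the localization matters: every deformation is supported on a short interval adjacent to an endpoint and takes values in the convex set $B_{1/2}(y_0)$ (respectively a small ball around $x$), which is disjoint from the trajectories of all primaries. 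Sliding the endpoint through this set furnishes a homotopy avoiding $q_{i_0}$ and $q_{i_1}$, so the excursion $\gm|_{[s_x,s_y]}$ between the first and last crossings of $\partial B_{R_0}$ is unchanged up to fixed-end homotopy rel the two primary trajectories; hence the tie survives and the interpolated path remains in $\Gm_{t_1,t_2}$. Verifying this homotopy invariance carefully is the only step not already contained in Proposition~\ref{prop:psih-Lip}; once it is in place, the quantitative estimates are word-for-word the same as there.
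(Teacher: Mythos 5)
Your proposal is correct and is essentially the paper's own proof: the paper disposes of this proposition in one line, saying that with Lemma \ref{lem:Psih-UpperBnd} it follows by the same argument as Proposition \ref{prop:psih-Lip}, which is exactly the substitution scheme you carry out. Your extra discussion of admissibility (that endpoint-localized deformations supported in a primary-free ball, or paths retaining an interior collision with $q_{i_0}$ or $q_{i_1}$ via Lemma \ref{lem;Coll-imply-Tie}, stay in $\Gm_{t_1,t_2}$) makes explicit precisely the step the paper leaves implicit here and in the proof of Proposition \ref{prop:Minimizer-Gamma_ij}, so it is a faithful, slightly more careful rendering of the same route.
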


\begin{proof}
With Lemma \ref{lem:Psih-UpperBnd}, this can be proven by a similar argument as Proposition \ref{prop:psih-Lip}. 
\end{proof}

\begin{prop} \label{prop:Tie-ExistFreeMin-1}
When $h>0$, for any $s_1, s_2 \in [0, 1)$ and $x, y \in \cc \setminus B^o_{R_0}$. There exist $t_1< t_2 \in \rr$ satisfying $\{t_i\} = s_i$, $i=1, 2$, and a $\gm \in \Gm_{t_1, t_2}(x, y)$, such that
$$ \A_{h}(\gm; t_1, t_2) =  \Psi_h(x, y; t_1, t_2) = \Phi_h(x, y; s_1, s_2). $$ 
\end{prop}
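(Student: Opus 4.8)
The plan is to mirror the proof of Proposition \ref{prop:ExistFreeTimeMin}, replacing the fixed-endpoint minimizer of $\psi_h$ by the tied minimizer of $\Psi_h$ furnished by Proposition \ref{prop:Minimizer-Gamma_ij}, and then exploiting the positivity of $h$ to reduce the free-time infimum defining $\Phi_h$ to a minimum over finitely many integer time shifts. First I would record the relevant invariance: since $q(t)$ is $1$-periodic, both the action $\A_h$ and the tied condition (hence the classes $\Gm_{\tau_1,\tau_2}(x,y)$) are invariant under integer time translations, so $\Psi_h(x,y;\tau_1,\tau_2)$ depends only on $\{\tau_1\}$ and $\tau_2-\tau_1$. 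Fixing $\tau_1 = s_1$ and writing $\tau_2 = s_2 + n$, the very definition of $\Xi_{s_1,s_2}(x,y)$ as $\cup_{\{\tau_i\}=s_i}\Gm_{\tau_1,\tau_2}(x,y)$ then gives
$$ \Phi_h(x,y;s_1,s_2) = \inf_n \Psi_h(x,y;s_1,s_2+n), $$
where $n$ ranges over $\zz^+\cup\{0\}$ when $s_1<s_2$ and over $\zz^+$ when $s_1 \ge s_2$ (exactly as in the two cases of Proposition \ref{prop:ExistFreeTimeMin}).

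Next I would establish coercivity in $n$. Because $U \ge 0$, the Lagrangian $L = \ey|\gmd|^2 + U$ is nonnegative, so every $\gm \in \Gm_{s_1,s_2+n}(x,y)$ satisfies $\A_h(\gm;s_1,s_2+n) \ge h(s_2-s_1+n)$; taking the infimum yields $\Psi_h(x,y;s_1,s_2+n) \ge h(s_2-s_1+n) \to \infty$ as $n \to \infty$. On the other hand $\Phi_h(x,y;s_1,s_2)$ is finite: once the time interval is long enough there is room to build a path from $x$ that enters $B_{R_0}$, winds around both trajectories $q_{i_0}$ and $q_{i_1}$, and exits to $y$, so $\Gm_{s_1,s_2+n}(x,y)$ is nonempty with finite action for large $n$. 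Combining the lower bound with finiteness forces the infimum over $n$ to be attained at some finite $n_0$, i.e. $\Phi_h(x,y;s_1,s_2) = \Psi_h(x,y;s_1,s_2+n_0)$.

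Finally I would invoke Proposition \ref{prop:Minimizer-Gamma_ij} with $t_1 = s_1$ and $t_2 = s_2 + n_0$: it produces a $\gm \in \Gm_{t_1,t_2}(x,y)$ with $\A_h(\gm;t_1,t_2) = \Psi_h(x,y;t_1,t_2)$. Since $\{t_1\}=s_1$, $\{t_2\}=s_2$ and $\Psi_h(x,y;t_1,t_2) = \Phi_h(x,y;s_1,s_2)$ by the previous step, this $\gm$ is the desired minimizer and the chain of equalities in the statement follows at once.

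The proof is essentially routine given the earlier results, and the only points that genuinely require care are the two input facts rather than any new estimate: that $\Gm_{s_1,s_2+n}(x,y)$ is nonempty (and finite-action) for at least one admissible $n$, which guarantees $\Phi_h < \infty$, and that the reduction to a minimum over integer shifts faithfully captures the free-time infimum. Both are consequences of the $1$-periodicity of $q(t)$ together with the existence of tied paths once the time interval is long enough, so I expect no serious obstacle beyond carefully handling the case split $s_1 < s_2$ versus $s_1 \ge s_2$.
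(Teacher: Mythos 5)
Your proposal is correct and follows essentially the same route as the paper, whose proof of Proposition \ref{prop:Tie-ExistFreeMin-1} is precisely ``with Proposition \ref{prop:Minimizer-Gamma_ij}, repeat the argument of Proposition \ref{prop:ExistFreeTimeMin}'': reduce $\Phi_h$ to $\inf_n \Psi_h(x,y;s_1,s_2+n)$ by $1$-periodicity, use $\Psi_h \ge h(s_2-s_1+n)$ to attain the infimum at a finite $n_0$, and take the tied minimizer on $[s_1, s_2+n_0]$. Your explicit remark that $\Gm_{s_1,s_2+n}(x,y)$ is nonempty with finite action for large $n$ (via a winding path) is a point the paper leaves implicit, and handling it is a welcome but minor elaboration rather than a different approach.
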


\begin{proof}
With  Proposition \ref{prop:Minimizer-Gamma_ij}, this can be proven by the same arguments as Proposition \ref{prop:ExistFreeTimeMin}. 
\end{proof}

For any $x, y\in \cc \setminus B^o_{R_0}$ and $t_1< t_2$, we set 
\begin{equation}
\label{eq;Omega} \Omega_{t_1, t_2}(x,y) = \{ \xi \in H^1_{t_1, t_2}(x,y): |\xi(t)| \ge R_0, \; \forall t \in [t_1, t_2] \}.
\end{equation}
\begin{prop}
\label{prop:Tie-ExistFreeTimeMin-2} For any $h>0$ and $x, y \in \cc \setminus B^o_{R_0}$, there is a $\gm_{xy} \in \Gm_{t_x, t_y}(x, y)$ satisfying:
\begin{enumerate}
\item[(a).] $ \A_{h}(\gm_{xy}; t_x, t_y) = \Phi_h(x, y; \{t_x\}, \{t_y\}) = \inf\{ \Phi_h(x, y; s_1, s_2): s_1, s_2 \in [0, 1) \};$
\item[(b).] for any $\tau_1 \in [t_x, s_x]$ and $\tau_2 \in [s_y, t_y]$ (with $s_x$ and $s_y$ given as in \eqref{eq;sx-sy}), 
$$ \A_h(\gm_{xy};\tau_1, \tau_2) = \Phi_h(\gm_{xy}(\tau_1), \gm_{xy}(\tau_2); \{\tau_1\}, \{\tau_2 \}). $$
\item[(c).] for any $[\tau_1, \tau_2] \subset [t_x, s_x]$ or $[s_y, t_y]$, 
$$ \A_h(\gm_{xy}; \tau_1, \tau_2) = \inf\{\A_h(\xi; \tau_1, \tau_2): \; \xi \in \Omega_{\tau_1, \tau_2}(\gm_{xy}(\tau_1), \gm_{xy}(\tau_2)\}.$$
\end{enumerate} 
 
\end{prop}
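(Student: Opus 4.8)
The plan is to produce $\gm_{xy}$ in part (a) by the direct method, minimizing the action over the fractional endpoint times $\{t_x\}$ and $\{t_y\}$ simultaneously, and then to read off the sub-minimality properties (b) and (c) from the global minimality by cut-and-paste, in the spirit of the remark following Definition~\ref{dfn:FreeTimeMinimizer}. First I would check that $m_0 := \inf\{\Phi_h(x,y;s_1,s_2): s_1,s_2\in[0,1)\}$ is finite: it is $\ge 0$ since the integrand $\ey|\gmd|^2 + U + h$ is positive, and, fixing any $t_1<t_2$, Lemma~\ref{lem:Psih-UpperBnd} gives $\Phi_h(x,y;\{t_1\},\{t_2\}) \le \Psi_h(x,y;t_1,t_2) < \infty$. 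Choosing a minimizing sequence $(s_1^n,s_2^n)$ and applying Proposition~\ref{prop:Tie-ExistFreeMin-1}, I obtain for each $n$ a minimizer $\gm_n \in \Gm_{t_1^n,t_2^n}(x,y)$ with $\{t_i^n\} = s_i^n$ and $\A_h(\gm_n;t_1^n,t_2^n) = \Phi_h(x,y;s_1^n,s_2^n) \to m_0$. The spans $t_2^n - t_1^n$ are controlled on both sides: since $h>0$ one has $h(t_2^n-t_1^n) \le \A_h(\gm_n) \le m_0+1$, so they are bounded above; and since each $\gm_n$ is tied, Lemma~\ref{lem:|x-y|ge2} gives $\int|\gmd_n|\,dt \ge 2$, whence Cauchy--Schwartz yields $\A_h(\gm_n) \ge 2/(t_2^n-t_1^n)$ and the spans stay bounded away from $0$. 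Normalizing $t_1^n = s_1^n$ and passing to a subsequence along which $(s_1^n,s_2^n)$ converges and the integer part of $t_2^n$ is constant, I get $t_1^n \to t_x$ and $t_2^n \to t_y$ with $t_y - t_x > 0$.

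The $\A_h(\gm_n)$ being bounded and $t_y - t_x$ finite, the argument of Proposition~\ref{prop:ExistMinimizer} shows $\{\gm_n\}$ is bounded in $H^1$; after extending the $\gm_n$ to a common interval and extracting, $\gm_n \to \gm_{xy}$ weakly in $H^1$ and uniformly, with $\gm_{xy}(t_x)=x$ and $\gm_{xy}(t_y)=y$ (using $t_i^n\to t_i$ and the embedding $H^1\hookrightarrow C^0$). Weak lower semicontinuity of $\A_h$ then gives $\A_h(\gm_{xy};t_x,t_y)\le m_0$, so (a) is complete once I show $\gm_{xy}\in\Gm_{t_x,t_y}(x,y)$. \emph{This is the main obstacle}: I must verify that being tied survives the limit across the varying time intervals. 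Since each $\gm_n$ dips to $|\gm_n|\le R_0-1$, the uniform limit also enters $B_{R_0}$, so its times $s_x,s_y$ from \eqref{eq;sx-sy} are defined; if $\gm_{xy}$ meets $q_{i_0}$ or $q_{i_1}$ at an interior moment then Lemma~\ref{lem;Coll-imply-Tie} makes it tied, while if $\gm_{xy}$ avoids both primaries on $[s_x,s_y]$ then for large $n$ the uniform closeness of $\gm_n$ to $\gm_{xy}$ provides a primary-avoiding fixed-end homotopy between them, placing $\gm_{xy}$ in the same tied class as $\gm_n$. The delicate points are the convergence of the enter/exit times $s_x^n,s_y^n$ and the construction of the homotopy near $\partial B_{R_0}$.

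For (b) I would argue by contradiction with the global minimality in (a). As $[\tau_1,\tau_2]\supseteq[s_x,s_y]$, the outer pieces $\gm_{xy}|_{[t_x,\tau_1]}$ and $\gm_{xy}|_{[\tau_2,t_y]}$ lie in $\cc\setminus B^o_{R_0}$ and the excursion into $B_{R_0}$ is contained in $[\tau_1,\tau_2]$, so $\gm_{xy}|_{[\tau_1,\tau_2]}$ is again tied. If it were not a free-time minimizer, there would be a tied $\xi$ on some $[\tau_1',\tau_2']$ with $\{\tau_i'\}=\{\tau_i\}$, the same endpoints, and strictly smaller action; then $\gm_{xy}|_{[t_x,\tau_1]}*\xi*\gm_{xy}|_{[\tau_2,t_y]}$ is tied (its $B_{R_0}$-excursion is that of $\xi$), lies in $\Xi_{\{t_x\},\{t_y\}}(x,y)$ since the concatenation preserves the fractional endpoint times, and has strictly smaller total action, contradicting $\A_h(\gm_{xy};t_x,t_y)=m_0$.

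For (c) the same replacement idea works inside $\Omega$. On $[\tau_1,\tau_2]\subset[t_x,s_x]$ (or $\subset[s_y,t_y]$) the path stays in $\cc\setminus B^o_{R_0}$ by the definition of $s_x$ (resp. $s_y$), so $\gm_{xy}|_{[\tau_1,\tau_2]}\in\Omega_{\tau_1,\tau_2}(\gm_{xy}(\tau_1),\gm_{xy}(\tau_2))$. A competitor $\xi\in\Omega$ of strictly smaller action remains outside $B^o_{R_0}$ on an interval disjoint from $(s_x,s_y)$, and since $B_{R_0}$ contains every primary in its interior ($\sup_i|q_i|=R_0-1$ by \eqref{eq:R0}), replacing $\gm_{xy}|_{[\tau_1,\tau_2]}$ by $\xi$ leaves the segment $\gm_{xy}|_{[s_x,s_y]}$, hence the tied class, unchanged while lowering the action, again contradicting the minimality established in (a).
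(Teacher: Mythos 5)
Your treatments of (b) and (c) are essentially the paper's own: the same cut-and-paste contradiction against the global minimality from (a), with the pasted competitor still in the tied class because its $B_{R_0}$-excursion either replaces the tied segment (in (b), using $[\tau_1,\tau_2]\supseteq[s_x,s_y]$ and preservation of fractional endpoint times) or is disjoint from it (in (c), where the paper simply says the claim follows from Definition~\ref{dfn:sx-sy-tied} and \eqref{eq;Omega}). Your skeleton for (a) --- minimizing sequence in $(s_1,s_2)$ via Proposition~\ref{prop:Tie-ExistFreeMin-1}, the upper span bound $h(t_2^n-t_1^n)\le\A_h(\gm_n;t_1^n,t_2^n)$, the lower span bound from Lemma~\ref{lem:|x-y|ge2}, weak $H^1$ compactness and lower semicontinuity --- is also precisely the paper's, which compresses the limit-passage to ``the rest of property (a) can be proven just like Proposition~\ref{prop:ExistFreeTimeMin-2}''.

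The one divergence is the step you yourself flag as the main obstacle, and there your route has a genuine gap that the paper's definitions were designed to dissolve. You attempt to show that the weak limit $\gm_{xy}$ is itself \emph{tied}, transferring the tied class from $\gm_n$ by a primary-avoiding homotopy. But tiedness is a fixed-end condition on the segment $[s_x,s_y]$, and $s_x(\gm_n)$, $s_y(\gm_n)$ differ from $s_x(\gm_{xy})$, $s_y(\gm_{xy})$ and need not converge well (the limit can graze $\partial B_{R_0}$), so uniform closeness does not by itself yield a fixed-end homotopy between the relevant segments; these are exactly the ``delicate points'' you leave open, and they are not routine. The observation you miss is that no such verification is required: by Definition~\ref{dfn:Gm-Xi}, $\Gm_{t_1,t_2}(x,y)$ is the \emph{weak closure} of the tied paths, hence closed under weak $H^1$ limits, and the statement (as well as Proposition~\ref{prop:Minimizer-Gamma_ij} and the functionals $\Psi_h$, $\Phi_h$) is formulated for $\Gm$, not for tied paths themselves --- indeed your own appeal to Lemma~\ref{lem;Coll-imply-Tie} already concludes membership in $\Gm$, not tiedness. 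The only residual work is the varying-interval normalization: after extracting a subsequence on which the integer parts are constant, so that $t_1^n\to t_x$ and $t_2^n\to t_y$, correct the tied paths weakly approximating $\gm_n$ near the endpoints by short connector arcs; since $|x|,|y|\ge R_0$ while $\sup\{|q_i(t)|\}=R_0-1$ by \eqref{eq:R0}, such arcs stay away from all primaries and do not touch the segment $[s_x,s_y]$, hence change neither the tied class nor, up to $o(1)$, the action. A diagonal argument then places $\gm_{xy}$ in $\Gm_{t_x,t_y}(x,y)$ by weak closedness. With your homotopy-transfer paragraph replaced by this observation, the proof closes and coincides with the paper's.
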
 

\begin{proof}
(a). By Proposition \ref{prop:Tie-ExistFreeMin-1}, we can find a sequence of paths $\{ \gm_n \in \Gm_{t_n, \tau_n}(x, y) \}_{n \in \nn}$ satisfying
$$ \lim_{n \to \infty}  \A_h(\gm_n; t_n, \tau_n) = \Phi_h(x, y; \{t_n\}, \{\tau_n\}) = \inf\{\Phi_h(x, y; s_1, s_2); s_1, s_2 \in [0, 1) \}. $$
Since the above infimum must be finite, $\{\tau_n -t_n\}_{n \in \nn}$ is bounded. Otherwise the following holds, which is absurd. 
$$  \A_h(\gm_n; t_n, \tau_n) \ge h(\tau_n -t_n) \to \infty, \; \text{ as } n \to \infty.$$
As we may assume $t_n \in [0, 1]$, $\forall n$, the rest of property (a) can be proven just like Proposition \ref{prop:ExistFreeTimeMin-2}. 

(b). For simplicity, set $\gm =\gm_{xy}$. By a contradiction argument, let's assume the desired result does not hold. Then there exist $t_1 < t_2$ with $\{t_i\}=\{\tau_i\}$, $i =1,2$, and a $\xi \in \Gamma_{t_1, t_2}(\gm(\tau_1), \gm(\tau_2))$ satisfying $\A_h(\xi; t_1, t_2) < \Phi_h(\gm(\tau_1), \gm(\tau_2); \{\tau_1\}, \{\tau_2\}).$ Then  
$$ \tilde{\gm}|_{[t_x, \tau_1 + t_2 -t_1 + t_y -\tau_2]}=\gm|_{[t_x, \tau_1]}*\xi|_{[t_1, t_2]}*\gm|_{[\tau_2, t_y]} \in \Xi_{\{t_x\}, \{t_y\}}(\gm(t_x), \gm(t_y))$$
and $\A_h(\tilde{\gm};t_1, \tau_1 + t_2 -t_1 + t_y -\tau_2) < \A_h(\gm; t_x, t_y)= \Phi_h(x, y; \{t_n\}, \{\tau_n\})$, which is absurd.

(c). This follows directly from Definition \ref{dfn:sx-sy-tied} and \eqref{eq;Omega}. 
\end{proof}

A lemma similar to Lemma \ref{prop:phihUpperBound} will be needed. However because of the topological constraint imposed in Definition \ref{dfn:sx-sy-tied}, we only consider paths from $\Omega$ as defined in \eqref{eq;Omega}. 
\begin{lem}
\label{lem:ActionR0UpperBound} For any $x, y \in \cc$ satisfying $\sqrt{2}R_1 \le |x|, |y| \le R$, the following results hold. 
\begin{enumerate}
 \item[(a).] For any $t_1 < t_2$, $\inf\{ \A(\gm; t_1, t_2): \gm \in \Omega_{t_1, t_2}(x, y) \} \le 16 \frac{R^2}{t_2 -t_1} + 12m \frac{t_2 -t_1}{R}.$
 \item[(b).] For any $h>0$, there exist positive constant $\beta_1, \beta_2$ independent of $R$, such that 
 $$ \inf \{ \A_h(\gm; \tau_1, \tau_2): \gm \in \cup_{\tau_1 < \tau_2} \Omega_{\tau_1, \tau_2} (x, y)\} \le \beta_1 R + \beta_2. $$
 \end{enumerate} 
\end{lem}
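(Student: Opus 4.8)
The plan is to reuse the explicit competitor path already constructed in the proof of Lemma~\ref{prop:phihUpperBound}, after observing that this path automatically obeys the extra constraint $|\gm(t)| \ge R_0$ that defines $\Omega$. In other words, no new construction is needed: the estimate in Lemma~\ref{prop:phihUpperBound} was obtained by exhibiting a specific path between $x$ and $y$, and that path already lies in $\Omega$, so the same number bounds the infimum taken over $\Omega$.

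For part (a), I would take the intermediate point $z$ with $|z| = R$ bisecting the angle between $\overrightarrow{ox}$ and $\overrightarrow{oy}$ (each half-angle $\le \pi/2$), together with the two reparameterized straight segments $\xi$ from $x$ to $z$ on $[t_1, \frac{t_1+t_2}{2}]$ and $\zeta$ from $z$ to $y$ on $[\frac{t_1+t_2}{2}, t_2]$ from Lemma~\ref{prop:phihUpperBound}, and let $\gm \in H^1_{t_1,t_2}(x,y)$ be the path equal to $\xi$ on the first half-interval and to $\zeta$ on the second. The only point needing verification beyond Lemma~\ref{prop:phihUpperBound} is that $\gm \in \Omega_{t_1, t_2}(x, y)$. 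This is exactly where the half-angle hypothesis enters: since the angle at the origin between the two endpoints of each segment is $\le \pi/2$, the cross term in $|\xi(t)|^2 = (1-\lmd)^2|x|^2 + 2\lmd(1-\lmd)\langle x, z\rangle + \lmd^2|z|^2$ is nonnegative, whence $|\xi(t)|^2 \ge (1-\lmd)^2|x|^2 + \lmd^2|z|^2 \ge \frac{1}{2}\min\{|x|^2, |z|^2\} \ge R_1^2$, and likewise for $\zeta$. Since $R_1 \ge R_0$ by the choice in \eqref{eq:R_1} and $|z| = R \ge \sqrt{2}R_1 \ge R_0$, the path stays at radius $\ge R_1 \ge R_0$ throughout, so $\gm \in \Omega_{t_1, t_2}(x, y)$. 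Therefore $\inf\{\A(\gm'; t_1, t_2): \gm' \in \Omega_{t_1,t_2}(x,y)\} \le \A(\gm; t_1, t_2)$, and the right-hand side is precisely the quantity already bounded by $16 R^2/(t_2 - t_1) + 12m(t_2 - t_1)/R$ in Lemma~\ref{prop:phihUpperBound}(a).

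For part (b), I would copy the time-selection step of Lemma~\ref{prop:phihUpperBound}(b): put $t_1 = s_1$ and choose $t_2 \in [s_1 + R, s_1 + R + 1]$, so that $R \le t_2 - t_1 \le R + 1$; here no fractional-part matching is required, because the infimum in (b) ranges freely over $\tau_1 < \tau_2$. Feeding the same path $\gm \in \Omega_{t_1, t_2}(x, y)$ from part (a) and inserting the energy term, $\A_h(\gm; t_1, t_2) = \A(\gm; t_1, t_2) + h(t_2 - t_1)$ is controlled by part (a) plus $h(R+1)$, which is of the form $\beta_1 R + \beta_2$ with $\beta_1, \beta_2$ independent of $R$. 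Since $\gm \in \cup_{\tau_1 < \tau_2}\Omega_{\tau_1, \tau_2}(x, y)$, this bounds the infimum in (b).

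I do not anticipate a genuine obstacle: all of the analytic content is already in Lemma~\ref{prop:phihUpperBound}, and the single new ingredient is the elementary inequality $R_1 \ge R_0$, which upgrades the pointwise bound $|\gm(t)| \ge R_1$ to membership in $\Omega$. The only mild point to confirm is that the piecewise path is an admissible $H^1$ curve, continuous at the shared endpoint $z$ with $|z| \ge R_0$, and that the defining condition of $\Omega$ — being the closed pointwise requirement $|\gm| \ge R_0$ — passes to the piecewise path; both are immediate from the construction.
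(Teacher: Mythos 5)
Your proposal is correct and matches the paper's approach: the paper's proof of this lemma is literally the one-line remark that it ``can be proven exactly the same as Lemma~\ref{prop:phihUpperBound},'' which implicitly relies on the fact you verify explicitly, namely that the broken-segment competitor satisfies $|\xi(t)|\ge R_1\ge R_0$ (via $\langle x,z\rangle\ge 0$ from the half-angle condition and $(1-\lmd)^2+\lmd^2\ge \ey$) and hence lies in $\Omega_{t_1,t_2}(x,y)$. Your treatment of part (b) --- dropping the fractional-part matching since the infimum ranges over all $\tau_1<\tau_2$, then adding $h(t_2-t_1)\le h(R+1)$ --- is exactly the time-selection step of Lemma~\ref{prop:phihUpperBound}(b), so nothing is missing.
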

\begin{proof}
This can be proven exactly the same as Lemma \ref{prop:phihUpperBound}. 
\end{proof}

For the rest of the section, let's fix arbitrarily an $h>0$, a pair of angles $\tht_{\pm} \in [0, 2\pi)$ and set 
$$\Lmd_{\tht_-}^{\tht_+}(R) = \{(r_1 e^{i\tht_1}, r_2e^{i\tht_2}): r_1, r_2 \ge R, \; \tht_1 = \tht_-(\text{mod } 2\pi) \text{ and } \tht_2 = \tht_+(\text{mod } 2\pi) \}. $$
For any $(x, y) \in \Lmd_{\tht_-}^{\tht_+}(R_0)$, let $\gm_{xy} \in \Gm_{t_x, t_y}(x, y)$ be the minimizer given by Proposition \ref{prop:Tie-ExistFreeTimeMin-2}.  

\begin{lem} \label{lem:TimeBound-2}
For any $(x, y) \in  \Lmd_{\tht_-}^{\tht_+}(R_2)$ with $R_2 \ge 2\sqrt{2}R_1$, let 
\begin{equation}
 \tau_y= \min \{ t \in [s_y, t_y]: |\gm_{xy}(t)| =R_2\}, \;\; \tau_x = \max \{ t \in [t_x, s_x]: |\gm_{xy}(t)| =R_2 \},
\end{equation} 
with $s_x, s_y$ defined as in \eqref{eq;sx-sy}. The following results hold. 
\begin{enumerate}
\item[(a).] $0 < \inf\{ s_y-s_x: (x,y) \in \Lmd_{\tht_-}^{\tht_+}(R_2) \} \le \sup \{ s_y -s_x: (x, y) \in \Lmd_{\tht_-}^{\tht_+}(R_2) \} < \infty.$
\item[(b).] $0 < \inf \{ \tau_y-s_y: (x, y) \in \Lmd_{\tht_-}^{\tht_+}(R_2) \} \le \sup \{\tau_y-s_y: (x, y) \in \Lmd_{\tht_-}^{\tht_+}(R_2) \} < \infty.$
\item[(c).] $0 < \inf \{ s_x -\tau_x: (x, y) \in \Lmd_{\tht_-}^{\tht_+}(R_2) \} \le \sup \{ s_x -\tau_x: (x, y) \in \Lmd_{\tht_-}^{\tht_+}(R_2) \}  < \infty.$
\item[(d).] Set $r_{xy}(t) = |\gm_{xy}(t)|$, when $R_2$ is large enough, 
\begin{equation} 
\dot{r}_{xy}(t) \ge \sqrt{3m/2} R_2^{-\ey}, \; \forall t \in [\tau_y, t_y]; \quad  
\dot{r}_{xy}(t) \le - \sqrt{3m/2} R_2^{-\ey}, \; \forall t \in [t_x, \tau_x].
\end{equation} 
\end{enumerate}
\end{lem}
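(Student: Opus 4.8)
The plan is to deduce every item from the one-sided estimates already proved in Lemma~\ref{lem:gmy-H1-Upperbound}, transferring minimality to the relevant sub-arcs of $\gm_{xy}$ via Proposition~\ref{prop:Tie-ExistFreeTimeMin-2}. First I would record that all the listed times are well defined: since $\gm_{xy}$ is tied, the argument behind Lemma~\ref{lem:|x-y|ge2} produces a $t_0$ with $|\gm_{xy}(t_0)| \le R_0-1$, so $\gm_{xy}$ meets $B_{R_0}$ and $s_x \le t_0 \le s_y$ exist with $r_{xy}(s_x)=r_{xy}(s_y)=R_0$; since $r_{xy}(t_x)=|x|\ge R_2$ and $r_{xy}(t_y)=|y|\ge R_2$, the radius $R_2$ is attained on $[t_x,s_x]$ and on $[s_y,t_y]$, so $\tau_x<s_x<s_y<\tau_y$ exist. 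The guiding principle is the following splicing fact: by the global free-time minimality in Proposition~\ref{prop:Tie-ExistFreeTimeMin-2}(a), replacing a sub-arc of $\gm_{xy}$ by any path that (i) keeps the endpoints, and (ii) leaves the tie-core $\gm_{xy}|_{[s_x,s_y]}$ and the crossings $s_x,s_y$ unchanged, produces a tied path from $x$ to $y$ and hence cannot lower the action; thus each such sub-arc of $\gm_{xy}$ is action-minimal against the corresponding admissible competitors.

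For (a) I would bound the ``tangle time'' $s_y-s_x$. Taking $\tau_1=s_x$, $\tau_2=s_y$ in Proposition~\ref{prop:Tie-ExistFreeTimeMin-2}(b) gives $\A_h(\gm_{xy};s_x,s_y)=\Phi_h(\gm_{xy}(s_x),\gm_{xy}(s_y);\{s_x\},\{s_y\})$, with both endpoints on the compact circle $\partial B_{R_0}$. Covering $\partial B_{R_0}$ by finitely many of the balls $B_{1/2}(\cdot)$ and applying Lemma~\ref{lem:Psih-UpperBnd} (together with $\Phi_h\le\Psi_h$) yields a bound $\A_h(\gm_{xy};s_x,s_y)\le C$ independent of $(x,y)$, whence the upper bound $s_y-s_x\le C/h$. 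For the lower bound, the tie forces $|\gm_{xy}(t_0)|\le R_0-1$ for some $t_0\in(s_x,s_y)$, so $\int_{s_x}^{s_y}|\gmd_{xy}|\,dt\ge 2$, and Cauchy--Schwarz gives $C\ge\A_h(\gm_{xy};s_x,s_y)\ge 2/(s_y-s_x)$, i.e. $s_y-s_x\ge 2/C>0$.

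For (b) and (c) I would treat the two ``radial climbs.'' On $[s_y,\tau_y]$ I splice in an explicit $\Om$-arc from $\gm_{xy}(s_y)$ (radius $R_0$) to $\gm_{xy}(\tau_y)$ (radius $R_2$) that meets $\partial B_{R_0}$ only at its initial point and stays in the annulus $\{R_0\le r\le R_2\}$, where $U$ is bounded; since both endpoints lie in compact sets, its action is bounded by some $C'$ uniformly, so by the splicing principle $\A_h(\gm_{xy};s_y,\tau_y)\le C'$ and $\tau_y-s_y\le C'/h$. Conversely $|\gm_{xy}(\tau_y)-\gm_{xy}(s_y)|\ge R_2-R_0$, so Cauchy--Schwarz gives $\tau_y-s_y\ge (R_2-R_0)^2/(2C')>0$. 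Item (c) is the mirror image of (b) under the time reversal $s\mapsto -s$, which sends $\gm_{xy}$ to a solution of the restricted problem with the still collision-free $1$-periodic primaries $q_i(-\cdot)$; so it follows verbatim.

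Finally, for (d) I would repeat the argument of Lemma~\ref{lem:gmy-H1-Upperbound}(b) on the climb $[s_y,\tau_y]$. Put $a=\max\{t\in[s_y,\tau_y]: r_{xy}(t)=R_2/2\}$, which exists because $R_0<R_2/2<R_2$, and note $r_{xy}\ge R_2/2\ge\sqrt2\,R_1\ge R_1$ on $[a,\tau_y]$. If $\rdt_{xy}(t)<\sqrt{6m/r_{xy}(t)}$ throughout $[a,\tau_y]$, integration forces $\tau_y-a>\tfrac{1}{4\sqrt{3m}}R_2^{3/2}$, so $\A_h(\gm_{xy};a,\tau_y)\ge h(\tau_y-a)>\tfrac{h}{4\sqrt{3m}}R_2^{3/2}$; but splicing an $\Om$-arc staying in $\{r>R_0\}$ realizing the bound of Lemma~\ref{lem:ActionR0UpperBound}(b) (legitimate since the endpoint radii lie between $\sqrt2\,R_1$ and $R_2$) gives $\A_h(\gm_{xy};a,\tau_y)\le\beta_1R_2+\beta_2$, a contradiction for $R_2$ large. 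Hence some $t_0\in[a,\tau_y]$ has $\rdt_{xy}(t_0)\ge\sqrt{6m/r_{xy}(t_0)}$, and Proposition~\ref{prop_MonIncreasing} yields $\rdt_{xy}(t)>\tfrac12\rdt_{xy}(t_0)\ge\sqrt{3m/2}\,R_2^{-\ey}$ on $[t_0,t_y]\supset[\tau_y,t_y]$; the estimate on $[t_x,\tau_x]$ again follows by time reversal. I expect the main obstacle to be the upper bound in (a): one must bound the action of the tied portion between two points of $\partial B_{R_0}$ uniformly in the endpoints and in the period phases, reconciling the topological (tied) constraint with an explicit action estimate. Once this is handled through Lemma~\ref{lem:Psih-UpperBnd} and compactness of $\partial B_{R_0}$, it is the only genuinely new ingredient beyond the one-sided Lemma~\ref{lem:gmy-H1-Upperbound}.
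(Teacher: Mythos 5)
Your proposal is essentially sound, and for parts (a) and (d) it follows the paper's own route: part (a) rests on applying Proposition \ref{prop:Tie-ExistFreeTimeMin-2}(b) to a sub-arc spanning the tie-core, bounding the resulting $\Phi_h$ by a uniform bound on $\Psi_h$ over compact endpoint circles, and then using $h(s_y-s_x)\le \A_h$ together with Lemma \ref{lem:|x-y|ge2} and Cauchy--Schwarz; part (d) is exactly the contradiction argument of Lemma \ref{lem:gmy-H1-Upperbound}(b) re-run with Lemma \ref{lem:ActionR0UpperBound}, and your bookkeeping there ($a\in[s_y,\tau_y]$ exists since $R_0<R_2/2$, $r_{xy}(t_0)\le R_2$, Proposition \ref{prop_MonIncreasing} propagating the radial speed to $[\tau_y,t_y]$) is correct. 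One small point in (a): you take the core endpoints $\gm_{xy}(s_x),\gm_{xy}(s_y)\in\partial B_{R_0}$ where the paper takes $[\tau_x,\tau_y]$ with endpoints on $\partial B_{R_2}$ --- both are legitimate instances of property (b) --- but the uniformity you need is not only over the endpoint circle but also over the time phases $\{s_x\},\{s_y\}\in[0,1)$, and Lemma \ref{lem:Psih-UpperBnd} alone (which is for a fixed pair $t_1<t_2$) does not deliver that: you also need the Lipschitz continuity of $\Psi_h$ in the time variables, i.e.\ Proposition \ref{prop:Psih-LocalLip}, which is precisely how the paper manufactures its constant $C_1$.

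The more substantive deviation is in (b)--(c), where the paper does not splice at all: it simply observes that $[s_y,\tau_y]$ and $[\tau_x,s_x]$ are contained in $[\tau_x,\tau_y]$, so the single constant $C_1$ bounding $\A_h(\gm_{xy};\tau_x,\tau_y)$ (via property (b) applied across the core, with endpoints on $\partial B_{R_2}$) bounds all three sub-actions simultaneously, since the integrand $L+h$ is positive; the upper bounds are then $C_1/h$, and the lower bounds come from Cauchy--Schwarz with $|\gm_{xy}(\tau_y)-\gm_{xy}(s_y)|\ge R_2-R_0$, as in your proposal. Your alternative splicing of the climb $[s_y,\tau_y]$ needs a repair as stated: Proposition \ref{prop:Tie-ExistFreeTimeMin-2} gives free-time minimality only for sub-arcs spanning the core ($\tau_1\in[t_x,s_x]$, $\tau_2\in[s_y,t_y]$) and only fixed-time $\Omega$-minimality for sub-arcs inside $[s_y,t_y]$. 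A fixed-time competitor on $[s_y,\tau_y]$ has action at least $h(\tau_y-s_y)$, so no uniform $C'$ can emerge from a fixed-duration comparison and the deduction $\tau_y-s_y\le C'/h$ would be circular; on the other hand, a replacement arc of different duration shifts the phase of the tail $\gm_{xy}|_{[\tau_y,t_y]}$, whose action is \emph{not} invariant under non-integer time translations (the system is only $1$-periodic), so your ``splicing principle'' is false if ``any path that keeps the endpoints'' is read literally. The fix is to choose the replacement arc's duration congruent to $\tau_y-s_y$ modulo $1$ (say in $[1,2)$), translate the tail by the corresponding integer, and then invoke the global free-time minimality of Proposition \ref{prop:Tie-ExistFreeTimeMin-2}(a); the same integer-shift bookkeeping is needed where you apply the free-time bound of Lemma \ref{lem:ActionR0UpperBound}(b) on $[a,\tau_y]$ in part (d) (and is implicitly what the paper does there too). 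With that repair your argument closes, and the time-reversal reduction for (c) and for the inward branch of (d) is fine; but note the paper's nesting trick renders the extra splicing in (b)--(c) unnecessary.
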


\begin{figure} 
\centering
\includegraphics[scale=1.0]{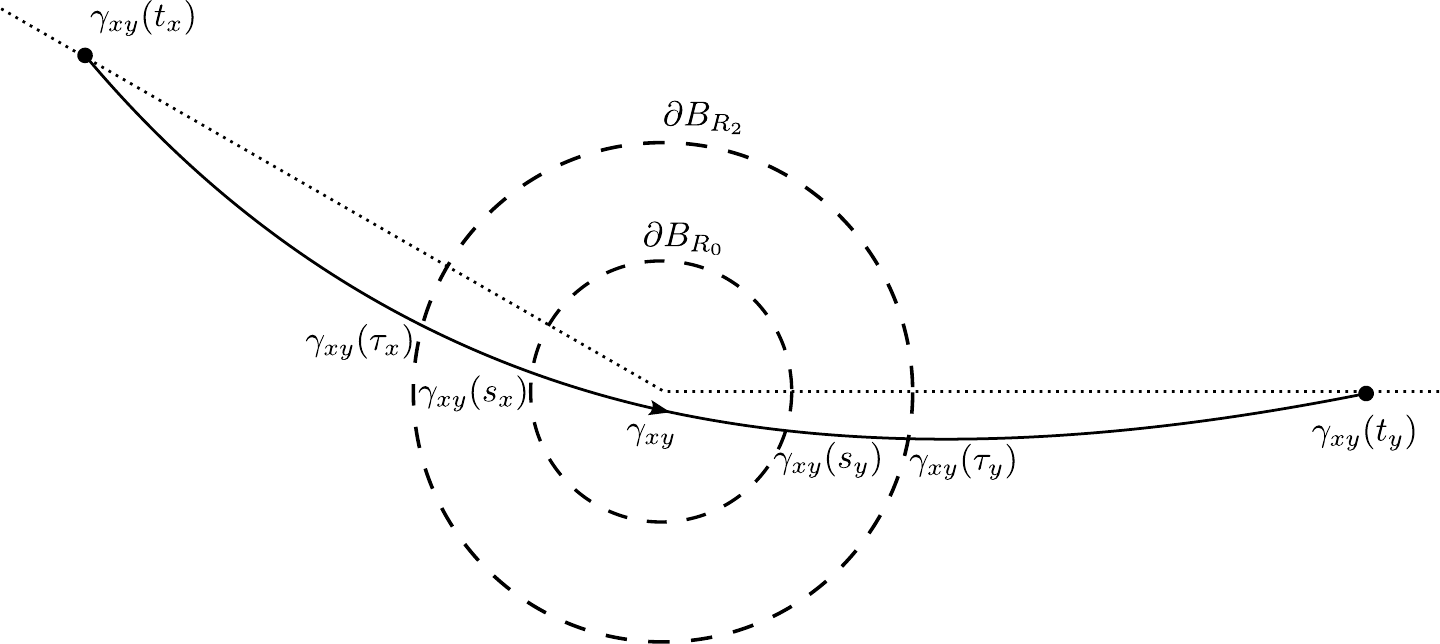}
\caption{}
\label{fig:times}
\end{figure}

\begin{proof}
(a). Fixed an $n_0 \ge 2$, by Proposition \ref{prop:Psih-LocalLip},  
$$ \sup \{ \Psi_h(x_0, y_0; s_1, s_2+n_0): x_0 \in \partial B_{R_2}, \; y_0 \in \partial B_{R_2},  s_1, s_2 \in [0, 1] \} \le C_1. $$
Then By Proposition \ref{prop:Tie-ExistFreeTimeMin-2}, for any $[\tau_1, \tau_2] \subset [\tau_x, \tau_y]$ (notice that $[s_x, s_y] \subset [\tau_x, \tau_y]$, see Figure \ref{fig:times}), 
\begin{equation}  \label{eq:Action-gm_xy-UpperBound}
\begin{aligned}
\A_h(\gm_{xy}; \tau_1, \tau_2) & \le \A_h(\gm_{xy}; \tau_x, \tau_y) = \Phi_h(\gm_{xy}(\tau_x), \gm_{xy}(\tau_y); \{\tau_x\}, \{\tau_y\}) \\
& \le  \Psi_h(x, y; \{\tau_x\}, \{\tau_y\} +n_0) \le C_1.
\end{aligned}
\end{equation}
This gives us the following estimates, which imply the third inequality in (a),
$$ s_y -s_x \le  \A_h(\gm_{xy}; s_x, s_y)/h \le \A_h(\gm_{xy}; \tau_x, \tau_y)/h \le C_1 /h. $$
Meanwhile by Lemma \ref{lem:|x-y|ge2} and the Cauchy-Schwartz inequality, 
$$ (s_y -s_x) \int_{s_x}^{s_y} |\dot{\gm}_{xy}|^2 \,dt \ge \left( \int_{s_x}^{s_y} |\dot{\gm}_{xy}| \,dt \right)^2 \ge 4.
$$
Therefore
$$ \frac{2}{s_y -s_x} \le \ey \int_{s_x}^{s_y} |\gmd_{xy}|^2 \,dt \le \A_h(\gm_{xy}; s_x, s_y) \le C_1.$$
This implies the first inequality in (a), while the second inequality in (a) is trivial. 

(b) \& (c). Since $[\tau_x, s_x], [s_y, \tau_y] \subset [\tau_x, \tau_y]$ and  
$$ \int_{s_y}^{\tau_y} |\gmd_{xy}| \,dt, \int_{\tau_x}^{s_x} |\gmd_{xy}| \,dt \ge R_2 -R_0 >0,$$
the proofs are similar to those given in part (a) and will be omitted.

(d). With Lemma \ref{lem:ActionR0UpperBound}, this can be proven by a similar argument as the proof of property (b) in Lemma \ref{lem:gmy-H1-Upperbound}. 
\end{proof}

Like in the previous section, we fix an $R_2 \ge 2\sqrt{2}R_1$ for the rest of the section, such that the above lemma always holds. Moreover since the system is $1$-periodic, we may further assume 
\begin{equation}
 \label{eq:s_x} s_x \in [0, 1), \; \forall (x, y) \in \Lmd_{\tht_-}^{\tht_+}(R_2).
\end{equation} 
Under the above assumption, Lemma \ref{lem:TimeBound-2} implies 
\begin{equation} \label{eq:taux-tauy-UpperBound}
\sup \{|\tau_x|, |\tau_y|: (x, y) \in \Lmd_{\tht_-}^{\tht_+}(R_2) \} < \infty. 
\end{equation}
\begin{lem}
\label{lem:gmxy-H1-Upperbound} Given an arbitrary $\tau \ge \sup \{|\tau_x|, |\tau_y|: (x, y) \in \Lmd_{\tht_-}^{\tht_+}(R_2) \}$, the following results hold. 
\begin{enumerate}
\item[(a).] For any  $(x, y) \in \Lmd_{\tht_-}^{\tht_+}(R_2)$ with $[-\tau, \tau] \subset [t_x, t_y]$, there is a finite $R = R(\tau, R_2)$ independent of $(x, y)$, such that $|\gm_{xy}(t)| \le R$, $\forall t \in [-\tau, \tau].$
\item[(b).] $\{\gm_{xy}|_{[-\tau, \tau]}: (x, y) \in \Lmd_{\tht_-}^{\tht_+}(R_2) \text{ with }  [-\tau, \tau] \subset [t_x, t_y] \}$ is a bound subset of $H^1([-\tau, \tau], \cc)$. 
\end{enumerate}
\end{lem}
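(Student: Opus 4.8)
The plan is to mirror the proof of Lemma \ref{lem:BoundHlocNorm}, the one-sided analogue, with the two differences that the interval $[-\tau,\tau]$ must now be split into three parts and that the tie constraint forces us to work with paths in $\Omega$. Throughout fix $(x,y)\in\Lmd_{\tht_-}^{\tht_+}(R_2)$ with $[-\tau,\tau]\subset[t_x,t_y]$, write $\gm=\gm_{xy}$ and $r(t)=|\gm(t)|$, and recall from Lemma \ref{lem:TimeBound-2} and \eqref{eq:taux-tauy-UpperBound} that $-\tau\le\tau_x\le\tau_y\le\tau$, that $\tau_y-\tau_x$ is bounded above uniformly in $(x,y)$, and that $r$ is strictly decreasing on $[t_x,\tau_x]$ and strictly increasing on $[\tau_y,t_y]$. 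Consequently on the two outer pieces $[-\tau,\tau_x]$ and $[\tau_y,\tau]$ we have $r(t)\le\max\{r(-\tau),r(\tau)\}$ while $r(t)\ge R_2$, so it suffices to bound $r(-\tau)$ and $r(\tau)$; on the middle piece $[\tau_x,\tau_y]$ we bound $r$ directly.

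For the middle piece I would combine \eqref{eq:Action-gm_xy-UpperBound}, which gives $\A_h(\gm;\tau_x,\tau_y)\le C_1$, with Cauchy--Schwarz: since $\tfrac12\int|\gmd|^2\le\A_h$, for $t\in[\tau_x,\tau_y]$ one has
$$ r(t)\le r(\tau_x)+\int_{\tau_x}^{\tau_y}|\gmd|\,dt\le R_2+\big(2(\tau_y-\tau_x)C_1\big)^{\ey}, $$
a uniform bound since $\tau_y-\tau_x$ is uniformly bounded. For the right piece I would bound $r(\tau)$ exactly as in Lemma \ref{lem:BoundHlocNorm}(a): using that $\gm|_{[\tau_y,\tau]}$ is a \emph{free-time} minimizer of $\A_h$ among paths in $\Omega$ joining $\gm(\tau_y)$ to $\gm(\tau)$ (the free-time sharpening of Proposition \ref{prop:Tie-ExistFreeTimeMin-2}(c), legitimate because $|z|\ge R_0$ contains none of the primaries, so deformations there preserve the tie class and hence the minimality of $\gm$ over $\Gm$), Lemma \ref{lem:ActionR0UpperBound}(b) yields $\A_h(\gm;\tau_y,\tau)\le\bt_1 r(\tau)+\bt_2$. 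Thus $\int_{\tau_y}^{\tau}|\gmd|^2\,dt\le 2\bt_1 r(\tau)+2\bt_2$, and with $r(\tau_y)=R_2$ and $\tau-\tau_y\le 2\tau$ the estimate $r(\tau)\le R_2+\big((\tau-\tau_y)\int_{\tau_y}^{\tau}|\gmd|^2\big)^{\ey}$ becomes the quadratic inequality
$$ r(\tau)^2-2(R_2+2\bt_1\tau)r(\tau)+R_2^2-4\bt_2\tau\le 0, $$
whose solution gives $r(\tau)\le R=R(\tau,R_2)$ independent of $(x,y)$. The bound on $r(-\tau)$ follows symmetrically from the left piece $[-\tau,\tau_x]\subset[t_x,\tau_x]$. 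Taking the maximum of the three bounds proves (a).

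For (b), once (a) provides the uniform pointwise bound $r(t)\le R$ on $[-\tau,\tau]$, the $L^2$-norm of $\gm$ is controlled by $\int_{-\tau}^{\tau}|\gm|^2\,dt\le 2\tau R^2$. For the derivative I would bound the action piecewise: $\A_h(\gm;\tau_x,\tau_y)\le C_1$ on the middle, and $\A_h(\gm;\tau_y,\tau)\le\bt_1 R+\bt_2$ and $\A_h(\gm;-\tau,\tau_x)\le\bt_1 R+\bt_2$ on the ends (again via the free-time $\Omega$-minimality and Lemma \ref{lem:ActionR0UpperBound}(b)). Adding these and discarding the nonnegative potential term gives $\int_{-\tau}^{\tau}|\gmd|^2\,dt\le 2\A_h(\gm;-\tau,\tau)\le 2\big(C_1+2\bt_1 R+2\bt_2\big)$, whence $\|\gm|_{[-\tau,\tau]}\|_{H^1}^2\le 2\tau R^2+2\big(C_1+2\bt_1 R+2\bt_2\big)$, a bound independent of $(x,y)$.

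The main obstacle is precisely the outer-end estimate. A naive fixed-time comparison path (Lemma \ref{lem:ActionR0UpperBound}(a)) would only bound $\A_h(\gm;\tau_y,\tau)$ by a quantity \emph{quadratic} in $r(\tau)$, which is too weak to close the quadratic inequality above. The \emph{linear}-in-$r(\tau)$ bound is indispensable, and it is exactly the free-time optimization over $\Omega$ that produces it; hence the crux is justifying that the outer segments of the tied minimizer $\gm_{xy}$ are genuine free-time $\Omega$-minimizers, which rests on the fact that all primaries remain inside $B_{R_0}$ so that any modification in the region $|z|\ge R_0$ leaves the tie class of $\gm_{xy}$ unchanged.
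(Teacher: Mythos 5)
Your proposal is correct and follows essentially the same route as the paper: monotonicity of $r_{xy}$ outside $[\tau_x,\tau_y]$ (Lemma \ref{lem:TimeBound-2}(d)) reduces part (a) to bounding $|\gm_{xy}(\pm\tau)|$, which you obtain, exactly as in Lemma \ref{lem:BoundHlocNorm}(a), from the linear-in-$R$ free-time bound of Lemma \ref{lem:ActionR0UpperBound}(b) applied to the outer segments and the resulting quadratic inequality, while part (b) follows by summing the middle action bound (the paper gets it from Proposition \ref{prop:Psih-LocalLip} and Proposition \ref{prop:Tie-ExistFreeTimeMin-2}(b), equivalent to your use of \eqref{eq:Action-gm_xy-UpperBound}) with the two end estimates. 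Your explicit justification that the outer segments are genuine free-time $\Omega$-minimizers --- deformations in $\{|z|\ge R_0\}$ preserve the tie class, so the tied free-time minimality of $\gm_{xy}$ transfers to them --- is precisely what the paper's citation of Proposition \ref{prop:Tie-ExistFreeTimeMin-2}(c) together with Lemma \ref{lem:ActionR0UpperBound}(b) implicitly relies on, and your observation that a fixed-time comparison would only give a bound quadratic in $R$ correctly identifies why the free-time estimate is indispensable.
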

\begin{proof}
Throughout this proof, we only consider $(x, y) \in \Lmd_{\tht_-}^{\tht_+}(R_2)$ with $[-\tau, \tau] \subset [t_x, t_y]$.

(a). Since $|\gm_{xy}(t)| \le R_2$, $\forall t \in [\tau_x, \tau_y]$. By property (d) in Lemma \ref{lem:TimeBound-2}, it is enough to show $|\gm_{xy}(\pm \tau)| \le R$, for some $R$ independent of $(x,y)$. Meanwhile by property (c) in Proposition \ref{prop:Tie-ExistFreeTimeMin-2} and property (b) in Lemma \ref{lem:ActionR0UpperBound},   
\begin{equation}
\label{eq:Action-gmxy-UppBnd-1}  
\int_{\tau_y}^{\tau} |\gmd_{xy}|^2 \,dt   \le 2\A_h(\gm_{xy}; \tau_y, \tau) \le 2\beta_1 R + 2\beta_2;   
\end{equation} 
\begin{equation} 
\label{eq:Action-gmxy-UppBnd-2}
\int_{-\tau}^{\tau_x} |\gmd_{xy}|^2 \,dt \le  2\A_h(\gm_{xy}; -\tau, \tau_x) \le 2\beta_1 R + 2\beta_2. 
\end{equation}
The desired result then follows from the same argument as in property (a) of Lemma \ref{lem:BoundHlocNorm}. 

(b). It is enough to show $\int_{-\tau}^{\tau} |\gmd_{xy}|^2 \,dt \le C_1$ for some $C_1$ independent of $\gm_{xy}$. By Lemma \ref{lem:TimeBound-2}, 
$$ 0< a^- = \inf \{\tau_x: (x,y) \in \Lmd_{\tht_-}^{\tht_+}(R_2)\} < a^+ = \sup \{ \tau_x: (x,y) \in \Lmd_{\tht_-}^{\tht_+}(R_2) \} < \infty ; $$
$$ 0< b^- = \inf \{ \tau_y: (x,y) \in \Lmd_{\tht_-}^{\tht_+}(R_2) \} < b^+ = \sup \{ \tau_y: (x,y) \in \Lmd_{\tht_-}^{\tht_+}(R_2) \} < \infty. $$
Then Proposition \ref{prop:Psih-LocalLip} implies 
$$ \sup \{ \Psi_h(z_1, z_2; t_1, t_2): z_1, z_2 \in \partial B_{R_2}, t_1 \in [a^-, a^+], \; t_2 \in [b^-, b^+] \} \le C_2. $$
By Proposition \ref{prop:Tie-ExistFreeTimeMin-2}, 
$$ \A_h(\gm_{xy}; \tau_x, \tau_y) = \Phi_h(\gm_{xy}(\tau_x), \gm_{xy}(\tau_y); \{\tau_x\}, \{\tau_y\}) \le \Psi_h(\gm_{xy}(\tau_x), \gm_{xy}(\tau_y); \{\tau_x\}, \{\tau_y\}). $$
Since $|\gm_{xy}(\tau_x)| = |\gm_{xy}(\tau_y)|=R_2$, the above two estimates imply
$$ \int_{\tau_x}^{\tau_y} |\gmd_{xy}|^2 \,dt \le 2\A_h(\gm_{xy}; \tau_x, \tau_y) \le 2\Psi_h(\gm_{xy}(\tau_x), \gm_{xy}(\tau_y); \{\tau_x\}, \{\tau_y\}) \le 2C_2. $$
Combining this with \eqref{eq:Action-gmxy-UppBnd-1} and \eqref{eq:Action-gmxy-UppBnd-2}, we get the desired result. 
\end{proof}

For the rest of the section, let's choose a sequence $\{ (x_n, y_n) \in \Lmd_{\tht_-}^{\tht_+}(R_2) \}_{n \in \nn}$ satisfying 
$$ \lim_{n \to \infty} |x_n| = \lim_{n \to \infty} |y_n| =\infty. $$
For each $n$, by Proposition \ref{prop:Tie-ExistFreeTimeMin-2}, there is a $\gm_{n} \in \Gm_{t_{x_n}, t_{y_n}}(x_n, y_n)$, such that 
$$ \A_{h}(\gm_{n}; t_{x_n}, t_{y_n}) = \Phi_h(x_n, y_n; \{t_{x_n}\}, \{t_{y_n}\}) = \inf\{ \Phi_h(x_n, y_n; s_1, s_2): s_1, s_2 \in [0, 1) \}.$$
\begin{prop}
\label{prop:Lim-gm-xn-yn} 
Under the above notations. 
\begin{enumerate}
\item[(a).] $t_{x_n} \to -\infty$ and $t_{y_n} \to \infty$, as $n \to \infty$. 
\item[(b).] There is a $\gm \in H^1(\rr, \cc)$, such that (after passing to a proper subsequence) $\gm_n \to \gm$ strongly in $L^{\infty}_{\text{loc}}$-norm and weakly in $H^1_{\text{loc}}$-norm, as $n \to \infty$. 
\item[(c).] For any $t_0>0$ large enough, $\gm|_{[-t_0, t_0]} \in \Gm_{-t_0, t_0}(\gm(-t_0), \gm(t_0))$ and 
$$ \A_h(\gm; -t_0, t_0) = \Phi_h(\gm(-t_0), \gm(t_0); \{-t_0\}, \{t_0\}). $$
\item[(d).] $\gm(t)$ is a solution of \eqref{eq_RNBP1} with at most one regularizable collision, and there is a $\tau_0>0$ large enough, such that for any compact interval $I \subset (-\infty, \tau_0)$ or $(\tau_0, \infty)$, $\| \gm_n|_{I} -\gm|_{I}\|_{C^1} \to 0$, as $n \to \infty$. 
\end{enumerate}
\end{prop}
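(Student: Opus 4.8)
The plan is to mirror, in both time directions, the argument used for the one-sided limit in Proposition \ref{prop:Lim-gm-yn}, while tracking the topological constraint and the single admissible collision. For part (a), I would exploit that the escaping segments of $\gm_n$ stay outside $B_{R_0}$ and, by property (c) of Proposition \ref{prop:Tie-ExistFreeTimeMin-2}, minimize the action among such $\Omega$-paths. Hence Lemma \ref{lem:ActionR0UpperBound}(b) (with $R=|y_n|$) gives $\A_h(\gm_n;\tau_{y_n},t_{y_n}) \le \beta_1|y_n|+\beta_2$, while the Cauchy--Schwarz inequality yields $\A_h(\gm_n;\tau_{y_n},t_{y_n}) \ge (|y_n|-R_2)^2/\big(2(t_{y_n}-\tau_{y_n})\big)$. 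Since $\tau_{y_n}$ is uniformly bounded by \eqref{eq:taux-tauy-UpperBound}, these two estimates force $t_{y_n}-\tau_{y_n}\to\infty$, hence $t_{y_n}\to\infty$; the symmetric computation on the segment $[t_{x_n},\tau_{x_n}]$ gives $t_{x_n}\to-\infty$.

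For part (b), once (a) is known, every compact interval $[-\tau,\tau]$ lies inside $[t_{x_n},t_{y_n}]$ for all large $n$, so Lemma \ref{lem:gmxy-H1-Upperbound} makes $\{\gm_n|_{[-\tau,\tau]}\}$ bounded in $H^1([-\tau,\tau],\cc)$. A diagonal extraction over $\tau=1,2,\dots$ then produces a subsequence and a limit $\gm\in H^1(\rr,\cc)$ with $\gm_n\to\gm$ strongly in $L^\infty_{\text{loc}}$ and weakly in $H^1_{\text{loc}}$, and weak lower semicontinuity gives $\A_h(\gm;-t_0,t_0)\le\liminf_n\A_h(\gm_n;-t_0,t_0)$ for every $t_0$.

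For part (c), I first fix $t_0$ large enough that $[s_{x_n},s_{y_n}]\subset[-t_0,t_0]$ for all large $n$, which is possible by \eqref{eq:s_x} together with the uniform bound on $s_y-s_x$ from Lemma \ref{lem:TimeBound-2}(a). Each $\gm_n|_{[-t_0,t_0]}$ then keeps the same innermost excursion into $B_{R_0}$ and is therefore tied, and since $|\gm(\pm t_0)|$ is large a small endpoint deformation (staying in the region far from $B_{R_0}$, which contains all the trajectories $q_i$, and hence homotopically trivial) produces paths in $\hat{\Gm}_{-t_0,t_0}(\gm(-t_0),\gm(t_0))$ converging weakly to $\gm$; by the weak-closure definition of $\Gm$ this gives $\gm|_{[-t_0,t_0]}\in\Gm_{-t_0,t_0}(\gm(-t_0),\gm(t_0))$. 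The minimality is then obtained exactly as in Proposition \ref{prop:Lim-gm-yn}(c): recalling from property (b) of Proposition \ref{prop:Tie-ExistFreeTimeMin-2} that $\gm_n|_{[-t_0,t_0]}$ is itself a free-time minimizer over $\Xi$, if some $\xi\in\Xi$ beat $\A_h(\gm;-t_0,t_0)$ by $3\ep$, I would glue $\xi$ into $\gm_n$ after a small endpoint-matching deformation to obtain a tied competitor with strictly smaller action, contradicting the minimality of $\gm_n$.

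For part (d), applying Proposition \ref{prop:Minimizer-Gamma_ij} to the minimizer $\gm|_{[-t_0,t_0]}$ from (c) shows $\gm$ solves \eqref{eq_RNBP1} with at most one collision, which is regularizable; let $\tau_0$ be its time (or any point if none). Away from $\tau_0$ the limit $\gm$ is bounded away from every $q_i$, so by $L^\infty_{\text{loc}}$ convergence the same holds for $\gm_n$ for large $n$, forcing the single collision of $\gm_n$ (if present) to cluster at $\tau_0$. On a compact $I\subset(-\infty,\tau_0)$ or $(\tau_0,\infty)$ both $\gm$ and the $\gm_n$ are then collision-free solutions with uniformly bounded $|\ddot{\gm}_n|$, so Proposition \ref{prop_C1Convergence} upgrades $L^\infty$ to $C^1$ convergence. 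The step I expect to be the main obstacle is part (c): transferring the ``tied'' homotopy constraint to the weak limit while the endpoints $\gm_n(\pm t_0)$ drift to $\gm(\pm t_0)$, and simultaneously arranging the gluing so that the deformed competitor remains tied; the remaining parts are essentially two-sided restatements of the one-sided arguments.
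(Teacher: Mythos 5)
Your proposal is correct and takes essentially the same route as the paper's proof: the same $\beta_1|y_n|+\beta_2$ upper bound versus Cauchy--Schwarz lower bound (with $\tau_{y_n}$, $\tau_{x_n}$ uniformly bounded) for (a), the same diagonal extraction and weak lower semicontinuity for (b), the same contradiction-by-gluing argument exploiting the free-time minimality of $\gm_n$ over $\Xi$ from Proposition \ref{prop:Tie-ExistFreeTimeMin-2} for (c), and the same collision localization plus Proposition \ref{prop_C1Convergence} upgrade from $L^\infty$ to $C^1$ for (d). The endpoint-matching deformation you insert in (c) to transfer the tied constraint to the weak limit (small connectors staying outside $B_{R_0}$, so that $s_x$, $s_y$ and the homotopy class are unchanged) is precisely the step the paper leaves implicit when it asserts that property (b) of Proposition \ref{prop:Lim-gm-xn-yn} yields $\gm|_{[-t_0,t_0]}\in\Gm_{-t_0,t_0}(\gm(-t_0),\gm(t_0))$, so your write-up is, if anything, slightly more complete at that point.
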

\begin{proof}
(a). We give the details for $t_{y_n} \to \infty$, while $t_{x_n} \to \infty$ can be proven similarly. 

By Proposition \ref{prop:Tie-ExistFreeTimeMin-2}, Lemma \ref{lem:ActionR0UpperBound} and the Cauchy-Schwartz inequality, 
$$ \begin{aligned}
\bt_1 |y_n| + \bt_2 & \ge \A_h(\gm_n; \tau_{y_n}, t_{y_n}) \ge \ey \int_{\tau_{y_n}}^{t_{y_n}} |\gmd_n|^2 \,dt \ge \frac{1}{2(t_{y_n}- \tau_{y_n})} \left( \int_{\tau_{y_n}}^{t_{y_n}} |\gmd_n| \,dt\right)^2 \\
& \ge \frac{|y_n - \gm_n(\tau_{y_n})|^2}{2(t_{y_n} -\tau_{y_n})} \ge \frac{|y_n|^2 - 2R_2 |y_n| + R_2^2}{2t_{y_n}}. 
\end{aligned}
$$
This implies 
$$ t_{y_n} \ge \frac{|y_n|^2 - 2R_2 |y_n| + R_2^2}{2(\bt_1 |y_n| + \bt_2)} \ge \frac{|y_n| -2R_2 +R_2^2 |y_n|^{-1}}{2\bt_1 + 2\bt_2 |y_n|^{-1}} \to \infty, \text{ and } |y_n| \to \infty. $$ 

(b). With property (a), it follows from a similarly argument as in the proof of property (b) in Lemma \ref{prop:Lim-gm-yn}.

(c). Choose an arbitrary $t_0 >\sup \{ |\tau_{x_n}|, |\tau_{y_n}|: n \in \nn\}$. By \eqref{eq:taux-tauy-UpperBound}, the supremum must be finite. Then for $n$ large enough, $\gm_n|_{[-t_0, t_0]} \in \Gamma_{-t_0, t_0}(\gm_n(-t_0), \gm_n(t_0))$. Then property (b) implies $\gm|_{[-t_0, t_0]} \in \Gamma_{-t_0, t_0}(\gm(-t_0), \gm(t_0))$. 
To obtain the rest of the property, by a contradiction argument, let's assume there is a $t_0$ large enough, such that
$$\A_h(\gm; -t_0, t_0) < \Phi_h(\gm(-t_0), \gm(t_0); \{t_0\}, \{t_0\}). $$
By Proposition \ref{prop:Tie-ExistFreeTimeMin-2}, there is a $\xi \in H^1_{t_1, t_2}(\gm(-t_0), \gm(t_0))$ with $\{t_1\}= \{-t_0\}$ and $\{t_2\}=\{t_0\}$(see Figure \ref{fig:two-deform}), and an $\ep>0$ small enough, such that 
\begin{equation}
\label{eq;Act-gm-xi} \A_h(\xi; t_1, t_2) = \Phi_h(\gm(-t_0), \gm(t_0); \{-t_0\}, \{t_0\}) \le \A_h(\gm; -t_0, t_0)- 4\ep. 
\end{equation}
By property (b) above and the lower semi-continuity of $\A_h$, for $n$ large enough,
\begin{equation}
\label{eq;Act-gmn-gm} \A_h(\gm_n; -t_0, t_0) \ge \A_h(\gm; -t_0, t_0) -\ep \ge \A_h(\xi; t_1, t_2) +3\ep. 
\end{equation}

Notice that $\xi(t_1) =\gm(-t_0)$ and $\xi(t_2) =\gm(t_0)$ are collision-free, for $t_0$ large enoguh. Then so is $\gm|_{[t_1, \tau_1]}$ and $\gm|_{[\tau_2, t_2]}$, for $\tau_i$ close enough to $t_i$, $i=1,2$. As a result, $\A_h$ is continuous in a small neighborhood of $\xi|_{[t_1, \tau_1]}$ in $H^1_{t_1, \tau_1}(\xi(t_1), \xi(\tau_1))$, and of $\xi|_{[\tau_2, t_2]}$ in $H^1_{\tau_2, t_2}(\xi(\tau_2), \xi(t_2))$. Since   
$$ \gm_n(-t_0) \to \gm(-t_0) = \xi(t_1), \; \gm_n(t_0) \to \gm(t_0) = \xi(t_2), \; \text{ when } n \to \infty. $$
For each $n$ large enough, there is a $\eta_n \in H^1_{t_1, \tau_1}(\gm_n(-t_0), \xi(\tau_1))$ and $\tilde{\eta}_n \in H^1_{\tau_2, t_2}(\xi(\tau_2), \gm_n(t_0))$ with 
$$ |\A_h(\xi; t_1, \tau_1) - \A_h(\eta_n; t_1, \tau_1)|, |\A_h(\xi; \tau_2, t_2) - \A_h(\tilde{\eta}_n; \tau_2, t_2) \le \ep. $$
Let $\zt_n|_{[t_1, t_2]}=  \eta_n|_{[-t_0, \tau_1]}*\xi|_{[\tau_1, \tau_2]}*\tilde{\eta}_n|_{[\tau_2, t_2]}$. Then 
$$ \A_h(\xi; t_1, t_2) - \A_h(\zt_n; t_1, t_2) \ge -2\ep. $$
Combining this with \eqref{eq;Act-gmn-gm}, we get
$$ \A_h(\gm_n; -t_0, t_0) - \A_h(\zt_n; t_1, t_2) \ge \ep.$$
This is absurd, as $\zt_n|_{[t_1, t_2]} \in \Xi_{\{-t_0\}, \{t_0\}}(\gm_n(-t_0), \gm_n(t_0))$ and Proposition \ref{prop:Tie-ExistFreeTimeMin-2} implies  
$$ \A_h(\gm_n; -t_0, t_0) = \Phi_h(\gm_n(-t_0), \gm_n(t_0); \{-t_0\}, \{t_0\}). $$

\begin{figure} 
\centering
\includegraphics[scale=1.0]{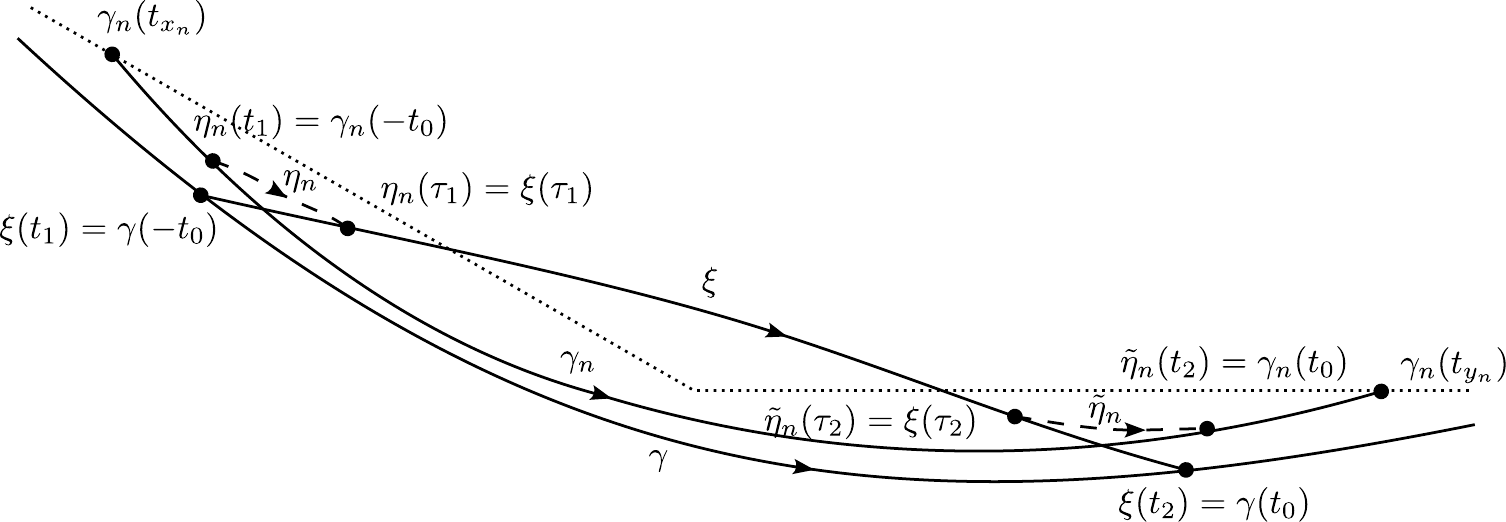}
\caption{}
\label{fig:two-deform}
\end{figure}

(d). With property (c), Proposition \ref{prop:Minimizer-Gamma_ij} shows $\gm|_{\rr}$ is a solution of \eqref{eq_RNBP1} with at most one regularizable collision. Then we can find a $\tau>0$ large enough, such that $\gm(t)$ is collision-free, for all $t \in (-\infty, \tau_0] \cup [\tau_0, \infty)$. The result then follows from similar arguments as in the proof of property (d) in Proposition \ref{prop:Lim-gm-yn}.
\end{proof}

With the above results,  similar arguments as in the proofs of Proposition \ref{prop: LimitAngle} and \ref{prop: LimEnergy} can show  
\begin{prop}
$\lim_{t \to \pm \infty} \frac{\gm(t)}{|\gm(t)|} = e^{i \tht_{\pm}(\text{mod } 2\pi)}$ and $\lim_{t \to \pm \infty} \dot{\gm}(t) = \pm \sqrt{2h} e^{i \tht_{\pm}(\text{mod } 2\pi)}. $
\end{prop}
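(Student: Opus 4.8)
The plan is to run, at each of the two ends $t \to +\infty$ and $t \to -\infty$, the same two-stage argument used for the one-sided solution in Proposition~\ref{prop: LimitAngle} (the asymptotic direction) and Proposition~\ref{prop: LimEnergy} (the asymptotic velocity), feeding in the convergence and minimality properties of the limiting curve $\gm$ recorded in Proposition~\ref{prop:Lim-gm-xn-yn}. I treat $t \to +\infty$ first; the case $t \to -\infty$ is mirror-symmetric. Fix $\tau_0$ as in Proposition~\ref{prop:Lim-gm-xn-yn}(d), so that beyond it $\gm$ is collision-free and $\gm_n \to \gm$ in $C^1_{\text{loc}}$. Writing $\gm = r e^{i\tht}$ and $\gm_n = r_n e^{i\tht_n}$, the radial lower bound $\dot r_{xy}(t) \ge \sqrt{3m/2}\,R_2^{-1/2}$ on $[\tau_y, t_y]$ from Lemma~\ref{lem:TimeBound-2}(d) holds uniformly in $n$; passing to the $C^1_{\text{loc}}$ limit gives $\dot r(t) \ge v_0 := \sqrt{3m/2}\,R_2^{-1/2} > 0$ for all large $t$, exactly as in the one-sided case.

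With $r \ge R_2 \ge R_1$ and $\dot r \ge v_0$ in force, Proposition~\ref{prop:PurterbKeplerLimitAngle} yields both the existence of $\tht_0^+ := \lim_{t\to\infty}\tht(t)$ and a uniform bound on the angular momenta $\om(t)$ and $\om_n(t)$. To identify $\tht_0^+$ with $\tht_+$ I would reproduce the contradiction argument of Proposition~\ref{prop: LimitAngle}: the bound on $\om$ makes $|\tht(s_2)-\tht(s_1)|$ and $|\tht_n(s_2)-\tht_n(s_1)|$ arbitrarily small once $r$ is large, while $(x_n, y_n) \in \Lmd_{\tht_-}^{\tht_+}(R_2)$ forces $\tht_n(t_{y_n}) = \tht_+ \pmod{2\pi}$; combining these with the $C^0_{\text{loc}}$ convergence $\tht_n(s_0) \to \tht(s_0)$ at a fixed large time $s_0$ shows $\tht_0^+ - \tht_+ \in 2\pi\zz$, i.e. $\lim_{t\to\infty}\gm(t)/|\gm(t)| = e^{i\tht_+}$.

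For the velocity, integrability of $\ddot r$ (from $|\ddot r| \lesssim r^{-2} + \om^2 r^{-3}$ together with $r(t) \gtrsim v_0 t$) gives $\dot r(t) \to v_1^+ > 0$, while the bounded angular momentum gives $r\dot\tht \to 0$, hence $\dot\gm(t) \to v_1^+ e^{i\tht_+}$. The remaining point, $v_1^+ = \sqrt{2h}$, is where the topological constraint must be handled with care. As in Proposition~\ref{prop: LimEnergy} I would argue by contradiction using that $f(v) = v/2 + h/v$ attains its unique minimum at $v = \sqrt{2h}$: if $v_1^+ \ne \sqrt{2h}$, replacing $\gm|_{[s_1,s_2]}$ (for large $s_1$) by the straight segment traversed at constant speed $\sqrt{2h}$ over a suitable free-time interval strictly lowers the action. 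The competitor stays outside $B_{R_0}$ by the same triangle estimate as before (valid since $R_1 \ge R_0$), so it lies in the class $\Omega$; the contradiction is then with the $\Omega$-minimality of $\gm$ on the one-sided tail $[s_y, \infty)$, which I would obtain by restricting the full free-time minimality $\A_h(\gm;-t_0,t_0) = \Phi_h(\gm(-t_0),\gm(t_0);\{-t_0\},\{t_0\})$ of Proposition~\ref{prop:Lim-gm-xn-yn}(c) to subintervals, exactly as property (c) of Proposition~\ref{prop:Tie-ExistFreeTimeMin-2} does for the finite minimizers.

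Finally, the end $t \to -\infty$ is handled identically, now using the second inequality in Lemma~\ref{lem:TimeBound-2}(d), namely $\dot r_{xy}(t) \le -\sqrt{3m/2}\,R_2^{-1/2}$ on $[t_x, \tau_x]$ (so that $r$ grows as $t \to -\infty$), together with the constraint $\tht_n(t_{x_n}) = \tht_- \pmod{2\pi}$ coming from $(x_n,y_n)\in\Lmd_{\tht_-}^{\tht_+}(R_2)$, to obtain $\lim_{t\to-\infty}\gm(t)/|\gm(t)| = e^{i\tht_-}$ and $\lim_{t\to-\infty}\dot\gm(t) = -\sqrt{2h}\,e^{i\tht_-}$, the sign reflecting that $r$ is decreasing in forward time on that tail. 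I expect the only genuinely new difficulty, relative to the one-sided theorem, to be precisely this bookkeeping with the tied class: verifying that the asymptotic analysis of both direction and energy takes place entirely in the region $|z| \ge R_0$, where the topological constraint defining $\Gm$ is inactive, so that the $\Omega$-competitors above are admissible and the arguments of Propositions~\ref{prop: LimitAngle} and~\ref{prop: LimEnergy} transfer verbatim.
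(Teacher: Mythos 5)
Your proposal is correct and follows essentially the same route the paper intends: the paper proves this proposition by exactly the adaptation you describe, rerunning Propositions~\ref{prop: LimitAngle} and~\ref{prop: LimEnergy} on each tail using Lemma~\ref{lem:TimeBound-2}(d), Proposition~\ref{prop:PurterbKeplerLimitAngle}, and the convergence/minimality properties in Proposition~\ref{prop:Lim-gm-xn-yn}. You also correctly identify and resolve the one genuinely new point — that the straight-line competitor in the energy argument stays in $\cc \setminus B^o_{R_0}$ (since $R_1 \ge R_0$) and is therefore admissible via the $\Omega$-restricted free-time minimality on the tails, which is precisely the role of properties (b) and (c) of Proposition~\ref{prop:Tie-ExistFreeTimeMin-2} transferred to the limit curve.
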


\begin{proof}[Proof of Theorem \ref{thm:HyperScatter}]
When $T=1$, the desired result follows from previous results obtained in this section. When $T\ne 1$, it follows from a blow-up argument as in the proof of Theorem \ref{thm:HyperSol-1} given in the end of Section \ref{sec_HypSol}.
\end{proof}

\section{Appendix A: asymptotic analysis near an isolated collision}  \label{sec: app}

We give proofs of Proposition \ref{prop:Asymptotic} and \ref{prop:LocalDeform} in this appendix. For this it is more convenient to consider the relative position between the massless body and the primary $m_{i_0}$, which is 
\begin{equation}
\label{eq;zt} \zt(t) = \gm(t) - q_{i_0}(t), \; \forall t \in [t_0-\dl, t_0 +\dl]. 
\end{equation}
Then the Lagrangian $L$ becomes
\begin{equation} \label{eq:Lagrangian-Zt}
L(\zt, \dot{\zt}, t) = \ey|\dot{\zt}(t) + \dot{q}_{i_0}(t)|^2 + \frac{m_{i_0}}{|\zt(t)|} + \sum_{i \ne i_0} \frac{m_i}{|\zt(t) + q_{i_0}(t) - q_i(t)|} \,dt.
\end{equation}

\begin{proof}[Proof of Proposition \ref{prop:Asymptotic}] 
Since $\gm(t)$ is a collision-ejection solution of \eqref{eq_RNBP1}, $\zt(t)$ satisfies
\begin{equation} \label{eq:zt}
 \ddot{\zt}(t) = -\frac{m_{i_0} \zt(t)}{|\zt(t)|^3} -\ddot{q}_{i_0}(t)- \sum_{i \ne i_0} \frac{m_i(\zt(t) +q_{i_0}(t) - q_i(t))}{|\zt(t) +q_{i_0}(t) - q_i(t)|^3}, \;\; \forall t \ne t_0. 
\end{equation}
Notice that there is a $C_1>0$, such that for any $t \in [t_0 -\dl, t_0+\dl]$, 
$$ \Big|\ddot{q}_{i_0}(t)+\sum_{i \ne i_0} \frac{m_i(\zt(t) +q_{i_0}(t) - q_i(t))}{|\zt(t) +q_{i_0}(t) - q_i(t)|^3} \Big| \le C_1.$$
This means $\zt(t)$ is a collision-ejection solution of the Kepler problem with bounded perturbation. Then the desired results follow from Sperling \cite{Sp69}. 
\end{proof}

For the proof of Proposition \ref{prop:LocalDeform}, we need some results of the  Kepler problem
\begin{\eq}
\label{eq:kepler} \ddot{\xi}(s) = -m_{i_0} \frac{\xi(s)}{|\xi(s)|^3},
\end{\eq}
which is the Euler-Lagrangian equation of the following Lagrangian action functional
\begin{\eq}
\label{eq:LagKepler} \bar{\A}(\xi; s_1, s_2) = \int_{s_1}^{s_2} \bar{L}(\xi, \dot{\xi}) \,ds, \; \text{ where } \bar{L}(\xi, \dot{\xi}) = \ey |\dot{\xi}|^2 + \frac{m_{i_0}}{|\xi|}. 
\end{\eq}

Recall that for $\sg_{\pm} \in \cc$ with $|\sg_{\pm}|=1$, the following $\bar{\zt}(s)$ is \emph{a parabolic homothetic collision-ejection solution} of the Kepler problem \eqref{eq:kepler} with an isolated collision at the moment $t_0$,
\begin{equation}
\label{eq:Kepler-Coll-Eject} \bar{\zt}(s) = \begin{cases}
(\frac{9}{2}m_{i_0})^{\frac{1}{3}} |s|^{\frac{2}{3}} \sg_-, & \text{ if } s \le t_0; \\
(\frac{9}{2}m_{i_0})^{\frac{1}{3}} |s|^{\frac{2}{3}} \sg_+, & \text{ if } s \ge t_0.
\end{cases}
\end{equation}  
\begin{prop}
\label{prop:LocalDeform-Kepler} 
When $\sg_- \ne \sg_+$, for any $T>0$, there are two collision-free solutions of \eqref{eq:kepler}, $\xi^{\pm} \in C^{\infty}([-T, T], \cc \setminus \{0\})$ satisfying $\bar{\A}(\xi^{\pm};-T,  T) < \bar{\A}(\bar{\zeta}; -T, T)$ and 
\begin{enumerate}
\item[(a).] $\xi^+(\pm T) = \bar{\zt}(\pm T)$ and $\text{Arg}(\xi^+(T)) - \text{Arg}(\xi^+(-T)) \in (0, 2\pi)$;
\item[(b).] $\xi^-(\pm T) = \bar{\zt}(\pm T)$ and $\text{Arg}(\xi^-(T)) - \text{Arg}(\xi^-(-T)) \in (-2\pi, 0).$
\end{enumerate}
\end{prop}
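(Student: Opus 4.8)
The plan is to exploit the Levi--Civita regularization, in which the hypothesis $\sg_-\ne\sg_+$ manifests as a genuine corner that can be cut to strictly lower the action. First I would remove the dependence on $T$: since $\xi(s)\mapsto T^{-\se}\xi(Ts)$ carries solutions of \eqref{eq:kepler} on $[-T,T]$ to solutions on $[-1,1]$ and scales $\bar{\A}$ by the positive factor $T^{\sy}$, the desired strict inequality on $[-T,T]$ is equivalent to the one on $[-1,1]$, so I may assume $T=1$. Write $A=\bar{\zt}(-1)$ and $B=\bar{\zt}(1)$; both lie on $\{|z|=(\tfrac92 m_{i_0})^{\sy}\}$ at the \emph{distinct} arguments $\arg\sg_-\ne\arg\sg_+$. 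Using the conserved energy $E=\ey|\dot\xi|^2-m_{i_0}/|\xi|$ I rewrite $\bar L=E+2m_{i_0}/|\xi|$, so that $\bar{\A}(\xi;-1,1)=2E+2m_{i_0}\int_{-1}^1 |\xi|^{-1}\,ds$; in particular $\bar{\zt}$ has $E=0$.

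Next I would apply the substitution $z=w^2$ together with the time change $ds=|w|^2\,d\tau$. This turns \eqref{eq:kepler} into the linear oscillator $w''=\ey E\,w$ subject to $2|w'|^2-E|w|^2=m_{i_0}$, and turns the action into $\bar{\A}=\int 4|w'|^2\,d\tau-E\int|w|^2\,d\tau$, a positive multiple of the Dirichlet energy when $E=0$. The collision $z=0$ lifts to $w=0$, the two square roots $\pm w$ of a point correspond to the two sides of the branch, and a full turn of $\arg z$ is a half turn of $\arg w$. Lifting $\bar{\zt}$ produces a path that runs into $w=0$ along a direction $\om_-$ with $\om_-^2=\sg_-$ and out along $\om_+$ with $\om_+^2=\sg_+$; because $\sg_-\ne\sg_+$ the lines $\rr\om_-$ and $\rr\om_+$ are distinct, so the lifted path has a genuine corner at the origin.

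The mechanism for the strict decrease is then geometric: a straight chord joining the lifted endpoints and missing $w=0$ has strictly smaller Dirichlet energy than the broken path through $w=0$, and the two admissible lifts $w_B$ and $-w_B$ of $B$ give two chords passing on opposite sides of the origin (both avoid it precisely because $\sg_-\ne\sg_+$), whose $z$-projections are collision-free Kepler arcs with oriented argument increments in $(0,2\pi)$ and $(-2\pi,0)$ respectively. These are the candidates for $\xi^+$ and $\xi^-$. To make this rigorous I would frame it as a fixed-endpoint, fixed-time minimization of $\bar{\A}$ over each winding class, obtaining a minimizer by the direct method (coercivity and weak lower semicontinuity, as in Proposition \ref{prop:ExistMinimizer}); any collision competitor joining $A$ and $B$, which sit on distinct rays, must enter and leave along different directions, hence lifts to a cornered $w$-path that the chord strictly undercuts, so the minimizer is collision-free and realizes the prescribed winding.

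The main obstacle is the rigid constraint that $\xi^\pm$ be an honest solution on exactly $[-1,1]$ with endpoints $A,B$: after the time change, matching the duration reads $\int|w|^2\,d\tau=2$, which couples the shape of the admissible $w$-arc to its energy $E$, so the competitor cannot be the pure $E=0$ chord but must be the solution of the corresponding Lambert problem, with $E$ then determined. The delicate point is therefore the \emph{strict} inequality at fixed time, i.e.\ ruling out that the constrained minimizer merely ties $\bar{\A}(\bar{\zt})$. I expect to close this by re-solving $w''=\ey E\,w$ for small $E\ne0$ with the $\tau$-window adjusted (via the implicit function theorem off the $E=0$ chord) so as to restore $\int|w|^2\,d\tau=2$ while keeping the arc away from $w=0$; since the action depends continuously on $E$ and the $E=0$ chord already lies strictly below $\bar{\A}(\bar{\zt})$, the strict inequality persists. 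Verifying this continuity-plus-nondegeneracy step, and the existence of the Lambert arcs in each winding class with the asserted argument increment, is the technical heart of the argument.
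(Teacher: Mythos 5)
Your Levi--Civita mechanism is indeed the classical engine behind this proposition (the paper itself gives no proof, deferring to \cite{TV07}, \cite{FGN11}, \cite{Y15a}, \cite{Chen17}), and your geometric bookkeeping is right: the lift of $\bar{\zt}$ is a broken line through $w=0$ with a genuine corner iff $\sg_-\ne\sg_+$, the two lifts $\pm w_B$ give chords avoiding the origin exactly when $\sg_-\ne\sg_+$, and their projections realize the two winding classes. But your collision-exclusion step contains a concretely false claim: ``any collision competitor joining $A$ and $B$, which sit on distinct rays, must enter and leave along different directions.'' The in/out asymptotic directions at an \emph{interior} collision of a minimizer are unrelated to the rays through the endpoints; a competitor can bounce at the origin with $\sg'_-=\sg'_+$ and then curve to reach $B$. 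For such a bounce the Levi--Civita lift is a straight segment through $w=0$ (or a $180^\circ$ spike, depending on the branch choice), there is no corner to cut, and no chord undercuts it; worse, Gordon's theorem \cite{Gordon77} shows ejection--collision paths can \emph{tie} with genuine minimizers, so bounces cannot be dismissed by soft continuity. Since the winding classes are not weakly closed, your direct method must run in their weak closure, whose boundary consists precisely of collision paths --- so the bounce case is exactly what must be excluded, and your argument does not do it. This is why the cited proofs construct the direct and indirect Keplerian (Lambert) arcs explicitly and compare actions, rather than minimizing in winding classes.

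The second gap is your treatment of the time constraint. A small-$E$ implicit-function correction can only absorb a duration mismatch that is $o(1)$, whereas the same-window chord misses $\int|w|^2\,d\tau=2$ by an amount of order one; conversely, at $E=0$ the duration of the affine chord over a window of length $L$ is \emph{linear} in $L$, so you can match the duration exactly without ever leaving $E=0$ --- but then the inequality you invoke (``the $E=0$ chord already lies strictly below $\bar{\A}(\bar{\zt})$'') is precisely the unproven step, not a consequence of continuity. It is true, and provable by the equal-moduli structure you noticed but did not exploit: since $|\bar{\zt}(T)|=|\bar{\zt}(-T)|$, write $w_A=\rho e^{i\beta}$, $w_B=\rho e^{-i\beta}$, $c=\cos 2\beta\in(-1,1)$; the duration-matched affine chord over $[0,L]$ with $L=\frac{2\rho}{v(2+c)}$, $v=\sqrt{m_{i_0}/2}$, has action ratio to the broken lift equal to $\frac{(1-c)(2+c)}{4}+\frac{1}{2(2+c)}$, and the strict bound $<1$ reduces to $(1+c)\bigl((1+c)^2+1\bigr)>0$. (Note also that this chord projection is \emph{not} a Kepler arc --- the constraint $2|w'|^2-E|w|^2=m_{i_0}$ fails for generic speed --- so it can only serve as a competitor; the solutions $\xi^{\pm}$ must still come from the minimization, which returns you to the bounce gap above.) In short: the competitor step is fixable by this computation, but the upgrade from ``the class infimum is strictly below $\bar{\A}(\bar{\zt})$'' to ``a collision-free solution attains it'' is missing, and the step you offer for it fails.
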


\begin{rem}
As the angular momentum of a collision-free solution of the Kepler problem is a non-zero constant, $\text{Arg}(\xi^+(t))$ is strictly increasing and $\text{Arg}(\xi^-(t))$ is strictly decreasing, as $t$ goes from $-T$ to $T$. 
\end{rem}

\begin{proof}
This is a well-known result with several different proofs, see \cite{TV07}, \cite{FGN11}, \cite{Y15a} or \cite{Chen17}.
\end{proof}

A blow-up argument will be used in order to apply this result. 

\begin{dfn}
\label{def;blow-up}
For any $\lmd>0$, define the \textbf{$\lmd$-blow-up} of $\zt|_{[t_0-\dl, t_0+\dl]}$ and $q_i|_{[t_0-\dl, t_0+\dl]}$, $ i \in \N$, as
\begin{equation}
\label{eq:blowup} \zt^{\lmd}(s) = \lmd^{\se} \zt(t_0 + \lmd^{-1}s), \;\; \ql_i(s) = \lmd^{\se} q_i(t_0 + \lmd^{-1} s), \;\; \forall s \in [-\lmd \dl, \lmd \dl]. 
\end{equation}
\end{dfn}
The time is shifted only to simplify notations. As a result, $\zt^{\lmd}(s)$, $s \in [-\lmd \dl, \lmd \dl] \setminus \{0 \}$, satisfies 
\begin{\eq}
\label{eq:ResNbody-blowup} \ddot{\zt}^{\lmd}(s) + \ddot{q}_i^{\lmd}(s) = - m_{i_0} \frac{\zt^{\lmd}(s)}{|\zt^{\lmd}(s)|^3} - \sum_{i \ne i_0} \frac{m_i(\zt^{\lmd}(s) + \ql_{i_0}(s) - \ql_i(s))}{|\zt^{\lmd}(s) + \ql_{i_0}(s) - \ql_i(s)|^3},
\end{\eq}
which is the Euler-Lagrangian equation of the action functional
\begin{equation}
\A^{\lmd}(\zt^{\lmd}; s_1, s_2) = \int_{s_1}^{s_2} L^{\lmd}(\zt^{\lmd}(s), \dot{\zt}^{\lmd}(s), s) \,ds,
\end{equation}
where
\begin{\eq}
\label{eq:LagRes-blowup} L^{\lmd}(\zt^{\lmd}(s), \dot{\zt}^{\lmd}(s), s) = \ey |\dot{\zt}^{\lmd}(s) + \dot{q}^{\lmd}_{i_0}(s) |^2 + \frac{m_{i_0}}{|\zt^{\lmd}(s)|} + \sum_{i \ne i_0} \frac{m_i}{|\zt^{\lmd}(s) + \ql_{i_0}(s) - \ql_i(s)|}.
\end{\eq}

\begin{lem}
\label{lem:ScaleofAction} For any $\dl_1 \in (0, \dl]$,  $\A^{\lmd}(\zt^{\lmd}; -\lmd \dl_1, \lmd \dl_1) = \lmd^{\frac{1}{3}} \A (\zt; t_0-\dl_1, t_0 + \dl_1). $ 
\end{lem}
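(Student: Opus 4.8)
The plan is to prove the identity by direct substitution, exploiting the self-similar scaling built into Definition~\ref{def;blow-up}. First I would differentiate the blow-up relations. From $\zt^{\lmd}(s) = \lmd^{\se}\zt(t_0 + \lmd^{-1}s)$ the chain rule gives $\dot{\zt}^{\lmd}(s) = \lmd^{\se - 1}\dot{\zt}(t_0 + \lmd^{-1}s) = \lmd^{-\sy}\dot{\zt}(t_0 + \lmd^{-1}s)$, and likewise $\dot{q}^{\lmd}_{i_0}(s) = \lmd^{-\sy}\dot{q}_{i_0}(t_0 + \lmd^{-1}s)$. Writing $t = t_0 + \lmd^{-1}s$ throughout, this lets me express every ingredient of $L^{\lmd}$ in terms of the corresponding ingredient of $L$ at time $t$.

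Next I would check term by term that $L^{\lmd}$ picks up a single overall factor $\lmd^{-\se}$. The kinetic term satisfies $\ey|\dot{\zt}^{\lmd}(s) + \dot{q}^{\lmd}_{i_0}(s)|^2 = \lmd^{-\se}\,\ey|\dot{\zt}(t) + \dot{q}_{i_0}(t)|^2$, since the velocity carries the factor $\lmd^{-\sy}$ and is then squared. The Keplerian potential satisfies $m_{i_0}/|\zt^{\lmd}(s)| = \lmd^{-\se}\,m_{i_0}/|\zt(t)|$, because $|\zt^{\lmd}(s)| = \lmd^{\se}|\zt(t)|$. Finally, using $\zt^{\lmd}(s) + \ql_{i_0}(s) - \ql_i(s) = \lmd^{\se}\bigl(\zt(t) + q_{i_0}(t) - q_i(t)\bigr)$, each perturbation term obeys $m_i/|\zt^{\lmd}(s) + \ql_{i_0}(s) - \ql_i(s)| = \lmd^{-\se}\,m_i/|\zt(t) + q_{i_0}(t) - q_i(t)|$. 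Summing over the three families gives $L^{\lmd}(\zt^{\lmd}(s), \dot{\zt}^{\lmd}(s), s) = \lmd^{-\se}L(\zt(t), \dot{\zt}(t), t)$.

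The last step is the change of variables $s = \lmd(t - t_0)$, so that $ds = \lmd\,dt$ and the limits $s = \pm\lmd\dl_1$ correspond exactly to $t = t_0 \pm \dl_1$. The factor $\lmd^{-\se}$ from the integrand combines with the factor $\lmd$ from $ds$ to produce the net power $\lmd^{1 - \se} = \lmd^{\sy}$, which yields $\A^{\lmd}(\zt^{\lmd}; -\lmd\dl_1, \lmd\dl_1) = \lmd^{\sy}\A(\zt; t_0 - \dl_1, t_0 + \dl_1)$, as claimed. There is no genuine obstacle here; the only point requiring care is consistent bookkeeping of the exponents, and in particular confirming that the differences $q_{i_0} - q_i$ in the perturbation potential rescale by the same $\lmd^{\se}$ as $\zt$ itself, so that all three families contribute the identical power of $\lmd$. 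This common scaling reflects the scaling symmetry $(t, x) \mapsto (\lmd t, \lmd^{\se}x)$ of the Kepler problem and is precisely the reason the exponent $\se$ appears in the definition of the blow-up.
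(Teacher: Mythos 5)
Your proof is correct and follows the same route as the paper: a direct term-by-term verification that $L^{\lmd}(\zt^{\lmd}(s), \dot{\zt}^{\lmd}(s), s) = \lmd^{-\se}L(\zt(t), \dot{\zt}(t), t)$, combined with the change of variables $s = \lmd(t - t_0)$, $ds = \lmd\,dt$, giving the net factor $\lmd^{\sy}$. The paper's proof is just a terser version of your bookkeeping, so there is nothing to add.
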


\begin{proof}
By a direct computation, 
$$ L^{\lmd}(\zt^{\lmd}(s), \dot{\zt}^{\lmd}(s), s) = \lmd^{-\se} L(\zt(t), \dot{\zt}(t), t). $$
The result then follows from the fact that $t = \lmd^{-1}s + t_0$, as it implies $ds = \lmd dt$. 
\end{proof}

\begin{lem}
\label{lem:ztl-to-ztb} For any $T>0$, as $\lmd$ goes to infinity, 
\begin{enumerate}
\item[(a).] $\zt^{\lmd}(s)$ converges uniformly to $\bar{\zt}(s)$ on $[-T, T]$;
\item[(b).] $\dot{\zt}^{\lmd}(s)$ converges uniformly to $\dot{\bar{\zt}}(s)$ on any compact subset of $[-T, T] \setminus \{0\}$;
\end{enumerate}
\end{lem}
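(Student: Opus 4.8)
The plan is to insert the collision asymptotics of $\zt$ from Proposition \ref{prop:Asymptotic} directly into the blow-up \eqref{eq:blowup} and to exploit the fact that the exponent $\se$ is exactly the self-similar scaling exponent of the Kepler collision profile $|t-t_0|^{\se}$. Concretely, the rescaling $\zt \mapsto \lmd^{\se}\zt(t_0+\lmd^{-1}\cdot)$ fixes this profile, so after substitution the leading homothetic term of $\zt$ turns into the homothetic solution $\bar{\zt}$ of \eqref{eq:Kepler-Coll-Eject}, and everything reduces to controlling the lower-order remainders $f^{\pm}$ and $\dot f^{\pm}$. I would treat $s>0$ and $s<0$ separately, using $\sg_+$ and $\sg_-$; at $s=0$ both $\zt^{\lmd}(0)=\lmd^{\se}\zt(t_0)$ and $\bar{\zt}(0)$ vanish because $t_0$ is a collision.

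For (a), setting $t=t_0+\lmd^{-1}s$ and using Proposition \ref{prop:Asymptotic}(b) for $s\in(0,T]$ gives $\zt^{\lmd}(s)=\lmd^{\se}\big[(\frac{9}{2}m_{i_0})^{\sy}(\lmd^{-1}s)^{\se}\sg_+ + f^+(t_0+\lmd^{-1}s)\big]$. Since $\lmd^{\se}(\lmd^{-1}s)^{\se}=s^{\se}$, the first term is precisely $\bar{\zt}(s)$, so the error is $\lmd^{\se}f^+(t_0+\lmd^{-1}s)$. The estimate $f^+(t)\simeq o(|t-t_0|^{\se})$ means that given $\ep>0$ there is $\dl_\ep$ with $|f^+(t)|\le \ep|t-t_0|^{\se}$ for $0<|t-t_0|<\dl_\ep$; choosing $\lmd$ so large that $\lmd^{-1}T<\dl_\ep$ bounds the error by $\lmd^{\se}\ep(\lmd^{-1}s)^{\se}=\ep s^{\se}\le \ep T^{\se}$ uniformly on $(0,T]$. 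The branch $[-T,0)$ is identical with $f^-$, $\sg_-$ and $|s|^{\se}$, yielding uniform convergence on all of $[-T,T]$.

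For (b), the chain rule gives $\dot{\zt}^{\lmd}(s)=\lmd^{-\sy}\dot{\zt}(t_0+\lmd^{-1}s)$. Substituting the velocity expansion from Proposition \ref{prop:Asymptotic}(b) for $s>0$ yields $\dot{\zt}^{\lmd}(s)=\se(\frac{9}{2}m_{i_0})^{\sy}s^{-\sy}\sg_+ + \lmd^{-\sy}\dot f^+(t_0+\lmd^{-1}s)$, whose first term is exactly $\dot{\bar{\zt}}(s)$. Now $\dot f^+(t)\simeq o(|t-t_0|^{-\sy})$ bounds the remainder by $\lmd^{-\sy}\ep(\lmd^{-1}s)^{-\sy}=\ep s^{-\sy}$ once $\lmd$ is large. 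The factor $s^{-\sy}$ is bounded on any compact subset of $(0,T]$ but diverges as $s\to 0^+$, which is precisely why the velocity convergence can only be claimed on compact subsets of $[-T,T]\setminus\{0\}$. Fixing such a compact $K$ with $s_0=\min_{s\in K}|s|>0$ makes the bound $\ep s_0^{-\sy}$ uniform, and letting $\ep\to 0$ concludes.

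The computations are routine once Proposition \ref{prop:Asymptotic} is available; the only genuine subtlety is the unavoidable singularity of the rescaled velocity at the collision instant $s=0$, which is what forces the weaker, away-from-zero conclusion in (b). The one bookkeeping point to check is that for $s$ in a fixed bounded interval the rescaled time $t_0+\lmd^{-1}s$ tends to $t_0$ uniformly as $\lmd\to\infty$, so that the one-sided little-$o$ bounds on $f^{\pm}$ and $\dot f^{\pm}$ may be applied simultaneously over the whole interval.
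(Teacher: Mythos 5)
Your proof is correct and follows exactly the route the paper intends: its proof of Lemma \ref{lem:ztl-to-ztb} is the single line ``It follows from Proposition \ref{prop:Asymptotic} and direct computations,'' and your argument is precisely those computations carried out, with the scaling $\lmd^{\se}(\lmd^{-1}s)^{\se}=s^{\se}$ cancelling the homothetic profile and the little-$o$ bounds on $f^{\pm}$, $\dot f^{\pm}$ giving the uniform estimates (including the correct restriction to compact subsets of $[-T,T]\setminus\{0\}$ in part (b) due to the $s^{-\sy}$ factor).
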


\begin{proof}
It follows from Proposition \ref{prop:Asymptotic} and direct computations. 
\end{proof}

Now we are ready to give a proof of Proposition \ref{prop:LocalDeform}. 

\begin{proof}[Proof of Proposition \ref{prop:LocalDeform}]
Fix a $T>0$ for the rest of the proof. Let $\xi$ be either $\xi^+$ or $\xi^-$ obtained in Proposition \ref{prop:LocalDeform-Kepler}. Then there is an $\ep_0>0$, such that 
\begin{\eq}
\label{eq:ep0} \bar{\A}(\bar{\zt}; -T, T) - \bar{\A}(\xi; -T, T) = 6\ep_0. 
\end{\eq}
As $\sg_{\pm} = \frac{\bar{\zt}(\pm T)}{|\bar{\zt}(\pm T)} = \frac{\xi(\pm T)}{|\xi(\pm T)|}$, we may further assume  $\text{Arg}(\sg_{\pm}) = \text{Arg}(\xi(\pm T)).$ By Proposition \ref{prop:LocalDeform-Kepler}, 
 \begin{equation}
\label{eq;Arg-Sg-Diff}  \text{Arg}(\sg_+) - \text{Arg}(\sg_-) \in \begin{cases}
(0, 2\pi), \; & \text{ if } \xi = \xi^-; \\
(-2\pi, 0), \; & \text{ if } \xi = \xi^+. 
\end{cases}  
\end{equation}

By Lemma \ref{lem:ztl-to-ztb}, $ \{\zt^{\lmd}|_{[-2T, 2T]}: \lmd >2T/\dl \}$ is bound in $H^1([-2T, 2T], \cc)$, so we can find a sequence of positive numbers $\{ \lmd_n \nearrow \infty\}_{n=1}^{\infty}$ with $\zt^{\lmd_n}|_{[-2T, 2T]}$ converging weakly to $\zt|_{[-2T, 2T]}$, as $n$ goes to infinity. The weakly lower semi-continuity of $\bar{\A}$ then implies 
$$ \lim_{n \to \infty} \bar{\A}(\zt^{\lmd_n}; -2T, 2T) \ge \bar{\A}(\bar{\zt}; -2T, 2T). $$
As a result, for $n$ large enough, 
\begin{\eq} \label{eq:Abar-Ztlmd-Ztbar}
\bar{\A}(\zt^{\lmd_n}; -2T, 2T) \ge \bar{\A}(\bar{\zt}; -2T, 2T)-\ep_0.
\end{\eq}

By Lemma \ref{lem:ztl-to-ztb}, we can also find a sequence of positive numbers $\{\dl_n \searrow 0 \}_{n =1}^{\infty}$, such that 
\begin{equation}
\label{eq:LeDeltan} |\zt^{\lmd_n}(s) - \bar{\zt}(s)| \le \dl_n, \;\; |\dot{\zt}^{\lmd_n}(s) - \dot{\bar{\zt}}(s)| \le \dl_n, \;\; \forall s \in [-2T, 2T] \setminus [-T, T]. 
\end{equation}
For each $n$, we define a path $\xi^{\lmd_n} \in H^1([-\dl/\lmd_n, \dl/\lmd_n], \cc)$ as 
\begin{\eq}
\label{eq:XiLmdn} \xi^{\lmd_n}(s) = \begin{cases}
\zt^{\lmd_n}(s), \; & \text{ when } s \in [2T, \lmd_n\dl], \\
\frac{2T-s}{\dl_n} (\bar{\zt}(s) - \zt^{\lmd_n}(s)) + \zt^{\lmd_n}(s), \; & \text{ when } s \in [2T- \dl_n, 2T], \\
\bar{\zt}(s), \; & \text{ when } s \in [T, 2T- \dl_n], \\
\xi(s), \; & \text{ when } s \in [-T, T], \\
\bar{\zt}(s), \; & \text{ when } s \in [-2T+ \dl_n, -T], \\
\frac{s + 2T-\dl_n}{\dl_n}(\bar{\zt}(s) - \zt^{\lmd_n}(s)) + \bar{\zt}(s), \; & \text{ when } s \in [-2T, -2T + \dl_n], \\
\zt^{\lmd_n}(s), \; & \text{ when } s \in [-\lmd_n\dl, -2T],
\end{cases} 
\end{\eq}
and a corresponding path $\eta^n \in H^1([t_0-\dl, t_0+\dl], \cc)$ as 
$$ \eta^{\lmd_n}(t) = \lmd_n^{-\se} \xi^{\lmd_n}(\lmd_n(t-t_0)), \; \forall t \in [t_0-\dl, t_0+\dl]. $$
We will show that for large $n$, $(\eta^{\lmd_n}+q_{i_0})|_{[t_0 -\dl, t_0 +\dl]}$ are the desired paths we are looking for. 

Notice that $\xi^{\lmd_n}(s)$ and $\zt^{\lmd_n}(s)$ are correspondingly the $\lmd_n$-blow-up of $\eta^{\lmd_n}(t)$ and $\zt(t)$. By \eqref{eq:XiLmdn}, for any $n$ large enough, we have the following inequality, which verifies \eqref{eq;ep}. 
\begin{equation}
\label{eq:eta-zt-ep}  |\eta^{\lmd_n}(t) - \zt(t)| = |\eta^{\lmd_n}(t) +q_{i_0}(t) - \gm(t)|\le \ep, \;\; \forall t \in [t_0-\dl, t_0+ \dl]. 
\end{equation}
In particular this implies that, for $\ep$ small enough, 
$$ \eta^{\lmd_n}(t) + q_{i_0}(t) \ne q_i(t), \; \forall t \in [t_0-\dl, t_0 +\dl]. $$
At the same time Proposition \ref{prop:LocalDeform-Kepler} and \eqref{eq:XiLmdn} also implies $|\xi^{\lmd_n}(s)| \ne 0$, $\forall s \in [-\lmd_n\dl, \lmd_n\dl]$. Then 
$$ \eta^{\lmd_n}(t) \ne 0, \; \forall t \in [t_0 -\dl, t_0 +\dl]. $$
As a result, for $n$ large enough, $(\eta^{\lmd_n} + q_{i_0})|_{[t_0 -\dl, t_0 +\dl]}$ must be collision-free. 

Meanwhile by Proposition \ref{prop:Asymptotic}, $\lim_{\dl \to 0} \zt(t_0 \pm \dl)/|\zt(t_0 \pm \dl)| = \sg_{\pm}$. Therefore we may assume 
\begin{equation}
\label{eq:Lim-Arg-zt} \lim_{\dl \to 0} \text{Arg}(\zt(t_0 \pm \dl)) = \text{Arg}(\sg_{\pm}). 
\end{equation}
Then by \eqref{eq;Arg-Sg-Diff}, for $\dl$ small enough,
\begin{equation} \label{eq;Arg-zt-diff}
\text{Arg}(\zt(t_0+\dl)) - \text{Arg}(\zt(t_0 -\dl)) \in \begin{cases}
(0, 2\pi), & \text{ if } \xi = \xi^+; \\
(-2\pi, 0), & \text{ if } \xi = \xi^-. 
\end{cases}
\end{equation}
Since $\xi^{\lmd_n}(\pm \lmd_n \dl) = \zt^{\lmd_n}(\pm \lmd_n \dl)$, we may assume 
\begin{equation*}
\label{eq:Arg} \text{Arg}(\xi^{\lmd_n})(\pm \lmd_n\dl) = \text{Arg}(\zt^{\lmd_n})(\pm \lmd_n\dl) . 
\end{equation*}
This then implies  
\begin{equation*}
  \label{eq;Arg-eta-lmd}  \text{Arg}(\eta^{\lmd_n})(t_0 \pm \dl) = \text{Arg}(\zt)(t_0 \pm \dl). 
  \end{equation*}  
Combining this with \eqref{eq;Arg-zt-diff}, we get  
$$ 
\text{Arg}(\eta^{\lmd_n}(t_0+\dl)) - \text{Arg}(\eta^{\lmd_n}(t_0 -\dl)) \in \begin{cases}
(0, 2\pi), & \text{ if } \xi = \xi^+; \\
(-2\pi, 0), & \text{ if } \xi = \xi^-. 
\end{cases}
$$
This implies $\eta^{\lmd_n}|_{[t_0-\dl, t_0 +\dl]} + q_{i_0}|_{[t_0 -\dl, t_0 +\dl]}$ satisfies \eqref{eq;Arg-diff}, for $n$ large enough. 

We claim that for $n$ large enough,
\begin{\eq}
\label{eq:AlmdXi<Zt} \A^{\lmd_n}(\xi^{\lmd_n}; -\lmd_n\dl, \lmd_n\dl) < \A^{\lmd_n}(\zt^{\lmd_n}; -\lmd_n\dl, \lmd_n\dl).
\end{\eq}
Then \eqref{eq:AlmdXi<Zt}  and Lemma \ref{lem:ScaleofAction} imply
$$ \A(\eta^{\lmd_n}; t_0-\dl, t_0 + \dl) < \A(\zt; t_0-\dl, t_0 + \dl). $$
As a result, our proof is finished once \eqref{eq:AlmdXi<Zt} is established. To show this, notice that
\begin{\eq} \label{eq;Act-Diff-ztlmn-xilmn}
 \begin{aligned}
 \A^{\lmd_n}(\zt^{\lmd_n}; -\lmd_n\dl, \lmd_n\dl) & - \A^{\lmd_n}(\xi^{\lmd_n}; -\lmd_n\dl, \lmd_n\dl)  \\ 
 & =  \A^{\lmd_n}(\zt^{\lmd_n}; -2T, 2T) - \A^{\lmd_n}(\xi^{\lmd_n}; -2T, 2T) \\
 & \ge \bar{\A}(\zt^{\lmd_n}; -2T, 2T) - \bar{\A}(\xi^{\lmd_n}; -2T, 2T) + \int_{-2T}^{2T} \langle \dot{\zt}^{\lmd_n} - \dot{\xi}^{\lmd_n}, \dot{q}_{i_0}^{\lmd_n} \rangle \,ds \\
 & - \int_{-2T}^{2T} \sum_{i \ne i_0} \frac{m_i}{|\xi^{\lmd_n} + q^{\lmd_n}_{i_0} - q^{\lmd_n}_i|} \,ds. 
 \end{aligned}
\end{\eq}
Meanwhile by \eqref{eq:ep0} and \eqref{eq:XiLmdn}, 
\begin{equation}
\label{eq:Abar-XiLmd-Ztbar}  
\bar{\A}(\bar{\zt}; -2T, 2T)  - \bar{\A}(\xi^{\lmd_n}; -2T, 2T)  \ge  6\ep_0 -  \bar{\A}(\xi^{\lmd_n}; -2T, -2T+\dl_n)- \bar{\A}(\xi^{\lmd_n}; 2T-\dl_n, 2T). 
\end{equation}
As \eqref{eq:LeDeltan} and \eqref{eq:XiLmdn} imply
$$  |\dot{\xi}^{\lmd_n}(s)| \le \begin{cases}
1 + |\dot{\zt}^{\lmd_n}(s)|, & \text{ if } s \in [2T- \dl_n, 2T]; \\
1 + |\dot{\bar{\zt}}(s)|, & \text{ if } s \in [-2T, -2T + \dl_n],
\end{cases} $$
By Lemma \ref{lem:ztl-to-ztb}, for $n$ large enough, there exist positive $C_1$ and $C_2$, such that  
\begin{equation}
 \label{eq;xilmd-upper-bound}\begin{aligned}
|\dot{\xi}^{\lmd_n}(s)|\le C_1, \;\; & \forall s \in [-2T, -2T + \dl_n] \cup [2T- \dl_n, 2T]; \\
|\xi^{\lmd_n}(s)|\le C_2^{-1}, \;\; & \forall s \in [-2T, -2T + \dl_n] \cup [2T- \dl_n, 2T]. 
\end{aligned}
 \end{equation} 
As a result, for $n$ large enough,
$$
 \begin{aligned}
 \bar{\A}(\xi^{\lmd_n}; & -2T, -2T+\dl_n)  + \bar{\A}(\xi^{\lmd_n}; 2T-\dl_n, 2T) \\
 & \le  \int_{-2T}^{-2T + \dl_n} + \int_{2T-\dl_n}^{2T} \ey |\dot{\xi}^{\lmd_n}(s)|^2 + \frac{m_{i_0}}{|\xi^{\lmd_n}(s)|} \,ds \le 2\left( \ey C_1^2 + m_{i_0}C_2 \right)\dl_n \le \ep_0.  
 \end{aligned}
 $$
Plug the above inequality into \eqref{eq:Abar-XiLmd-Ztbar}, we get 
$$ \bar{\A}(\bar{\zt}; -2T, 2T)  - \bar{\A}(\xi^{\lmd_n}; -2T, 2T) \ge 5 \ep_0. $$
Then \eqref{eq:Abar-Ztlmd-Ztbar} implies 
\begin{equation}
\label{eq:Almd-1} \bar{\A}(\zt^{\lmd_n}; -2T, 2T) - \bar{\A}(\xi^{\lmd_n}; -2T, 2T) \ge 4 \ep_0. 
\end{equation}
Since $ \rho_1=\min \{|\gm(t) - q_i(t)|: \forall t \in [t_0 -\dl, t_0 + \dl] \text{ and } i \ne i_0 \} >0$, $\forall i \ne i_0$, \eqref{eq:eta-zt-ep} implies, 
$$ |\eta^{\lmd_n}(t) + q_{i_0}(t) - q_i(t)|  \ge |\zt(t) +  q_{i_0}(t) - q_i(t)| - |\eta^{\lmd_n}(t) - \zt(t)| \ge  \rho_1 -\ep, \; \forall t \in [t_0-\dl, t_0+\dl].$$
As a result, when $0 < \ep \le \rho_1/2$, for $n$ large enough, we have  
\begin{\eq}
\label{eq:Almd-3} \begin{aligned}
\int_{-2T}^{2T} & \sum_{i \ne i_0} \frac{m_i}{|\xi^{\lmd_n}(s) + q^{\lmd_n}_{i_0}(s) - q^{\lmd_n}_i(s)|} \,ds = \lmd_n^{\sy} \int_{t_0 -\frac{2T}{\lmd_n}}^{t_0+\frac{2T}{\lmd_n}} \sum_{i \ne i_0}  \frac{m_i}{|\eta^{\lmd_n}(t) + q_{i_0}(t) - q_i(t)|} \,dt \\
& \le \lmd_n^{\sy} \int_{t_0 -\frac{2T}{\lmd_n}}^{t_0+\frac{2T}{\lmd_n}} \frac{\sum_{i \ne i_0} m_i}{\rho_1 - \ep} \,dt \le \frac{8T(\sum_{i \ne i_0} m_i)}{\rho_1} \lmd_n^{-\se} \le \ep_0. 
\end{aligned}
\end{\eq}
Plug \eqref{eq:Almd-1} and \eqref{eq:Almd-3} into \eqref{eq;Act-Diff-ztlmn-xilmn}, we get that when $n$ is large enough,
$$ \A^{\lmd_n}(\zt^{\lmd_n}; -\dl/\lmd_n, \dl/\lmd_n)  - \A^{\lmd_n}(\xi^{\lmd_n}; -\dl/\lmd_n,\dl/\lmd_n) \ge 3\ep_0 + \int_{-2T}^{2T}  \langle \dot{\zt}^{\lmd_n}(s) - \dot{\xi}^{\lmd_n}(s), \dot{q}_i^{\lmd_n}(s) \rangle \,ds. $$

As a result, it is enough to show that when $n$ is large enough,
$$  \int_{-2T}^{2T}  |\dot{\zt}^{\lmd_n}(s)| \cdot |\dot{q}_{i_0}^{\lmd_n}(s)| \,ds + \int_{-2T}^{2T}  |\dot{\xi}^{\lmd_n}(s)| \cdot |\dot{q}_{i_0}^{\lmd_n}(s)| \,ds \le 2\ep_0.$$ 
By the definition of $\lmd_n$-blow up, 
\begin{\eq}
\int_{-2T}^{2T}  |\dot{\zt}^{\lmd_n}(s)| \cdot |\dot{q}_{i_0}^{\lmd_n}(s)| \,ds = \lmd_n^{\sy} \int_{t_0 - \frac{2T}{\lmd_n}}^{t_0 + \frac{2T}{\lmd_n}} |\dot{\zt}(t)| \cdot |\dot{q}_{i_0}(t)| \,dt 
\end{\eq}
Since $q|_{[t_0-\dl, t_0+\dl]}$, is a collision-free solution of \eqref{eq_RNBP1}, 
\begin{\eq}
\label{eq:qi0-Dot-Upper-Bound} C_3 = \sup \{|\dot{q}_{i_0}(t)|: t \in [t_0 -\dl, t_0 +\dl] \} <  \infty.  
\end{\eq}
Meanwhile by Proposition \ref{prop:Asymptotic}, there exist two positive constants $C_4, C_5$, 
\begin{\eq}
\label{eq:Zt-Dot-Upper-Bound} |\dot{\zt}(t)| \le C_4|t-t_0|^{-\sy}+ C_5, \; \forall t \ne t_0 \text{ small enough}. 
\end{\eq}
As a result, for $n$ large enough, 
$$ \begin{aligned}
\int_{-2T}^{2T}  |\dot{\zt}^{\lmd_n}(s)| \cdot |\dot{q}_{i_0}^{\lmd_n}(s)| \,ds & \le \lmd_n^{\sy} C_3 \int_{t_0- \frac{2T}{\lmd_n}}^{t_0 + \frac{2T}{\lmd_n}} C_4|t-t_0|^{-\sy} + C_5 \,dt  \\
& =C_3(3 C_4(2T)^{\se} \lmd_n^{-\sy} + 4 C_5 T \lmd_n^{-\se}) \le \ep_0.
\end{aligned}$$

By \eqref{eq:XiLmdn} and the first inequality in \eqref{eq;xilmd-upper-bound}, we can find a $C_6$, such that for $n$ large enough, 
$$ \int_{-2T}^{2T} |\dot{\xi}^{\lmd_n}(s)| \,ds \le C_6. $$
Since $|\dot{q}^{\lmd_n}_{i_0}(s)| = \lmd_n^{-\sy} |\dot{q}_{i_0}(t_0 + \lmd_n^{-1} s)| \le \lmd_n^{-\sy}C_3$, $\forall s \in [-2T, 2T]$, for $n$ large enough, we have 
$$  \int_{-2T}^{2T}  |\dot{\xi}^{\lmd_n}(s)| \cdot |\dot{q}_{i_0}^{\lmd_n}(s)| \,ds \le \lmd_n^{-\sy}C_3 C_6 \le \ep_0.$$
This finishes our proof. 

\end{proof}

\section{Appendix B: The Perturbed Kepler problem} \label{sec_App-PurtKepler} 
 
In this appendix, instead of the restricted $(N+1)$-body problem, we will prove two propositions of the perturbed Kepler problem \eqref{eq_PurtKepler}, that imply Proposition \ref{prop_MonIncreasing} and \ref{prop:PurterbKeplerLimitAngle}, as we believe these results will be useful in more general setting.  
\begin{\eq}
\label{eq_PurtKepler} \ddot{z} = - \frac{mz}{|z|^3} + F(z, t), \;\; F \in C^0(\cc \times \rr, \rr).
\end{\eq}
 
Let $z|_{[t_1, t_2]}$, is a solution of \eqref{eq_PurtKepler} for the rest of the appendix. Rewrite it in polar coordinates with $z(t) = r(t)e^{i \tht(t)}$, we get
\begin{equation}
\label{eq_KeplerPolar} \begin{cases}
& \ddot{r} - r \dot{\tht}^2 = - m/r^2 + F_1(r, \tht, t); \\
& r\ddot{\tht} + 2 \rdt \dot{\tht} = F_2(r, \tht, t), 
\end{cases}
\end{equation}
where  $F_1(r, \tht, t) = \text{Re}(F(re^{\tht}, t)e^{-i \tht}),$ and $F_2(r, \tht, t) = \text{Im}(F(re^{\tht}, t)e^{-i \tht})$. 

\begin{prop}
\label{prop_MonIncreasingApp} Assume there are two positive constants $R$ and $C$, such that
\begin{equation}
\label{eq_FUpBoundApp}  |F(x, t)| \le C/|x|^2, \;\; \forall x \in \cc \setminus B_{R}^o.   
\end{equation}
If $r(t_1) \ge R$ and $\dot{r}(t_1) \ge \sqrt{\frac{3(m+C)}{r(t_1)}} >0$, then $\dot{r}(t)> \ey \dot{r}(t_1), \;\forall t \in [t_1, t_2]$. 
\end{prop}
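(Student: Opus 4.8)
I would prove Proposition \ref{prop_MonIncreasingApp} by a continuity/barrier argument on the radial variable $r$, using the first equation of \eqref{eq_KeplerPolar} together with conservation-type control on the angular momentum. The idea is: as long as $r$ stays above $R$, the radial equation $\ddot r = r\dot\tht^2 - m/r^2 + F_1$ is dominated by its favorable terms, so $\dot r$ cannot drop to half its initial value without a contradiction. Let me set $v_0 = \dot r(t_1) \ge \sqrt{3(m+C)/r(t_1)}$ and define
\[
 t^* = \sup\{\, t \in [t_1, t_2] : \dot r(s) > \tfrac12 v_0 \ \text{ for all } s \in [t_1, t] \,\}.
\]
By continuity $t^* > t_1$. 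The goal is to show $t^* = t_2$; I would argue by contradiction, assuming $t^* < t_2$, so that $\dot r(t^*) = \tfrac12 v_0$ while $\dot r > \tfrac12 v_0$ on $[t_1, t^*)$.

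\emph{First}, on $[t_1, t^*]$ we have $\dot r > 0$, hence $r$ is strictly increasing and $r(t) \ge r(t_1) \ge R$ throughout; in particular the bound \eqref{eq_FUpBoundApp} applies, giving $|F_1| \le |F| \le C/r^2 \le C/r(t_1)^2$. \emph{Second}, I need a lower bound on $\ddot r$. Since $r\dot\tht^2 \ge 0$, the radial equation gives
\[
 \ddot r \;\ge\; -\frac{m}{r^2} - \frac{C}{r^2} \;=\; -\frac{m+C}{r^2} \;\ge\; -\frac{m+C}{r(t_1)^2}.
\]
This is the cleanest route because it discards the (nonnegative but otherwise hard-to-estimate) centrifugal term entirely and only uses the crude bound on $F$. \emph{Third}, integrating this lower bound from $t_1$ to any $t \le t^*$ and controlling $t^* - t_1$ from above is the crux: I must show the velocity cannot fall by more than $\tfrac12 v_0$ before $r$ has grown enough to weaken the gravitational pull. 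The natural device is to bound the drop in $\dot r$ by the integral of $(m+C)/r^2$, and since $r(s) \ge r(t_1) + \tfrac12 v_0 (s - t_1)$ on $[t_1,t^*]$ (because $\dot r > \tfrac12 v_0$ there), the integral $\int_{t_1}^{t^*} (m+C)/r(s)^2\,ds$ is bounded by $\int_0^\infty (m+C)\big(r(t_1) + \tfrac12 v_0 u\big)^{-2}\,du = 2(m+C)/(v_0 r(t_1))$.

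\emph{Putting it together:} at $t = t^*$,
\[
 \tfrac12 v_0 = \dot r(t^*) = v_0 + \int_{t_1}^{t^*} \ddot r(s)\,ds \;\ge\; v_0 - \frac{2(m+C)}{v_0\, r(t_1)}.
\]
This forces $\tfrac12 v_0 \le 2(m+C)/(v_0 r(t_1))$, i.e. $v_0^2 \le 4(m+C)/r(t_1)$. But the hypothesis $v_0 \ge \sqrt{3(m+C)/r(t_1)}$ gives $v_0^2 \ge 3(m+C)/r(t_1)$, and the two are not yet contradictory ($3 < 4$), so I expect the sharp constant to require using the strict inequality on $[t_1,t^*)$ and integrating $\int (m+C)/r^2$ against the \emph{actual} growth $r(s) \ge r(t_1) + v_0(s-t_1)/2$ more carefully, or equivalently replacing the factor $2$ by the sharper constant that comes from the true lower envelope of $r$. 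The main obstacle is precisely pinning down this constant so that the threshold $\sqrt{3(m+C)/r(t_1)}$ (rather than $\sqrt{4(m+C)/r(t_1)}$) suffices; I anticipate this is resolved by a slightly finer accounting — for instance, noting that $\dot r$ stays close to $v_0$ over the relevant window so that $r$ grows nearly like $r(t_1) + v_0(s-t_1)$, tightening the integral bound to $(m+C)/(v_0 r(t_1))$ and yielding $v_0^2 \le 2(m+C)/r(t_1)$, which does contradict the hypothesis. Once the bookkeeping closes, $t^* = t_2$ follows and the proposition is proved.
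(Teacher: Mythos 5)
Your setup coincides with the paper's (the stopping time at which $\dot r$ first equals $\ey \dot r(t_1)$, the observation that $r$ is increasing there so $r \ge r(t_1) \ge R$ and \eqref{eq_FUpBoundApp} applies, and the crude bound $\ddot r \ge -(m+C)/r^2$ obtained by discarding the centrifugal term), but the integration step has a genuine gap, which you yourself detected: integrating $\ddot r$ \emph{in time} against the only available envelope $r(s) \ge r(t_1) + \ey v_0 (s-t_1)$ yields $v_0^2 \le 4(m+C)/r(t_1)$, which is compatible with the hypothesis $v_0^2 \ge 3(m+C)/r(t_1)$. Your proposed repair --- that $\dot r$ stays close to $v_0$, so that $r(s) \ge r(t_1) + v_0(s-t_1)$ --- is not available: on the interval in question all you know is $\dot r > \ey v_0$, and assuming $\dot r \approx v_0$ is essentially assuming the conclusion under dispute. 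Nor does tuning the stopping level rescue the time-integration route: stopping when $\dot r = \lmd v_0$ requires a velocity drop of $(1-\lmd)v_0$ while the integral bound is $(m+C)/(\lmd v_0 r(t_1))$, so a contradiction needs $v_0^2 > \frac{m+C}{\lmd(1-\lmd)r(t_1)} \ge \frac{4(m+C)}{r(t_1)}$, since $\lmd(1-\lmd) \le 1/4$. The constant $4$ is intrinsic to this method, and $3 < 4$.

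The missing idea --- and the paper's actual proof --- is to integrate the differential inequality \emph{in $r$ rather than in $t$}: multiply $\ddot r + \frac{m+C}{r^2} \ge 0$ by $\dot r > 0$ before integrating. Equivalently, the modified radial energy $g(t) = \ey \rdt^2(t) - \frac{m+C}{r(t)}$ satisfies $\dot g = \rdt\bigl(\ddot r + \frac{m+C}{r^2}\bigr) \ge 0$ on $[t_1, t^*]$, hence at the stopping time
\begin{equation*}
\frac{v_0^2}{8} = \ey \rdt^2(t^*) > g(t^*) \ge g(t_1) = \ey v_0^2 - \frac{m+C}{r(t_1)},
\end{equation*}
which forces $v_0^2 < \frac{8(m+C)}{3\, r(t_1)}$ and contradicts $v_0^2 \ge \frac{3(m+C)}{r(t_1)}$, since $8/3 < 3$. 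The gain over your computation is exactly that the kinetic-energy drop is paid against $\int (m+C) r^{-2}\, dr = \frac{m+C}{r(t_1)} - \frac{m+C}{r(t^*)} \le \frac{m+C}{r(t_1)}$, with the integral automatically weighted by the \emph{true} $\dot r$, instead of against $\int (m+C) r^{-2}\, dt$ weighted by the pessimistic bound $\dot r \ge \ey v_0$; this halves the effective loss and is what makes the threshold $\sqrt{3(m+C)/r(t_1)}$ suffice.
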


\begin{proof}
Assume the desired result does not hold, then there is a $\tau_1 \in (t_1, t_2]$, such that 
$$ \rdt(\tau_1) = \rdt(t_1)/2 \text{ and } \rdt(t) > \rdt(t_1)/2, \;\forall t \in [t_1, \tau_1). $$
By \eqref{eq_FUpBoundApp}, for $i=1, 2$, 
\begin{equation}
\label{eq_F1F1UpBound}  |F_i(r, \tht, t)| < C/r^2, \;\; \forall r \ge R.
\end{equation}
Let $g(t) = \ey \rdt(t)^2 - \frac{m+C}{r(t)}$. Then  
$$ \dot{g} = \rdt(\ddot{r} + \frac{m+C}{r^2}). $$
Combining this with \eqref{eq_KeplerPolar} and \eqref{eq_F1F1UpBound}, we get 
$$ \ddot{r} = r \dot{\tht}^2 - \frac{m}{r^2} + F_1 \ge -\frac{m+C}{r^2}, \;\; \forall t \in [t_1, \tau_1].$$
As a result, $\dot{g}(t) \ge 0$, $\forall t \in [t_1, \tau_1]$. This implies $g(\tau_1) \ge g(t_1)$ and
$$ \frac{\rdt^2(t_1)}{4}= \rdt^2(\tau_1) > 2g(\tau_1) \ge 2 g(t_1) \ge  \rdt^2(t_1) -  \frac{2(m+C)}{r(t_1)},$$
which is a contradiction to the condition that $\rdt(t_1) \ge \sqrt{3(m+C)/r(t_1)}$. 

\end{proof}

\begin{prop} \label{prop:PurterbKeplerLimitAngleApp}
Assume there are two positive constants $R$ and $C$, such that
\begin{\eq}
\label{eq_FUpBound2App} |F(x, t)| \le C/|x|^3, \;\; \forall x \in \cc \setminus B^o_{R}.
\end{\eq}
If $r(t_1) \ge R$ and $\rdt(t) \ge v_0$, $\forall t \ge t_1$, for some constant $v_0 >0$, then
\begin{enumerate}
 \item[(a).] $\sup\{ |\om(t)|: \; t \in [t_1, t_2] \} \le  |\om(t_1)| + \frac{C}{v_0r(t_1)}$, where $\om(t) = z(t) \wedge \dot{z}(t)$;
 \item[(b).] when $t_2=\infty$,  $\lim_{t \to \infty} \tht(t) = \tht_0$, for some $\tht_0 \in \rr$.
 \end{enumerate} 
\end{prop}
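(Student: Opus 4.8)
The plan is to exploit the near-conservation of angular momentum for the perturbed Kepler flow. First I would compute the time derivative of $\om = z \wedge \dot{z}$. Since $\dot{z} \wedge \dot{z} = 0$ and the Keplerian force $-mz/|z|^3$ is radial, so that $z \wedge (-mz/|z|^3) = 0$, differentiating \eqref{eq_PurtKepler} gives $\dot{\om} = z \wedge \ddot{z} = z \wedge F(z,t)$. Using $|a \wedge b| \le |a| |b|$ together with the decay hypothesis \eqref{eq_FUpBound2App}, this yields the pointwise estimate $|\dot{\om}(t)| \le |z(t)| \cdot C/|z(t)|^3 = C/r(t)^2$ for every $t$ with $r(t) \ge R$.

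Next I would convert the assumption $\rdt(t) \ge v_0$ into a linear growth bound on $r$: integrating from $t_1$ gives $r(t) \ge r(t_1) + v_0(t - t_1)$ for all $t \ge t_1$ in the domain. Feeding this into the previous estimate and integrating,
\begin{equation*}
\int_{t_1}^{t} |\dot{\om}(s)| \, ds \le C \int_{t_1}^{t} \frac{ds}{(r(t_1) + v_0(s-t_1))^2} \le \frac{C}{v_0 r(t_1)},
\end{equation*}
uniformly in $t \in [t_1, t_2]$. Since $|\om(t)| \le |\om(t_1)| + \int_{t_1}^{t} |\dot{\om}(s)| \, ds$, this establishes part (a) at once, with the constant $C/(v_0 r(t_1))$ as stated.

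For part (b), I would use that in polar coordinates $\om = r^2 \dot{\tht}$, so $\dot{\tht} = \om/r^2$. Part (a) supplies a uniform bound $|\om(t)| \le C_1 := |\om(t_1)| + C/(v_0 r(t_1))$, whence $|\dot{\tht}(t)| \le C_1/r(t)^2$. The same linear lower bound on $r$ then makes $\dot{\tht}$ absolutely integrable on $[t_1, \infty)$, since $\int_{t_1}^{\infty} C_1 (r(t_1)+v_0(t-t_1))^{-2}\, dt = C_1/(v_0 r(t_1)) < \infty$. Therefore $\tht(t) = \tht(t_1) + \int_{t_1}^{t} \dot{\tht}(s)\, ds$ converges to a finite limit $\tht_0$ as $t \to \infty$, proving (b).

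The argument is essentially routine once the right quantity is differentiated; the only step that requires a clean observation rather than mechanical computation is recognizing that $\dot{\om} = z \wedge F$, i.e.\ that the radial Kepler term drops out of the angular-momentum balance and only the perturbation contributes. Everything else is integration of an inverse-square bound made summable by the uniform speed lower bound $\rdt \ge v_0$. I do not anticipate a serious obstacle: the point is precisely that the $|x|^{-3}$ decay of $F$ in \eqref{eq_FUpBound2App}, one power faster than in Proposition \ref{prop_MonIncreasingApp}, is exactly what makes $\om$ have bounded total variation and forces the angle to converge.
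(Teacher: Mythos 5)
Your proposal is correct and follows essentially the same route as the paper: differentiate $\om = z \wedge \dot{z}$ so only the perturbation $z \wedge F$ survives, integrate the resulting $C/r^2$ bound using $r(t) \ge r(t_1) + v_0(t-t_1)$ (which, with $r(t_1)\ge R$, also keeps \eqref{eq_FUpBound2App} applicable throughout) to get (a), then bound $\dot{\tht} = \om/r^2$ and conclude (b) by integrability of the tail. Your direct absolute-integrability argument for (b) is just a rephrasing of the paper's Cauchy-criterion step.
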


\begin{proof}
(a). By \eqref{eq_PurtKepler} and \eqref{eq_FUpBound2App}, 
$$ |\dot{\om}(t)| = |z(t) \wedge \ddot{z}(t)| = |z(t) \wedge F(z,t)| \le  \frac{C}{r^2(t)}.$$
For any $t  \in  [t_1, t_2]$, since $r(t) \ge v_0(t- t_1) + r(t_1)$, 
\begin{equation*}
\begin{aligned}
|\om(t)| & \le |\om(t_1)| + \int_{t_1}^t \frac{C}{r^2(\tau)} d\tau \le |\om(t_1)| + \int_{t_1}^t \frac{C/v_0^2}{(\tau - \tau_0 +\frac{r(t_1)}{v_0})^2} d \tau \\
& = |\om(t_1)| + \frac{C}{v_0^2} \left( \frac{v_0}{r(t_1)} - \frac{v_0}{v_0(t-t_1) +r(t_1)}\right) \\
& \le |\om(t_1)| + \frac{C}{v_0r(t_1)}. 
\end{aligned}
\end{equation*} 
(b). Notice that in polar coordinates  $\om(t) = r^2(t) \dot{\tht}(t)$. Hence
$$ \dot{\tht}(t) = \frac{\om(t)}{r^2(t)}  \le \frac{v_0 r(t_1) |\om(t_1)| +C}{v_0^3 r(t_1)} \frac{1}{(t-t_1 + r(t_1)/v_0)^2}. $$
Therefore for any $t >t_1$,
\begin{equation}
\label{eq_ChangeofAngle} |\tht(t) - \tht(t_1)| \le \int_{t_1}^t |\dot{\tht}(\tau)| d\tau \le \frac{|\om(t_1)|}{v_0 r(t_1)} + \frac{C}{v_0^2 r^2(t_1)}.
\end{equation}

Since $\rdt(t) \ge v_0>0$, $\forall t \ge t_0$, $r(t) \to \infty$ as $t \to \infty$. As a result, the upper bound given in the above inequality goes to zero, as $t_1$ goes to infinity. By Cauchy's criterion for convergence, $\tht(t)$ converges to a finite $\tht_0$, as $t$ goes to infinity. 
\end{proof}

By \eqref{eq_Wpotential}, Proposotion \ref{prop_MonIncreasing} and \ref{prop:PurterbKeplerLimitAngle} follows directly from the above two results correspondingly.

\hfill\newline
\bibliographystyle{abbrv}
\bibliography{refScattering}

\end{document}